\renewcommand*{\backref}[1]{}
\renewcommand*{\backrefalt}[4]{
     \ifcase #1 (no cit.)
      \or (cit. on p. #2)
      \else (cit. on pp. #2)
      \fi}
\DeclareRobustCommand{\SkipTocEntry}[5]{}
\newcommand{\E}{\mathbb E}
\newcommand{\M}{\mathbb M}
\renewcommand{\P}{\mathbb P}\newcommand{\R}{\mathbb R}\newcommand{\T}{\mathbb T}
\newcommand{\Z}{\mathbb Z}
\newcommand{\cA}{\cal{A}}\newcommand{\cB}{\cal{B}}\newcommand{\cC}{\cal{C}}
\newcommand{\cF}{\cal{F}}\newcommand{\cG}{\cal{G}}\newcommand{\cH}{\cal{H}}
\newcommand{\cK}{\cal{K}}\newcommand{\cL}{\cal{L}}\newcommand{\cN}{\cal{N}}
\newcommand{\cP}{\cal{P}}\newcommand{\cR}{\cal{R}}\newcommand{\cT}{\cal{T}}
\newcommand{\cU}{\cal{U}}\newcommand{\cV}{\cal{V}}
\newcommand{\indicat}{\text{1}\negthickspace\text{I}}
\newcommand{\la}{\lambda}
\newcommand{\eps}{\varepsilon}
\newcommand{\dd}{\text{d}}
\newcommand{\pass}{\vspace{0.3cm}\noindent}
\newcommand{\ppass}{\vspace{0.1cm}\noindent}
\newcommand{\noi}{\noindent}
\newcommand{\tmax}{t_{\mbox{\tiny{max}}}}
\DeclareMathOperator{\dimbox}{\dim_B}\DeclareMathOperator{\dimh}{\dim_H} 
\DeclareMathOperator{\card}{card} \DeclareMathOperator{\diam}{diam}
\DeclareMathOperator{\cl}{cl}
\DeclareMathOperator{\intt}{int}
\theoremstyle{plain}
\newtheorem{theo}{Theorem}[section]
\newtheorem{prop}[theo]{Proposition}
\newtheorem{lem}[theo]{Lemma}
\theoremstyle{definition}
\numberwithin{equation}{section}
\definecolor{note}{rgb}{0.20,0.10,1}
\definecolor{new}{rgb}{0.75,0.40,0.30}
\author[P. Calka and Y. Demichel]{Pierre Calka\textsuperscript{1} and Yann Demichel\textsuperscript{2}}
\address{\textsuperscript{1} Univ Rouen Normandie, CNRS, Normandie Univ, LMRS UMR 6085, F-76000 Rouen, France.}
\email{pierre.calka@univ-rouen.fr}%
\address{\textsuperscript{2}Laboratoire MODAL'X, UMR CNRS 9023, UPL, Universit\'e Paris Nanterre, 200 avenue de la R\'e\-pu\-bli\-que, 92001 Nanterre, France.}
\email{ydemichel@parisnanterre.fr \\ ORCID Id: 0009-0008-4306-9469}%
\address{This work was partially supported by the French \textit{r\'eseaux th\'ematiques} MAIAGES (RT CNRS 2179) and ANAIS (RT CNRS 2169) and the \textit{Institut Universitaire de France}. We also thank our attentive and interested colleagues who attended our talks on the subject and contributed through their useful remarks in improving the presentation of the paper.}
\title{Fractal random sets associated with multitype Galton-Watson trees}
\begin{document}

\begin{abstract}
In this paper, we consider a regular tessellation of the Euclidean plane and the sequence of its geometric scalings by negative powers of a fixed integer. We generate iteratively random sets as the union of adjacent tiles from these rescaled tessellations. We encode this geometric construction into a combinatorial object, namely a multitype Galton-Watson tree. Our main result concerns the geometric properties of the limiting planar set. In particular, we show that both box and Hausdorff dimensions coincide and we calculate them in function of the spectral radius of the reproduction matrix associated with this branching process. We then make that spectral radius explicit in several concrete examples when the regular tessellation is either hexagonal, square or triangular. 
\end{abstract}

\subjclass[2020]{Primary 28A80, 60J80; Secondary 60D05, 60G18, 60G57}
\keywords{Fractal sets, Random sets, Box dimension, Hausdorff dimension, multitype Galton-Watson trees, random measure, Voronoi tessellations}

\vspace*{-1.5cm}
\maketitle

\tableofcontents
\thispagestyle{empty}

\addtocontents{toc}{\vspace{0.2cm}}%
\section{A growth model of planar random sets: An introduction}\label{sec:intro}

Let us start with a historical wandering through three different sets which are designed with an iterative construction and whose boundary is expected to be very irregular. 

The most famous and popular example of fractal set is certainly the {\it von Koch curve} named after the Swedish mathematician Helge von Koch. In his paper \cite{koch04} published in 1904, he constructs with a simple geometric iterative process a closed curve of infinite length, continuous but nowhere differentiable. 
The same procedure may be used to construct the so-called {\it von Koch snowflake} which is a basic but representative example of the limit of an increasing sequence of planar compact sets, which has finite area but infinite perimeter and, actually, a fractal boundary, see e.g. \cite{Dem24} for new historical considerations. Basically, the snowflake is obtained as follows: starting from an equilateral triangle, the von Koch iterative procedure consists in dividing each side of the set in construction into three equal parts, and adding at the middle third of each side a smaller equilateral triangle. Notice that the von Koch procedure has been modified in a natural way to generate random fractal curves, see e.g. \cite[pp. 244--245]{falco03}, and remains one of the prominent examples in the fractal world, see e.g. \cite{BBE15,DSL18,KW24}.

The {\it Eden model} was introduced in 1956 by the American physical chemist Murray Eden in the biological context of the growth of colonies of bacteria, see \cite{eden56,eden61}. It consists in considering the cubical tessellation of $\R^m$, $m\ge 2$, induced by the mesh $\Z^m$, starting with one cube and at each time step adding a neighboring cube uniformly at random. The limiting random interface is conjectured to belong, after rescaling, to the KPZ universality class although it is notoriously very difficult to tackle, see e.g. \cite{MRS23}.

In another direction, Konstantin Tchoumatchenko and Sergei Zuyev have introduced in 2001 a little-known model of random set which is based on a sequence of Poisson-Voronoi tessellations with increasing intensity, see \cite{tchou01}. We recall that the Poisson-Voronoi tessellation is the Voronoi partition generated by the set of nuclei provided by a Poisson point process. Their construction starts with an initial Poisson-Voronoi tessellation, then at each step, the current tessellation is overlayed with a new Poisson-Voronoi tessellation and each current cell is replaced by the union of new Voronoi cells whose nuclei belong to the cell in question. The considered random set is the union of the boundaries of all cells. Tchoumatchenko and Zuyev derive several notable properties for this set when the number of iterations goes to infinity, including an upper bound for its Hausdorff dimension and an estimate for its associated spherical contact distribution but their investigation falls short of showing its fractal nature. 

Inspired by all three examples above, we introduce a growth process based on one of three regular two-dimensional tessellations, i.e. triangular, square and hexagonal. The construction consists in starting with one tile from the tessellation, then adding randomly along each of its edges rescaled versions of the same tile and iterating the procedure, see Figure \ref{fig:NosSimuIntro} for simulations in the three cases. We take our inspiration from the three previous historical models. Indeed, we keep from the von Koch approach the ideas of an increasing sequence of compact sets, of the underlying self-similarity through the addition of rescaled versions of the same shape and of the example of the triangular tessellation as the starting object. We keep from the Eden model the randomness of the addition and the example of the square tessellation. Finally, we keep from the Tchoumatchenko-Zuyev model the idea of generating consecutively overlaid tessellations with an exponentially increasing underlying intensity and the example of the hexagonal tessellation. Indeed, the hexagonal tessellation may be seen as the most natural deterministic idealization of the Poisson-Voronoi tessellation because the expected number of edges of a typical tile of a homogeneous Poisson-Voronoi tessellation is equal to $6$, see \cite{mol94}. Actually, all three regular tessellations are Voronoi tessellations induced by a set of nuclei along a regular dual grid.

\renewcommand{\arraystretch}{0.8}
\begin{figure}[h!]
\begin{center}
\begin{tabular}{ccc}
\includegraphics[scale=1]{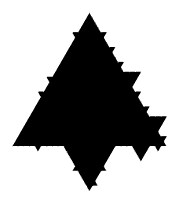} &
\includegraphics[scale=0.35]{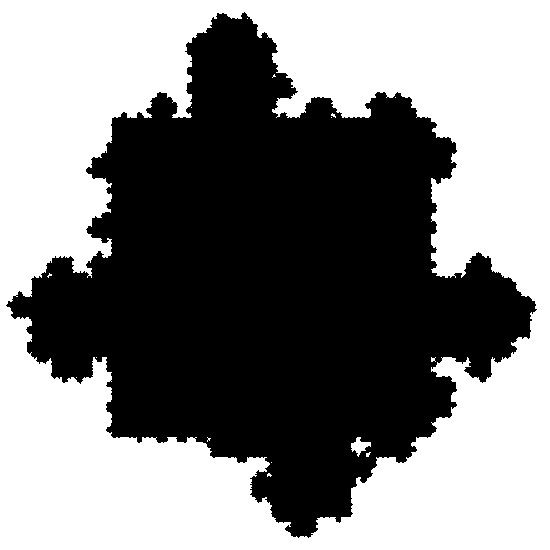} &
\includegraphics[scale=0.62]{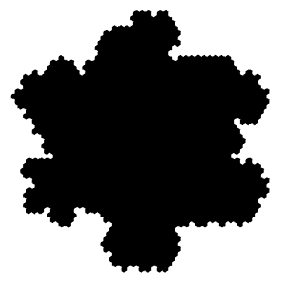} \\
\footnotesize{(a)} & \footnotesize{(b)} & \footnotesize{(c)}
\end{tabular}
\end{center}
\vspace{-.35cm}
\caption{Simulation of the random growth model for the triangular (resp. square, hexagonal) tessellation with the particular choice $p_*=0$, $p=0.5$ and $\la=3$ (resp. $\la=4$, $\la=3$).}
\label{fig:NosSimuIntro} 
\end{figure}
\renewcommand{\arraystretch}{1.5}

We make the construction more formal in the following lines. We assume that each tile of the regular tessellation $\cT$ has diameter one and that the origin is either at the center of one tile or one of the vertices of a tile. Let $\la\geq3$ be an integer and let $(\la^{-n}\cT)_{n\ge 0}$ be the associated sequence of tessellations generated by consecutive rescalings of $\cT$. We construct an increasing and bounded sequence $(\cK_n)_{n\ge 0}$ of compacts sets with an iterative geometric procedure which guarantees that at each step, $\cK_n$ is a union of tiles of $\la^{-n}\cT$, see Figure \ref{fig:setKn}. Namely, we start by fixing the tile of the grid $\cT$ which contains $0$ and call it $\cK_0$. Once $\cK_n$ is constructed for some $n\ge 0$, we define
\begin{equation}\label{eq:iterKn}
\cK_{n+1} = \cK_n^\bullet\cup \cK_n^\circ\supset \cK_n
\end{equation}
where:
\begin{itemize}[parsep=-0.15cm,itemsep=0.2cm,topsep=0.2cm,wide=0.15cm,leftmargin=0.65cm]
\item[$-$] $\cK_n^\bullet$ is the deterministic set constituted with all of the tiles $\cT_{n+1,\ell}\in \la^{-(n+1)}\cT$, $\ell\ge 1$, which intersect the interior of $\cK_n$,
\item[$-$] $\cK_n^\circ$ is a random union of tiles of $\la^{-(n+1)}\cT$. 
\end{itemize}

\begin{figure}[h!]
\centering
\includegraphics[scale=0.2]{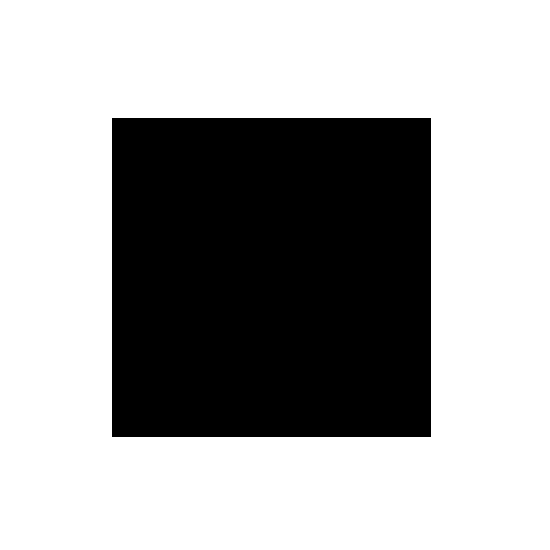}
\includegraphics[scale=0.2]{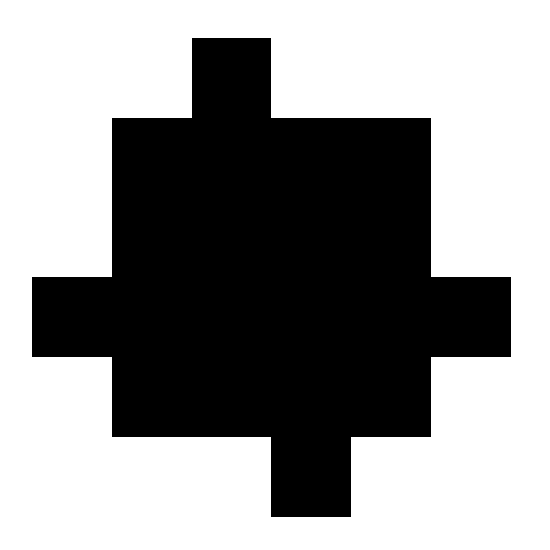}
\includegraphics[scale=0.2]{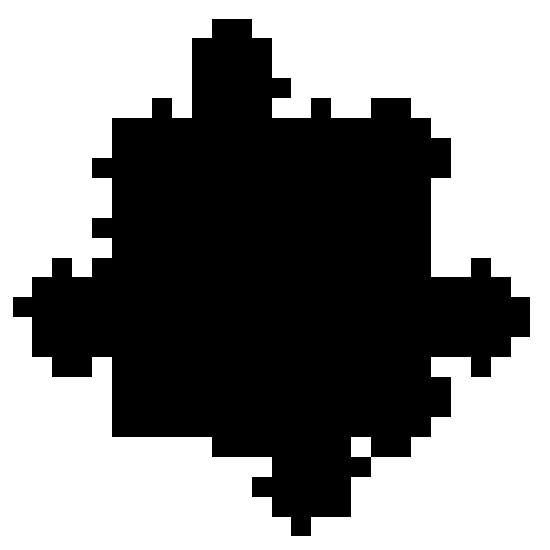}
\includegraphics[scale=0.2]{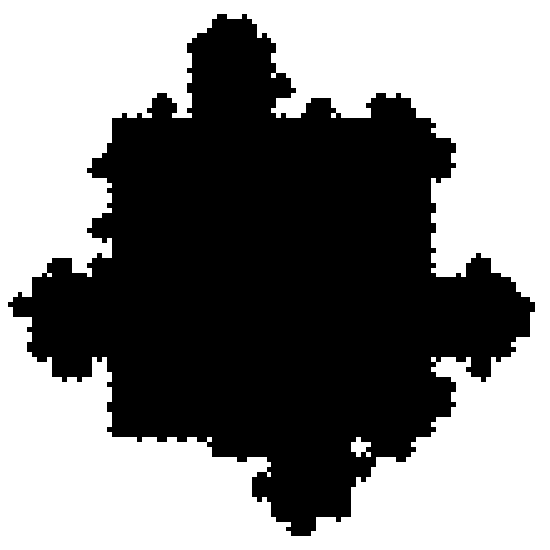}
\includegraphics[scale=0.2]{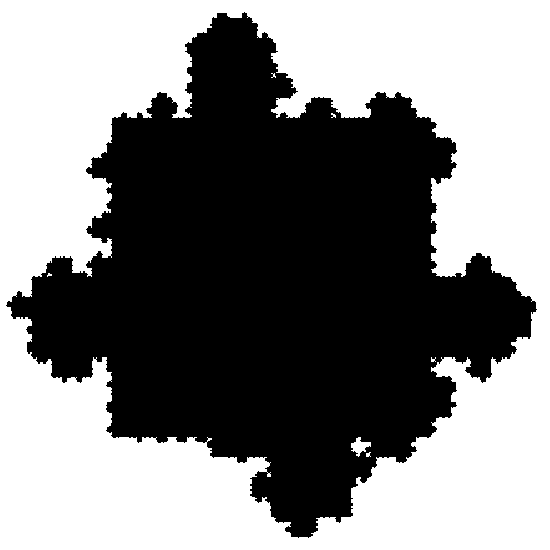}
\caption{Iterative construction of $\cK_{n+1}$ as the union of the deterministic set $\cK_n^\bullet$ and the random set $\cK_n^\circ$ for the square tessellation with parameters $\la=4$, $p_*=0$ and $p=0.5$.}\label{fig:setKn}
\end{figure}

\noi More precisely, each additional tile $\cT_{n+1,\ell}$ from $\la^{-(n+1)}\cT$ is chosen independently from the others and according to a Bernoulli variable whose parameter $p_\ell$ does not depend on $n$ but only on the geometry of $\partial \cT_{n+1,\ell}\cap\partial \cK_n$ according to the two following groups, see Figure \ref{fig:SquareSnake}:
\begin{itemize}[parsep=-0.15cm,itemsep=0.25cm,topsep=0.2cm,wide=0.15cm,leftmargin=0.65cm]
\item[$-$] Group $1$: the tiles $\cT_{n+1,\ell}$ which either share exactly one edge with $\partial \cK_n$ but no end of an edge of $\partial \cK_n$ or share exactly two edges with $\partial \cK_n$;
\item[$-$] Group $2$: the tiles $\cT_{n+1,\ell}$ which share exactly one edge with $\partial \cK_n$ and contain the end of an edge of $\partial \cK_n$.
\end{itemize}

\noi We fix $p\in [0,1]$ and $p_*\in\{0,1\}$. When $\cT_{n+1,\ell}$ is in Group $1$ (resp. Group $2$), we take $p_\ell=p$ (resp. $p_\ell=p_*$), see Figure \ref{fig:SquareSnake}. Otherwise, we take $p_\ell=0$. The particular choice $p_*=0$ or $1$ for the tiles at the end of an edge of $\cK_n$ guarantees the spatial independence of the evolution of the growth inside two neighboring tiles. This will allow a solvable probabilistic coding of the model. 

\renewcommand{\arraystretch}{0.8}
\begin{figure}[h!]
\begin{center}
\begin{tabular}{ccc}
\includegraphics[scale=0.75]{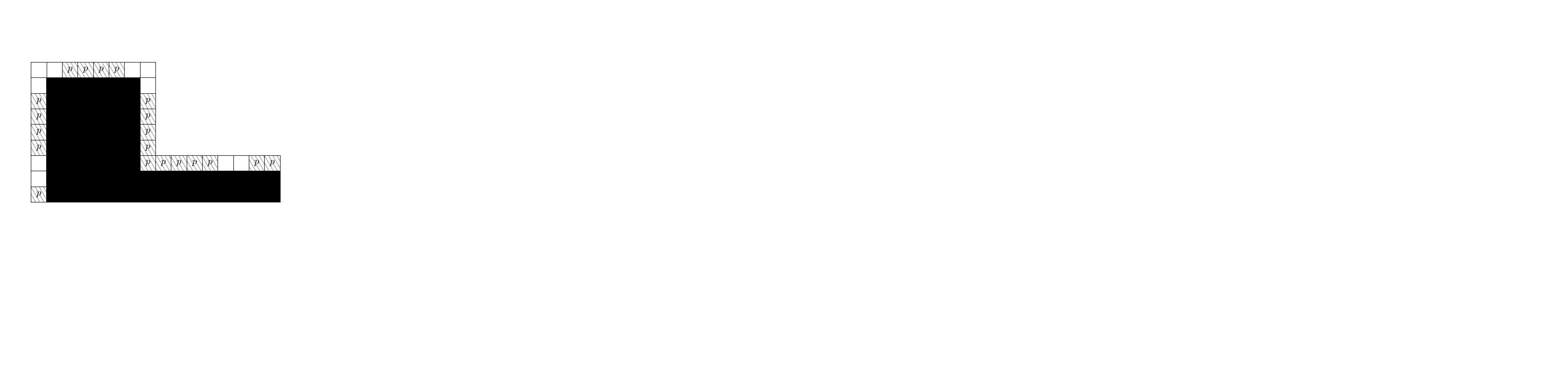} & &
\includegraphics[scale=0.75]{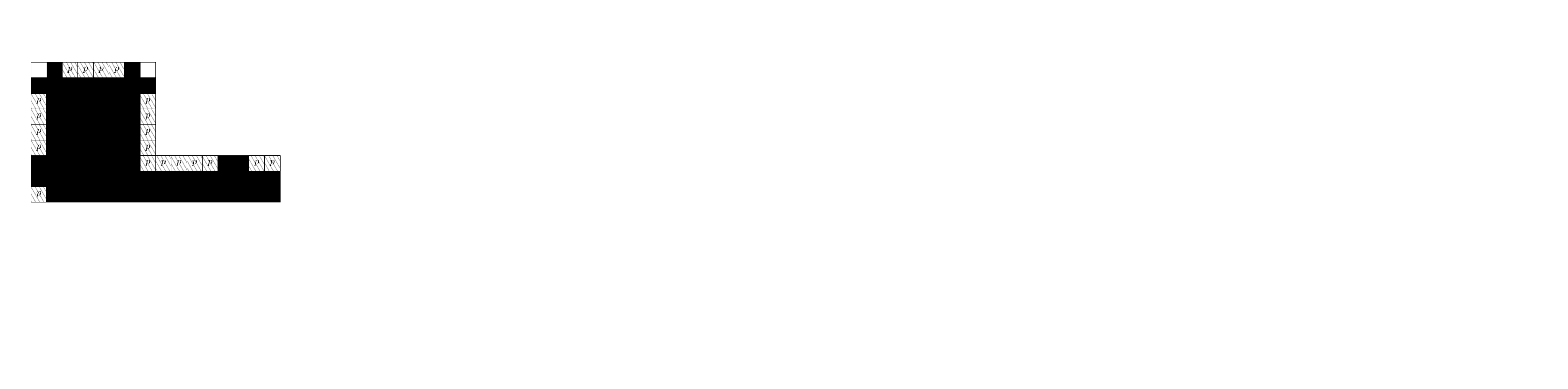} \\
\footnotesize{(a)} & & \footnotesize{(b)}
\end{tabular}
\end{center}
\vspace{-.35cm}
\caption{Description of the random rule in the case of the square tessellation (a) $p_*=0$ and (b) $p_*=1$. Each square $\cT_{n+1,\ell}$ from $\la^{-(n+1)}\cT$ along $\partial\cK_n$ is added to $\cK_n$ to obtain $\cK_{n+1}$ according to its Bernoulli variable $\cB(p_\ell)$.}
\label{fig:SquareSnake}
\end{figure}
\renewcommand{\arraystretch}{1.5}

By \eqref{eq:iterKn} the sequence $(\cK_n)_{n\ge 0}$ is increasing. Moreover, the distance from $\{0\}$ to $\partial \cK_n$ is upper bounded by $\sum_{k\geq0}\la^{-k}$. Consequently, it converges almost surely for the Hausdorff distance to the closure of the union of the sets $\cK_n$, namely to the compact set
\begin{equation}\label{eq:limcup}
\cK_{\infty}=\cl \bigg(\bigcup_{n\ge 0} \cK_n \bigg).
\end{equation}

We aim at investigating the geometric and fractal properties of the boundary $\partial \cK_\infty$ of the limit set $\cK_\infty$. In particular, we are interested in the box and Hausdorff dimensions of $\partial\cK_\infty$. Our main result below is an explicit calculation of both the Hausdorff dimension $\dimh(\partial \cK_\infty)$ and box-dimension $\dimbox(\partial \cK_\infty)$ of the limiting set $\partial \cK_\infty$ (see \eqref{eq:defhsdim} and \eqref{eq:defdimbox} for the definition of these two dimensions). The calculation depends on an explicit deterministic matrix $\M$ which is defined at \eqref{eq:defMatrixM}, is non-negative and will be proved to be primitive. As we will see in Section \ref{sec:galton}, this matrix is the reproduction matrix associated with a multitype Galton-Watson process encoding our model. The spectral radius $\rho_\M$ of $\M$ is the key parameter of the whole machinery.

\begin{theo}\label{theo:main}
The set $\partial \cK_\infty$ has Hausdorff dimension and box-dimension which coincide almost surely and are equal to
\begin{equation}\label{eq:dimfrac}
d=\frac{\log\rho_\M}{\log \la}\in [1,2).
\end{equation}
Moreover, the $d$-dimensional Hausdorff measure $\cH^d(\partial \cK_\infty)$ of $\partial \cK_\infty$ is positive in mean and finite almost surely.
\end{theo}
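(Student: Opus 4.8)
The plan is to extract both dimensions from the growth of the multitype Galton-Watson process of Section~\ref{sec:galton}, whose mean matrix is the primitive matrix $\M$. Denote by $Z_n=(Z_n^{(1)},\dots,Z_n^{(k)})$ the type-population vector at generation $n$ and by $N_n$ the number of tiles of $\la^{-n}\cT$ meeting $\partial\cK_\infty$; by construction $N_n$ is comparable, up to a factor depending only on the tessellation, to the total size $|Z_n|=\sum_i Z_n^{(i)}$. Perron-Frobenius applied to the primitive $\M$ gives a dominant eigenvalue $\rho_\M>0$, a strictly positive Perron eigenvector $v$, and the rank-one asymptotics $\M^{n}=\rho_\M^{n}(\Pi+o(1))$ with $\Pi$ having strictly positive entries, so $\E|Z_n|\sim c\,\rho_\M^{n}$. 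As the offspring numbers are deterministically bounded, the $L\log L$ hypothesis of the Kesten-Stigum theorem holds and $\rho_\M^{-n}Z_n\to W\,v$ almost surely for a nonnegative scalar $W$; since the deterministic family $\cK_n^{\bullet}$ already forces $\partial\cK_\infty$ to refine at every scale, the process never dies out and $W>0$ almost surely. Hence $\tfrac1n\log N_n\to\log\rho_\M$ almost surely, and covering $\partial\cK_\infty$ by the $N_n$ tiles of diameter $\la^{-n}$ gives
\begin{equation}\label{eq:boxcomp}
\dimbox(\partial\cK_\infty)=\lim_{n\to\infty}\frac{\log N_n}{n\log\la}=\frac{\log\rho_\M}{\log\la}=d
\end{equation}
almost surely. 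Here $d\in[1,2)$: the bound $\rho_\M\ge\la$ holds because the deterministic refinement already splits each boundary edge into $\la$ collinear pieces, while $\rho_\M<\la^{2}$ because at each step a positive proportion of the subtiles falls in the filled interior and carries no boundary. Since $\dimh\le\dimbox$ in general, this already yields $\dimh(\partial\cK_\infty)\le d$.

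For the matching lower bound I would put a mass on $\partial\cK_\infty$ coming from the tree. Each retained tile $\sigma$ at level $n$ roots a subprocess, and normalising its own population produces a per-tile martingale limit $W_\sigma$; one then defines a random Borel measure $\mu$ on $\partial\cK_\infty$ by assigning to the boundary piece carried by $\sigma$ a mass proportional to $\rho_\M^{-n}W_\sigma$. The branching property makes this family consistent across generations, so $\mu$ is well defined and $\E[\mu(\partial\cK_\infty)]>0$. The crux is a Frostman estimate, for which two ingredients enter: a purely geometric one, that for $\la^{-(n+1)}\le r<\la^{-n}$ the ball $B(x,r)$ meets only a bounded number of level-$n$ tiles (regularity of $\cT$); and a probabilistic one, that the choice $p_*\in\{0,1\}$ makes the evolutions inside neighbouring tiles independent, so that the $W_\sigma$ are, conditionally on their types, independent copies of finitely many laws with exponential tails. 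Together they give
\begin{equation}\label{eq:frostman}
\mu\big(B(x,r)\big)\le C\,\rho_\M^{-n}\max_{\sigma}W_\sigma,
\end{equation}
the maximum being over the finitely many level-$n$ tiles near $x$. The main obstacle is to control this maximum uniformly in $x$ and $n$: a Borel-Cantelli argument over the $O(\rho_\M^{n})$ tiles of each generation shows $\sup_n\sup_\sigma\rho_\M^{-\eps n}W_\sigma<\infty$ almost surely for every $\eps>0$, whence $\mu(B(x,r))\le C_\omega\,r^{d-\eps}$ with an almost surely finite random constant. The mass distribution principle then gives $\dimh(\partial\cK_\infty)\ge d-\eps$ for all $\eps>0$, so $\dimh(\partial\cK_\infty)\ge d$ almost surely; combined with \eqref{eq:boxcomp}, the two dimensions coincide and equal $d$.

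Finiteness of $\cH^{d}$ is read off the same covering: using $\rho_\M=\la^{d}$,
\begin{equation}\label{eq:Hdfinite}
\cH^{d}(\partial\cK_\infty)\le\liminf_{n\to\infty}N_n\,(\la^{-n})^{d}=\liminf_{n\to\infty}N_n\,\rho_\M^{-n}<\infty
\end{equation}
almost surely, the right-hand side converging because $\rho_\M^{-n}|Z_n|\to W|v|$. Positivity in mean is more delicate, and is exactly why the statement stops short of almost sure positivity. Here I would argue in expectation rather than pathwise: a first-moment computation, summing the contributions of the finitely many offspring laws over scales, yields a \emph{deterministic} Frostman bound $\E[\mu(B(x,r))]\le C_0\,r^{d}$, which after averaging the comparison between $\cH^d$ and $\mu$ gives $\E[\cH^{d}(\partial\cK_\infty)]\ge C_0^{-1}\,\E[\mu(\partial\cK_\infty)]>0$. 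The reason this cannot be promoted to an almost sure statement is the point flagged above: at the critical exponent the pathwise Frostman constant in \eqref{eq:frostman} need not be finite, since the exponential fluctuations of the $W_\sigma$ let $\sup_\sigma W_\sigma$ grow like $\log(\rho_\M^{n})$ over a generation, so that only the lossy exponent $d-\eps$ survives almost surely. I expect this uniform tail control of the family $(W_\sigma)$, the random-recursive substitute for an open set condition, to be the genuinely delicate part of the proof, the remainder being Perron-Frobenius bookkeeping and elementary plane geometry.
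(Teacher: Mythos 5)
Your outline is correct, and it matches the paper for the box-dimension computation (Kesten--Stigum convergence of $\rho_\M^{-n}Z_n$ applied to the covering $\cR_n$), for the upper bound $\dimh\le\dimbox$, for the a.s.\ finiteness of $\cH^d(\partial\cK_\infty)$ via the covering by $\cR_n$, and for the positivity in mean via the in-expectation Frostman bound $\E[\mu(U)]\le c(\diam U)^d$ combined with an averaged mass distribution principle. Where you genuinely diverge is the lower bound $\dimh(\partial\cK_\infty)\ge d$. You aim at a \emph{uniform} Frostman estimate $\mu(B(x,r))\le C_\omega r^{d-\eps}$, obtained by a union bound over all $O(\la^{2n})$ tiles of generation $n$ applied to the tails of the per-tile martingale limits $W_\sigma$; the paper instead never controls $\sup_\sigma W_\sigma$ and only establishes the \emph{local} dimension of $\mu$ at $\mu$-almost every point, namely $\lim_m m^{-1}\log\mu(\cT_m^{(x)})=-\log\rho_\M$, by adapting Hawkes's energy-integral and Borel--Cantelli argument to the multitype setting (Theorem \ref{theo:dimtree}, proved on the tree boundary $\partial\T$ with the $\la$-adic ultrametric and then transferred to $\partial\cK_\infty$ through the coding map $\varphi$, whose image of a set of diameter $r$ is covered by boundedly many balls of radius comparable to $r$). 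Your route is more quantitative but needs the tail input on $W_\sigma$ that you flag as delicate; note that exponential tails are not actually required --- since the offspring numbers are bounded by $\la^2-1$, the limits $W^{(t)}$ have finite moments of every order, and $\P(W_\sigma>\rho_\M^{\eps n})\le\E[(W^{(t)})^k]\rho_\M^{-k\eps n}$ with $k$ large already beats the $\la^{2n}$ union bound, so your Borel--Cantelli step closes with classical inputs. The paper's route is softer (it needs only first moments of $\mu(\cT_m^{(x)})$ and the $L\log L$ condition) and, as a by-product, isolates a statement of independent interest, the exact Hausdorff dimension of the boundary of a multitype Galton--Watson tree; your route, if the moment bookkeeping is written out, gives a uniform H\"older estimate on $\mu$ at every exponent below $d$, which is slightly more information about the measure itself. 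Both correctly stop short of a.s.\ positivity of $\cH^d$ for the same reason you identify.
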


In addition to Theorem \ref{theo:main}, our second result provides the asymptotics for the perimeter $\cL(\partial \cK_n)$ of $\partial\cK_n $ and the area $\cA(\cK_{n+1}\setminus \cK_n)$ of the difference set $\cK_{n+1}\setminus \cK_n$. 

\begin{theo}\label{theo:perimaire}
There exist two positive random variables $L_\infty$ and $A_\infty$ such that $\P(L_{\infty}>0,A_{\infty}>0)=1$ and such that with probability $1$,
\begin{equation}\label{eq:limperimaire}
\lim_{n\to \infty}\bigg(\frac{\la}{\rho_\M}\bigg)^n\cL(\partial \cK_n)=L_\infty
\text{ and }
\lim_{n\to\infty}\bigg(\frac{\la^2}{\rho_\M}\bigg)^n\cA(\cK_{n+1}\setminus \cK_n)=A_\infty.
\end{equation}
As a consequence,
\begin{equation}\label{eq:limairedefaut}
\lim_{n\to\infty}\bigg(\frac{\la^2}{\rho_\M}\bigg)^n\cA(\cK_\infty\setminus \cK_n)=\frac{\la^2}{\la^2-\rho_\M}A_\infty.
\end{equation}
\end{theo}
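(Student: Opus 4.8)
\textit{Proof proposal.} The plan is to reduce all three limits to the Kesten--Stigum convergence theorem for the multitype Galton--Watson process encoding the model. First I would let $Z_n=(Z_n^{(1)},\dots,Z_n^{(K)})$ denote the type-population vector at generation $n$ of the branching process whose reproduction matrix is $\M$. The decisive observation is that both the perimeter and the incremental area are exact linear functionals of $Z_n$: since every tile of $\la^{-n}\cT$ has edges of length proportional to $\la^{-n}$ and area proportional to $\la^{-2n}$, and since each type records a fixed local boundary configuration, there are deterministic nonnegative vectors $\ell$ and $a$, read off from the geometry of the types, such that
\begin{equation*}
\cL(\partial\cK_n)=\la^{-n}\,\langle \ell, Z_n\rangle,\qquad \cA(\cK_{n+1}\setminus\cK_n)=\la^{-2(n+1)}\,\langle a, Z_n\rangle.
\end{equation*}
Identifying these vectors precisely --- that is, the bookkeeping of which types carry boundary edges and which generate fresh tiles --- is the \emph{one genuinely model-specific step}.

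Next I would invoke the limit theory attached to $\M$. Since $\M$ is primitive, Perron--Frobenius provides a simple dominant eigenvalue $\rho_\M$ together with strictly positive right and left eigenvectors $v$ and $u$, normalized by $\langle u,v\rangle=1$. Because each individual has a uniformly bounded number of offspring (a tile meets only finitely many neighbors), the offspring laws have all moments, so the $L\log L$ condition holds trivially; moreover the deterministic additions force almost sure non-extinction, and the process is supercritical since $\rho_\M\ge\la>1$ by Theorem \ref{theo:main}. The Kesten--Stigum theorem then yields a nonnegative random variable $W$ with $\rho_\M^{-n}Z_n\to W\,v$ almost surely and $W>0$ almost surely. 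Combining this with the displayed identities gives
\begin{equation*}
\Big(\tfrac{\la}{\rho_\M}\Big)^n\cL(\partial\cK_n)=\rho_\M^{-n}\langle\ell,Z_n\rangle\longrightarrow W\langle\ell,v\rangle=:L_\infty,
\end{equation*}
and likewise $\big(\tfrac{\la^2}{\rho_\M}\big)^n\cA(\cK_{n+1}\setminus\cK_n)\to\la^{-2}W\langle a,v\rangle=:A_\infty$, both almost surely. As $v>0$ and $\ell,a$ are nonzero nonnegative vectors, $L_\infty$ and $A_\infty$ are positive multiples of the single variable $W$, so $\P(L_\infty>0,A_\infty>0)=1$.

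For the last assertion I would sum the incremental areas. Writing $b_k:=\big(\tfrac{\la^2}{\rho_\M}\big)^k\cA(\cK_{k+1}\setminus\cK_k)$, which converges to $A_\infty$, a reindexing gives
\begin{equation*}
\Big(\tfrac{\la^2}{\rho_\M}\Big)^n\cA(\cK_\infty\setminus\cK_n)=\sum_{k\ge n}\Big(\tfrac{\la^2}{\rho_\M}\Big)^n\cA(\cK_{k+1}\setminus\cK_k)=\sum_{j\ge0}\Big(\tfrac{\rho_\M}{\la^2}\Big)^{j}b_{n+j}.
\end{equation*}
Since $\rho_\M<\la^2$ (again by Theorem \ref{theo:main}, as $d<2$), the weights $(\rho_\M/\la^2)^j$ are summable and the $b_{n+j}$ are uniformly bounded, so dominated convergence lets me pass to the limit termwise, producing $A_\infty\sum_{j\ge0}(\rho_\M/\la^2)^j=\tfrac{\la^2}{\la^2-\rho_\M}A_\infty$, as claimed.

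The principal obstacle is the first step: making rigorous the claim that perimeter and added area are honest linear functionals of the type vector, which demands a careful description of the type set and of each type's geometric contribution. Everything afterward is a direct application of branching-process limit theory, with the convergence of the series guaranteed by the dimension bound $1\le d<2$ of Theorem \ref{theo:main}.
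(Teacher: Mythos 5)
Your overall strategy is the one the paper uses: express the geometric quantities as functionals of the type-population vector $Z_n$, apply the Perron--Frobenius/Kesten--Stigum convergence $\rho_\M^{-n}Z_n\to Wv$ with $W>0$ a.s.\ (the paper's Proposition \ref{prop:convbyAN}), and sum a geometric series for \eqref{eq:limairedefaut}. The perimeter identity $\cL(\partial\cK_n)=\la^{-n}\langle \ell,Z_n\rangle$ is exact and is precisely the paper's $v_0$, since the type of a tile determines deterministically how many of its edges lie on $\partial\cK_n$ and each boundary edge is counted by exactly one tile of $\cR_n$. Your treatment of \eqref{eq:limairedefaut} by termwise passage to the limit in $\sum_{j\ge0}(\rho_\M/\la^2)^j b_{n+j}$ is a harmless variant of the paper's $\eps$-sandwich.

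There is, however, one step that fails as stated: the claimed exact identity $\cA(\cK_{n+1}\setminus\cK_n)=\la^{-2(n+1)}\langle a,Z_n\rangle$ with a \emph{deterministic} vector $a$. The number of tiles of $\la^{-(n+1)}\cT$ added inside a given tile of $\cR_n$ of type $t$ is not a function of the type alone: it depends on the realizations of the Bernoulli variables governing $\cK_n^\circ$, so it is a genuinely random quantity conditionally on $\cF_n$. No deterministic $a$ makes your identity hold pathwise; it only holds in conditional expectation. The paper repairs exactly this point by writing $\cA(\cK_{n+1}\setminus\cK_n)=\la^{-2(n+1)}\sum_{t}\sum_{\ell=1}^{Z_n^t}N_\ell^t$ with $(N_\ell^t)_\ell$ i.i.d.\ for each type, and then combining $\rho_\M^{-n}Z_n^t\to Wv^t$ with the strong law of large numbers (valid since $Z_n^t\to\infty$) to replace $N_\ell^t$ by its mean in the limit, yielding $A_\infty=\la^{-2}\sum_t Wv^t\,\E[N_\ell^t]$. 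Your conclusion survives, but only after inserting this law-of-large-numbers step; as written, the area half of \eqref{eq:limperimaire} is not justified.
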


The key idea for proving Theorems \ref{theo:main} and \ref{theo:perimaire} consists in coding the construction of the set $\cK_\infty$ with a canonical multitype Galton-Watson process. This consists in particular in enumerating the tiles which surround $\cK_n$ and assigning to each of them a fixed type according to its intersection with $\cK_n$. We show that the Hausdorff dimension of the geometric set coincides with the dimension of the boundary of the tree as introduced by Furstenberg \cite{furst71} when the tree is endowed with the classical $\la$-adic ultrametric distance. Actually, we show independently that the dimension of the tree coincides with \eqref{eq:dimfrac}, see Theorem \ref{theo:dimtree}. The calculation of the dimension of a simple branching process dates back to \cite{holmes73} and \cite{hawk81}. Since then, it has been extended in \cite{wat07} to the determination of the exact gauge for the Hausdorff measure and more recently proved in an elementary way in \cite{Duq09}. The multitype case was investigated in \cite{Lal00}. More precisely, they obtain the dimension of a subset of the boundary of the tree when the limiting frequency is fixed. The actual dimension of the whole boundary is then expressed implicitly as the maximum of a certain function over a set of invariant ergodic measure, see Remark (C) therein. To the best of our knowledge, although we think that the classical methods used in the monotype case naturally extend, there is no known explicit value for the dimension of the boundary of the tree and our Theorem \ref{theo:dimtree} fills the gap. 

In the study of deterministic fractal models, in particular those enjoying a self-similar structure as the attractor of an Iterated Functions System (IFS), it is not unusual to construct an associated offspring matrix and express the fractal dimension of the set in terms of the spectral radius of such a matrix, see e.g. \cite{Duv99} for the dimension of the L\'evy dragon, then \cite{Duv00} and \cite{lau03} for several extensions. In the case of random fractal sets, several works rely on a representation by a tree and most notably a Galton-Watson tree, in particular when considering sets defined as the intersection of unions of rescaled tiles, like for instance the random Cantor set or the Mandelbrot percolation, see the appropriate survey \cite{mor09}. Nevertheless, the model considered in this paper does not exhibit the same self-similarity feature as a random IFS. As a matter of fact, in comparison with the literature, we consider that our approach presents several specificities. Indeed, the coding is done on the covering of the random set rather than on the set itself. As a consequence, this induces the appearance of several types and the construction of a multitype Galton-Watson process, which, to the best of our knowledge, has never been used before in the context of random fractals. Considering \eqref{eq:limcap}, one could assume that we could adopt a dual point of view by forgetting about the construction of $\cK_\infty$ and concentrating on the intersection of the covering sets $\cR_n$, $n\ge 0$, as defined at \eqref{def:cover}. Only, this approach would not fit into the general theory of fractals governed by Galton-Watson trees as presented in \cite{mor09} since the construction of $\cR_n$ is inextricably tied up with the one of $\cK_n$ and could not be expressed as a classical percolation. The technique that we develop here is based on the explicit calculation of the reduced reproduction matrix in Sections \ref{sec:exaintro} and \ref{sec:Model} and makes the Hausdorff dimension of the set fully computable in practice, which illustrates the efficiency of the method. 

\newpage
The paper is structured as follows. We describe the coding in Section \ref{sec:main} and investigate the Hausdorff dimension of the boundary of the associated multitype Galton-Watson tree through martingale techniques and geometric measure theory. Section \ref{sec:proof} is then devoted to the proofs of Theorems \ref{theo:main} and \ref{theo:perimaire}. In Section \ref{sec:exaintro}, we put in motion the machinery to solve two toy examples, namely a random version of the von Koch curve and a generalized von Koch model based on a deterministic pattern. Incidentally, we also explain how to reduce the size of the reproduction matrix $\M$, which facilitates an exact computation of its spectral radius $\rho_\M$. In Section \ref{sec:Model}, we then identify explicitly that reproduction matrix and whenever possible, its spectral radius associated with the model described at \eqref{eq:limcup} for the three tessellations, hexagonal, square and triangular, respectively. Our choice of starting with the hexagonal tessellation is due to two reasons: first, this allows us to deal with matrices of size $2\times2$, as opposed to the square case, and second, this induces an interesting artefact, i.e. the calculation depends on the remainder of the Euclidean division of $\la$ by $3$. For all three tessellations, we determine concretely each entry of the reproduction matrix by partitioning the set of tiles of $\cT_n$ which are at the border of $\cK_n$. This induces a heavy ensemble of tables which deal with all the different cases and which is postponed to the Appendix in Section \ref{sec:annexe}. Finally, we discuss some related open problems in Section \ref{sec:outro}.

\addtocontents{toc}{\vspace{0.2cm}}%
\section{The multitype Galton-Watson tree}\label{sec:main}
In this section, we focus on the geometric and probabilistic tools needed for proving Theorems \ref{theo:main} and \ref{theo:perimaire}. We first introduce a good economic covering of the boundary $\partial \cK_\infty$ of the limit set, which allows to express its box-dimension in function of the cardinality of a random set of tiles. We then present the core idea of the paper, namely the construction of a coding of the tiles into a multitype Galton-Watson tree. This branching process has several well-known properties, notably in terms of martingales, that we describe for our purpose. We conclude the section with our main result on multitype branching trees, i.e. the explicit calculation of the Hausdorff dimension of the boundary of the tree as a function of the spectral radius of its reproduction matrix.

\subsection{A natural covering of $\partial \cK_\infty$}\label{sec:cover}

\pass 

We recall that the box-dimension $\dimbox({\partial\cK_\infty})$ of $\partial \cK_\infty$ is obtained by counting the number of squares of the regular $\eps$-mesh needed to cover the whole set. Precisely, 
\begin{equation}\label{eq:defdimbox} 
\dimbox({\partial\cK_\infty})=\limsup_{\eps\to 0}\frac{\log N_\eps}{-\log \eps}
\end{equation}
where $N_\eps$ is the number of tiles from $\eps \cT$, where $\cT$ is the square tessellation, which intersect $\partial \cK_\infty$
(see e.g. \cite[Chap. 3, p. 43]{falco03}).

We aim at constructing an explicit covering of the limit set $\partial \cK_\infty$ which is tailored to improve the calculation of the box-dimension and later on, of the Hausdorff dimension of $\partial \cK_\infty$. The trick consists in using a union of tiles of $\la^{-n}\cT$ to build an economic covering. We consider the following set $\cR_n$ pictured in Figure \ref{fig:setRn}:
\begin{equation}\label{def:cover}
\cR_n=\left\{\begin{array}{ll}\big\{\cT_{n,\ell}\in \la^{-n}\cT\setminus \cK_n: \cT_{n,\ell}\cap \partial \cK_{n}\mbox{ contains at least one edge}\big\} & \mbox{ if $p_*=0$} \\
\big\{\cT_{n,\ell}\in \la^{-n}\cT\setminus \cK_n: \cT_{n,\ell}\cap \partial\cK_n\ne \emptyset\big\} & \mbox{ if $p_*=1$,}\end{array}\right.
\end{equation} 
where we recall that $p_*$ is the probability to add a tile of Group $2$, i.e. at the end of an edge.

\renewcommand{\arraystretch}{0.8}
\begin{figure}[h!]
\begin{center}
\begin{tabular}{cc}
\includegraphics[scale=0.2]{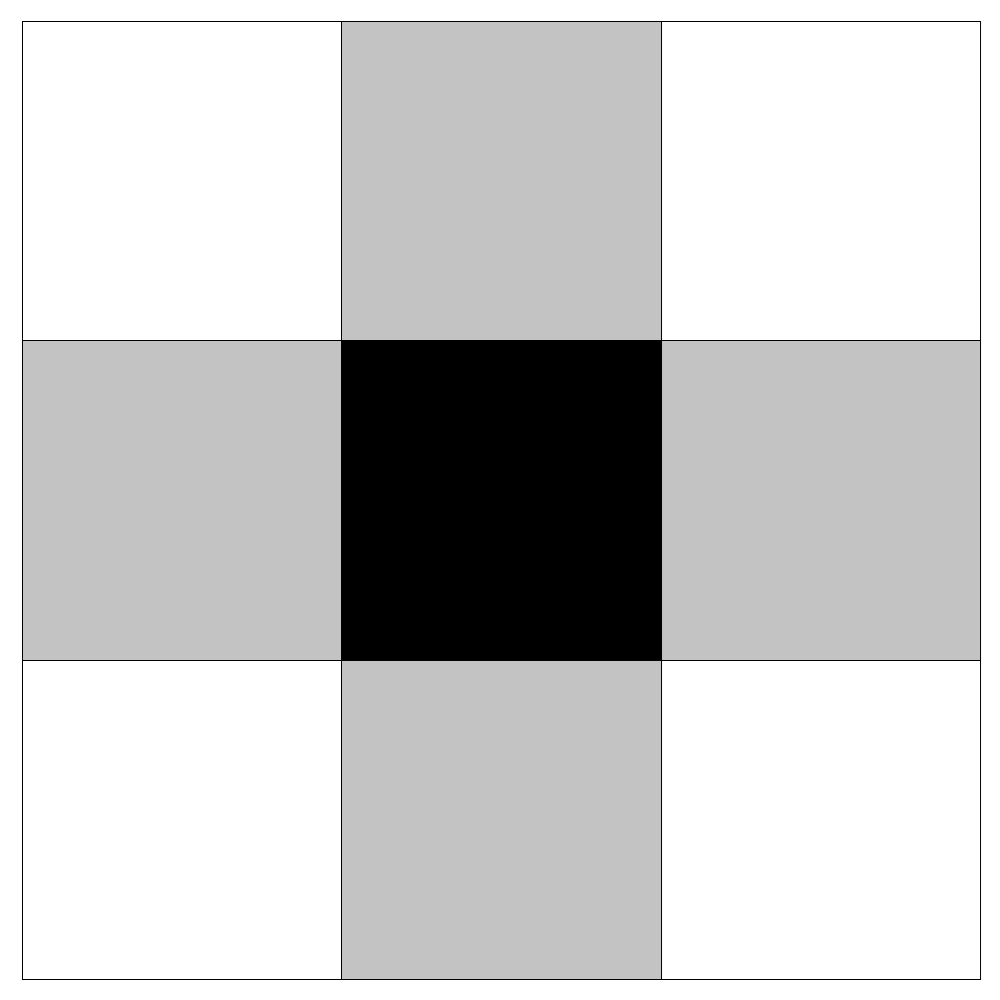} &
\includegraphics[scale=0.2]{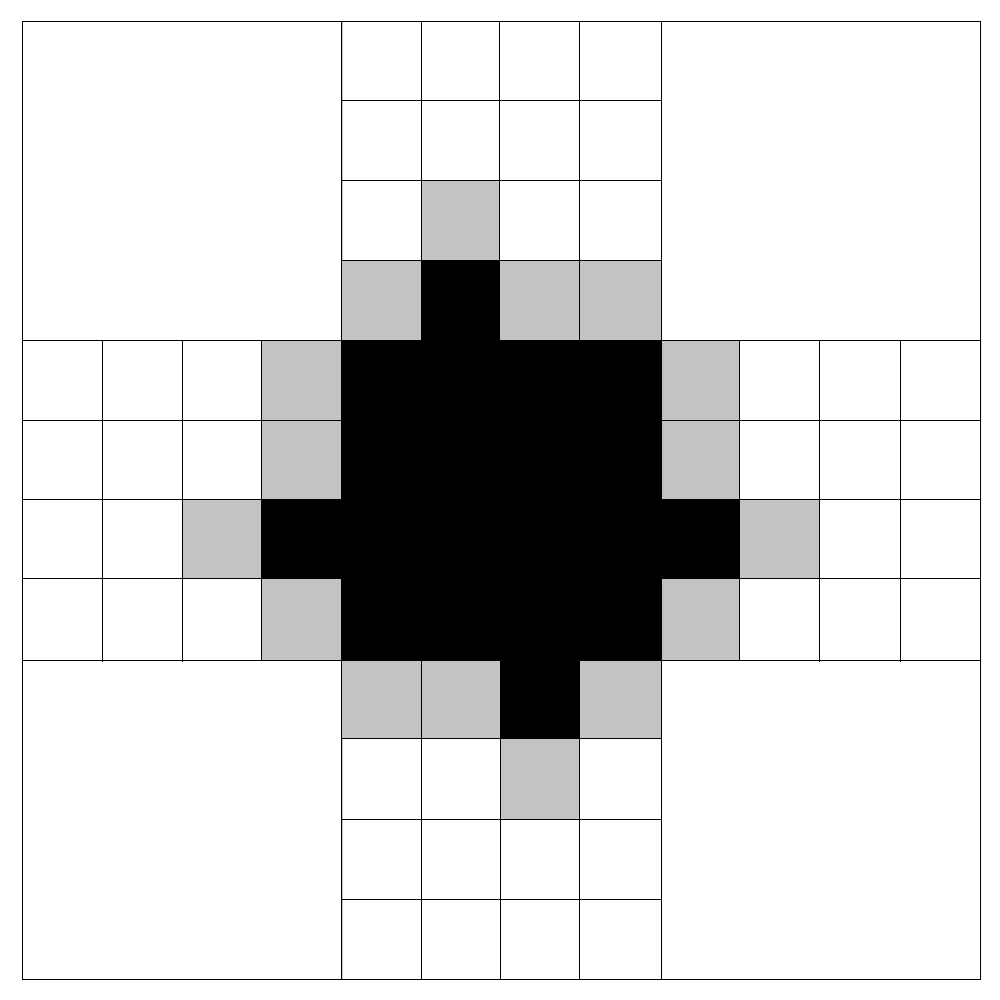} \\
\footnotesize{(a)} & \footnotesize{(b)}
\end{tabular}
\end{center}
\vspace{-.35cm}
\caption{Example for the square tessellation of the construction (in grey) of $\cR_0$ (a) and $\cR_1$ (b) for the sets $\cK_0$ and $\cK_1$ (in black) associated with the parameters $\la=4$, $p_*=0$ and $p=0.5$.}\label{fig:setRn}
\end{figure}
\renewcommand{\arraystretch}{1.5}

\newpage 
We classically endow the space of non-empty compact sets of $\R^2$ with the Hausdorff metric denoted by $\mathrm{d_H}$.

\begin{lem}\label{lem:cover}
The sequence $(\cR_n)_{n\ge 0}$ is decreasing and converges to $\partial \cK_\infty$, i.e.
\begin{equation}\label{eq:limcap}
\partial\cK_\infty = \bigcap_{n\ge 0}\cR_n.  
\end{equation}
Moreover, the box-dimension of $\partial \cK_\infty$ is almost surely given by
\begin{equation}\label{eq:dimbox}
\dimbox(\partial \cK_\infty) = \limsup_{n\to \infty} \frac{\log R_n }{\log \la^n}
\end{equation}
where $R_n=\card(\cR_n)$ is the (random) number of tiles belonging to $\cR_n$. 
\end{lem}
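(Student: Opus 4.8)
The plan is to establish two things: first, the set-theoretic identity \eqref{eq:limcap}, namely that the decreasing intersection of the coverings $\cR_n$ equals the boundary $\partial\cK_\infty$; and second, the formula \eqref{eq:dimbox} expressing the box-dimension through the counting sequence $R_n=\card(\cR_n)$. I would treat these separately, since the first is a topological/geometric statement while the second is a covering-counting estimate that leverages the special $\la$-adic structure of the tiles.

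For the inclusion $\cR_{n+1}\subset\cR_n$ (monotonicity), the key observation is that each tile of $\la^{-n}\cT$ contributing to $\cR_n$ is subdivided into finitely many tiles of $\la^{-(n+1)}\cT$ in the next generation, and that the construction \eqref{eq:iterKn} only adds tiles \emph{inside} the region bordering $\cK_n$. First I would verify that a tile in $\cR_{n+1}$ — which sits just outside $\cK_{n+1}$ and touches $\partial\cK_{n+1}$ in the prescribed way — is contained in a (unique) parent tile of $\la^{-n}\cT$ that itself lies outside $\cK_n$ and borders $\partial\cK_n$ appropriately; this uses that $\cK_n\subset\cK_{n+1}$ and that the tessellations are nested. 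The spatial-independence design (the choice $p_*\in\{0,1\}$) is what makes the border geometry clean enough that ``contains at least one edge'' (resp. ``nonempty intersection'') is preserved under passing to the parent. For the identity \eqref{eq:limcap} itself, I would prove both inclusions: any point of $\partial\cK_\infty$ lies in $\cl(\bigcup\cK_n)\setminus\intt(\cK_\infty)$ and hence, at every scale $n$, must be approached by tiles outside $\cK_n$ that touch $\partial\cK_n$, placing it in every $\cR_n$; conversely, a point in $\bigcap_n\cR_n$ lies within Hausdorff distance $O(\la^{-n})$ of $\partial\cK_n$ for every $n$, and since $\diam(\cT_{n,\ell})=\la^{-n}\to 0$ while $\cK_n\uparrow\cK_\infty$, such a point is a boundary point of the limit. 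The convergence in $\mathrm{d_H}$ follows from the same diameter bound $\la^{-n}\to 0$ controlling how far $\cR_n$ can stray from $\partial\cK_\infty$.

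For the box-dimension formula \eqref{eq:dimbox}, the idea is to compare the ``natural'' $\la^{-n}$-mesh coverings $\cR_n$ with the generic $\eps$-mesh coverings appearing in the definition \eqref{eq:defdimbox}. Restricting the $\limsup$ in \eqref{eq:defdimbox} to the subsequence $\eps=\la^{-n}$ is harmless because $\log N_\eps/(-\log\eps)$ varies by a bounded multiplicative factor between consecutive scales $\la^{-n}$ and $\la^{-(n-1)}$ (each coarse tile meets a bounded number of fine tiles, the bound depending only on $\la$ and the fixed tessellation geometry). Then I would show that $R_n$ and $N_{\la^{-n}}$ are comparable up to multiplicative constants independent of $n$: on one hand $\cR_n$ covers $\partial\cK_\infty$ by \eqref{eq:limcap}, so counting $\la^{-n}$-mesh squares meeting $\partial\cK_\infty$ is bounded by a constant times $R_n$; on the other hand every tile of $\cR_n$ does genuinely meet $\partial\cK_\infty$ (again by \eqref{eq:limcap}, since the $\cR_n$ are nested and their intersection is exactly the boundary), so $R_n$ is in turn bounded by a constant times $N_{\la^{-n}}$. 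Taking $\log(\cdot)/\log\la^n$ and passing to the $\limsup$ absorbs all such constants, yielding \eqref{eq:dimbox}.

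The main obstacle I expect is the lower-bound half of the comparison, namely showing that every tile in $\cR_n$ actually meets the limit boundary $\partial\cK_\infty$ rather than eventually being ``swallowed'' into the interior by later generations. A tile touching $\partial\cK_n$ need not touch $\partial\cK_\infty$ a priori, since subsequent random additions $\cK_m^\circ$ might fill in the neighboring region. The identity \eqref{eq:limcap} is exactly what rescues this: because $\bigcap_m\cR_m=\partial\cK_\infty$ and the sequence is decreasing, a tile $\cT_{n,\ell}\in\cR_n$ contains, at each finer scale, at least one descendant in $\cR_m$, and the limit of these nested nonempty compact descendants is a point of $\partial\cK_\infty$ lying in $\cT_{n,\ell}$. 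I would make this compactness argument precise, being careful that the descendant tiles form a genuinely decreasing sequence of nonempty compacts so that Cantor's intersection theorem applies. Everything else reduces to bookkeeping with the fixed, scale-independent combinatorial bounds coming from the three regular tessellations.
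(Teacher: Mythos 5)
Your overall route is the same as the paper's: establish that each tile of $\cR_{n+1}$ sits inside a unique tile of $\cR_n$ and that each tile of $\cR_n$ has at least one descendant in $\cR_{n+1}$, deduce from this both the monotonicity, the Hausdorff-distance bound $\dha(\partial\cK_\infty,\cR_n)=O(\la^{-n})$ giving \eqref{eq:limcap}, and the fact that every tile of $\cR_n$ genuinely meets $\partial\cK_\infty$ (your Cantor-intersection argument is exactly the point the paper makes via its property (P2)); then invoke the standard flexibility of the box-counting definition (replacing squares by a fixed tile shape and restricting to the geometric subsequence $\eps=\la^{-n}$) to get \eqref{eq:dimbox}. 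All of that is sound and matches the paper.

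There is one concrete gap: you justify the ``unique parent tile'' step by saying that ``the tessellations are nested,'' but this is false for the hexagonal tessellation --- a hexagon is not a union of $\la^2$ hexagons of the rescaled grid, so a tile of $\la^{-(n+1)}\cT$ may straddle two tiles of $\la^{-n}\cT$. Your argument as written therefore only covers the square and triangular cases. The paper closes this exact hole with a separate geometric observation: for a tile $\cT_{n,\ell}\in\cR_n$ in the hexagonal case, any tile of $\la^{-(n+1)}\cT$ whose interior meets $\cT_{n,\ell}$ without being contained in it is either absorbed into $\cK_{n+1}$ or fails to meet $\partial\cK_{n+1}$, so it cannot belong to $\cR_{n+1}$; hence the tiles of $\cR_{n+1}$ are still each contained in a single tile of $\cR_n$ even though the grids themselves are not nested. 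You would need to supply this (or an equivalent) argument for the hexagonal tessellation; the rest of your proof then goes through unchanged.
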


\begin{proof} Since $\la\ge 3$, the set $\partial \cK_{n+1}$ is at most at distance $\frac12\la^{-n}$ from $\partial \cK_n$. This means that the tiles of $\cR_{n+1}$ are located in $\cR_n$. Moreover, the sequence $(\cR_n)_{n\ge 0}$ has been designed so that it satisfies the two following properties:
\begin{itemize}[parsep=-0.15cm,itemsep=0.25cm,topsep=0.2cm,wide=0.175cm,leftmargin=0.5cm]
\item[(P1)] Each tile of $\cR_{n+1}$ is included in exactly one tile of $\cR_n$.
\item[(P2)] Each tile of $\cR_n$ contains at least one tile of $\cR_{n+1}$.
\end{itemize}

Property (P1) is clear in the case of a nested tessellation, i.e. square or triangular. In the case of the hexagonal tessellation, we fix a tile $\cT_{n,\ell}$ of $\cR_n$ and observe that any tile of $\la^{-(n+1)}\cT$ whose interior intersects $\cT_{n,\ell}$ but is not included in $\cT_{n,\ell}$ is either in $\cK_{n+1}$ or does not meet $\partial \cK_{n+1}$. This implies (P1). The definition of $\cR_n$ in the two cases $p_*=0$ and $p_*=1$ implies Property (P2).

The two properties (P1) and (P2) imply that the sequence $(\cR_n)_{n\ge 0}$ is decreasing and that each tile of $\cR_n$ contains at least one edge or vertex of $\partial \cK_m$ for any $m>n$. Consequently, there exists a positive constant $c>0$ such that $\mathrm{d_H}(\partial \cK_\infty,\partial \cR_n)\le c \la^{-n}$, which implies in turn \eqref{eq:limcap}
and that each of the tiles contained in $\cR_n$ intersects $\partial \cK_{\infty}$. 

\enlargethispage*{0.2cm}
We now prove \eqref{eq:dimbox}. The definition \eqref{eq:defdimbox} can be modified for practical purpose in two different directions: first, we can replace the square shape by another fixed shape and secondly, we can discretize the size $\eps$ of the mesh as a sequence which decays geometrically, see e.g. \cite[Chap. 3, pp. 44-45, Chap. 4, Prop. 4.1]{falco03}. The trick here is to use the subset $\cR_n$ of tiles of $\la^{-n}\cT$ since the diameter of each of these tiles decays as $\la^{-n}$. This completes the proof of Lemma \ref{lem:cover}. 
\end{proof}

The equality of sets given at \eqref{eq:limcap} suggests that we could study the fractal dimension of the intersection of all sets $\cR_n$, $n\ge 0$, instead of that of $\partial \cK_\infty$. Indeed, the sequence $(\cR_n)_{n\ge 0}$ is non-increasing and the recursive construction is nested, i.e. the set $\cR_{n+1}$ consists of a random choice of subtiles of each tile of $\cR_n$. For this kind of model, the classical method consists in expressing the dimension as the solution of a self-similarity equation, as in the deterministic setting, see \cite{mor09}. Nevertheless, this machinery does not apply here. Indeed, the choice of subtiles is not a traditional percolation process, which means that the fate of each subtile is not independent of the others. In fact, it is impossible to flout the underlying construction of $\partial \cK_\infty$. That is why we henceforth adopt the coding by a multitype Galton-Watson tree. 

\subsection{Coding with a multitype branching process}\label{sec:galton}

\noi

Thanks to \eqref{eq:dimbox}, the calculation of $\dimbox(\partial \cK_\infty)$ only requires to estimate the asymptotics of $R_n$ which is a combinatorial problem. Because of the symmetry of one tile and the definition of the model, we observe that when $\cT$ is the square (resp. hexagonal, triangular) tessellation, the set $\cR_n$ can be naturally partitioned into $c_\cT=4$ (resp. $6$, $3$) i.i.d. pieces corresponding to the construction emanating from each edge of the initial tile of the considered tessellation\footnote{Actually, the situation is slighty more intricate when $p_*=1$ for the square tessellation, see the discussion preceding Lemma \ref{lem:galton}.}. This implies that it is enough to study the model starting above one of the edges of the initial tile. Our strategy described in detail below then consists in constructing a multitype Galton-Watson process such that $R_n$ is exactly, up to the multiplicative constant $c_\cT$, the number of children of the $n$th generation. 

For each $n$, a tile $\cT_{n,\ell}$ included in $\cR_n$ is seen as a {\it parent} which gives birth to a collection of {\it children} which are tiles included in $\cT_{n,\ell}$ and belonging to $\cR_{n+1}$. The subtlety here is that the number of such children depends on the local geometry of $\cT_{n,\ell}\cap \cK_n$. This is why we attach to each tile a precise {\it type}, which corresponds to the geometric nature of $\cT_{n,\ell}\cap \cK_n$, namely the edges and vertices of $\cT_{n,\ell}$ which belong to $\cK_n$. In particular, the set of types is always finite. In the rest of the paper, we label all the types with integers.

\begin{figure}[h!]
\centering
\includegraphics[scale=0.66]{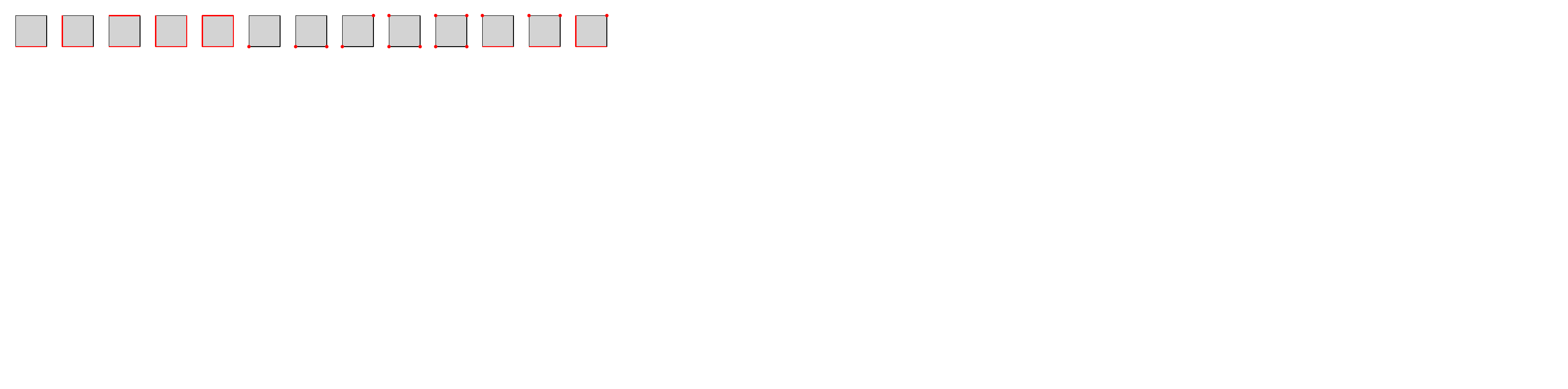}
\caption{Example for the square tessellation of the 13 different possible types labelled according to the intersection (in red) of the grey tile with $\cK_n$ during the iterative construction of $\cK_\infty$.}\label{fig:HexaAllTheTypes}
\end{figure}

Let $\tmax$ be the number of types and $Z_n^t$, $1\le t\le \tmax$, be the cardinality of $c_\cT^{-1}\cR_n^{t}$ where $\cR_n^{t}$ is the set of tiles of $\cR_n$ with type $t$.
In particular,
\begin{equation}\label{eq:RnavecZn}
R_n=c_\cT\sum_{t=1}^{\tmax} Z_n^t= c_\cT\langle Z_n,{\mathbf 1}\rangle
\end{equation}
where ${\mathbf 1}$ is the vector with $\tmax$ entries all equal to $1$ and $\langle\cdot,\cdot\rangle$ stands for the usual Euclidean scalar product with $\|\cdot\|$ its associated norm. Let us notice that when $n=0$ and $p_*=0$, the set $\cR_0$ consists in $c_\cT$ tiles of same type, denoted by T1 in Section \ref{sec:Model} and pictured as the first type in Figure \ref{fig:HexaAllTheTypes}, which corresponds to sharing exactly one edge with $\cK_0$, see Figure \ref{fig:setRn} (a). This implies that $Z_0$ is the vector with first entry equal to $1$ and all the other entries equal to $0$. When $p_*=1$, the initialization actually depends on the nature of the tessellation, see Section \ref{sec:Model}. For instance, when the underlying tessellation is square, the set $\cR_n$ consists in $4$ squares of type T1 and $4$ squares of type T0, which means that they share with $\cK_0$ exactly one vertex, see the sixth type in Figure \ref{fig:HexaAllTheTypes}. Consequently, $Z_0$ is the vector with first and final entries equal to $1$ and all the other entries equal to $0$. As we are exclusively interested in the asymptotics of $(Z_n)_{n\ge 0}$, we observe that the initialization of the sequence does not influence our study.

The structure of the sequence $(Z_n=(Z_n^1,\ldots,Z_n^{\tmax}))_{n\geq0}$ is described in Lemma \ref{lem:galton} below.

\begin{lem}\label{lem:galton}
The sequence $(Z_n)_{n\geq0}$ is a $t_{\mathrm{max}}$-dimensional multitype Galton-Watson process and belongs to the super-critical case.
\end{lem}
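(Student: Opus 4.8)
The plan is to verify the two defining features of a multitype Galton-Watson process, namely a purely type-dependent offspring law and the mutual independence of distinct individuals, and then to place the process in the super-critical regime by showing that the spectral radius of its mean matrix is strictly larger than $1$.

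First I would fix the genealogy. By Properties (P1) and (P2) from Lemma \ref{lem:cover}, assigning to each tile of $\cR_{n+1}$ the unique tile of $\cR_n$ containing it makes every tile of $\cR_n$ the \emph{parent} of a non-empty set of \emph{children} in $\cR_{n+1}$, and $Z_{n+1}^s$ is the total number of type-$s$ children, summed over all parents in the distinguished $c_\cT^{-1}$-piece. The first key step is that the joint law of the number and types of the children of a tile $\cT_{n,\ell}$ depends only on its type $t$, and neither on $n$ nor on $\ell$. This is a consequence of self-similarity: the homothety $x\mapsto\la x$ maps $\la^{-(n+1)}\cT$ onto $\la^{-n}\cT$ and carries a neighbourhood of $\cT_{n,\ell}$, together with the intersection $\cT_{n,\ell}\cap\cK_n$, onto a standard configuration attached to the type $t$ which is independent of the generation. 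Since the Bernoulli parameters $p_\ell\in\{0,p,p_*\}$ driving the additions depend only on the local geometry recorded by the type, and not on $n$, the conditional distribution of the rescaled children configuration is the same for every tile of a given type.

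The second, and main, step is independence: conditionally on $\cR_n$ and the types of its tiles, the children configurations of distinct parents must be mutually independent, each being a function only of the Bernoulli variables attached to tiles of its own parent. All the Bernoulli variables are independent by construction, so the sole possible coupling comes from a child adjacent to a vertex of $\partial\cK_n$ shared by two parents, whose type could \emph{a priori} depend on additions made on the other side of that junction. This is exactly what the hypothesis $p_*\in\{0,1\}$ removes: it makes the addition of every tile at such a vertex deterministic (never added when $p_*=0$, always added when $p_*=1$), so each parent's children see a deterministic boundary condition along the shared junction. Consequently the type of each child is measurable with respect to the Bernoulli variables of its own parent alone, the children configurations of distinct parents are independent, and the branching property holds, so $(Z_n)_{n\geq0}$ is a $\tmax$-type Galton-Watson process with mean (reproduction) matrix $\M$. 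I expect this decoupling to be the delicate point, requiring a short case check at the junctions, and — as the footnote signals — a little extra care for the square tessellation when $p_*=1$.

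Finally I would establish super-criticality, i.e. $\rho_\M>1$. Property (P2) shows that every parent has at least one child almost surely, hence every row sum of $\M$ is at least $1$, which already gives $\rho_\M\geq 1$. To reach a strict inequality I would exhibit one type with expected progeny strictly larger than $1$: a flat-edge tile (type $\mathrm{T}1$) lies just outside $\cK_n$ along an edge of length $\la^{-n}$, and whatever the additions inside it, the arc of $\partial\cK_{n+1}$ running through it spans the full width $\la^{-n}$ and is therefore covered by at least $\la\geq 3$ non-absorbed subtiles of $\cR_{n+1}$; thus this row sum is at least $\la$. Since $\M$ is irreducible (indeed primitive) and its row sums are not all equal, the Frobenius bound gives $\rho_\M>\min_t(\text{row sum})\geq 1$, so the process is super-critical. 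Alternatively, one may observe that $\cR_n\neq\emptyset$ for every $n$ forces almost-sure survival of the process, which for a non-singular positively regular branching process is equivalent to $\rho_\M>1$.
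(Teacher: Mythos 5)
Your proposal is correct and follows essentially the same route as the paper: the branching structure comes from the observation that the offspring law of a tile depends only on its type and not on the generation (time-homogeneity plus independence of the underlying Bernoulli variables), and super-criticality comes from the fact that $\cR_n$ is never empty. Two remarks on where you differ in emphasis. First, you spell out the decoupling role of the hypothesis $p_*\in\{0,1\}$ at the junctions between neighbouring parents; the paper states this only in the introduction ("guarantees the spatial independence of the evolution of the growth inside two neighboring tiles") and takes it for granted inside the proof of the lemma, so your elaboration fills in a step the paper leaves implicit rather than taking a different path. Second, for super-criticality the paper contents itself with "the set $\cR_n$ is not empty at each step, hence the process survives almost surely and is super-critical," which, as you note, implicitly uses non-singularity of the process; your quantitative argument via row sums and the strict Perron--Frobenius bound essentially reproduces the lower bound $\rho_\M\ge\la$ that the paper establishes separately in Lemma \ref{lem:rhod}, and is the more robust of your two alternatives.
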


\begin{proof}
Looking back at the construction of the sequence $(\cK_n)_{n\ge 0}$ described in Section \ref{sec:intro}, we notice that the type of a tile $\cT_{n,\ell}$ fixes the random rules for the construction of $\cT_{n,\ell}\cap\cK_{n+1}$
and subsequently, the set of tiles from $\cR_{n+1}$ included in $\cT_{n,\ell}$ and their types. In particular, the reproduction law associated with a particular tile is time-homogeneous, i.e. it only depends on its type and does not depend on the number of its generation. Let $(C_k^{t,u})_{t,u,k}$ be a collection of real random variables such that
\begin{itemize}[parsep=-0.15cm,itemsep=0.2cm,topsep=0.2cm,wide=0.15cm,leftmargin=0.65cm]
\item[$-$] they are independent when $t$ and $k$ are fixed,
\item[$-$] they are identically distributed for fixed types $t$ and $u$, distributed as the number of children of type $u$ of a parent of type $t$. 
\end{itemize}

\noi The previous remark then implies that $Z_{n+1}^u$, $1\le u\le \tmax$, can be rewritten in distribution as
\begin{equation}\label{eq:Zn}
Z_{n+1}^u\overset{(d)}{=}\sum_{t=1}^{\tmax}\sum_{k=1}^{Z_n^t}C_k^{t,u}. \end{equation}
This shows that $(Z_n)_{n\ge 0}$ is a Galton-Watson process.

Now, let $\cG_n$ be the $\sigma$-algebra generated by the Bernoulli variables necessary to construct the $n$th generation. In particular, the $\sigma$-algebras $(\cG_n)_{n\ge 1}$ are mutually independent. Denoting by $\cF_n$ the $\sigma$-algebra generated by $\cup_{m\le n} \cG_m$, we notice that each $Z_n$ is $\cF_n$-measurable. Finally, since the set $\cR_n$ is not empty at each step, it follows that the Galton-Watson process $(Z_n)_{n\ge 0}$ survives almost surely, i.e. it belongs to the super-critical case. 
\end{proof}

We denote by $\T$ the random tree associated with $(Z_n)_{n\ge 0}$ and by $\partial \T$ the boundary set of $\T$, i.e. the set of infinite sequences of integers. We then endow $\partial \T$ with the following distance: for any $(i_n)_{n\ge 0}$ and $(j_n)_{n\ge 0}$, let 
\begin{equation}\label{eq:treedist}
\dd_{\partial \T}\big((i_n)_{n\ge 0}, (j_n)_{n\ge 0}\big)=\la^{-m}
\end{equation}
where $m$ is the largest integer such that $i_n=j_n$ for every $0\le n\le m$. In particular, $\partial \T$ is a compact set. To each sequence $(i_n)_{n\ge 0}$ of $\partial \T$, we associate a sequence $(t_n)_{n\ge 0}$ with values in $\{1,\ldots,\tmax\}$.

For almost every $x\in\partial \cK_{\infty}$, there exists a unique sequence of tiles $(\cT_{n}^{(x)})_{n\ge 0}$ such that $\cT_n^{(x)}\in \la^{-n}\cT$, $x\in \cT_n^{(x)}$ and $\cT_{n+1}^{(x)}\subset \cT_n^{(x)}$ for every $n\ge 0$. By the encoding described above, a unique element $\varphi(x)$ of $\partial \T$ corresponds to that sequence of tiles. The application
\begin{equation}\label{eq:defvarphi}
\varphi:\partial \cK_\infty\longrightarrow\partial \T
\end{equation}
is defined almost everywhere and its inverse is Lipschitz. Indeed, let $x,y\in\partial \cK_\infty$ and let $n\ge 0$ be the maximal integer such that there exists a common tile belonging to $\la^{-n}\cT$ and containing both $x$ and $y$. Then $\|x-y\|\le \la^{-n}=\dd_{\partial\T}(\varphi(x),\varphi(y))$. 

\newpage
This allows us to use more practical notation for the balls of $\partial \T$. Indeed, the inverse image of the ball centered at some $(i_n)_{n\ge 0}\in \partial \T$ and of radius $\la^{-m}$ is the intersection of $\partial \cK_{\infty}$ with a fixed tile $\cT_{m,\ell}$ from $\la^{-m}\cT$. Consequently, for sake of simplicity, we use henceforth $\varphi(\cT_{m,\ell}\cap \partial \cK_\infty)$ for a generic ball of $\partial \T$ of radius $\la^{-m}$.

\subsection{Properties of the process $(Z_n)_{n\ge 0}$}\label{sec:propZn}

\pass

Let $\M$ be the matrix constituted with the entries $\M^{t,u}$, $1\le t,u\le \tmax$ such that $\M^{t,u}$ is the expectation of the number $C^{t,u}$ of children of type $u$ given by a parent of type $t$, i.e.
\begin{equation}\label{eq:defMatrixM}
\M=\left[\begin{matrix} 
\E[C^{1,1}] & \ldots & \E[C^{1,\tmax}] \\
\vdots & \ddots & \vdots \\
\E[C^{\tmax,1}] & \ldots & \E[C^{\tmax,\tmax}]
\end{matrix}\right].  
\end{equation}

\noi We assume that $\M$ is primitive, i.e. there exists a power of $\M$ with positive entries only. In particular, by Perron-Frobenius theorem, its spectral radius denoted by $\rho_\M$ is a simple eigenvalue associated with a unit eigenvector $v=(v^1,\ldots,v^{\tmax})$ with positive entries $v^i>0$.

\begin{lem}\label{lem:rhod}
The spectral radius of $\M$ satisfies $\rho_\M\in[\la,\la^2)$.
\end{lem}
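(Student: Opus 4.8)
The plan is to prove the two bounds by quite different means: the lower bound $\rho_\M\ge\la$ will come from the connectedness of $\partial\cK_\infty$ together with the subadditive (Gelfand) description of $\rho_\M$, whereas the strict upper bound $\rho_\M<\la^2$ will follow from a direct, almost sure bound on the number of children of a parent, hence on the row sums of $\M$.

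For the upper bound I would use that, $\M$ being non-negative, its spectral radius is dominated by its largest row sum, $\rho_\M\le\max_{1\le t\le\tmax}\sum_{u=1}^{\tmax}\M^{t,u}$. By \eqref{eq:defMatrixM} the $t$-th row sum equals $\E\big[\sum_u C^{t,u}\big]$, the expected total number of children of a parent of type $t$, so it suffices to bound this number of children almost surely. A parent $\cT_{n,\ell}\in\cR_n$ lies outside $\cK_n$ and is subdivided into the $\la^2$ subtiles of $\la^{-(n+1)}\cT$ it contains; its children are exactly those subtiles belonging to $\cR_{n+1}$, i.e. lying outside $\cK_{n+1}$ and meeting $\partial\cK_{n+1}$. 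I would then argue that at least one subtile is always excluded, by the following dichotomy. If the random growth $\cK_n^\circ$ adds a subtile of $\cT_{n,\ell}$ to $\cK_{n+1}$, that subtile becomes interior and is not a child. If on the contrary no subtile of $\cT_{n,\ell}$ is added, then $\partial\cK_{n+1}$ coincides inside $\cT_{n,\ell}$ with $\partial\cK_n$, which meets $\cT_{n,\ell}$ only along its boundary edges; since $\la\ge3$, the tile $\cT_{n,\ell}$ has an interior subtile adjacent to none of its edges, and such a subtile is outside $\cK_{n+1}$ yet disjoint from $\partial\cK_{n+1}$, hence not a child. In either case a parent has at most $\la^2-1$ children, every row sum of $\M$ is at most $\la^2-1$, and therefore $\rho_\M\le\la^2-1<\la^2$.

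For the lower bound I would combine Gelfand's formula $\rho_\M=\lim_n\|\M^n\|^{1/n}$ with a covering estimate. By Lemma~\ref{lem:galton} and \eqref{eq:RnavecZn}, $\E[R_n]=c_\cT\langle Z_0\M^n,{\mathbf 1}\rangle\le C\,\|\M^n\|$ for some constant $C>0$. On the other hand, Lemma~\ref{lem:cover} guarantees that the $R_n$ tiles of $\cR_n$, each of diameter $\la^{-n}$, cover $\partial\cK_\infty$; and $\partial\cK_\infty$ contains a connected subset of diameter at least $\diam(\cK_0)=1$, namely the outer boundary enclosing $\cK_0$. Projecting this connected set onto a direction realizing its diameter shows that covering a connected set of diameter $D$ by sets of diameter at most $\delta$ requires at least $D/\delta$ of them, so $R_n\ge\la^n$ deterministically, whence $\E[R_n]\ge\la^n$. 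Comparing the two estimates gives $\la^n\le C\|\M^n\|$; taking $n$-th roots and letting $n\to\infty$ yields $\la\le\rho_\M$.

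The delicate point is the strictness of the upper bound. A softer packing argument, bounding the total area of the disjoint tiles of $\cR_n$ inside a fixed disk, only gives $R_n\le C\la^{2n}$ and hence $\rho_\M\le\la^2$; upgrading to a strict inequality is exactly what forces the type-uniform production of an excluded subtile above, and this is where the hypothesis $\la\ge3$ enters. The same dichotomy must be revisited for the non-nested hexagonal tessellation, where a tile of $\la^{-n}\cT$ is not partitioned into $\la^2$ congruent tiles of $\la^{-(n+1)}\cT$; there one relies on the refinement property (P1) established in Lemma~\ref{lem:cover} and on the explicit combinatorics of Section~\ref{sec:Model}.
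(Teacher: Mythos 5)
Your proof is correct, and while your upper bound coincides with the paper's (both dominate $\rho_\M$ by the largest row sum of the non-negative matrix $\M$ and check that every parent produces at most $\la^2-1$ children; your dichotomy between ``some subtile is added, hence absorbed into $\cK_{n+1}$'' and ``no subtile is added, hence an interior subtile misses $\partial\cK_{n+1}$'' is just a more explicit version of the paper's one-line count), your lower bound takes a genuinely different route. The paper stays inside the matrix: it isolates the set $T$ of types sharing an edge with $\cK_n$, checks combinatorially that each such parent has at least $\la$ children of types in $T$, and invokes the Perron--Frobenius row-sum bound for the submatrix $\M_{T\times T}$ together with the monotonicity of the spectral radius under passing to principal submatrices. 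You instead argue globally: Gelfand's formula plus $\E[R_n]\le C\|\M^n\|$ on one side, and on the other the deterministic bound $R_n\ge\la^n$ obtained by covering a connected piece of $\partial\cK_\infty$ of diameter at least $1$ (the outer boundary) by the $R_n$ tiles of diameter $\la^{-n}$ and projecting. Your argument is more robust --- it needs no per-generation combinatorics and would survive modifications of the reproduction rules --- at the price of invoking Gelfand's formula and the plane-topology fact that the boundary of the unbounded complementary component of a continuum is itself a connected set of diameter at least $\diam(\cK_0)$; the paper's argument is self-contained within the branching-process framework but requires the tessellation-specific verification that the $\ge\la$ guaranteed children are again of edge-sharing type. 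Two minor caveats, neither fatal: you rightly flag that the ``$\la^2$ subtiles per parent'' count must be revisited for the non-nested hexagonal tessellation (the paper glosses over this too), and the connectedness of the outer boundary deserves a one-line justification or reference.
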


\begin{proof} 
By Perron-Frobenius Theorem \cite[Th. 2.35]{BP94}, we get that
\begin{equation}\label{eq:maxminPF}
\min_{1\le t\le \tmax}\sum_{u=1}^{\tmax} C^{t,u}\le \rho_\M\le \max_{1\le t\le \tmax}\sum_{u=1}^{\tmax} C^{t,u}.
\end{equation}
For every $1\le t\le \tmax$, the sum of all coefficients $C^{t,u}$ of $\M$ over $u$ is bounded almost surely by $\la^2-1$. Indeed, it represents the mean number of tiles in $\cR_{n+1}$ included in some tile $\cT_{n,\ell}$ of $\la^{-n}\cT$. Since $\la\ge 3$, it is strictly less than the total number of tiles of generation $n+1$ included in $\cT_{n,\ell}$, i.e. less than $\la^2-1$. Using the upper bound of $\rho_\M$ given at \eqref{eq:maxminPF}, we obtain that $\rho_\M<\la^2$. We now concentrate on the lower bound for $\rho_\M$. We denote by $T$ the set of types such that the corresponding tile $\cT_{n,\ell}$ in $\cR_n$ shares at least one edge with $\cK_n$. For such tile $\cT_{n,\ell}$, the number of tiles of $\cR_{n+1}$ in $\cT_{n,\ell}$ is at least $\la$. Indeed, in the case of the square tessellation, there are exactly $\la$ squares of $\la^{-(n+1)}\cT$ which share an edge with $\cK_n$ and for each such square $\cT_{n+1,\ell}$, either $\cT_{n+1,\ell}$ or the square above belongs to $\cR_{n+1}$, see Table \ref{tab:SquareChild}. A similar argument works for the other two tessellations, see Figures \ref{fig:Model0HexaR0AllTheChild} and \ref{fig:Model0TriangleAllTheChild}. We assert that $\rho_\M$ is lower bounded by the spectral radius of the submatrix $\M_{T\times T}=[\M^{t,u}]_{(t,u)\in T\times T}$, see \cite[Cor. 1.6]{BP94}, which is larger than $\la$ by the lower bound in \eqref{eq:maxminPF} applied to $\M_{T\times T}$. 
\end{proof}

We recall that $(\cF_n)_n\ge 0$ is the $\sigma$-algebra generated by the first $n$ generations, i.e. the type of each tile of the first $n$ generations. Thanks to the almost sure identity $\E[Z_{n+1}^*\,|\,\cF_n]=\M^*Z_n^*$, where $*$ stands for the transpose of a matrix, the sequence $(M_n)_{n\ge0}$ defined by 
\begin{equation}\label{eq:martingale}
M_n=\rho_\M^{-n}\langle Z_n,v\rangle  
\end{equation}
is a positive $(\cF_n)_{n\ge 0}$-martingale, see \cite[Th. 4 p. 193]{AN72}, which converges almost surely. 
In particular, this implies that the random vector $Z_n$ satisfies an almost sure law of large numbers.

\newpage

\begin{prop}\label{prop:convbyAN}
There exists a positive random variable $W$ such that 
\begin{equation}\label{eq:TheoremAthreyaNey}
\P\big(\lim_{n\to\infty}\rho_\M^{-n}Z_n=W v\big)=1 \text{ and } \P(W>0)=1.
\end{equation}
\end{prop}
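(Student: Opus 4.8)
The plan is to read \eqref{eq:TheoremAthreyaNey} as the multitype Kesten--Stigum theorem and to check that its hypotheses hold for our process. Since $M_n=\rho_\M^{-n}\langle Z_n,v\rangle$ has already been shown to be a nonnegative $(\cF_n)$-martingale, it converges almost surely, and I would simply \emph{define} $W:=\lim_n M_n\ge 0$. Two things then remain: to upgrade this scalar convergence to the vector convergence $\rho_\M^{-n}Z_n\to Wv$, and to establish $\P(W>0)=1$.

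For the first point I would verify the integrability hypothesis of the convergence theorem \cite[Ch. V]{AN72}. The crucial and easy observation is that the offspring is almost surely bounded: a tile of generation $n$ contains fewer than $\la^2$ subtiles of generation $n+1$, so the total number of children $\sum_{u=1}^{\tmax}C^{t,u}$ of any individual is bounded almost surely by $\la^2-1$, exactly the deterministic bound already exploited in the proof of Lemma \ref{lem:rhod}. Consequently every offspring law has bounded support and the $X\log X$ condition is satisfied trivially. As $\M$ is primitive and $\rho_\M\ge\la\ge 3>1$ by Lemma \ref{lem:rhod}, the process is supercritical and irreducible, so \cite[Ch. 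V]{AN72} applies and yields both the almost sure convergence $\rho_\M^{-n}Z_n\to Wv$ and the convergence $M_n\to W$ in $L^1$. The latter forces $\E[W]=M_0=\langle Z_0,v\rangle>0$, since $Z_0\neq 0$ and $v$ has positive entries; in particular $W$ is not almost surely zero.

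The delicate point, and the real content of the statement, is the almost sure positivity of $W$; this is precisely the hard half of Kesten--Stigum and is where I expect the main difficulty to lie. I would argue through the extinction event: the theorem identifies $\{W=0\}$ with $\{Z_n=0\text{ for some }n\}$ up to a null set, equivalently the vector $(\P_t(W=0))_t$ is the smallest fixed point in $[0,1]^{\tmax}$ of the offspring generating functions, i.e. the extinction probability of the process. By Lemma \ref{lem:galton} the set $\cR_n$ is never empty, so the process survives almost surely and its extinction probability vanishes; hence $\P(W=0)=0$, which is the claim. Should one prefer to avoid citing the full theorem, the bound $\la^2-1$ also makes $(M_n)_{n\ge 0}$ an $L^2$-bounded martingale, giving $\E[W]=M_0>0$ and uniform integrability directly; the passage from $\E[W]>0$ to $\P(W>0)=1$ can then be carried out by hand using the fixed-point equation for $\P(W=0)$ together with the primitivity of $\M$ and the almost sure survival from Lemma \ref{lem:galton}.
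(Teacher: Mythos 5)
Your proposal is correct and follows essentially the same route as the paper: both invoke the multitype Kesten--Stigum theorem of Athreya--Ney, verify the $X\log X$ condition via the deterministic bound $\la^2-1$ on the offspring numbers $C^{t,u}$, and obtain $\P(W>0)=1$ from the almost sure non-extinction guaranteed by the fact that $\cR_n$ is never empty (Lemma \ref{lem:galton}). The additional $L^2$-boundedness remark is a fine alternative but not needed.
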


\begin{proof}
This is a direct consequence of \cite[Th. 2 (ii) p. 192]{AN72} whose assumptions are satisfied in our case. Indeed, observe that the random variables $C^{t,u}$ are bounded almost surely by $\la^2-1$. Consequently, we get
\begin{equation}\label{eq:condGW} 
\E\bigg[\sum_{1\le t, u\le \tmax} C^{t,u}\log C^{t,u}\bigg]<\infty   
\end{equation}    

Moreover, since at any generation, the set $\cR_n$ with cardinality $R_n$ given by \eqref{eq:RnavecZn} is not empty, the branching process $(Z_n)_{n\ge 0}$ does not vanish almost surely.  
\end{proof}

We conclude this section by introducing a new set of martingales in the same spirit as $(M_n)_{n\ge 0}$. When $\cT_{m,\ell}$ is a fixed tile of generation $m$, i.e. belonging to $\la^{-m}\cT$, we can define $Z_n(\cT_{m,\ell})$ as the vector constituted with the cardinality of the tiles of each type at generation $n\ge m$ which belong to $\cR_n$ and are included in $\cT_{m,\ell}$. In particular, when $\cT_{m,\ell}\not\in \cR_m$, $Z_n(\cT_{m,\ell})=0$ and when $\cT_{m,\ell}\in \cR_m$, only one entry of $Z_m(\cT_{m,\ell})$ is different from $0$ and equal to $1$ and this entry corresponds to the type of the tile $\cT_{m,\ell}\in \cR_m$. 
Moreover, by subtiles additivity, we obtain for any $n\ge m$
\begin{equation}\label{eq:subtilesadd}
Z_n=\sum_{\cT_{m,\ell}\in \cR_m}Z_n(\cT_{m,\ell}).    
\end{equation}
We then consider the sequence $(M_n(\cT_{m,\ell}))_{n\ge m}$ defined by 
\begin{equation}\label{eq:martingalebis}
M_n(\cT_{m,\ell}) = 
\rho_\M^{-n}\langle Z_n(\cT_{m,\ell}),v\rangle
\end{equation}
which is again a positive and convergent martingale with respect to the $\sigma$-algebra $(\cF_n)_{n\ge m}$. Moreover, \eqref{eq:subtilesadd} implies the decomposition
\begin{equation}\label{eq:decompMnarfixe}
M_n=\sum_{\ell\ge 1}M_n(\cT_{m,\ell}).    
\end{equation}

\subsection{Hausdorff dimension of the boundary set of a multitype Galton-Watson tree}\label{sec:GWTree}

\noi

We can introduce now a random measure on $\partial \T$ or equivalently on the random set $\partial \cK_\infty$ through the pushforward by the application $\varphi$ introduced at \eqref{eq:defvarphi}. Thanks to the convergence of the martingale defined at \eqref{eq:martingalebis}, we can associate to each tile $\cT_{m,\ell}$, $m\ge 0$, $\ell\ge1$, the random variable 
\begin{equation}\label{eq:muk}
\mu(\cT_{m,\ell})=\lim_{n\to\infty} M_n(\cT_{m,\ell})
\end{equation}
which satisfies 
\begin{equation}\label{eq:espermuk}
\E[\mu(\cT_{m,\ell})\,|\,\cF_m]=M_m(\cT_{m,\ell})=\rho_\M^{-m}v^{t_{m,\ell}}
\end{equation}
where $t_{m,\ell}$ is the type of the tile $\cT_{m,\ell}$ when $\cT_{m,\ell}$ belongs to $\cR_m$. Moreover, $\mu$ is additive with respect to the tiles of generation $m+1$. Indeed, denoting by $\cT_{m+1,\ell,\ell'}$, $\ell'\ge1$, the subtiles of generation $m+1$ (in finite number) included in $\cT_{m,\ell}$, we see immediately that for each $n\ge m+1$, $M_n(\cT_{m,\ell})=\sum_{\ell'\ge1}M_n(\cT_{m+1,\ell,\ell'})$ which implies that
\begin{equation*}
\mu(\cT_{m,\ell})=\sum_{\ell'\ge1}\mu(\cT_{m+1,\ell,\ell'}).  
\end{equation*}
This classically extends to an outer measure 
\begin{equation}\label{eq:defmesext}
\mu(B)=\inf \bigg\{\sum_{i}\mu(\cT_{m_i,\ell_i}):B\subset\bigcup_i \cT_{m_i,\ell_i}\bigg\}, \text{ $B$ Borel set of $\R^2$}.
\end{equation}
This, in turn, induces the definition of a Borel measure $\mu$ which satisfies \eqref{eq:espermuk} and whose support is included in $\cap_{m\ge 0}\cR_m=\partial \cK_\infty$. Moreover, thanks to \eqref{eq:decompMnarfixe} and Proposition \ref{prop:convbyAN}, the total mass $\mu(\partial \cK_\infty)$ of $\mu$ is equal to $W \|v\|^2=W$ which is positive almost surely. 

\pass We intend now to deduce the calculation of the Hausdorff measure of $\partial \T$ from the use of the pushforward $\mu_\varphi$ by $\varphi$ of the measure $\mu$ defined at \eqref{eq:defmesext}. Indeed, $\mu_\varphi$ is expected to play the role of a Frostman measure for $\partial \T$, see \cite[Section 3.1]{BP16}. In particular, this requires to estimate the measure of a generic ball of $\partial \T$ that we rewrite as $\varphi(\cT_{m,\ell}\cap \partial \cK_\infty)$ according to the discussion at the end of Section \ref{sec:galton}. This paves the way for Section \ref{sec:proof} but we note here that a wording purely in terms of tree would have been possible as well.

\pass We recall the definition of the Hausdorff dimension $\dimh(E)$ of a metric space $E$, i.e.
\begin{equation}\label{eq:defhsdim}
\dimh(E)=\inf \{s\geq0 : \cH^s(E)=0\} = \sup \{s\geq0 : \cH^s(E) =\infty\}
\end{equation}
where $\cH^s(E)$, $s>0$, denotes the $s$-dimensional Hausdorff measure of $E$, i.e.
\begin{equation}\label{eq:defhs}
\cH^s(E) = \lim_{\eps\to 0} \cH^s_\eps(E) = \inf_{\eps>0} \cH^s_\eps(E)
\end{equation}
where
\begin{equation}\label{eq:defhsbis}
\cH^s_\eps(E)= \inf\bigg\{ \sum_{i\ge0} (\diam U_i)^s \,:\, E\subset \bigcup_{i\ge 0} U_i \text{ and } \diam U_i\le \eps\bigg\}.
\end{equation}

In Theorem \ref{theo:dimtree}, we obtain the calculation of the Hausdorff dimension of $\partial \T$ through the asymptotics of $\mu(\cT_{m}^{(x)})$ for almost all $x\in \R^2$, where we recall that $\cT_m^{(x)}$ is the tile of $\la^{-m}\cT$ which contains $x$ (and which is unique for almost every $x$). Although our statement and proof are specific to the tree as constructed in Section \ref{sec:galton}, we claim that the method naturally extends to any multitype Galton-Watson which satisfies the assumption given at \eqref{eq:condGW}. 

\begin{theo}\label{theo:dimtree}
The boundary set $\partial \T$ has Hausdorff dimension almost surely equal to
\begin{equation}\label{eq:dimtree}
d=\frac{\log\rho_\M}{\log \la}\in[1,2).
\end{equation}
\end{theo}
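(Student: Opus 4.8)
The plan is to bound $\dimh(\partial\T)$ above and below by $d$ separately, the lower bound being the substantial part. Throughout I would work directly on $\partial\T$ with the ultrametric $\dd_{\partial\T}$, recalling from the end of Section \ref{sec:galton} that a ball of radius $\la^{-m}$ is exactly $\varphi(\cT_{m,\ell}\cap\partial\cK_\infty)$, and that $\la^{-md}=\rho_\M^{-m}$ by the very definition \eqref{eq:dimtree} of $d$.

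For the upper bound I would use the economic covering furnished by $\cR_n$: through $\varphi$ the $R_n$ tiles of generation $n$ provide a cover of $\partial\T$ by $R_n$ balls of radius $\la^{-n}$, whence $\cH^s_{\la^{-n}}(\partial\T)\le R_n\la^{-ns}$ for every $s>0$. By \eqref{eq:RnavecZn} and Proposition \ref{prop:convbyAN}, $\rho_\M^{-n}R_n=c_\cT\langle\rho_\M^{-n}Z_n,\mathbf 1\rangle\to c_\cT W\langle v,\mathbf 1\rangle$ almost surely, so that $R_n\la^{-ns}$ is comparable to $\la^{n(d-s)}$. Hence for every $s>d$ one gets $\cH^s(\partial\T)=0$, i.e. $\dimh(\partial\T)\le d$ almost surely.

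For the lower bound I would use the pushforward measure $\mu_\varphi$ from \eqref{eq:defmesext} as a Frostman measure and invoke the mass distribution principle (Billingsley's lemma, see \cite[Section 3.1]{BP16}): since $\mu_\varphi(\partial\T)=W>0$ almost surely, it suffices to prove that for $\mu_\varphi$-almost every $\xi=\varphi(x)$,
\[
\liminf_{m\to\infty}\frac{\log\mu(\cT_m^{(x)})}{-m\log\la}\ge d ,
\]
the restriction to $\la$-adic radii being harmless in this ultrametric setting. The first step is to identify $\mu(\cT_m^{(x)})$: conditionally on $\cF_m$ and on $\cT_{m,\ell}\in\cR_m$, the subprocess $(Z_n(\cT_{m,\ell}))_{n\ge m}$ is a multitype Galton--Watson process issued from one individual of type $t_{m,\ell}$, so Proposition \ref{prop:convbyAN} after a time shift yields $\rho_\M^{-(n-m)}Z_n(\cT_{m,\ell})\to W_{m,\ell}v$ for a positive variable $W_{m,\ell}$; combined with \eqref{eq:martingalebis} and $\|v\|=1$ this gives $\mu(\cT_{m,\ell})=\lim_n M_n(\cT_{m,\ell})=\rho_\M^{-m}W_{m,\ell}$. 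Writing $W_m^{(x)}:=W_{m,\ell}$ for the tile $\cT_m^{(x)}$, the target inequality is equivalent to $\frac1m\log W_m^{(x)}\to 0$ along $\mu$-typical rays; since only an upper bound on $\mu(\cT_m^{(x)})$ is needed, I must control the \emph{upper} tail, i.e. show $\mu\{x:W_m^{(x)}>\la^{m\eps}\ \text{i.o.}\}=0$ for every $\eps>0$.

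This last point is the main obstacle, and I would settle it by a Borel--Cantelli argument on the product measure $\int\mu(\cdot)\,\dd\P$. Conditioning the mass of the bad set at level $m$ on $\cF_m$ gives
\[
\E\Big[\rho_\M^{-m}\!\!\sum_{\cT_{m,\ell}\in\cR_m}\!\!W_{m,\ell}\,\mathbf 1[W_{m,\ell}>\la^{m\eps}]\Big]\le \rho_\M^{-m}\,\E[R_m]\,\max_{1\le t\le \tmax}\E\big[W^{(t)}\mathbf 1[W^{(t)}>\la^{m\eps}]\big],
\]
where $W^{(t)}$ is the martingale limit of a tree rooted at a type-$t$ individual. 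Since the offspring variables $C^{t,u}$ are bounded by $\la^2-1$ (Lemma \ref{lem:rhod}) and $\rho_\M\ge\la\ge3$, the multitype Kesten--Stigum $L^2$ theory applies and each $W^{(t)}\in L^2$, whence $\E[W^{(t)}\mathbf 1[W^{(t)}>K]]\le K^{-1}\E[(W^{(t)})^2]=O(K^{-1})$; moreover $\rho_\M^{-m}\E[R_m]$ stays bounded because $\E[Z_m]$ grows exactly like $\rho_\M^m$ by primitivity. Thus the whole expression is $O(\la^{-m\eps})$, which is summable, and Borel--Cantelli for measures yields $\mu(\limsup_m\{W_m^{(x)}>\la^{m\eps}\})=0$ almost surely; letting $\eps\downarrow0$ along a sequence gives $\dimh(\partial\T)\ge d$. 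Together with the upper bound this proves \eqref{eq:dimtree}, and the auxiliary integrability estimates above are exactly what will later feed the finiteness and positive-mean statements for $\cH^d$ in Theorem \ref{theo:main}.
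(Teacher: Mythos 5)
Your proposal is correct, and for the substantial half (the lower bound) it follows essentially the same route as the paper: you push the martingale limits forward to a measure $\mu$ on $\partial\T$, identify $\mu(\cT_{m,\ell})=\rho_\M^{-m}W_{m,\ell}$ via the time-shifted subprocess, control the upper tail of the $W_{m,\ell}$ by a first-moment Borel--Cantelli estimate that exploits the independence of the descendant trees from $\cF_m$ (this is exactly the computation in \eqref{eq:waytolimsup}), and conclude by the mass distribution principle. The only difference there is that you invoke $L^2$-boundedness of the martingale where the paper uses the $L\log L$ tail bound; since the offspring counts are bounded by $\la^2-1$, both are available and the choice is immaterial. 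Where you genuinely diverge is the upper bound: you cover $\partial\T$ directly by the $R_n$ balls of radius $\la^{-n}$ coming from $\cR_n$ and use $\rho_\M^{-n}R_n\to c_\cT W\langle v,{\mathbf 1}\rangle$ to get $\cH^s(\partial\T)=0$ for $s>d$, i.e.\ essentially $\dimh\le\dimbox$. The paper instead derives the upper bound from the lower estimate \eqref{eq:liminfmuT} on $\mu(\cT_m^{(x)})$, which it proves by the Hawkes-style energy integral \eqref{eq:relationHawkes}, Fubini, Markov and a double Borel--Cantelli. Your route is more economical, as it dispenses with \eqref{eq:relationHawkes}--\eqref{eq:liminfmuT} altogether; what the paper's two-sided estimate \eqref{eq:mesureduneboule} buys in exchange is the exact local logarithmic density of $\mu$ at a.e.\ point, which is of independent interest but not needed for the dimension formula alone. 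Both arguments are sound.
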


\begin{proof}
The reasoning is done in two steps. First, we provide a logarithmic equivalent for the measure of a ball, i.e. we show that for $\mu$-almost every $x\in\partial \cK_\infty$ and $\P$-almost surely,
\begin{equation}\label{eq:mesureduneboule}
\lim_{m\to\infty}\frac{\log\mu(\cT_m^{(x)})}{m}=-\log\rho_\M.
\end{equation}
Once \eqref{eq:mesureduneboule} is derived, we deduce \eqref{eq:dimtree} by combining it with the definition \eqref{eq:defhsdim}--\eqref{eq:defhsbis} applied to a particular covering of $\partial\T$ with balls.

\pass{\bf\it Step 1: Proof of \eqref{eq:mesureduneboule}.} We follow closely the method developed in \cite[Theorem 1]{hawk81} and adapt it to the multitype setting. We denote by $t_m^{(x)}$ the type of the tile $\cT_m^{(x)}$ and calculate the following energy-type integral
\begin{equation}\label{eq:relationHawkes}
\E\bigg[\int_{\partial \cK_\infty} \frac{v^{t_m^{(x)}}}{\rho_\M^m\mu(\cT_{m}^{(x)})}\dd\mu(x)\bigg]=\rho_\M^{-m}\E\bigg[\sum_{\cT_{m,\ell}\in \cR_m}v^{t_{m,\ell}}\bigg]=\rho_\M^{-m}\sum_{t=1}^{\tmax} v^t\E[Z_m^t]=1
\end{equation}
where we have used in the last equality the martingale property of $(M_n)_{n\ge 0}$ defined at \eqref{eq:martingale}.
\pass 
Using Fubini's theorem, we deduce from \eqref{eq:relationHawkes} that
\begin{equation}\label{eq:relationHawkesapresFubini}
\E\bigg[\int_0^{\infty}\mu\big(\big\{x\in \partial \cK_\infty:\rho_\M^m\mu(\cT_m^{(x)})\le \frac{v^{t_m^{(x)}}}{s}\big\}\big)\dd s\bigg]=1
\end{equation}
which, in turn, implies, thanks to Markov's inequality, that for any $n\ge 1$,
\begin{equation}
\P\bigg(\mu\big(\big\{x\in\partial\cK_\infty:\rho_\M^m\mu(\cT_m^{(x)})\le \frac{v^{t_m^{(x)}}}{n^4}\big\}\big)\ge \frac1{n^2}\bigg)\le \frac1{n^2}.    
\end{equation}
In the same way as in \cite[p. 375]{hawk81}, Borel-Cantelli's lemma used twice then leads us to 
\begin{equation}\label{eq:liminfmuT}
\liminf_{m\to\infty}\frac{\log\mu(\cT_m^{(x)})}{m}\ge -\log\rho_\M
\end{equation}
almost surely and for $\mu$-almost every $x\in \partial \cK_\infty$.

The reverse inequality relies on a self-similarity argument combined with the independence property of the Galton-Watson construction. Indeed, let $m\ge 0$ be fixed again. We recall that for any $\cT_{m,\ell}\in \cR_m$ and $n\ge m$, $Z_n(\cT_{m,\ell})$ is the vector constituted with the number of descendants of generation $n$ and of each type $1\le t\le \tmax$. The sequences $\{Z_n(\cT_{m,\ell}):\cT_{m,\ell}\in \cR_m^t, n\ge m\}$ for $1\le t\le \tmax$ are mutually independent and each of them is constituted with i.i.d. variables which satisfy the equality in law 
\begin{equation*}
Z_n(\cT_{m,\ell})\overset{(d)}{=}Z_{n-m}(\cT_{0}^{t})    
\end{equation*}
where $t=t_{m,\ell}$ is the type of the tile $\cT_{m,\ell}$, and $\cT_0^t$ is a generic tile of $\cT$ with type $t$. Consequently, recalling the definition of $M_n(\cT_{m,\ell})$ and $\mu(\cT_{m,\ell})$ at \eqref{eq:martingalebis} and \eqref{eq:muk} respectively, we get that the sequences $\{\frac{\rho_\M^m\mu(\cT_{m,\ell})}{v^t}:\cT_{m,\ell}\in \cR_m^t\}$, $1\le t\le \tmax$, are mutually independent, independent of $Z_m$, and constituted with i.i.d. random variables that we denote by $W_{m,\ell}^t$. This implies that, for any $w>0$,
\begin{align}\label{eq:waytolimsup}
\E\big[\mu(\{x\in\partial \cK_\infty:\rho_\M^m   \mu(\cT_{m}^{(x)})\ge v^{t_m(x)}w\})\big]
 & =\rho_\M^{-n}\sum_{t=1}^{\tmax}v^{t}\E\bigg[\sum_{\cT_{m,\ell}\in\cR_m^t}W_{m,\ell}^t{\mathbf 1}_{\{W_{m,\ell}^t\ge w\}}\bigg]\notag\\
 &=\rho_\M^{-m}\sum_{t=1}^{\tmax}v^t\E[Z_m^t]\E\big[W_{m,\ell}^t{\mathbf 1}_{\{W_{m,\ell}^t\ge w\}}\big]\notag\\
 &\le \rho_\M^{-m}\sum_{t=1}^{\tmax}v^t\E[Z_m^t]\sum_{t=1}^{\tmax} \E\big[W_{m,\ell}^t{\mathbf 1}_{\{W_{m,\ell}^t\ge w\}}\big]\notag\\
 &=\sum_{t=1}^{\tmax} \E\big[W_{m,\ell}^t{\mathbf 1}_{\{W_{m,\ell}^t\ge w\}}\big]
 \end{align}
where the last equality comes again from the martingale property of $(M_n)_{n\ge 0}$. Since the entries of $\M$ are bounded, it turns out that $\E[W_{m,\ell}^t\log W_{m,\ell}^t]$ is finite for any $1\le t\le \tmax$. Thus, \eqref{eq:waytolimsup} implies that the series
$\sum\E\big[\mu(\{x\in\partial \cK_\infty:\rho_\M^m \mu(\cT_{m}^{(x)})\ge v^{t_m^{(x)}}(1+\varepsilon)^k\})\big]$ converges for any $\varepsilon>0$, and by Borel-Cantelli's lemma, we get that
\begin{equation}\label{eq:limsupmuT}
\limsup_{m\to\infty}\frac{\log\mu(\cT_{m}^{(x)})}{m}\le -\log\rho_\M
\end{equation}
almost surely and for $\mu$-almost every $x\in \partial \cK_\infty$. We then deduce \eqref{eq:mesureduneboule} from both \eqref{eq:liminfmuT} and \eqref{eq:limsupmuT}.

\pass{\bf\it Step 2: Proof of \eqref{eq:dimtree} through the use of \eqref{eq:mesureduneboule}.} The calculation of the Hausdorff dimension of $\partial \T$ is derived from \eqref{eq:mesureduneboule} by classical techniques from geometric measure theory, see \cite{falco03,BP16}. To the best of our knowledge, the literature does not include a general result designed for our purpose. That is why we opted to include below an exhaustive proof which goes along lines very similar to the references cited above.

Let $\eps>0$ and $n_\eps\geq 0$ such that $\la^{-n_\eps}<\eps$. Let $\eta\in(0,d)$. It follows from \eqref{eq:mesureduneboule} that, for $\mu$-almost every $x\in\partial \cK_\infty$ and $\P$-almost surely, we can find a (minimal) integer $m_x\geq n_\eps$ such that, for all $m\geq m_x$,
\begin{equation}\label{eq:encadrement muTmx}
\la^{-(d+\eta)m} \leq  \mu(\cT_{m}^{(x)}) \leq \la^{-(d-\eta)m}. 
\end{equation}
The family $\{\varphi(\cT_{m_x}^{(x)}\cap\partial\cK_\infty) : x\in\partial\cK_\infty\}$ is then a covering of $\partial\T$ by balls with radius $\la^{-m_x}<\eps$. Since $\partial\T$ is an ultrametric and compact space, we can assume that these balls are disjoint and in finite number. We denote the associated tiles by $\cT_{m_{x_1}}^{(x_1)},\ldots,\cT_{m_{x_q}}^{(x_q)}$ where $q\ge 1$. Therefore, 
\begin{equation*}
 2^{-(d+\eta)}\sum_{i=1}^q \big(\diam
\varphi(\cT_{m_{x_i}}^{(x_i)}\cap\partial\cK_\infty)
\big)^{d+\eta} \leq  \sum_{i=1}^q (\la^{-m_{x_i}})^{d+\eta}
\leq \sum_{i=1}^q  \mu(\cT_{m_{x_i}}^{(x_i)}) \leq \mu (\partial\cK_\infty) < \infty.
\end{equation*}
Recalling \eqref{eq:defhsbis}, we obtain that $\cH^{d+\eta}_\eps(\partial\T)<\infty$. Letting $\eps\to0$, we deduce that $\cH^{d+\eta}(\partial\T)<\infty$, which implies in turn that $\dimh(\partial\T)\leq d+\eta$. Since this holds for every $\eta>0$, we get $\dimh(\partial\T)\leq d$.

Let us now prove the reverse inequality. For every $n\geq0$, we consider the set
\begin{equation*}
I_n=\left\{x\in\partial\cK_\infty : m_x\le n\right\}.
\end{equation*}
Observe that $I_n\subset I_{n+1} \nearrow \partial\cK_\infty$ and $\mu(I_n)\nearrow \mu(\partial\cK_\infty)<\infty$. For a fixed family $(n_x)_{x\in \partial \cK_\infty}$ of integers such that $n_x\ge n$ for any $x\in \partial \cK_\infty$, we consider a covering of $\partial \T$ by balls $\varphi(\cT_{n_x}^{(x)}\cap \partial \cK_\infty)$ with radii $\la^{-n_x}$. Again, we may assume that the covering is finite and constituted with disjoint balls $\varphi(\cT_{n_{x_1}}^{(x_1)}\cap \partial \cK_\infty),\ldots,\varphi(\cT_{n_{x_q}}^{(x_q)}\cap \partial\cK_\infty)$. We fix $n\ge n_\varepsilon$ so that $\la^{-n_x}< \eps$. Thanks to the inequality $n_x\ge n$, we also get that $m=n_x$ satisfies \eqref{eq:encadrement muTmx} as soon as $x\in I_n$. Moreover, as soon as $\cT_{n_x}^{(x)}$ intersects $I_n$ at a point $y$, the ball $\varphi(\cT_{n_x}^{(x)}\cap \partial \cK_\infty)$ coincides with $\varphi(\cT_{n_x}^{(y)}\cap \partial \cK_\infty)$ by property of the ultrametric topology, which implies that $\cT_{n_x}^{(x)}=\cT_{n_x}^{(y)}$ satisfies again \eqref{eq:encadrement muTmx} for $m=n_x$. Consequently, recalling that the diameter of one tile of $\cT$ is $1$, we obtain that
\begin{equation*}
\sum_{i=1}^q \big(\diam\cT_{n_{x_i}}^{(x_i)}\big)^{d-\eta} 
\geq  \sum_{i=1}^q \indicat_{\{\cT_{n_{x_i}}^{(x_i)}\cap I_n\ne \emptyset\}}\la^{-(d-\eta)n_{x_i}}
\geq  \sum_{i=1}^q\indicat_{\{\cT_{n_{x_i}}^{(x_i)}\cap I_n\ne \emptyset\}}\mu(\cT_{n_{x_i}}^{(x_i)}) \geq \mu(I_n).
\end{equation*}
Actually, in the same way as it is classically done for fractal sets, see e.g. the discussion (2) in \cite[Section 1.4]{BP16}, the series of inequalities above extends to all $\varepsilon$-coverings of $\partial \cK_\infty$ since the family of balls of type $\varphi(\cT_{n_x}^{(x)}\cap \partial \cK_\infty)$, $x\in \partial\cK_\infty$ and $n_x\ge n$, is a Vitali covering with the bounded subcover property. Hence, taking the infimum and  using \eqref{eq:defhsbis}, we deduce from the previous inequality that $\cH^{d-\eta}_\eps(\partial\T)\geq \mu(I_n)$. Letting $\eps\to 0$ we obtain $\cH^{d-\eta}(\partial\T)\geq \mu(I_n)$. Letting $n\to\infty$ we get $\cH^{d-\eta}(\partial\T)\geq \mu(\partial\cK_\infty)>0$. Hence $\dimh(\partial\T)\geq d-\eta$. Since this holds for all $\eta>0$, we get $\dimh(\partial\T)\geq d$ and this completes the proof of Theorem \ref{theo:dimtree} .
\end{proof}

\addtocontents{toc}{\vspace{0.2cm}}%
\section{Proof of Theorems \ref{theo:main} and \ref{theo:perimaire}}\label{sec:proof}

The results from Theorems \ref{theo:main} and \ref{theo:perimaire} are proved in the following order: we start with the calculation of the box-dimension of $\partial \cK_\infty$ as a consequence of the martingale convergence in Proposition \ref{prop:convbyAN}, and we show that the Hausdorff dimension has the same value almost surely thanks to the identification with $\partial \T$ and the application of Theorem \ref{theo:dimtree}. Next, we use the mass distribution principle to show that the $d$-dimensional Hausdorff measure of $\partial\cK_\infty$ is positive in mean and we also prove that it is finite almost surely. Finally, we turn to the almost sure limits for both the perimeter of $\partial \cK_n$ and the area of $\cK_{n+1}\setminus \cK_n$ which are again byproducts of the coding and the martingale argument.

\subsection{Proof of Theorem \ref{theo:main}: exact calculation of $\dimbox(\partial \cK_{\infty})$ and $\dimh(\partial \cK_{\infty})$}\label{sec:dimfrac}

\noi

First, we calculate $\dimbox(\partial\cK_\infty)$ by combining Lemma \ref{lem:cover} and \eqref{eq:RnavecZn} with Proposition \ref{prop:convbyAN}. Indeed, the result \eqref{eq:TheoremAthreyaNey} implies that $R_n=\langle Z_n,{\mathbf 1}\rangle$ is asymptotically proportional to $\rho_\M^n$. Then, recalling \eqref{eq:dimbox}, we deduce that almost surely,
\begin{equation}\label{eq:dimbox2}
\dimbox(\partial \cK_\infty) = 
\limsup_{n\to \infty} \frac{\log R_n}{\log \la^n} 
= \limsup_{n\to \infty} \frac{\log \rho_\M^n }{\log \la^n} = \frac{\log \rho_\M}{\log \la}.
\end{equation}
In particular, this proves that the limsup in \eqref{eq:dimbox} is a real limit.

We now turn to the determination of $\dimh(\partial \cK_\infty)$. Contrary to the box-dimension, the calculation of the Hausdorff dimension requires to consider all the coverings of $\partial \cK_\infty$ and not only the regular $\varepsilon$-mesh. Nevertheless, it is classically bounded from above by the box-dimension. Thanks to \eqref{eq:dimbox2}, this implies that almost surely
\begin{equation}
\dimh(\partial \cK_\infty)\le \dimbox(\partial \cK_\infty)= d= \frac{\log \rho_\M}{\log \la}.   
\end{equation}
In order to bound from below $\dimh(\partial \cK_\infty)$, we use both the coding of $\partial \cK_\infty$ by the boundary $\partial \T$ of the Galton-Watson tree, such coding being provided by the function $\varphi$ defined at \eqref{eq:defvarphi}, and the expression of $\dimh(\partial \T)$ obtained in Theorem \ref{theo:dimtree}. The only obstacle here comes from the fact that $\varphi$ is not bi-Lipschitz and actually not even Lipschitz. But we make the important observation that the image by $\varphi$ of a Borel set $U$ with $\diam U \leq1$ is included in a finite number $k_0$ of balls of $\partial \T$ with radius $\la^{-n}$ where $n=\lfloor \frac{-\log\diam U|}{\log\la}\rfloor$ and where $k_0$ only depends on the tessellation $\cT$ and $\la$. Indeed, if $x,y\in \partial \cK_\infty$ such that $\|x-y\|\le \la^{-n}$, then  $y$ belongs to a tile of $\la^{-n}\cT$ which is at graph distance from the tile containing $x$ bounded by some constant $c$ depending on $\la$ and on the diameter of one tile of $\cT$, assumed here to be equal to $1$. The required integer $k_0$ is then the number of neighboring tiles at graph distance at most $c$ from a fixed tile of $\cT$.

We now implicitly fix the sample $\omega$ in the event of probability $1$ on which $\dimh(\partial\T)=d$. Let us consider $s\in (0,d)$, $n\ge 1$ and a covering of $\partial \cK_\infty$ by Borel sets $U_i$, $i\ge 0$, with diameter bounded by $\la^{-n}$. Then, for each $i\ge 0$, $\varphi(U_i)$ is included in the union of $k_0$ balls $B_{i,1},\ldots,B_{i,k_0}$ of radius $\la^{-n_i}$ of $\partial \T$ where $n_i=\lfloor \frac{-\log\diam U_i}{\log\la}\rfloor$. This implies that $\partial \T$ is covered by the union of all of these balls $B_{i,j}$ for $i\ge 0$ and $1\le j\le k_0$. Using now \eqref{eq:defhs}, \eqref{eq:defhsbis} and Theorem \ref{theo:dimtree}, we deduce that
\begin{align*}
\sum_{i\ge 0}(\diam U_i)^s\ge \la^{-s}\sum_{i\ge 0}\la^{-n_is}=\frac{\la^{-s}}{2^sk_0}\sum_{i\ge 0}\sum_{j=1}^{k_0}(\diam B_{i,j})^s\ge \frac{\la^{-s}}{2^sk_0}\cH^s(\partial \T)>0.
\end{align*}
Taking now the minimum in the left-hand side over all such coverings $(U_i)_{i\ge 0}$ and letting then $n\to\infty$, we deduce that $\cH^s(\partial \cK_\infty)>0$ almost surely. This being true for any $s<d$, we obtain thanks to \eqref{eq:defhsdim} that $\dimh(\partial \cK_\infty)\ge d$ and subsequently the required equality.

\subsection{Proof of Theorem \ref{theo:main}: the $d$-dimensional Hausdorff measure of $\partial\cK_{\infty}$}\label{sec:proofhausmeasure}

\noi

First, we state a useful property of $\mu$, namely that it is $d$-H\"olderian in mean. In other words, there exists a constant $c_\mu>0$ such that, for every bounded Borel set $U\subset\R^2$ with $\diam U <1$, 
\begin{equation}\label{eq:boundmu}
\E\big[\mu(U)\big] \le c_\mu (\diam U)^d.
\end{equation}
Indeed, let $n\ge 1$ such that $\la^{-n-1}\le \diam U < \la^{-n}$. Denoting by $\cU_n(U)$ the set of tiles of generation $n$ needed to cover $U$, it holds that the cardinality of $\cU_n(U)$ is at most $(\diam U)/\la^{-n-1} < \lambda$ up to a multiplicative constant $c>0$. Therefore, we obtain with the help of \eqref{eq:espermuk} and the identity $\rho_\M=\la^d$,
\begin{equation}\label{eq:boundmu2}
\E\big[\mu(U)\,|\,\cF_n\big]
 \le \sum_{\cT_{n,\ell}\in \cU_n(U)}\E\big[\mu(\cT_{n,\ell})\,|\,\cF_n\big]\le c\la\big(\max_{1\le i\le m}v^i\big)\la^{-nd} \le c_\mu (\diam U)^d
\end{equation} 
where $c_\mu=c\la^{d+1}\max_{1\le i\le m}v^i$. Taking the expectation in \eqref{eq:boundmu2} gives \eqref{eq:boundmu}.

We then use the classical mass distribution principle, see e.g. \cite[Chap. 4, p. 60]{falco03}, to deduce from \eqref{eq:boundmu} that 
\begin{equation*}
\E\big[\cH^d(\partial \cK_\infty)\big]\ge \E[\mu(\partial \cK_\infty)]/c_\mu.    
\end{equation*}
Indeed, we can follow line by line the technique developed in \cite{falco03}, only we slightly adapt it by taking the expectation of the infimum required in \eqref{eq:defhsbis}. 

We then notice that
\begin{equation}\label{eq:espermuinfini}
\E[\mu(\partial \cK_\infty)]=\E\big[\E[\mu(\partial \cK_\infty) \,|\, \cF_n]\big]=\E[M_n]=\E[M_0]=\langle Z_0,v\rangle>0, \end{equation}
to deduce that $\E\big[\cH^d(\partial \cK_\infty)\big]>0$. 

The almost sure upper bound of $\cH^d(\partial \cK_\infty)$ is obtained by considering the particular covering of $\partial \cK_\infty$ with the tiles from $\cR_n$. 
Indeed,  for every $n\ge 1$, we get almost surely, thanks to \eqref{eq:defhsbis}, \eqref{eq:TheoremAthreyaNey} and again the identity $\rho_\M=\la^d$,
\begin{equation*}
\cH^d(\partial \cK_\infty) \le c_\mu\la^{-nd}R_n = c_\mu\rho_\M^{-n}\langle Z_n,{\mathbf 1}\rangle\longrightarrow c_\mu W \langle v,{\mathbf 1}\rangle.
\end{equation*}
This proves that $\cH^d(\partial \cK_\infty)<\infty$ almost surely.

\subsection{Proof of Theorem \ref{theo:perimaire}: estimates of the perimeter and defect area}\label{sec:proofperimaire}

\noi

We use \eqref{eq:TheoremAthreyaNey} in order to get an asymptotic estimate for the perimeter of $\partial \cK_n$. Indeed, the length $\cL(\partial \cK_n)$ of $\partial \cK_n$ is $\la^{-n}$ times the number of edges which are commmon to $\cK_n$ and to any tile of $\cR_n$. In other words, $\cL(\partial \cK_n)$ is equal to $\la^{-n}\langle Z_n, v_0\rangle$ where $v_0$ is a deterministic vector such that for $1\le t\le \tmax$, the $t$th entry of $v_0$ is equal to the number of edges of a tile of type $t$ at generation $n$ which belong to the boundary of $\cK_n$. We recall that the definition of the type of a tile guarantees that the number of such edges only depends on the type. In particular, it implies that we have the almost sure convergence
\begin{equation*}
\frac{\cL(\partial \cK_n)}{(\rho_\M/\la)^n}\longrightarrow W \langle v,v_0\rangle.   
\end{equation*}
where we recall that $v$ is the unique unit eigenvector associated with $\rho_\M$ with positive entries. This proves the first part of \eqref{eq:limperimaire} with the choice $L_\infty=W \langle v,v_0\rangle.$

\pass In the same way, we can obtain an asymptotic estimate for the area of $\cR_n$. Indeed, recalling that the sequence of sets $\cR_n$, $n\ge 1$, is decreasing, we obtain
\begin{align*}
\cA(\cK_{n+1}\setminus \cK_n)
= \la^{-2(n+1)}\sum_{t=1}^{\tmax}\sum_{\ell=1}^{Z_n^t}N_\ell^t
\end{align*}
where the sequence $(N_\ell^t)_{\ell\ge 1}$ is constituted with i.i.d. random variables distributed as the number of tiles from $\cK_{n+1}\setminus \cK_n$ which belong to a fixed tile of $\cR_n^t$. Moreover, for each $1\le t\le \tmax$, $\rho_\M^{-n}Z_n^t$ converges almost surely to $W v^t$ by \eqref{eq:TheoremAthreyaNey}. Thanks to the law of large numbers, this implies that almost surely when $n\to \infty$,
\begin{equation}\label{eq:LGN1type}
\frac1{\rho_\M^n}\sum_{\ell=1}^{Z_n^t}N_\ell^t=(\rho_\M^{-n}Z_n^t)\frac1{Z_n^t}\sum_{\ell=1}^{Z_n^t}N_\ell^t
\longrightarrow (W v^t)\E[N_\ell^t].
\end{equation}
Summing \eqref{eq:LGN1type} over $1\le t \le \tmax$, we obtain that
\begin{equation*}
\lim_{n\to\infty} \frac{\cA(\cK_{n+1}\setminus \cK_n)}{(\rho_\M/\la^2)^n}=\la^{-2}\sum_{t=1}^{\tmax} (W v^t)\E[N_\ell^t].
\end{equation*}
This proves the second part of \eqref{eq:limperimaire} with the choice $A_\infty=\la^{-2}\sum_{t=1}^{\tmax}(W v^t)\E[N_\ell^t]$.

Finally, we deduce \eqref{eq:limairedefaut} by a summation over $n\ge m$:
\begin{equation*}
\cA(\cK_{\infty}\setminus \cK_m)=\sum_{n\ge m} \cA(\cK_{n+1}\setminus\cK_n).
\end{equation*}
It then remains to fix $\omega\in \Omega$ such that the convergence \eqref{eq:limperimaire} occurs, fix $\eps>0$ and take $m\ge0$ large enough such that for every $n\ge m$,
\begin{equation*}
(1-\eps)(\rho_\M/\la^2)^nA_\infty(\omega)\le \cA(\cK_{n+1}(\omega)\setminus \cK_n(\omega))\le (1+\eps)(\rho_\M/\la^2)^nA_\infty(\omega).
\end{equation*}
We sum both lower and upper bounds and obtain the required convergence \eqref{eq:limairedefaut}.

\addtocontents{toc}{\vspace{0.2cm}}%
\section{Von Koch-type examples and reduction of $\M$}\label{sec:exaintro}

In this section, we expose two short examples derivated from the classical von Koch construction. This requires to make explicit the reproduction matrix $\M$. It provides a new proof of well-known box dimension calculations and it also paves the way for a general technique of reduction of the reproduction matrix.

\subsection{A classical random version of the original von Koch model}\label{sec:vonkochalea}

\noi

The classical von Koch curve has given birth to several randomized generalizations. The most well-known is the so-called flip-flop, see \cite[Ex. 15.3, page 256]{falco03}, which consists in tossing a coin at each step and deciding accordingly if the newly added triangle should point up or down. In this section, we propose a different extension which is a small variant of our growth model associated with the triangular tessellation $\cT$ and the parameter $\la\ge 3$. To do so, we slightly modify the random rule described in Section \ref{sec:intro} and pictured in Figure \ref{fig:SquareSnake}. More precisely, we start with $\cK_0$ as a unique triangle from $\cT$. Then, along each edge of that triangle, we add with probability $p$ one triangle from $\la^{-1}\cT$ which is the $\lceil \la/2 \rceil$-th along the edge and we leave the edge unchanged with probability $(1-p)$, see Figure \ref{fig:KochAlea}. This gives birth to $\cK_1$. Given $\cK_n$ for $n\ge 1$, we iterate the procedure along each edge of $\partial \cK_n$ to construct $\cK_{n+1}$. As in Section \ref{sec:intro}, the sequence $(\cK_n)_{n\ge 0}$ is increasing and converges to a random compact set $\cK_\infty$. We assert that Theorem \ref{theo:main} still holds for this variant and Proposition \ref{prop:KochAlea} below provides the explicit calculation of the box and Hausdorff dimension of $\partial\cK_\infty$.

\begin{figure}[h!]
\centering
\includegraphics[scale=0.5]{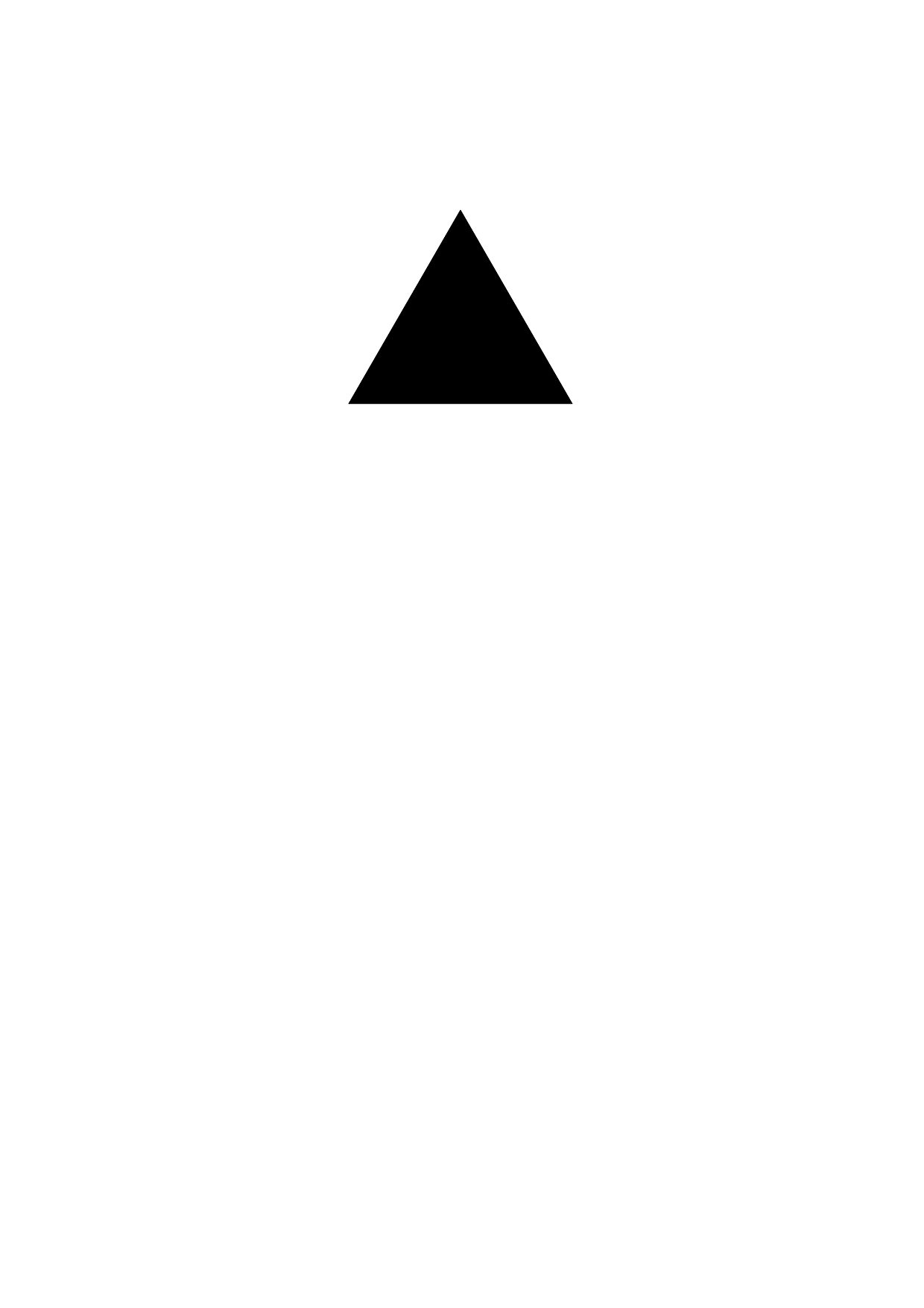} \hspace{-0.25cm}
\includegraphics[scale=0.5]{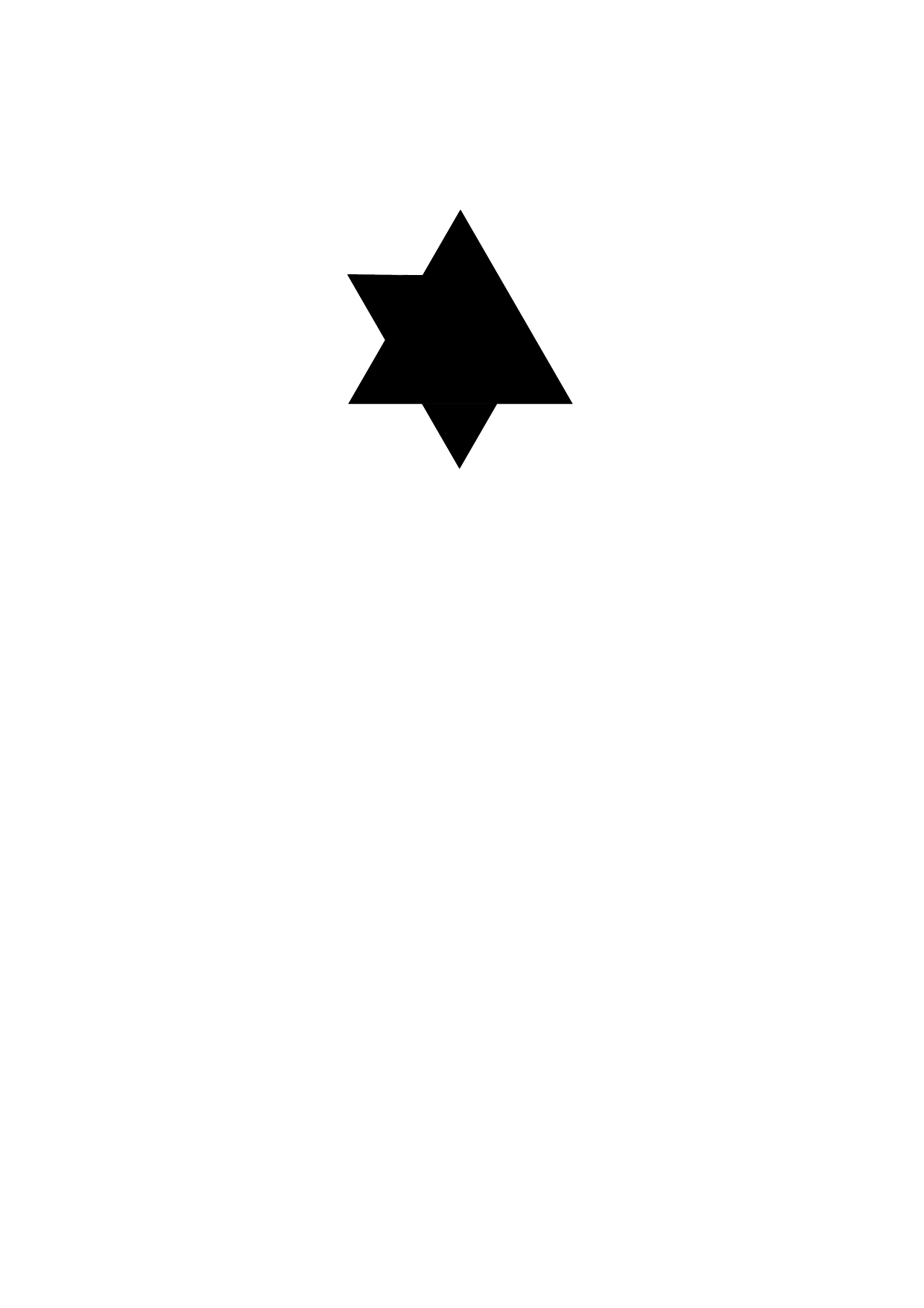} \hspace{-0.25cm}
\includegraphics[scale=0.5]{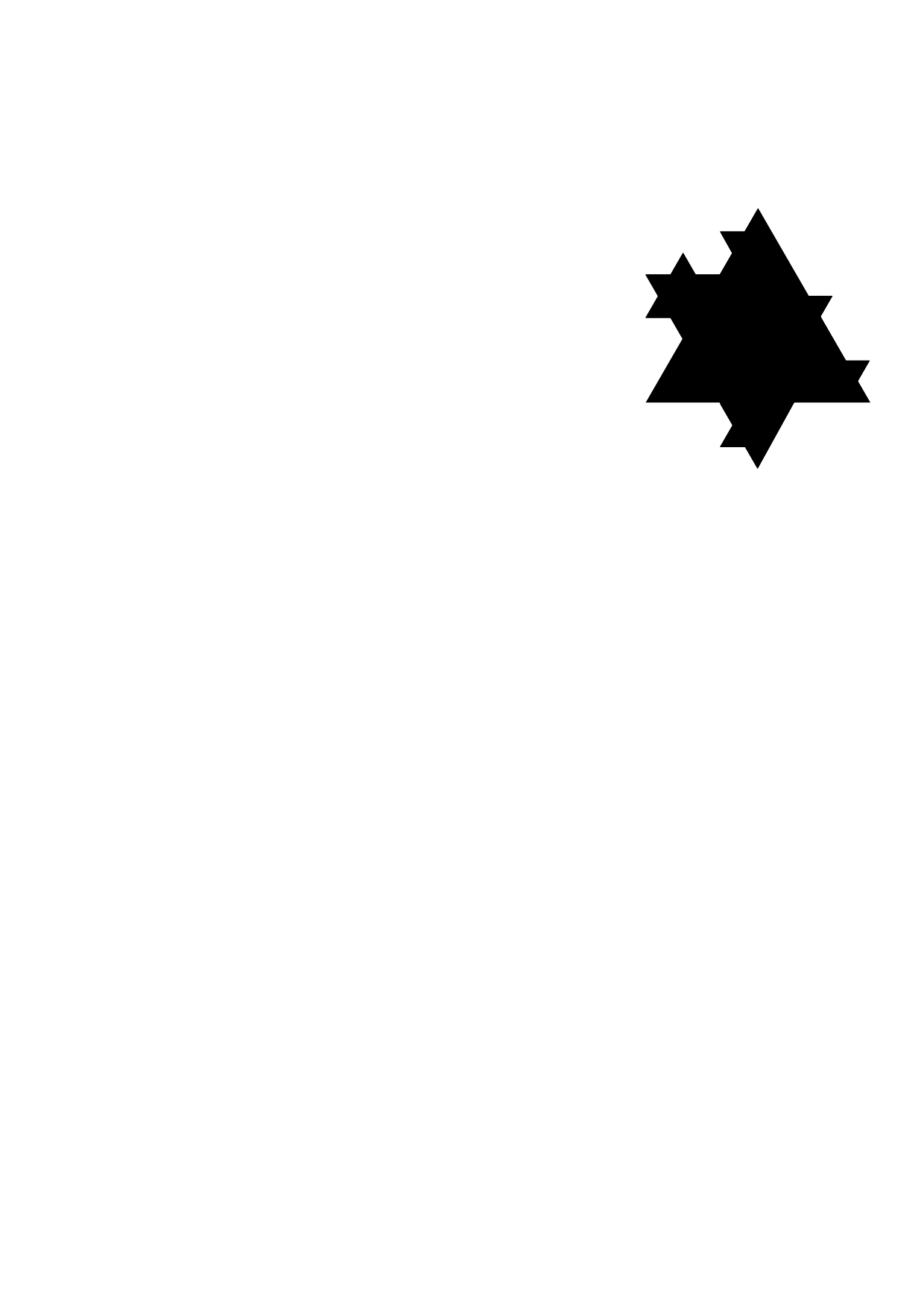} \hspace{-0.25cm}
\includegraphics[scale=0.5]{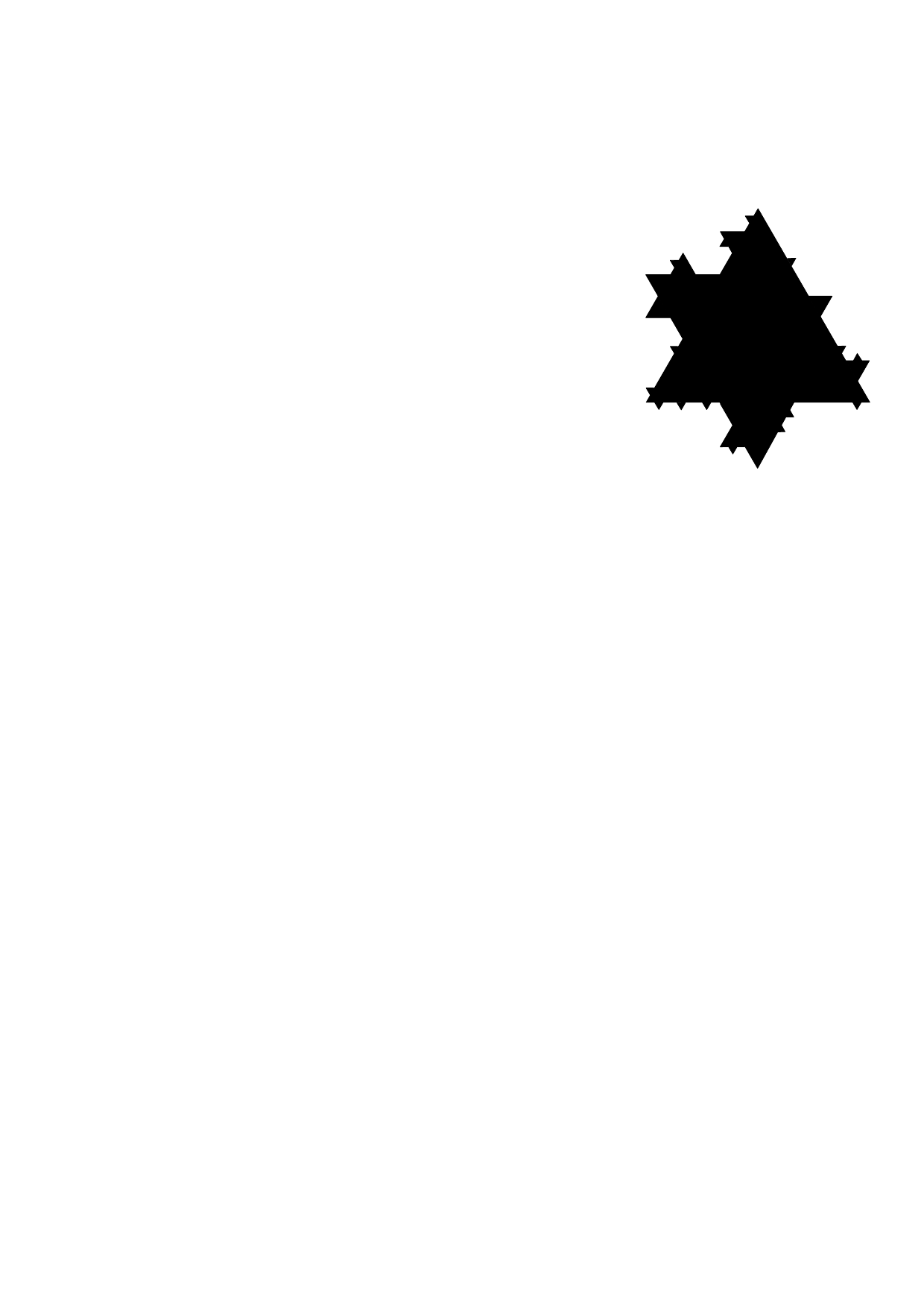}
\vspace{-0.2cm}\caption{A variant of the flip-flop von Koch random curve with $\lambda=3$ and $p=0.5$.}
\label{fig:KochAlea}
\end{figure}

\newpage
\begin{prop}\label{prop:KochAlea}
For any $\la\ge 3$ and $p\in [0,1]$, the limiting set $\cK_\infty$ satisfies
\begin{equation}\label{eq:dimrandomvonkoch}
\dimh(\partial\cK_\infty)=\dimbox(\partial\cK_\infty) = \frac{\log(\la+p)}{\log \la}.
\end{equation}
\end{prop}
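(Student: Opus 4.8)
\emph{Strategy.} The plan is to reduce the statement to the general theory developed above. Since the proposition asserts that Theorem \ref{theo:main} carries over to this variant, once that is granted the identity \eqref{eq:dimrandomvonkoch} amounts to the single computation $\rho_\M=\la+p$. I would therefore proceed in two stages: first verify that the variant is encoded by a super-critical multitype Galton-Watson process exactly as in Section \ref{sec:galton}, and then compute the spectral radius of its reproduction matrix.

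\emph{Stage 1: the coding.} The essential point is that the middle-triangle rule retains the spatial independence exploited in Section \ref{sec:intro}. Since $\la\ge3$, the index $\lceil\la/2\rceil$ satisfies $2\le\lceil\la/2\rceil\le\la-1$, so the triangle erected on an edge sits strictly in its interior and cannot meet either the endpoints of the edge or the triangles erected on the two neighboring edges. Consequently the evolutions emanating from distinct edges are independent, the covering of Lemma \ref{lem:cover} splits into $c_\cT=3$ i.i.d. pieces, and attaching to each tile the type recording its intersection with $\cK_n$ yields a time-homogeneous super-critical multitype Galton-Watson process $(Z_n)_{n\ge0}$ as in Lemma \ref{lem:galton}; the hypotheses of Theorem \ref{theo:main} (bounded offspring, primitivity) hold just as in Section \ref{sec:propZn}.

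\emph{Stage 2: computing $\rho_\M$.} I would carry out the computation at the level of boundary edges, which turns the multitype bookkeeping into a single-type one. Let $E_n$ be the number of edges of $\partial\cK_n$, all of length $\la^{-n}$. Each edge is subdivided by $\la^{-(n+1)}\cT$ into $\la$ collinear segments; with probability $p$ a triangle is erected on the $\lceil\la/2\rceil$-th segment, replacing it by the two other sides of that triangle and leaving $\la+1$ edges, and with probability $1-p$ the edge stays straight and contributes $\la$ edges. As all edges are congruent and, by the interior placement above, evolve independently, $(E_n)_{n\ge0}$ is a single-type super-critical Galton-Watson process whose offspring equals $\la$ with probability $1-p$ and $\la+1$ with probability $p$, of mean
\begin{equation*}
p(\la+1)+(1-p)\la=\la+p .
\end{equation*}
The offspring being bounded, the martingale $E_n/(\la+p)^n$ converges almost surely to a positive limit, exactly as in Proposition \ref{prop:convbyAN}. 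Finally, in the triangular tessellation each tile of $\cR_n$ abuts $\partial\cK_n$ along a bounded number of edges and each boundary edge is abutted by a bounded number of such tiles, so $R_n\asymp E_n$ with constants depending only on $\cT$; comparing with $R_n\asymp\rho_\M^n$ from Proposition \ref{prop:convbyAN} forces $\rho_\M=\la+p$, and substituting into \eqref{eq:dimfrac} gives \eqref{eq:dimrandomvonkoch}.

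\emph{Main obstacle.} The genuinely model-specific work lies in the relation $R_n\asymp E_n$: one must check that no boundary edges are lost to cancellation when a triangle is added and that the correspondence between edges of $\partial\cK_n$ and tiles of $\cR_n$ stays bounded in both directions, including at the convex and concave vertices of $\cK_n$, where a tile may touch two boundary edges or share extra vertices. These corners contribute only finitely many auxiliary types to the full matrix $\M$ and do not affect the dominant growth, so the edge computation still pins the spectral radius at $\la+p$. With this settled, primitivity of the reduced matrix is immediate (for $p>0$ it is the positive scalar $\la+p$, while $p=0$ degenerates to $\cK_\infty=\cK_0$ with $d=1$, consistent with \eqref{eq:dimrandomvonkoch}), and the equality of $\dimh(\partial\cK_\infty)$ and $\dimbox(\partial\cK_\infty)$ follows from Theorem \ref{theo:main}. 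As a check, $\la=3$, $p=1$ recovers the von Koch snowflake and its dimension $\log4/\log3$.
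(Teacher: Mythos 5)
Your proposal is correct and follows essentially the same route as the paper: both reduce the variant to a single-type supercritical Galton--Watson process with mean offspring $\la+p$ and then invoke Theorem \ref{theo:main}. The only cosmetic difference is that you count boundary edges of $\partial\cK_n$ while the paper counts the tiles of the covering set $\cR_n$ directly (observing that every potential child is of type T1, so the reproduction matrix is $1\times1$ with entry $(\la-1)\cdot 1+1\cdot(1-p)+2\cdot p=\la+p$); since each type-T1 tile meets $\partial\cK_n$ in exactly one edge, your comparison $R_n\asymp E_n$ is in fact an equality and the corner/cancellation issue you flag as the main obstacle does not arise.
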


\begin{proof}
We start by shortly explaining why the results from Sections \ref{sec:intro} and \ref{sec:main} extend to the model described above. Indeed, the coding of the tiles of the set $\cR_n$ into a multitype Galton-Watson tree is still relevant. Since the construction procedure does not interrupt almost surely, that branching process does not die with probability one. Moreover, from one generation to the next, this new model clearly produces less children than the initial one described in Section \ref{sec:intro}. Actually, the set $\cK_n$ generated by the new model is included in the former set $\cK_n$ for each $n\ge 0$. Consequently, the integrability conditions at \eqref{eq:condGW} are again clearly satisfied. Subsequently, the convergence of the underlying martingale occurs and the rest of the proofs goes along the exact same lines.

Our task is then to identify the reproduction matrix $\M$. In Figure \ref{fig:ModelKochALea}, we have represented the situation along the edge of one triangle which belongs to $\partial\cK_n$. We observe that there are $\la+2$ potential triangles along that edge which could belong to the set $\cR_n$, namely the $\la$ upward triangles from the next generation and the $2$ downward triangles which surround the central upward triangle. Moreover, each of these potential children would be also triangles with only one edge belonging to $\cK_{n+1}$, i.e. they are of type T1. Consequently, this means that there is only one type T1 and that $\M$ is a matrix of size $1\times 1$. We then only need to calculate the expected value of the number of triangles in $\cR_n$ along the edge, which is obviously the spectral radius $\rho_\M$ of $\M$. 

We study the case of each potential child with respect to its color. All upward triangles but the central one (in yellow in Figure \ref{fig:ModelKochALea}) belong to $\cR_n$ with probability $1$ while the central upward triangle (in green in Figure \ref{fig:ModelKochALea}) is in $\cR_n$ if and only if it has not been added in $\cK_{n+1}$, which happens with probability $1-p$. To the contrary, the two downward triangles (in blue in Figure \ref{fig:ModelKochALea}) are in $\cR_n$ as soon as the central upward triangle is not, which happens with probability $p$. 

\begin{figure}[h!]
\centering
\includegraphics[scale=1.25]{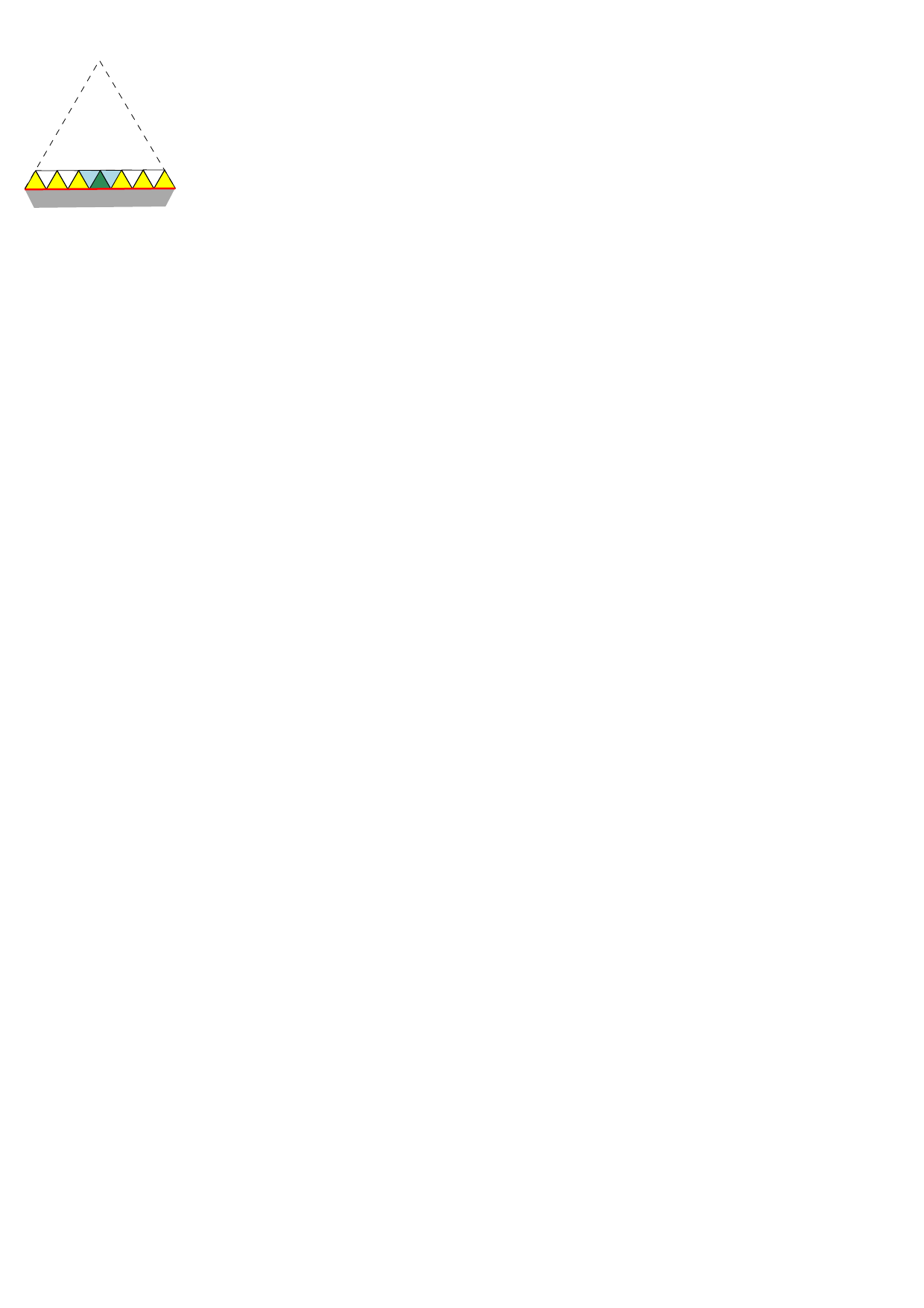}
\caption{The potential children of a T1-parent in the random version of the von Koch model for $\la=7$.}
\label{fig:ModelKochALea}
\end{figure}

\noi The expectation of the number of triangles in $\cR_n$ along the edge is then
\begin{equation*}
(\la-1)\times 1+ 1\times (1-p)+ 2\times p=\la+p.
\end{equation*}
Applying then Theorem \ref{theo:main}, we deduce that the box and Hausdorff dimension of $\partial \cK_\infty$ is $\frac{\log(\la+p)}{\log \la}$.
\end{proof}

\newpage
\subsection{A deterministic von Koch square model}\label{sec:squaredeter}

\noi

This specific model is based on the square tessellation with $\la\ge 3$. We slightly modify the random rule as described in Section \ref{sec:intro}. The Group 1 is now composed by the tiles $\cT_{n+1,\ell}$ which share exactly one edge with $\partial \cK_n$ but no end of an edge of $\partial \cK_n$. In other words, compared with the Group 1 from Section \ref{sec:intro}, we exclude the tiles which share two edges with $\partial \cK_n$. We keep the same Group 2 as in Section \ref{sec:intro} and choose $p_*=0$. The novelty in the current model is that the Bernoulli distributions associated with the tiles of the Group $1$ are deterministic but allowed to be different, i.e. they are characterized by a probability vector $\texttt{p}=(p_2,\ldots,p_{\la-1})\in\{0,1\}^{\la-2}$ where $p_i$ is the probability to add the $p_i$th square (the polygonal line $\partial \cK_n$ being oriented clockwise). This vector $\texttt{p}$ being fixed, it is convenient to see it as a deterministic geometric pattern applied at each edge of $\partial\cK_n$ with a suitable rescaling, see Figure \ref{fig:SquareDeter}.

\renewcommand{\arraystretch}{0.8}
\begin{figure}[h!]
\begin{center}
\begin{tabular}{cc}
\includegraphics[scale=0.75]{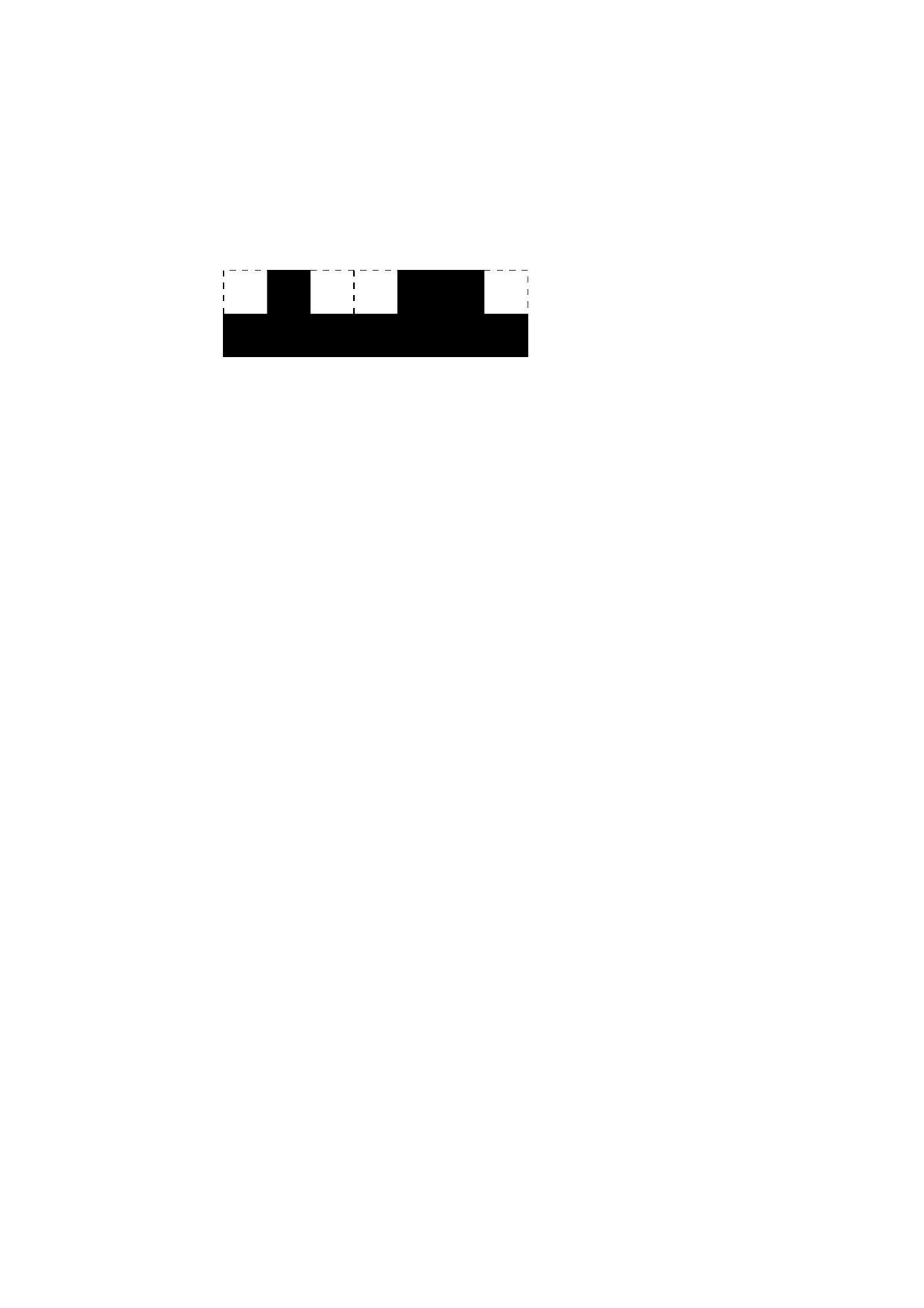} & 
\includegraphics[scale=0.75]{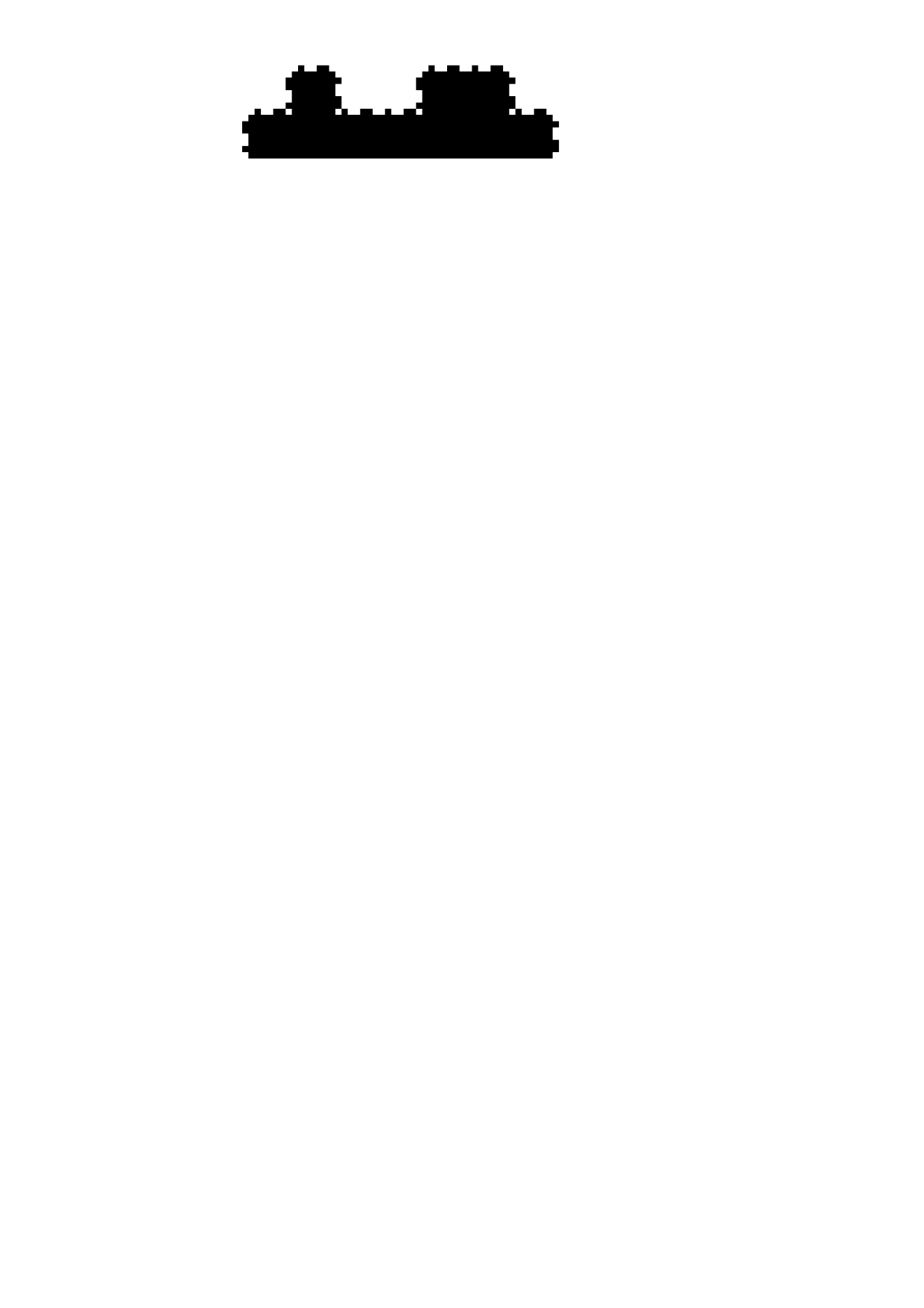} \\
\footnotesize{(a)} & \footnotesize{(b)}
\end{tabular}
\end{center}
\vspace{-.35cm}
\caption{Example of a deterministic pattern for the square tessellation with $\la=7$ and with the probability vector $\texttt{p}=(1,0,0,1,1)$. Here $\beta_\texttt{p}=2$. The figures (a) and (b) show the boundary $\partial\cK_1$ and $\partial\cK_2$ respectively.}
\label{fig:SquareDeter}
\end{figure}
\renewcommand{\arraystretch}{1.5}

Not only does Theorem \ref{theo:main} apply in this particular context but also the self-similarity properties of the set $\partial\cK_\infty$ provides a classical method for deriving the dimensions and the two quantities $\cL(\partial \cK_n)$ and $\cA(\cK_{n+1}\setminus \cK_n)$.

\begin{prop}\label{prop:SquareDeter}
Under the assumptions above the limiting set $\partial\cK_\infty$ is the union of $4$ self-similar sets with equal box and Hausdorff dimension given by 
\begin{equation}\label{eq:dimautosimcarre}
\dimbox(\partial\cK_\infty)=\dimh(\partial\cK_\infty) = \dfrac{\log(\lambda+2\beta_{\emph{\texttt{p}}})}{\log \lambda}
\end{equation}
where $\beta_\emph{\texttt{p}}$ denotes the number of blocks of consecutive $1$'s in $\emph{\texttt{p}}$. \\
Moreover,
\begin{equation}\label{eq:perimaireSquareDeter}
\cL(\cK_n)=4\bigg(\frac{\la+2\beta_{\emph{\texttt{p}}}}{\la}\bigg)^n
\text{ and }
\cA(\cK_{n+1}\setminus \cK_n)=\frac{4\sigma_{\emph{\texttt{p}}}}{\la^2}\bigg(\frac{\la+2\beta_{\emph{\texttt{p}}}}{\la^2}\bigg)^n
\end{equation}
where $\sigma_\emph{\texttt{p}}$ denotes the sum of coefficients of $\emph{\texttt{p}}$.
\end{prop}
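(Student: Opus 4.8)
The plan is to exploit that the model is deterministic, so that $\partial\cK_\infty$ is a fixed (non-random) set and the coding of Section~\ref{sec:main} collapses to a single type. Indeed, since $p_*=0$ no tile of the covering ever meets an end of an edge of $\partial\cK_n$, so every tile of $\cR_n$ shares exactly one edge with $\partial\cK_n$; all tiles carry the type T1 and the reproduction matrix $\M$ is $1\times1$. The heart of the argument is therefore the following elementary count, which I would establish first: one edge of $\partial\cK_n$ (of length $\la^{-n}$) generates exactly $\la+2\beta_{\texttt{p}}$ edges of $\partial\cK_{n+1}$ (each of length $\la^{-(n+1)}$). Subdividing the edge into its $\la$ sub-edges, the two extreme ones (positions $1$ and $\la$) are never raised, while the middle ones follow $\texttt{p}$; a maximal block of $k$ consecutive $1$'s produces a rectangular bump whose boundary contributes a left side, $k$ top edges and a right side, i.e. $k+2$ edges in place of $k$, whereas each $0$ leaves one flat edge. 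Summing over the $\beta_{\texttt{p}}$ blocks and the $\la-\sigma_{\texttt{p}}$ zeros gives $(\la-\sigma_{\texttt{p}})+(\sigma_{\texttt{p}}+2\beta_{\texttt{p}})=\la+2\beta_{\texttt{p}}$ edges, all again of type T1. Consequently $\M=[\la+2\beta_{\texttt{p}}]$ and $\rho_\M=\la+2\beta_{\texttt{p}}$, so Theorem~\ref{theo:main} immediately yields \eqref{eq:dimautosimcarre}.

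For the self-similar description I would use the four-fold symmetry of the square $\cK_0$ to split $\partial\cK_\infty$ into four congruent pieces, one above each edge. Each such piece $E$ is the attractor of an iterated function system made of the $\la+2\beta_{\texttt{p}}$ similarities of ratio $1/\la$ that send $E$ onto the sub-piece sitting above each of the $\la+2\beta_{\texttt{p}}$ edges produced in the first step; exactness of the relation $E=\bigcup_i S_i(E)$ is precisely the uniformity (single type) established above. The crucial geometric point, again granted by $p_*=0$, is that no bump touches the endpoints of an edge, so the constructions carried by two adjacent edges never overlap; this separation gives the open set condition, and Moran's theorem reproves $\dimh(E)=\dimbox(E)=\log(\la+2\beta_{\texttt{p}})/\log\la$ in agreement with the matrix computation. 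Since the four pieces meet only at the corners of $\cK_0$, the union has the same dimension, which proves the structural claim of the proposition.

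The formulas \eqref{eq:perimaireSquareDeter} then follow from the same count. For the perimeter, each edge of $\partial\cK_n$ is replaced by $\la+2\beta_{\texttt{p}}$ edges scaled by $1/\la$, so $\cL(\cK_{n+1})=\tfrac{\la+2\beta_{\texttt{p}}}{\la}\,\cL(\cK_n)$; starting from $\cL(\cK_0)=4$ gives the first identity. For the defect area, $\partial\cK_n$ carries $\cL(\cK_n)/\la^{-n}=4(\la+2\beta_{\texttt{p}})^n$ edges, each of which raises $\sigma_{\texttt{p}}$ new squares of side $\la^{-(n+1)}$, hence $\cA(\cK_{n+1}\setminus\cK_n)=4(\la+2\beta_{\texttt{p}})^n\,\sigma_{\texttt{p}}\,\la^{-2(n+1)}=\tfrac{4\sigma_{\texttt{p}}}{\la^2}\bigl(\tfrac{\la+2\beta_{\texttt{p}}}{\la^2}\bigr)^n$, which is the second identity.

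The main obstacle is not the arithmetic but the verification that the construction is genuinely of a single type and exactly self-similar: one must check that every edge of $\partial\cK_n$, regardless of its orientation and of whether it is a flat segment, a bump top or a bump side, sees an identical fresh copy of the pattern at the next step, and that neighbouring edges never interfere. Both facts hinge on the choice $p_*=0$, which keeps all raised squares strictly interior to each edge and hence away from the corners; making this corner non-interference precise is the only delicate step, after which the open set condition — and thus the coincidence of the similarity, Hausdorff and box dimensions — is immediate.
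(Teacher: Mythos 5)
Your counting of edges and your IFS argument are sound and coincide with the paper's first proof of \eqref{eq:dimautosimcarre}: the paper likewise splits $\partial\cK_\infty$ into $4$ isometric pieces, realizes each as the attractor of an IFS of $k=\la+2\beta_{\texttt{p}}$ similarities of ratio $\la^{-1}$ satisfying the Open Set Condition, and obtains $k$ by walking along the external boundary of the pattern ($\la$ steps right, $\beta_{\texttt{p}}$ up, $\beta_{\texttt{p}}$ down). Your derivation of \eqref{eq:perimaireSquareDeter} is also exactly the paper's: $\la^n\cL(\cK_n)$ counts the edges of $\partial\cK_n$, each of which spawns $\la+2\beta_{\texttt{p}}$ edges and $\sigma_{\texttt{p}}$ new squares of side $\la^{-(n+1)}$.

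However, your opening claim --- that because $p_*=0$ every tile of $\cR_n$ shares exactly one edge with $\partial\cK_n$, so the coding is single-type and $\M=[\la+2\beta_{\texttt{p}}]$ is $1\times1$ --- is false, and the ``matrix computation'' you invoke does not exist in that form. You are conflating the edges of $\partial\cK_{n+1}$ (which indeed all play the same role) with the tiles of the covering $\cR_{n+1}$, which are what the Galton--Watson coding of Section \ref{sec:galton} and Theorem \ref{theo:main} actually count. A square sitting in a void of the pattern (a position $i$ with $p_i=0$ flanked by raised squares) is not added to $\cK_{n+1}$ but shares its bottom edge with the old boundary and one or two lateral edges with the newly raised squares, so it is of type T2 or T3; the squares at the two ends of the pattern can even be of type T4 when $p_2=p_{\la-1}=1$. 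The paper's second proof makes this explicit: the reduced reproduction matrix is of size up to $4\times4$, with spectrum $\{0,1,1+p_2p_{\la-1},\la+2\beta_{\texttt{p}}\}$, and only its \emph{spectral radius} equals $\la+2\beta_{\texttt{p}}$. Your route to the dimension via Theorem \ref{theo:main} therefore has a gap (you would at least need to argue that the number of boundary edges and the number of covering tiles are comparable up to constants, or compute the genuine multitype matrix); fortunately your independent IFS/Moran argument already establishes \eqref{eq:dimautosimcarre}, so the proposition itself is still proved, just not by the mechanism you describe in the first paragraph.
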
 

\noi\textit{Proof of \eqref{eq:dimautosimcarre} with the self-similarity}. The set $\partial \cK_\infty$ is the union of 4 isometric sets obtained from each of the $4$ sides of $\cK_0$ by our iterating procedure. It is enough to prove the results for one of these sets denoted by $\cC_\infty$. As for the classical von Koch set and its generalizations, the strategy consists in showing that $\cC_\infty$ is self-similar as the attractor of an Iterated Functions System (IFS) made up with several contracting similarity maps (see \cite[Chap. 9]{falco03} for a general account on IFS theory). Precisely, we observe that the IFS in question consists in a finite number $k$ of contracting similarity maps, all with the same ratio $\la^{-1}$ and that, because of the choice $p_*=0$, it satisfies the so-called \textit{Open Set Condition}. In these conditions it follows from \cite[Chap. 9, Th. 9.3]{falco03} that 
\begin{equation*}\label{eq:dimautosim}
\dimbox(\cC_\infty)=\dimh(\cC_\infty) = -\frac{\log k}{\log \la^{-1}}.
\end{equation*}
Finally, it remains to make the number $k$ of similarity maps explicit. We notice that $k$ is equal to the number of edges along the external boundary of the pattern above one edge of $\partial \cK_n$, see Figure \ref{fig:SquareDeter} (a). When going along the pattern, we do exactly $\la$ steps to the right, $\beta_p$ steps up and $\beta_p$ steps down, which leads to $k=\la+2\beta_\texttt{p}$.

\pass\textit{Proof of \eqref{eq:dimautosimcarre} with Theorem \ref{theo:main}.}
In the same way as in the proof of Proposition \ref{prop:KochAlea}, we assert that the conclusion of Theorem \ref{theo:main} still occurs with our current model. The only task that we need to fulfill consists in making explicit the reproduction matrix $\M$. 

We start by noticing that the squares belonging to $\cR_n$ are decomposed into three families: they are either the ones above the black squares of the pattern, or the ones which fill the voids of the chosen pattern or the ones at the two ends of the pattern. The first family is composed of squares mainly of type T1, i.e. squares which share with $\partial \cK_n$ exactly one edge, and possibly of type T2, i.e. with two edges in common with $\partial \cK_n$, when $p_2=p_{\la-1}=1$. The second family consists of squares either of type T1 when they are not at the ends of a void of the pattern or of type T2, when they are at one of the two ends of the void (and those ends are distinct) or else of type T3, i.e. with three edges in common with $\partial \cK_n$, when the void consists of exactly one square. The third family is trickier to deal with. Indeed, such a square can be of type T1, T2, T3 or even T4, i.e. with $4$ edges in common with $\partial \cK_n$. The possibility of being of type T3 (resp. of type T4) only exists if $p_2$ or $p_{\la-1}$ is equal to $1$ but not both at the same time (resp. if $p_2=p_{\la-1}=1$). The different types for the squares of $\cR_n$ are visible in Figure \ref{fig:SquareDeterChild} in the particular case $\la=7$, $\beta_\texttt{p}=2$, $\gamma_\texttt{p}=0$ and $p_2=p_{\la-1}=1$ and when the parent is either of type T1 (a) or of type T2 (b).

Let us denote by $\gamma_\texttt{p}$ the number of isolated $0$'s in the vector $\texttt{p}$, i.e. the number of voids in the pattern constituted of exactly one square. We deduce from the previous discussion that the reproduction matrix $\M$ is of size $2\times2$ if $p_2=p_{\la-1}=0$ and $\gamma_p=0$. Otherwise, it is a matrix of size $3\times3$, unless $p_2=p_{\la-1}=1$, in which case it is of size $4\times4$. We derive below a general formula for $\M$ as a function of $\la$, $\beta_p$ and $\gamma_p$ which holds in all cases. We explain for instance how to calculate the first line of $\M$, i.e. the cardinality of children of each type when the parent is of type T1. We first assert that the total number of children of a square of type T1 is $\la$, i.e. the number of squares of $\la^{-(n+1)}\cT$ along an edge of a square of $\la^{-n}\cT$, see Figure \ref{fig:SquareDeterChild} (a). Referring to the description above, we notice that the squares of the first family, i.e. above the squares of the chosen pattern, all belong to type T1. From the second family, only the squares which are inside a void of the pattern but not at one of the two ends of that void are of type T1. The rest of the squares of the second family belong to type T3 if they are in a one-square void or else of type T2. Regarding the third family, the square at the left end (resp. right end) is of type T1 if $p_2=0$ (resp. if $p_{\la-1}=0$) or of type T2 if $p_2=1$ (resp if $p_{\la-1}=1$). In other words, we first determine that the number of children of type T1 is $C^{1,1}=\la-2\beta_\texttt{p}+\gamma_\texttt{p}$, then that there are $C^{1,3}=\gamma_\texttt{p}$ children of type T3, no child of type T4, i.e. $C^{1,4}=0$,  and finally $C^{1,2}=\la-C^{1,1}-C^{1,3}-C^{1,4}=2\beta_\texttt{p}-2\gamma_\texttt{p}$  children of type T2. We proceed in a similar way to fill the remaining three lines of $\M$ and obtain that $\M= M_1 + M_2$ 
where
\begin{equation*}\label{eq:matrixsquarevonkoch1}
M_1=\left[\begin{matrix} 
\la-2\beta_\texttt{p}+\gamma_\texttt{p} & 2\beta_\texttt{p}-2\gamma_\texttt{p} & \gamma_\texttt{p} & 0 \\
2(\la-2\beta_\texttt{p}+\gamma_\texttt{p})-2 & 2(2\beta_\texttt{p}-2\gamma_\texttt{p}) & 2\gamma_\texttt{p} & 0 \\
3(\la-2\beta_\texttt{p}+\gamma_\texttt{p})-4& 3(2\beta_\texttt{p}-2\gamma_\texttt{p})
& 3\gamma_\texttt{p} & 0 \\
4(\la-2\beta_\texttt{p}+\gamma_\texttt{p})-8 & 4(2\beta_\texttt{p}-2\gamma_\texttt{p}) & 4\gamma_\texttt{p} & 0
\end{matrix}\right]
\end{equation*}
and
\begin{equation*}\label{eq:matrixsquarevonkoch2}
M_2=\left[\begin{mmatrix} 
0 & 0 & 0 & 0 \\
p_2+p_{\la-1} & -p_2-p_{\la-1} + (1-p_2)(1-p_{\la-1}) & p_2(1-p_{\la-1})+(1-p_2)p_{\la-1} & p_2p_{\la-1} \\
2p_2+2p_{\la-1} & -2p_2-2p_{\la-1} + 2(1-p_2)(1-p_{\la-1})
& 2p_2(1-p_{\la-1}) + 2(1-p_2)p_{\la-1} & 2p_2p_{\la-1} \\
4p_2+4p_{\la-1} & -4p_2-4p_{\la-1} + 4(1-p_2)(1-p_{\la-1})
& 4p_2(1-p_{\la-1}) + 4(1-p_2)p_{\la-1} & 4p_2p_{\la-1}
\end{mmatrix}\right]. 
\end{equation*}

\noi The eigenvalues of $\M$ are $0<1\leq 1+p_2p_{\la-1}<\la+2\beta_\texttt{p}=\rho_\M$. This fact combined with Theorem \ref{theo:main} shows \eqref{eq:dimautosimcarre}.

\renewcommand{\arraystretch}{0.8}
\begin{figure}[h!]
\begin{center}
\begin{tabular}{cc}
\includegraphics[scale=0.6]{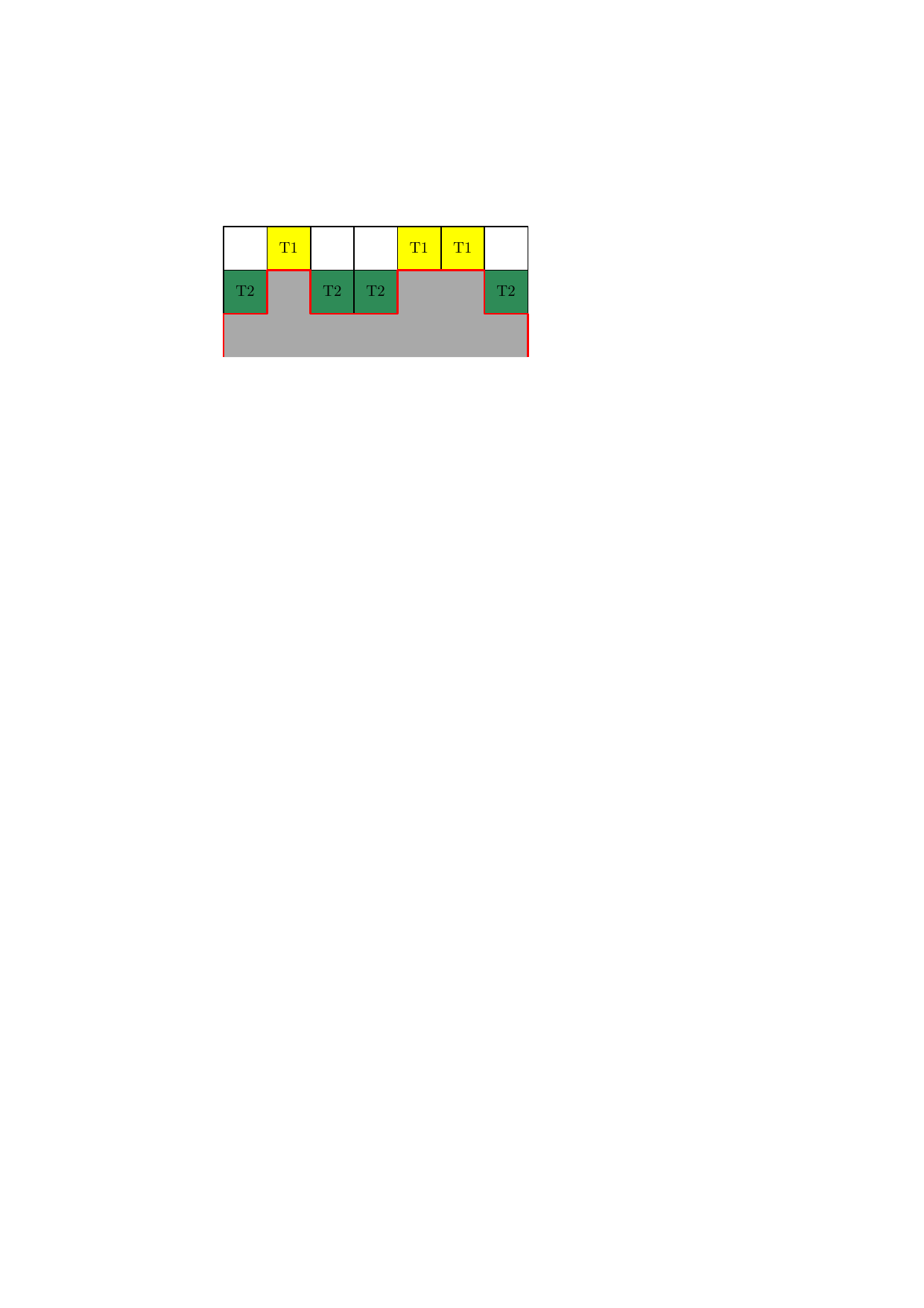} & 
\includegraphics[scale=0.6]{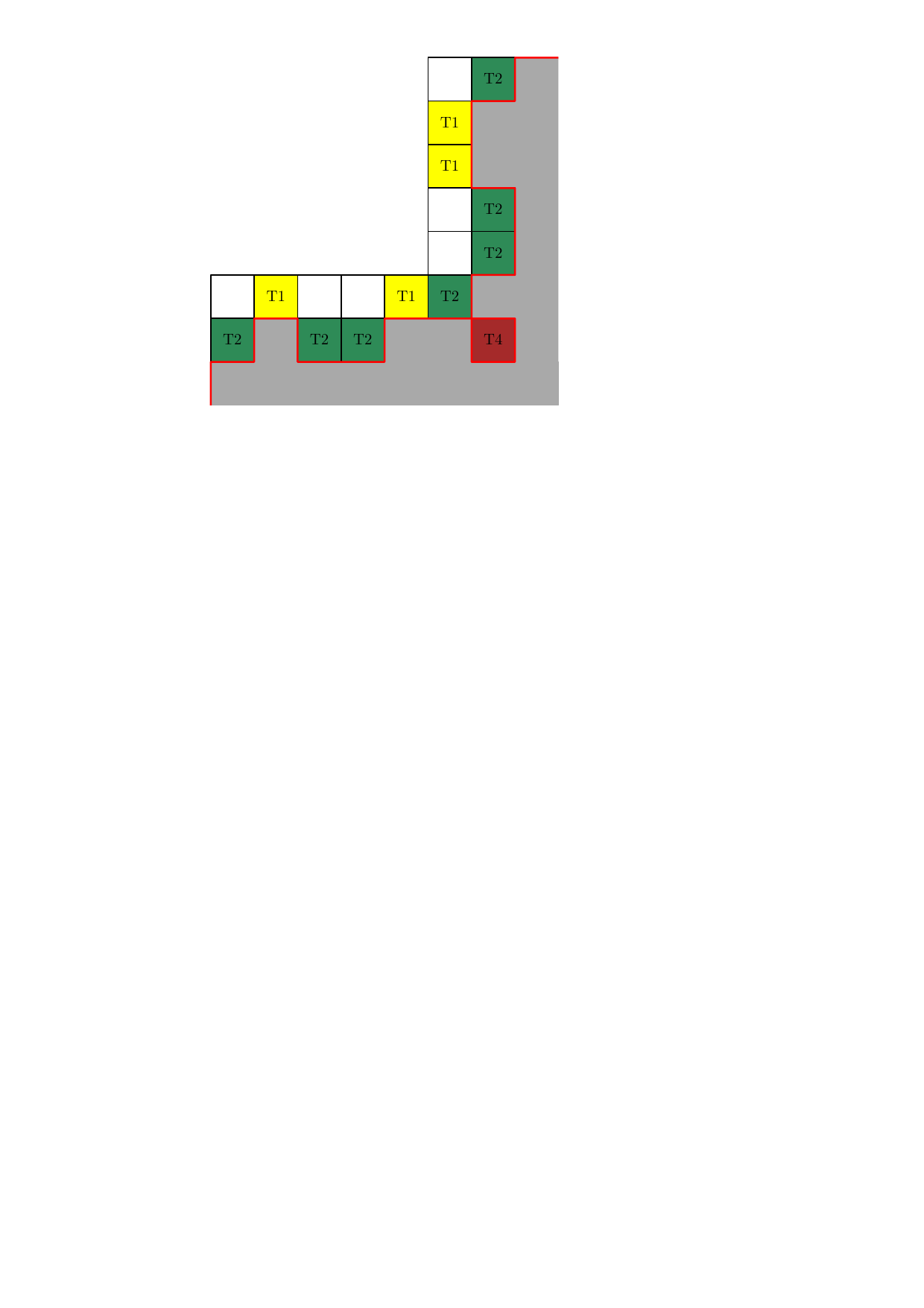} \\
\footnotesize{(a)} & \footnotesize{(b)}
\end{tabular}
\end{center}
\vspace{-.35cm}
\caption{Continuation of the example of the deterministic pattern for the square tessellation with $\la=7$ and $\texttt{p}=(1,0,0,1,1)$. The figures (a) and (b) show the children of a parent square of type T1 and of type T2 respectively.}
\label{fig:SquareDeterChild}
\end{figure}
\renewcommand{\arraystretch}{1.5}

The perimeter $\cL(\cK_n)$ and the area $\cA(\cK_{n+1}\setminus\cK_n)$ satisfy the asymptotic relations given in Theorem \ref{theo:perimaire} with $\rho_\M=\la+2\beta_\texttt{p}$. In fact, by a direct calculation, we can make explicit the constants involved. Indeed, noticing that $\la^n\cL(\cK_n)$ is the number of edges along the boundary of $\cK_n$, we observe that for any $n\ge 0$,
\begin{equation*}
\cL(\cK_{n+1})=\la^n\cL(\cK_n)\la^{-(n+1)}(\la+2\beta_\texttt{p})\mbox{ and }
\cA(\cK_{n+1}\setminus\cK_n)=\la^n\cL(\cK_n)\sigma_\texttt{p}\la^{-2(n+1)}.
\end{equation*}
Since $\cK_0$ is a square with perimeter $4$, this implies \eqref{eq:perimaireSquareDeter}.\hfill $\square$

\newpage
\subsection{Reducing the matrix $\M$}\label{sec:reductionmatrix}

\noi

In most cases as for instance the previous one, it turns out that some of the types appearing in the construction of the matrix $\M$ are redundant and this explains why $0$ appears in the spectrum. As a consequence, as soon as a non-trivial linear relation between the rows is unveiled, we propose to reduce the matrix by removing one type.

We recall that $\M$ is a square matrix of size $\tmax\times\tmax$ constituted with entries equal to $\E[C^{t,u}]$ and for $1\le t,u\le \tmax$, we denote by $\texttt{R}_t=(\E[C^{t,1}],\ldots,\E[C^{t,\tmax}])$ (resp. $\texttt{C}_u=(\E[C^{1,u}],\ldots,\E[C^{\tmax,u}])^*$) the $t$th row (resp. $u$th column) of $\M$. In particular, $\texttt{R}_t$ is the mean contribution of a parent of type $t$ while $\texttt{C}_u$ is the mean number of children of type $u$ per parent.   

\begin{lem}\label{lem:reductionmatrix}
Let us assume the existence of $\alpha_1,\ldots,\alpha_{\tmax-1}\in\R$ such that the linear combination $\texttt{R}_t=\sum_{t=1}^{\tmax-1}\alpha_t\texttt{R}_t$ is satisfied. Then $\M$ has same spectral radius as ${\M}'$ which is a square matrix of size $(\tmax-1)\times(\tmax-1)$ obtained by replacing each column $\texttt{C}_u$, $1\le u\le \tmax-1$, by $\texttt{C}_u+\alpha_u\texttt{C}_{\tmax}$, then removing from $\M$ the line $\texttt{R}_{\tmax}$ and the column $\texttt{C}_{\tmax}$.
\end{lem}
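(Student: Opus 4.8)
The plan is to treat this as a purely linear-algebraic identity and to realize the prescribed column operation as one half of a \emph{similarity} transformation, so that the full spectrum---and in particular the spectral radius---is automatically preserved. First I would introduce the unipotent elementary matrix
\[
P=\id+\sum_{u=1}^{\tmax-1}\alpha_u E_{\tmax,u},
\]
where $E_{\tmax,u}$ is the matrix whose only nonzero entry is a $1$ in position $(\tmax,u)$. Right multiplication by $P$ performs exactly the column replacements of the statement: the $u$th column of $\M P$ equals $\texttt{C}_u+\alpha_u\texttt{C}_{\tmax}$ for $u\le\tmax-1$, while the last column $\texttt{C}_{\tmax}$ is left untouched. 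Since $N:=\sum_{u}\alpha_u E_{\tmax,u}$ satisfies $N^2=0$, the matrix $P$ is invertible with $P^{-1}=\id-N$, and consequently $P^{-1}\M P$ is similar to $\M$ and shares all of its eigenvalues.

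Next I would compute $P^{-1}\M P=(\id-N)(\M P)$ and track the effect of the left factor $\id-N$: it alters only the last row, replacing it by $\texttt{R}_{\tmax}^{\M P}-\sum_{t=1}^{\tmax-1}\alpha_t\texttt{R}_t^{\M P}$, where $\texttt{R}_t^{\M P}=\texttt{R}_t\,P$ is the $t$th row of $\M P$. The crucial observation is that the hypothesis $\texttt{R}_{\tmax}=\sum_{t=1}^{\tmax-1}\alpha_t\texttt{R}_t$ survives right multiplication by $P$, since by linearity $\texttt{R}_{\tmax}^{\M P}=\texttt{R}_{\tmax}P=\sum_{t=1}^{\tmax-1}\alpha_t\,\texttt{R}_t P=\sum_{t=1}^{\tmax-1}\alpha_t\texttt{R}_t^{\M P}$. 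Hence the last row of $P^{-1}\M P$ vanishes identically. As $\id-N$ leaves the first $\tmax-1$ rows of $\M P$ unchanged, the top-left $(\tmax-1)\times(\tmax-1)$ block of $P^{-1}\M P$ is precisely what one gets by applying the column operation to $\M$ and then deleting the last row and column, namely $\M'$. This gives the block-triangular form
\[
P^{-1}\M P=\begin{pmatrix}\M' & b\\ 0 & 0\end{pmatrix},
\]
with $b$ the first $\tmax-1$ entries of $\texttt{C}_{\tmax}$.

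From here the conclusion is immediate: the characteristic polynomial factors over the diagonal blocks, $\det(x\,\id-P^{-1}\M P)=x\cdot\det(x\,\id_{\tmax-1}-\M')$, so $\spec(\M)=\spec(\M')\cup\{0\}$. Adjoining the eigenvalue $0$ cannot raise the maximal modulus, hence $\rho_\M=\rho(P^{-1}\M P)=\rho_{\M'}$, as claimed. I expect the main---and really the only---delicate point to be the decision to conjugate by the \emph{same} $P$ that encodes the column replacement: one must verify that the companion left multiplication by $P^{-1}$ induces exactly the row combination that the rank hypothesis annihilates, which is what collapses the last row to zero and exposes $\M'$ as a diagonal block. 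Once that bookkeeping is in place, the preservation of the spectral radius follows formally.
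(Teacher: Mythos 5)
Your proof is correct and follows essentially the same route as the paper: both conjugate $\M$ by the unipotent matrix encoding the row/column operations (your $P$ is exactly the paper's $\prod_t L_{\tmax,t}(\alpha_t)$), observe that the hypothesis kills the last row, and pass to the principal $(\tmax-1)\times(\tmax-1)$ block. Your write-up is merely a bit more explicit about the block-triangular form and the resulting factorization of the characteristic polynomial.
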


\begin{proof}
We show that the matrix $\M'$ defined above has the same eigenvalues as $\M$ minus one $0$. Proceeding with the operation in $\M$ which consists in substracting to $\texttt{R}_{\tmax}$ the combination $\sum_{t=1}^{\tmax-1}\alpha_t\texttt{R}_t$, we obtain a new matrix with a zero row vector and equal to $\widetilde{\M}=\prod_{i=t}^{\tmax-1}L_{\tmax,t}(-\alpha_t)\M$ where $L_{t,u}(x)$ is the elementary matrix which differs from the identity matrix by a coefficient $x$ in the $(t,u)$ position. Then, for each parent of type $t$, $1\le u\le \tmax-1$, we need to remove the mean number of children of type $\tmax$ and replace it by the equivalent quantity of children of types $1\le u\le \tmax-1$, namely $\alpha_u$ children of type $u$ instead of each child of type $\tmax$. This means proceeding with the operation of adding $\alpha_u\texttt{C}_{\tmax}$ to $\texttt{C}_u$. This leads to a new matrix $\widetilde{\M}$ equal to
\begin{equation}
\widetilde{\M}=\bigg(\prod_{t=1}^{\tmax-1}L_{\tmax,t}(-\alpha_t)\bigg)\M \bigg( \prod_{t=1}^{\tmax-1}L_{\tmax,t}(\alpha_t)\bigg). 
\end{equation}
Since $\prod_{i=t}^{\tmax-1}L_{\tmax,t}(\alpha_t)=\left(\prod_{t=1}^{\tmax-1}L_{\tmax,t}(-\alpha_t)\right)^{-1}$, we deduce that $\widetilde{\M}$ has same spectrum as $\M$ and this spectrum includes $0$ because $\widetilde{\M}$ has a zero row vector in place of the row vector $\texttt{R}_{\tmax}$ of $\M$. Consequently, $\widetilde{\M}$ and its submatrix $\M'$ obtained by removing its last row and last column have same spectral radius.
\end{proof}

An important consequence of Lemma \ref{lem:reductionmatrix} is that we can drastically reduce the size of the matrix $\M$ in all of the examples described in the next sections. Indeed, we can discriminate the types $t$ which are kept, called \textit{survivor types} and the types which are removed, called the \textit{phantom types}. Typically, the survivor types are those from the tiles $\cT_{n,\ell}$ such that the intersection $\partial\cT_{n,\ell}\cap \partial\cK_n$ is smaller (one vertex, one edge, two edges) whereas each phantom type satisfies that its corresponding row vector in $\M$ is a linear combination of the rows of $\M$ corresponding to the survivor types. For example, let us denote by $\texttt{R}_t$, $t=1,2,3$, the row associated with the type T$t$, $t=1,2,3$, where we denote by T$t$ the type of a tile $\cT_{n,\ell}$ such that its intersection with $\partial\cK_n$ consists in $t$ consecutive edges. Similarly, when the intersection of a tile $\cT_{n,\ell}$ with $\partial\cK_n$ is reduced to one vertex, we call T$0$ its type. We observe that
\begin{equation*}
\texttt{R}_3=2\texttt{R}_2-\texttt{R}_1,
\end{equation*}
which means that when we choose T1 and T2 as survivor types, type T3 is then a phantom type. Besides, we notice that a tile with type T2 cannot be replaced by two tiles with type T1. The complete list of reductions rules will be described for each of the three tessellations in Tables \ref{tab:ReductionRulesHexa} and \ref{tab:ReductionRulesSquare}, and in the beginning of Section \ref{sec:annexetriangle} respectively. In particular, these rules preserve the number of edges common to the tile and $\partial \cK_n$.
\enlargethispage*{0.5cm}

\addtocontents{toc}{\vspace{0.2cm}}%
\section{The method in action for the three tessellations}\label{sec:Model}

In this section, we apply Theorems \ref{theo:main} and \ref{theo:perimaire} in several concrete examples, i.e. when the underlying tessellation is either hexagonal, or square or triangular. This requires to identify the useful types related to the underlying multitype Galton-Watson tree and then to make explicit the reproduction matrix given at \eqref{eq:defMatrixM} after a suitable application of the reduction rules described in Lemma \ref{lem:reductionmatrix}. We obtain a matrix of size $2\times2$ in all cases, save for the case when the tessellation is square and the probability $p_*$ is equal to $1$ which leads to a matrix of size $3\times3$. The square case is also the only one which gives birth to a disconnected set $\partial \cK_\infty$, which justifies the asymptotics of the number of holes given in Proposition \ref{prop:squareholes}. 

Most of the calculation procedure relies on the decomposition of the set of potential children into smaller classes whoses cardinality and mean contribution to the offspring can be made explicit. All the details are summarized in precise tables available in the Appendix. Incidentally, we will place more emphasis on the hexagonal case as the other two go along similar lines.

\subsection{The case of the hexagonal tessellation}\label{sec:ModelHexa}

\noi

In this section, $\cT$ is the hexagonal tessellation with tiles of edge length $1$ and such that the origin is at the center of one hexagon from $\cT$. This particular rule is more natural since it guarantees the invariance of the model with respect to a rotation with center at the origin and angle $\frac{\pi}{3}$ and reflects in a more natural way the properties of a Poisson-Voronoi tessellation.

\renewcommand{\arraystretch}{0.8}
\begin{figure}[h!]
\begin{center}
\begin{tabular}{ccc}
\includegraphics[scale=1]{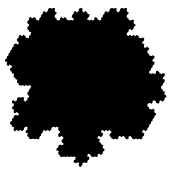} &
\includegraphics[scale=1.35]{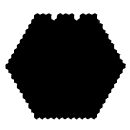} &
\includegraphics[scale=0.8]{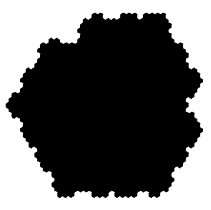} \\
\footnotesize{(a)} & \footnotesize{(b)} & \footnotesize{(c)}
\end{tabular}
\end{center}
\vspace{-.35cm}
\caption{Example of the limit set $\cK_\infty$ obtained with the hexagonal tessellation with $p*=0$, $p=0.5$ and, from left to right, $\la=3$, $\la=4$ and $\la=5$.}
\label{fig:HexagonModelSimu}
\end{figure}
\renewcommand{\arraystretch}{1.5}

In Propositions \ref{prop:Model0HexaCenter} and \ref{prop:Model1HexaCenter} below, we make explicit both the reduced reproduction matrix and its spectral radius in both cases $p_*=0$ and $p_*=1$. Interestingly, the calculation depends on the remainder of the Euclidean division of $\la$ by $3$. This arithmetical discrimination is due to the embedding of the consecutive grids in the particular context of the hexagonal grid. Surprisingly, in both Propositions, the spectral radius does not depend on $p$ when $\la=3\la'+2$. We then use Theorem \ref{theo:main} to calculate the dimension of $\partial \cK_\infty$ as a function of $p\in [0,1]$ and compare in two cases the graph of that function with the numerical estimate obtained by simulation, see Figure \ref{fig:HexaModelSimuDim}.

\newpage
\begin{prop}[Case $p_*=0$]\label{prop:Model0HexaCenter}
For the hexagonal model described above with parameters $(\la,p)$ where $\la=3\la'+r$ with $\la'\ge 1$ and $r\in \{0,1,2\}$, $p\in [0,1]$ and with the choice $p_*=0$, the reduced reproduction matrix is 
\begin{equation*}
\M'_0=\left[\begin{matrix} 1 & 2\la'
\\ 0 & 4\la'-p^2 \end{matrix}\right],\,
 \M'_1=\left[\begin{matrix} 1 & 2\la' 
\\ 0 & 4\la'+1-p \end{matrix}\right],\,
 \M'_2=\left[\begin{matrix} 1 & 2\la'+1
\\ 0 & 4\la'+2 \end{matrix}\right], 
\end{equation*}
where $\M'_0$ (resp. $\M'_1$, $\M'_2$) denotes the reduced reproduction matrix when $r=0$ (resp. $r=1$, $r=2$).

\pass Moreover, the spectral radius of the matrix $\M$ is given by
\begin{equation*}
\rho_\M = \left\{
\begin{array}{ll}
4\la'-p^2 & \text{if $r=0$} \\
4\la'+1-p & \text{if $r=1$} \\
4\la'+2 & \text{if $r=2$}
\end{array}
\right..
\end{equation*}
\end{prop}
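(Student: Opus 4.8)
The plan is to determine the full reproduction matrix $\M$ by a direct enumeration of the children of each type of parent tile, then to apply the reduction procedure of Lemma~\ref{lem:reductionmatrix} to obtain the announced $2\times 2$ matrices $\M'_0,\M'_1,\M'_2$, and finally to read off $\rho_\M$ from their triangular structure. The first step is to fix the set of types. Since $p_*=0$, a tile of $\cR_n$ shares at least one full edge with $\partial\cK_n$, so its type is recorded by the pattern of consecutive hexagon edges lying on $\partial\cK_n$; I write $\mathrm{T}t$ for a tile whose intersection with $\cK_n$ consists of $t$ consecutive edges. By the reduction rules collected in Table~\ref{tab:ReductionRulesHexa}, the rows associated with $\mathrm{T}3,\mathrm{T}4,\dots$ are linear combinations of those of $\mathrm{T}1$ and $\mathrm{T}2$ (for instance $\texttt{R}_3=2\texttt{R}_2-\texttt{R}_1$), so that $\mathrm{T}1$ and $\mathrm{T}2$ are the only survivor types and the reduced matrix is indeed $2\times 2$.

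The geometric heart of the argument is the enumeration of children, and this is where the dependence on $r=\la\bmod 3$ enters. Scaling the hexagonal grid by $\la^{-1}$ does not nest the fine grid inside the coarse one in a position-independent way: the manner in which the small hexagons of $\la^{-(n+1)}\cT$ line up along an edge of a tile of $\la^{-n}\cT$, and in particular at the two $120^\circ$ vertices bounding that edge, is governed by the remainder $r$ in $\la=3\la'+r$. I would carry out the count edge by edge, distinguishing the three cases $r\in\{0,1,2\}$; the systematic bookkeeping of how many small tiles sit along each shared edge, together with the Bernoulli rule of parameter $p$ deciding which of the ambiguous tiles near the vertices belong to $\cR_{n+1}$, is precisely what the tables of the Appendix (Section~\ref{sec:annexe}) record. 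Taking expectations of these counts produces the entries $\E[C^{t,u}]$ of $\M$. The factors $p$ and $p^2$ appearing in the bottom-right entries reflect, respectively, one or two independent Bernoulli choices influencing the type of a child of a $\mathrm{T}2$ parent, and their disappearance when $r=2$ mirrors the fact that in that case the alignment forces those children's types deterministically.

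Once $\M$ is written down, Lemma~\ref{lem:reductionmatrix}, applied with the coefficients dictated by $\texttt{R}_3=2\texttt{R}_2-\texttt{R}_1$ and its analogues, collapses $\M$ to the stated $\M'_r$ and, crucially, preserves the spectral radius. The computation of $\rho_\M$ is then immediate: each $\M'_r$ is upper triangular, with respective diagonals $(1,\,4\la'-p^2)$, $(1,\,4\la'+1-p)$ and $(1,\,4\la'+2)$, so its eigenvalues are $1$ and the bottom-right entry. Since $\la'\ge 1$ and $p\in[0,1]$, that bottom-right entry is at least $4\la'-1\ge 3>1$ in all three cases, so it equals the spectral radius of $\M'_r$ and hence, by Lemma~\ref{lem:reductionmatrix}, of $\M$. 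This yields the three stated values and, via Theorem~\ref{theo:main}, the corresponding dimension.

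The main obstacle is the geometric enumeration underlying $\M$: correctly tracking, for each of the three residues $r$, how the refined hexagonal tessellation meets a parent edge near its $120^\circ$ vertices and which of the borderline tiles fall into $\cR_{n+1}$ under the rule of parameter $p$. Everything downstream — the reduction to $\M'_r$ and the extraction of the eigenvalue — is then purely mechanical.
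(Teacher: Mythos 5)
Your proposal is correct and follows essentially the same route as the paper: reduce to the two survivor types T1 and T2 via the rules of Table \ref{tab:ReductionRulesHexa} and Lemma \ref{lem:reductionmatrix}, enumerate the potential children in groups whose structure depends on $r=\la\bmod 3$ (deferring the bookkeeping to the Appendix tables, exactly as the paper does), and extract $\rho_\M$ from the triangular form of $\M'_r$. The only cosmetic difference is that the paper works through one illustrative group (the pink children) explicitly before invoking the tables, whereas you describe the mechanism in general terms; the logical content is the same.
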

\begin{proof} The strategy consists in reducing the number of types so that the reproduction matrix $\M$ is a matrix of size $2\times2$ (Step 1). We then prepare the calculation of each entry by considering the set all potential children of a parent with given type and split it into subsets of hexagons with the same contribution in mean (Step 2). The outcome of Step 2 is summarized in Tables \ref{tab:Model0HexaR0Child}, \ref{tab:Model0HexaR1Child} and \ref{tab:Model0HexaR2Child} in the cases $r=0$, $r=1$ and $r=2$ respectively. We finally proceed with the explicit calculation of $\M$ and subsequently of $\rho_\M$ (Step 3). 

\pass{\bf\it Step 1: Reduction of $\M$.}
We first claim that the reduction of the number of types, see Lemma \ref{lem:reductionmatrix}, allows us to deal with only two survivor types: the type T1 of an hexagon with exactly one edge belonging to $\cK_n$ and the type T2 of an hexagon with exactly two consecutive edges belonging to $\partial\cK_n$. As a consequence of Lemma \ref{lem:reductionmatrix}, the reduction rules allow us to remove the phantom types T3, T4, T5 and T6, see Table \ref{tab:ReductionRulesHexa}. Notice that the two rightmost types in Figure \ref{fig:HexaAllTheTypes} only appear in the particular case $\la=3$.

\renewcommand{\arraystretch}{1.5}
\setlength{\arrayrulewidth}{0.1pt}
\begin{table}[h!]
\centering
\begin{tabular}{|c|c|c|c|c|c|}
\cline{3-6}
\multicolumn{1}{c}{} & & \multicolumn{4}{c|}{Phantom types} \\
\cline{3-6}
\multicolumn{1}{c}{} & & \rule{0cm}{0.75cm} \raisebox{0.5\height}{T3}\hspace{0.2cm} \raisebox{-.02\height}{\includegraphics[scale=0.75]{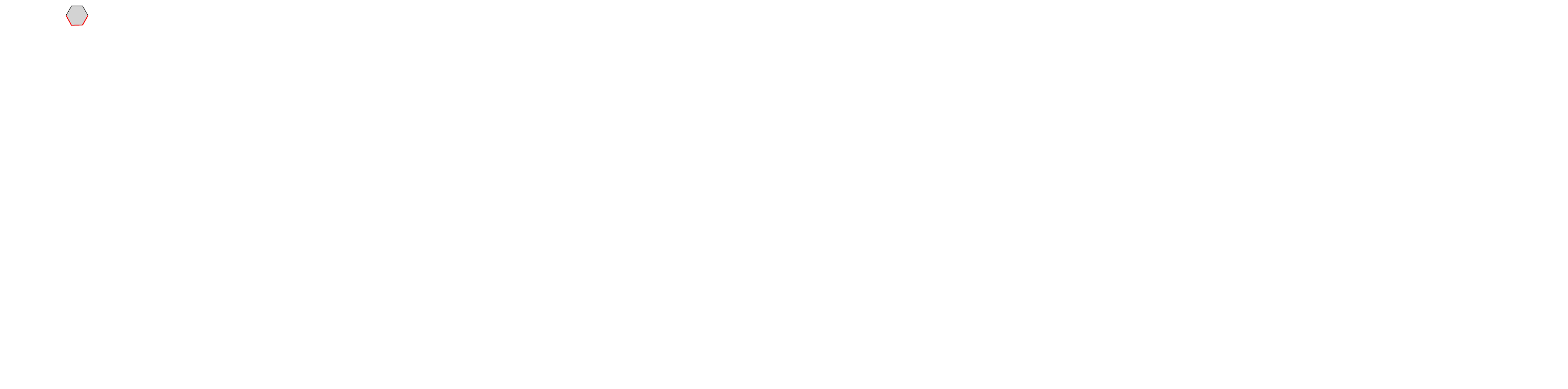}} 
& \rule{0cm}{0.75cm} \raisebox{0.5\height}{T4}\hspace{0.2cm} \raisebox{-.02\height}{\includegraphics[scale=0.75]{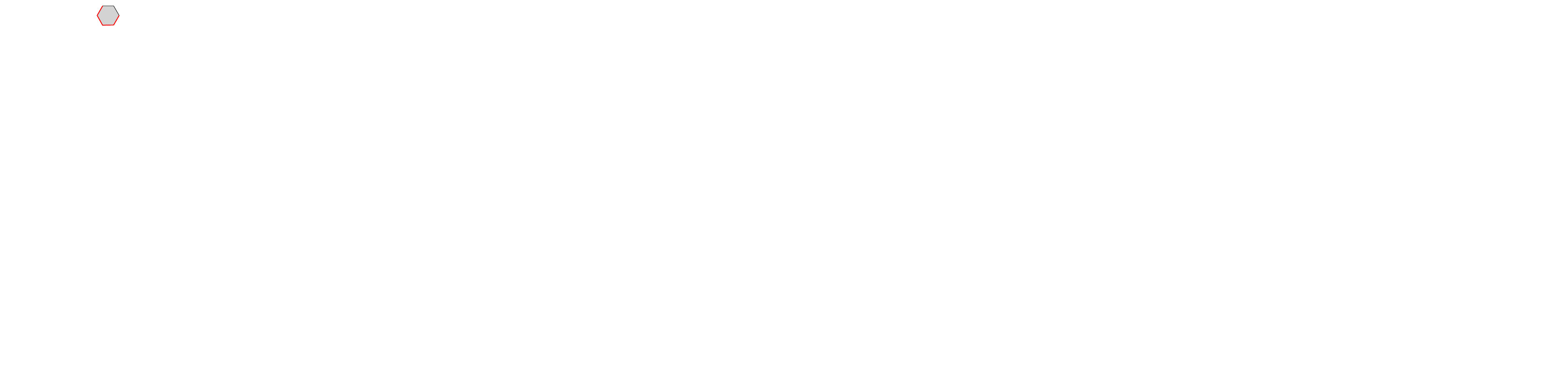}}
& \rule{0cm}{0.75cm} \raisebox{0.5\height}{T5}\hspace{0.2cm} \raisebox{-.02\height}{\includegraphics[scale=0.75]{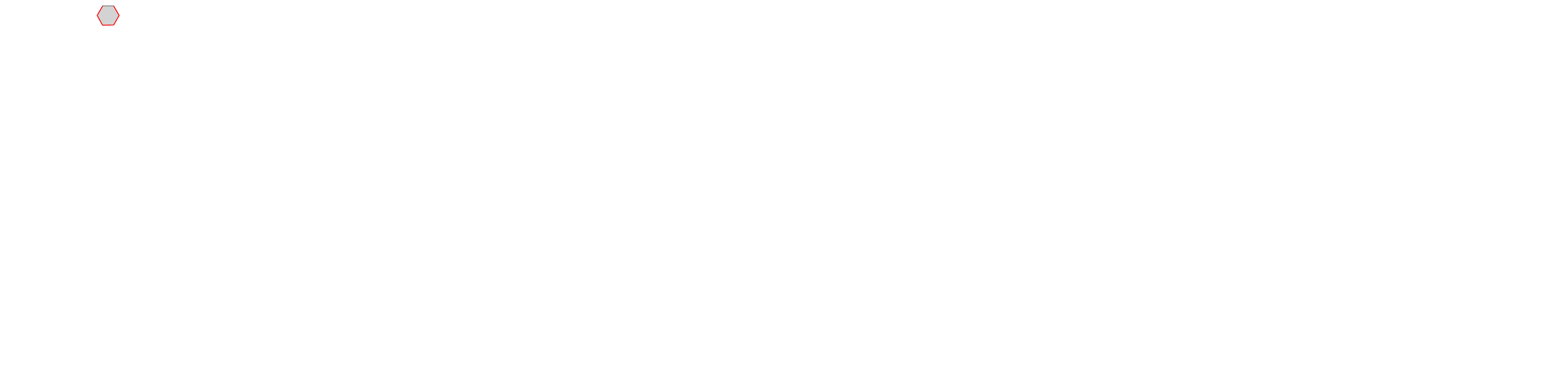}}
& \rule{0cm}{0.75cm} \raisebox{0.5\height}{T6}\hspace{0.2cm} \raisebox{-.02\height}{\includegraphics[scale=0.75]{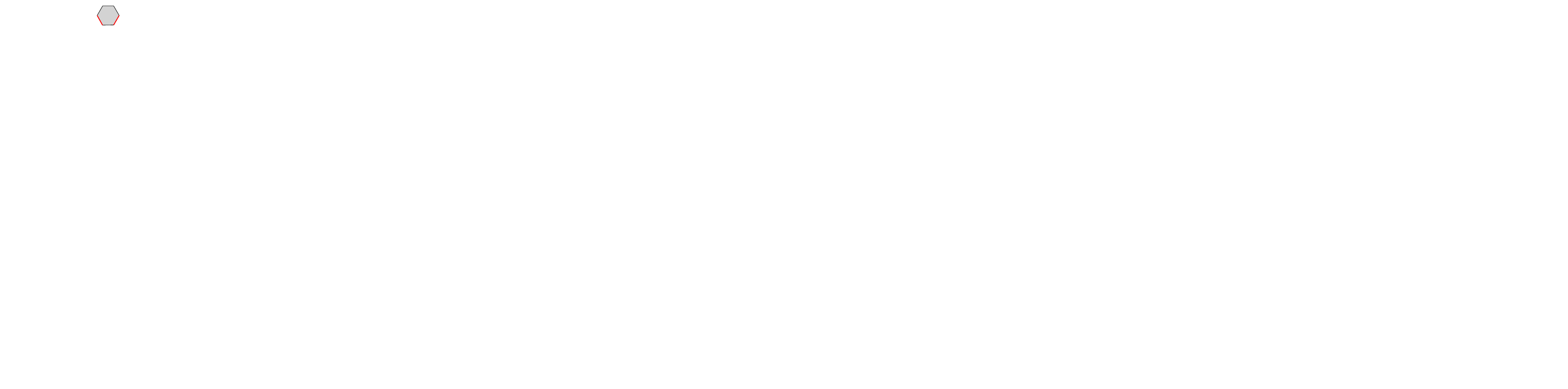}}\\
\hline
\multirow{3}{.7cm}{\rotatebox{90}{\hspace{0.5cm}Survivor}\;\rotatebox{90}{\hspace{0.75cm}types}}
& \rule{0cm}{0.75cm} \raisebox{0.5\height}{T1}\hspace{0.2cm} \raisebox{-.02\height}{\includegraphics[scale=0.75]{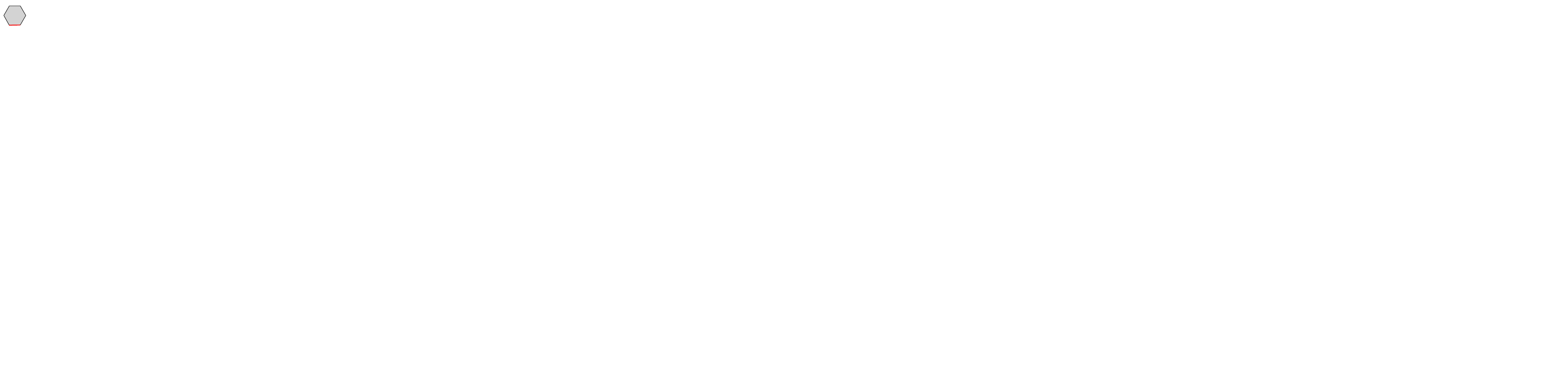}}
& \raisebox{0.5\height}{$-1$} & \raisebox{0.5\height}{$-2$} & \raisebox{0.5\height}{$-3$} & \raisebox{0.5\height}{$2$} \\
\cline{2-6}
& \rule{0cm}{0.75cm} \raisebox{0.5\height}{T2}\hspace{0.2cm} \raisebox{-.02\height}{\includegraphics[scale=0.75]{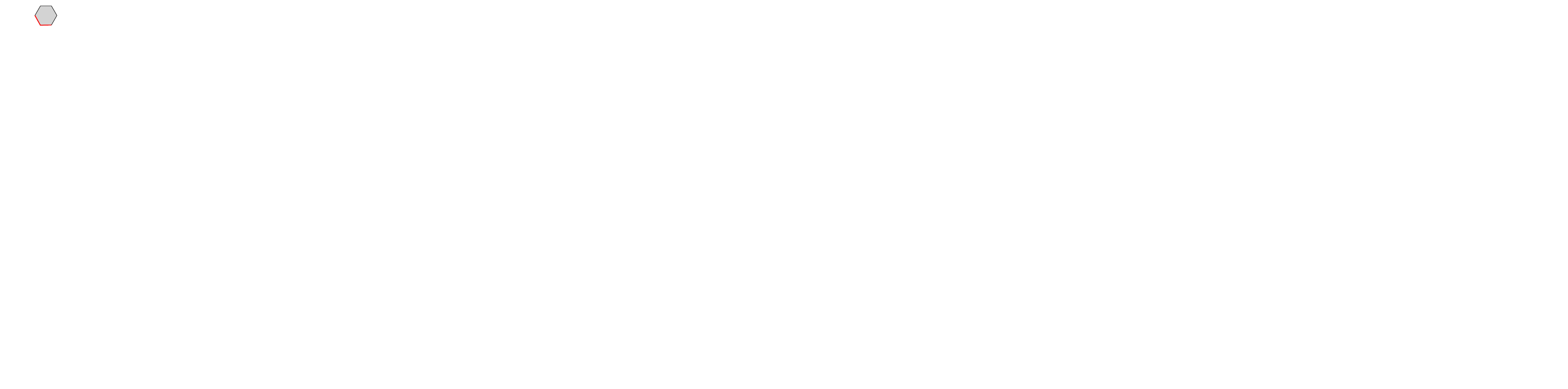}}
& \raisebox{0.5\height}{$2$} & \raisebox{0.5\height}{$3$} & \raisebox{0.5\height}{$4$} & \raisebox{0.5\height}{$0$} \\
\hline
\end{tabular}
\pass\caption{The explicit rules for the reduction of the four different types T3, T4, T5 and T6. Each phantom type may be expressed as a linear combination of the two survivor types T1 and T2 only.}
\label{tab:ReductionRulesHexa}
\end{table}
\pass After application of the reduction rule, the new matrix which shares the same spectral radius as $\M$ is a matrix of size $2\times 2$ called $\M'$.

\newpage
\pass{\bf\it Step 2: Decomposition of the set of potential children of each parent.}
Each parent $\cT_{n,\ell}$ of type T1 or T2 belongs to $\cR_n$ and gives birth to children whose union is the intersection of the set $\cR_{n+1}$ with $\cT_{n,\ell}$. We need to determine the cardinality of the children of both types T1 and T2. To do so, we adopt an inverse procedure, i.e. we start with the whole set of potential children and study the mean contribution of each potential child separately. By {\it mean contribution}, we mean the expected number of children of type T1 and of type T2 which are induced by the potential child. The potential children lie in the first three lines of $\cT_{n+1}$ inside $\cT_{n,\ell}$ \textit{above} the edges (one or two) of $\cT_{n,\ell}$ which lie in $\cK_n$. The contribution of a potential child only depends on the local geometry around it so that we can partition the set of potential children into groups which are labelled with different colors, see Figure \ref{fig:Model0HexaR0AllTheChild} below and Table \ref{tab:HexagonChild}. This partition depends on the remainder $r$ of $\la$ when divided by $3$. This leads us to consider the three different cases $r=0$, $r=1$ and $r=2$. When $r$ is fixed, the cardinality of each group then is an affine function of $\la$. 

\renewcommand{\arraystretch}{0.8}
\begin{figure}[h!]
\begin{center}
\begin{tabular}{cc}
\includegraphics[scale=0.75]{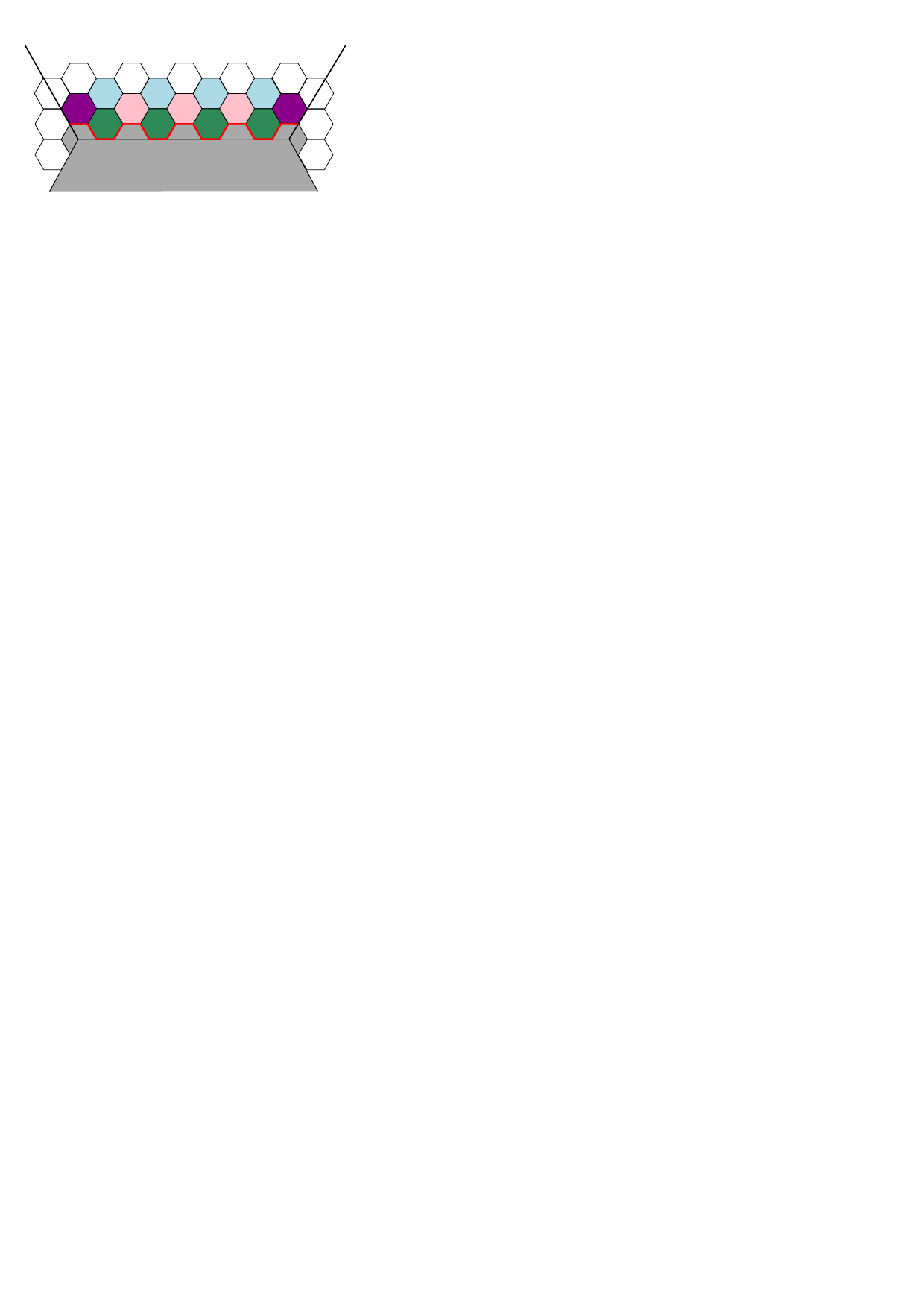} &
\includegraphics[scale=0.75]{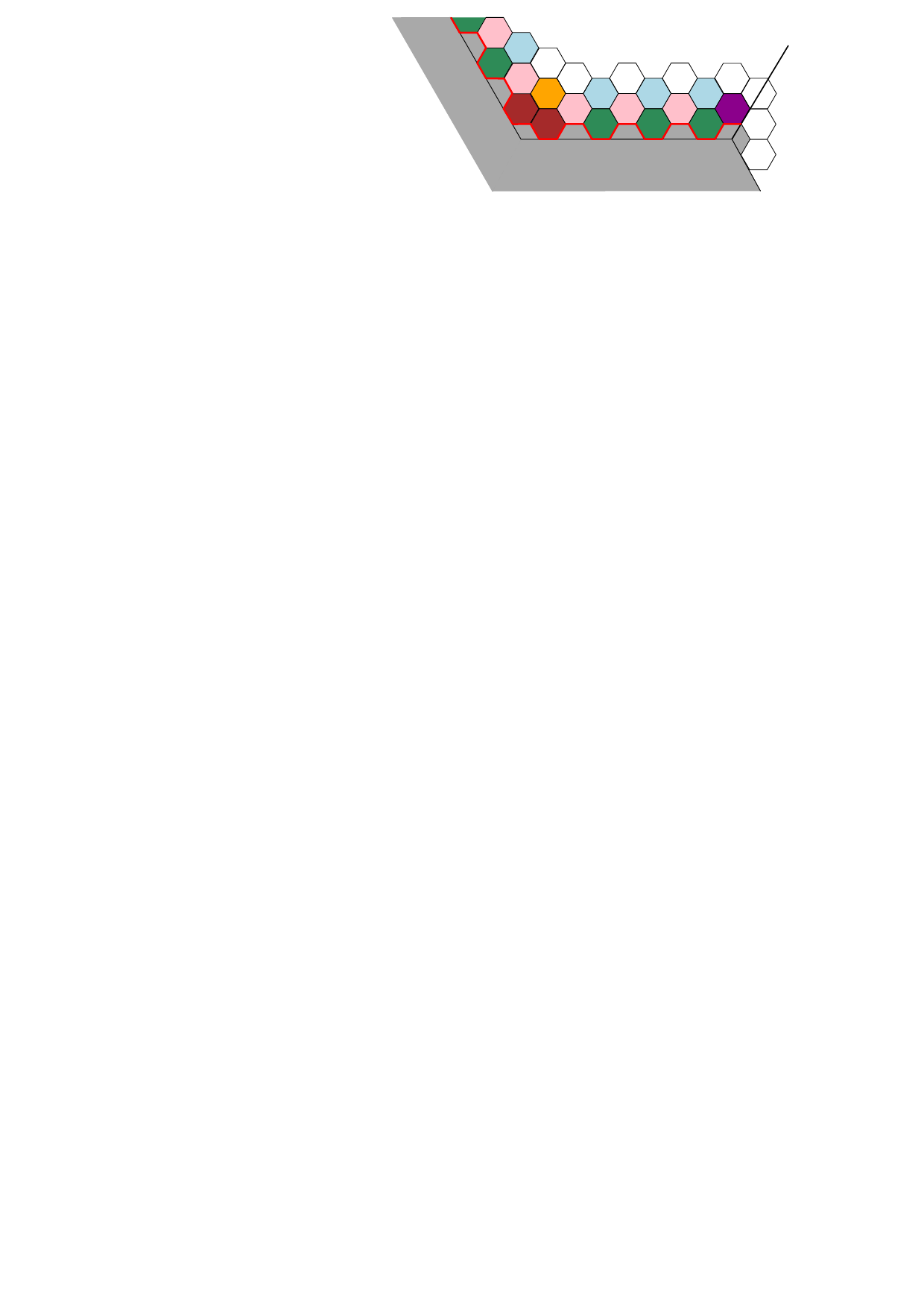} \\
\footnotesize{(a)} & \footnotesize{(b)}
\end{tabular}
\end{center}
\vspace{-.35cm}
\caption{The potential children of a T1-parent (a) and a T2-parent (b) for the case of the hexagonal tessellation when $p_*=0$ and $r=0$ (here $\la=12$).}
\label{fig:Model0HexaR0AllTheChild}
\end{figure}
\renewcommand{\arraystretch}{1.5}

\pass{\bf \it Step 3: Calculation of the reduced matrix coefficients.}
As soon as the color of a potential child, i.e. the group it belongs to, is fixed, the mean contribution of that potential child is a deterministic function of one or two independent Bernoulli variables with parameter $p$ that we can make explicit. Tables \ref{tab:Model0HexaR0Child}, \ref{tab:Model1HexaR1Child} and \ref{tab:Model0HexaR2Child} summarize the groups, the cardinality of each group and the contribution of each member of a fixed group. Let us notice that in order to keep the set of children nested in one parent tile, we allow the occurrence of half-hexagons. For instance, in the case $r=0$, let us explain how to determine the line in Table \ref{tab:Model0HexaR0Child} related to a pink potential child. 

\renewcommand{\arraystretch}{0.8}
\begin{figure}
\begin{center}
\begin{tabular}{cccc}
\includegraphics[scale=1]{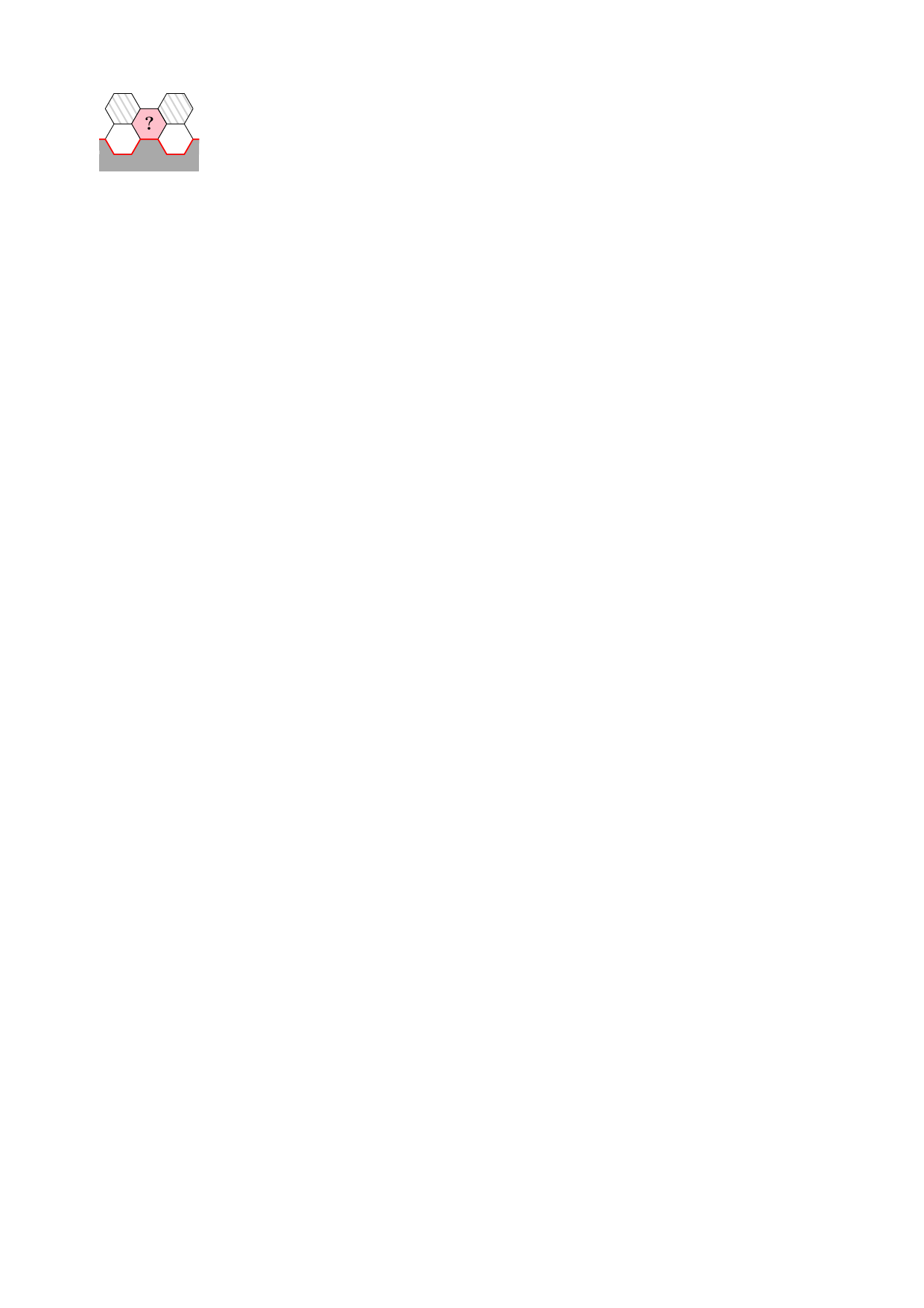} & 
\includegraphics[scale=1]{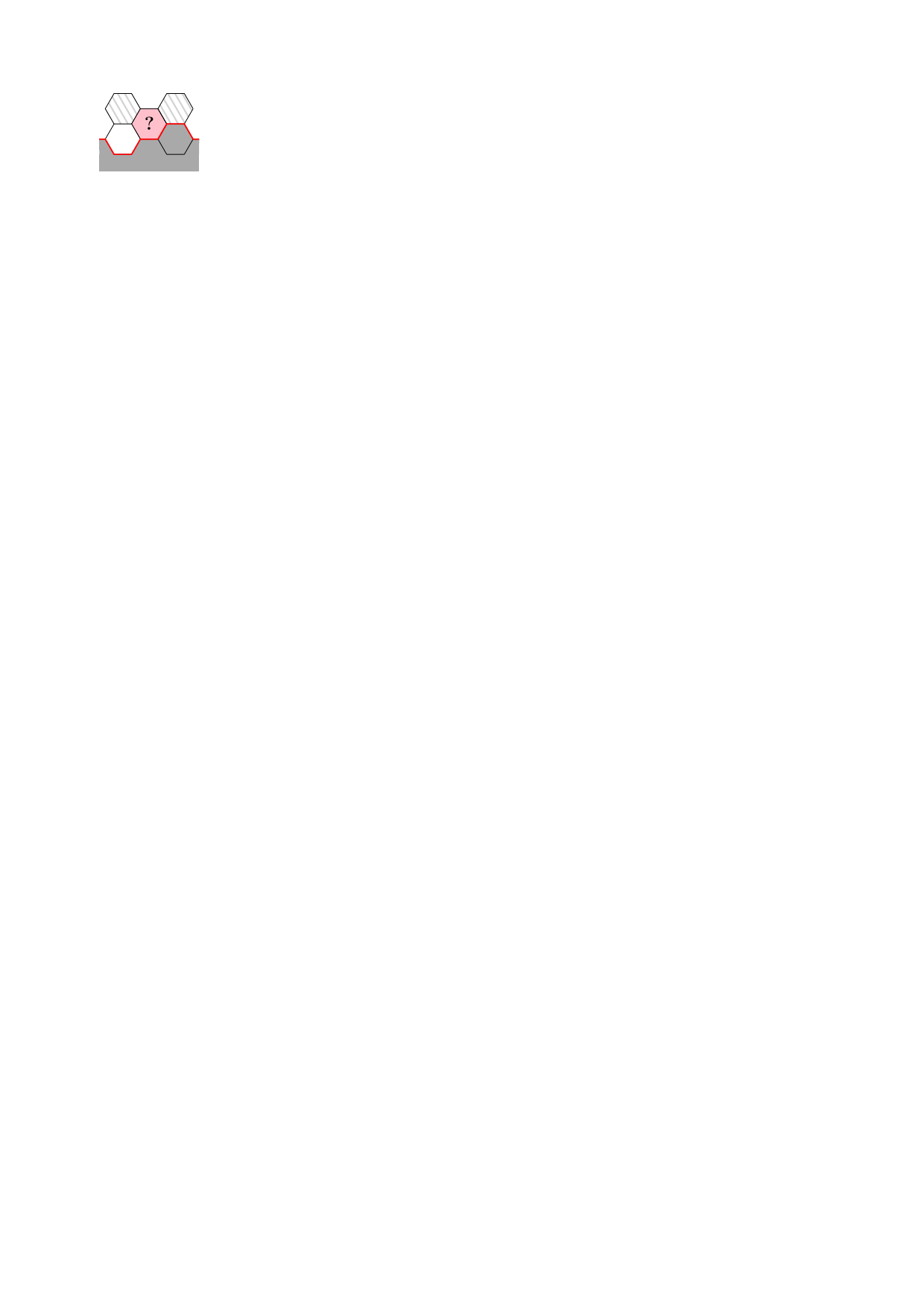} & 
\includegraphics[scale=1]{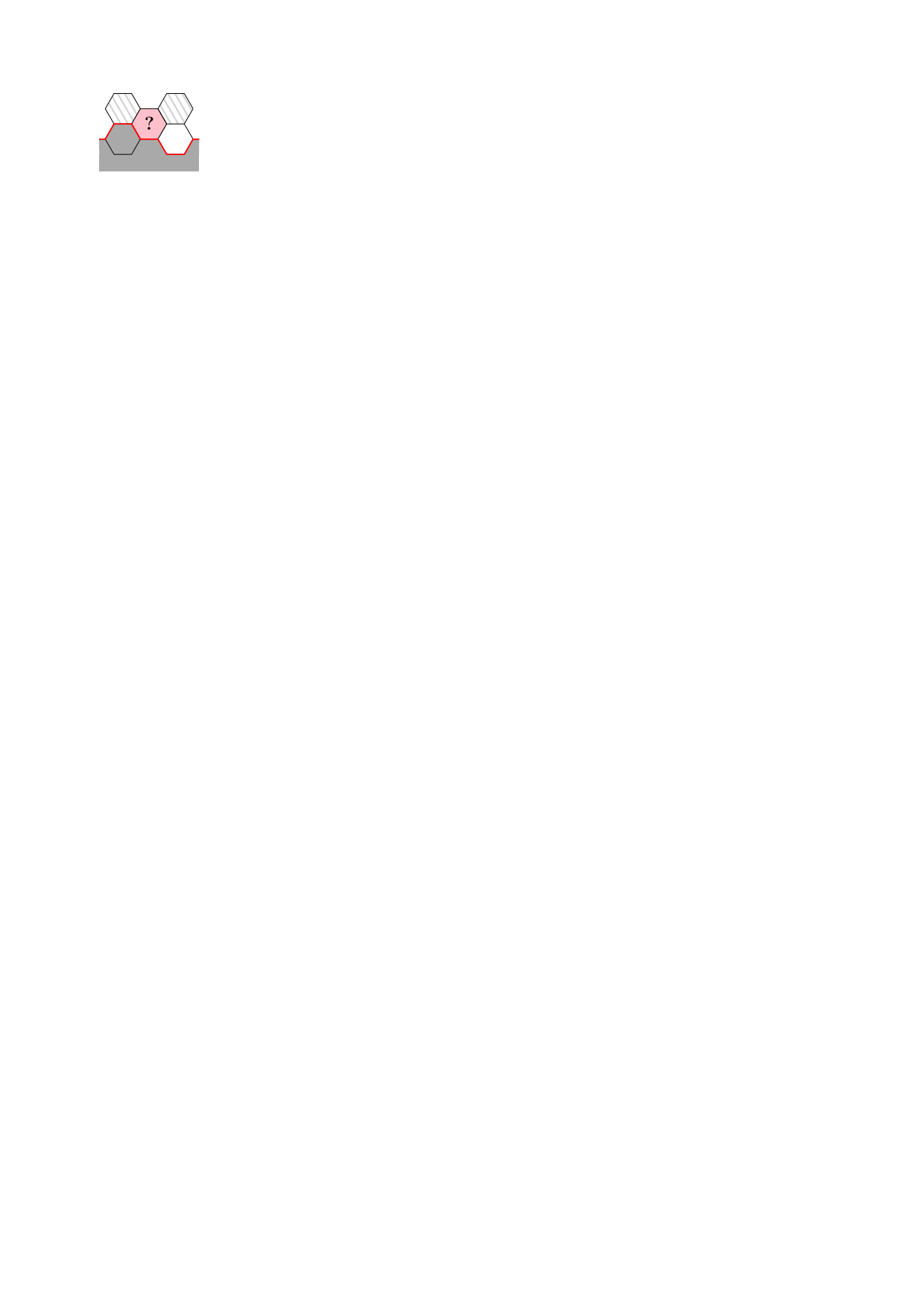} &
\includegraphics[scale=1]{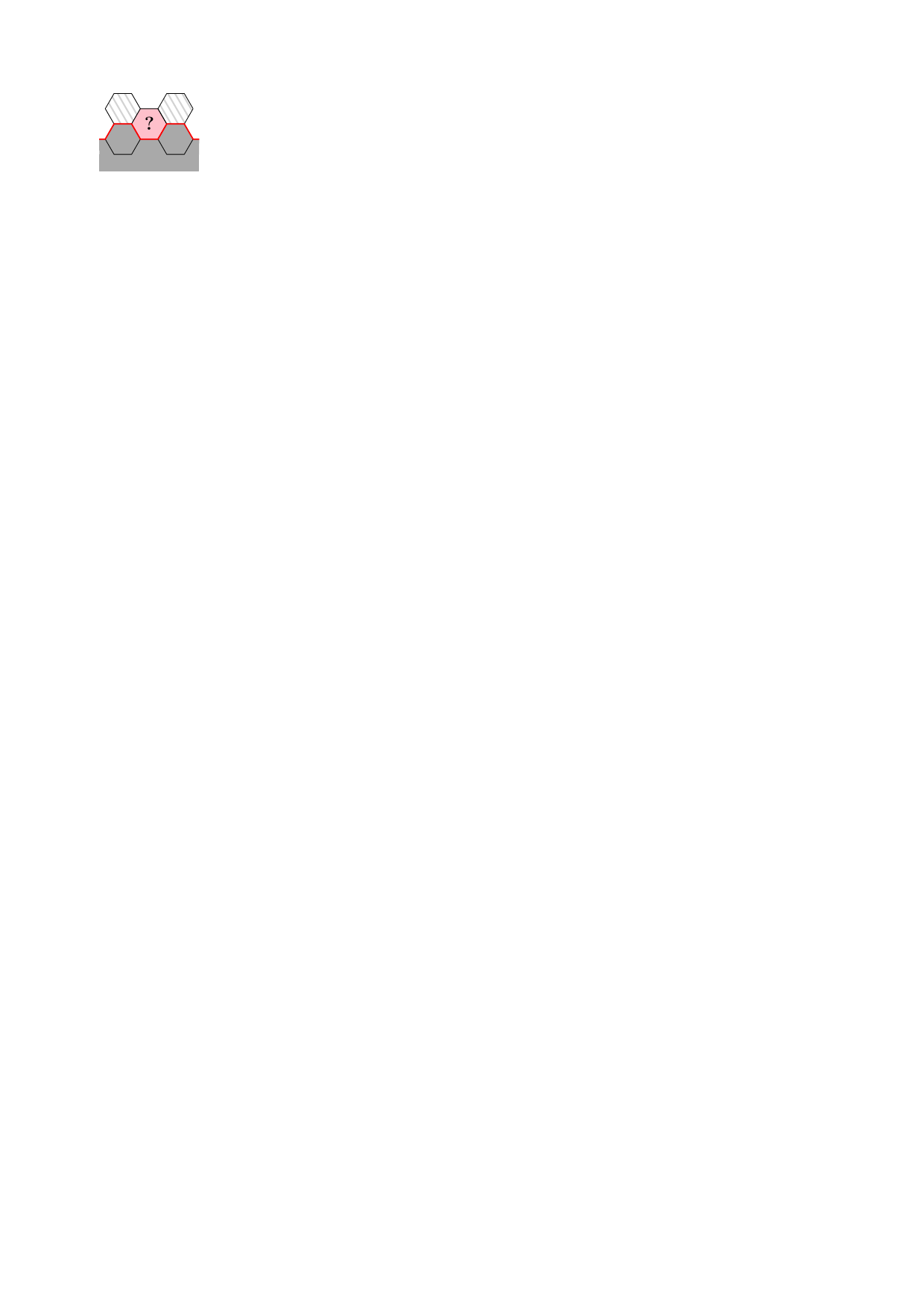} \\
\footnotesize{(a)} & \footnotesize{(b)} & \footnotesize{(c)} & \footnotesize{(d)}
\end{tabular}
\end{center}
\vspace{-.35cm}
\caption{Status of a pink potential child of a parent of type T1 in the case $r=0$ for the model with $p_*=0$. It only depends on the realization of the two Bernoulli variables associated with the tiles under the pink tile.}
\label{fig:Model0HexaR0ChildT1Pink}
\end{figure}
\renewcommand{\arraystretch}{1.5}

\pass Such a potential child lies above a half-hexagon which belongs to $\cK_{n+1}$ and it is surrounded by two hexagons from Group $1$, i.e. they belong to $\cK_{n+1}$ with probability $p$ independently from each other. The type of the pink potential child is decided as soon as the couple $B_p^{(2)}$ of these two independent Bernoulli variables is realized:
\begin{itemize}[parsep=-0.15cm,itemsep=0.2cm,topsep=0.2cm,wide=0.15cm,leftmargin=0.65cm]
\item[$-$] when $B_p^{(2)}=(0,0)$, which happens with probability $(1-p)^2$, only the edge at the bottom of the pink hexagon belongs to $\cK_{n+1}$. Consequently, the pink hexagon is a child of type $T1$ and its contribution is $(1,0)$, see Figure \ref{fig:Model0HexaR0ChildT1Pink} (a).
\item[$-$] when $B_p^{(2)}=(0,1)$ (or $(1,0)$), which happens with probability $p(1-p)$, exactly two consecutive edges of the pink hexagon belong to $\cK_{n+1}$. Consequently, the pink hexagon is a child of type $T2$ and its contribution is $(0,1)$, see Figure \ref{fig:Model0HexaR0ChildT1Pink} (b) and (c).
\item[$-$] when $B_p^{(2)}=(1,1)$, which happens with probability $p^2$, exactly three consecutive edges of the pink hexagon belong to $\cK_{n+1}$. Consequently, the pink hexagon is a child of type $T3$ and its contribution is $(-1,2)$ thanks to the reduction rule listed in Table \ref{tab:ReductionRulesHexa}, see Figure \ref{fig:Model0HexaR0ChildT1Pink} (d).
\end{itemize}

Once the type of the parent is fixed, either T1 or T2, we then have to sum up over the groups the product of the cardinality of the group by its individual mean contribution. This provides the first then the second line of the matrix $\M'$. For instance, in the case $r=0$, denoting by $\cN_\texttt{c}^{(0)}=\cN_\texttt{c}^{(0)}(\mbox{T1})$ the cardinality of potential children of a parent T1 of color \texttt{c} belonging to $\{\texttt{green}, \texttt{pink}, \texttt{purple}, \texttt{blue}, \texttt{brown}, \texttt{orange}\}$, we obtain that the first row $\texttt{R}_1$ of $\M'$ is obtained by the formula 
\begin{align*}
\texttt{R}_1
& = \sum_{\substack{c\,\in \{\texttt{green},\\\texttt{purple}, \texttt{blue}\}}} \cN_{\texttt{c}}^{(0)} \sum_{\eps\in
\{0,1\}}\P(B_p^{(1)}=\eps)F_{\texttt{c}}^{(0)}(\eps)
+\sum_{\substack{c\,\in \{\texttt{pink}, \\ \texttt{brown}, \texttt{orange}\}}}\cN_{\texttt{c}}^{(0)} \sum_{\eps\in \{0,1\}^2}\P(B_p^{(2)}=\eps)F_{\texttt{c}}^{(0)}(\eps)\\
& = \big(1-p,2\la'\big).  
\end{align*}
The proof is then complete.
\end{proof}

In Figure \ref{fig:HexaModelSimuDim}, we represent the dimension as a function of $p$ for fixed $\la$. We call it theoretical and compare it with the numerical dimension set obtained by counting the number of hexagons in the economic covering set $\cR_n$.

\renewcommand{\arraystretch}{0.8}
\begin{figure}[h!]
\begin{center}
\begin{tabular}{cc}
\includegraphics[scale=0.6]{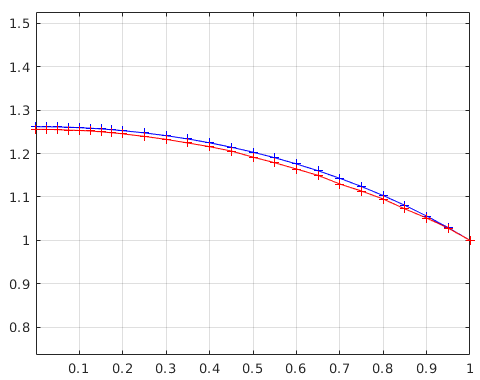} &
\includegraphics[scale=0.6]{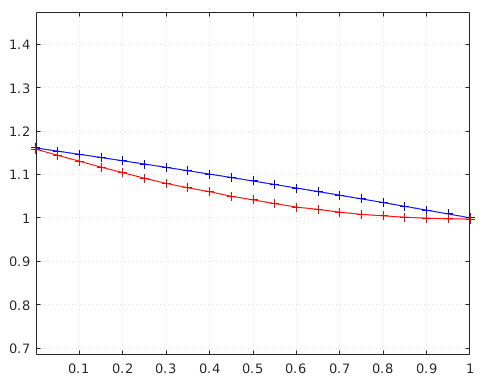} \\
\footnotesize{(a)} & \footnotesize{(b)}
\end{tabular}
\end{center}
\vspace{-.35cm}
\caption{Comparison between the graphs of the theoretical (in blue) and numerical (in red) dimension of $\partial \cK_\infty$ as a function of $p\in [0,1]$ in the hexagonal case, when $p_*=0$, for (a) $\la=3$ and (b) $\la=4$.}
\label{fig:HexaModelSimuDim}
\end{figure} 
\renewcommand{\arraystretch}{1.5}

\newpage
We now focus on the case $p_*=1$. Because of the geometry of the hexagonal tessellation, the initialization of $(Z_n)_{n\ge 0}$ stays the same, i.e. $Z_0$ consists only of one hexagon of type T1. Actually, we can show that as in the case $p_*=0$, only the two types T1 and T2 are needed to construct the reduced reproduction matrix. The calculation of $\rho_\M$ is provided in Proposition \ref{prop:Model1HexaCenter} below.

\begin{prop}[Case $p_*=1$]\label{prop:Model1HexaCenter}
For the hexagonal model described above with parameters $(\la,p)$ where $\la=3\la'+r$ with $\la'\ge 1$ and $r\in \{0,1,2\}$, $p\in [0,1]$ and with the choice $p_*=1$, the reduced reproduction matrix is 
\begin{equation*}
\M'_{0}=\left[\begin{matrix} 1 & 2\la'
\\ 0 & 4\la'-p^2 \end{matrix}\right],\,
\M'_{1}=\left[\begin{matrix} 1 & 2 \la'+1 \\ 0 & 4\la'+2-p \end{matrix}\right],\,
\M'_{2}=\left[\begin{matrix} 1 & 2\la'+2 \\ 0 & 4\la'+3 \end{matrix}\right],
\end{equation*}
where $\M'_{0}$ (resp. $\M'_{1}$, $\M'_{2}$) denotes the reduced reproduction matrix when $r=0$ (resp. $r=1$, $r=2$).

\pass Moreover, the spectral radius of the matrix $\M$ is given by
\begin{equation*}
\rho_\M = \left\{
\begin{array}{ll}
4\la' - p^2 & \text{if $r=0$} \\
4\la'+ 2-p & \text{if $r=1$} \\ 
4\la'+ 3 & \text{if $r=2$}
\end{array}
\right..
\end{equation*}
\end{prop}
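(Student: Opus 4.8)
The plan is to re-run verbatim the three-step scheme of the proof of Proposition \ref{prop:Model0HexaCenter}, modifying only the local analysis at the ends of the edges of $\partial\cK_n$, where the choice $p_*=1$ now forces the Group~2 hexagons to be added systematically. For \textbf{Step~1 (reduction)} I would argue that, exactly as when $p_*=0$, the two survivor types T1 and T2 suffice. The reduction identities of Table \ref{tab:ReductionRulesHexa} are purely geometric: they express a hexagon sharing $k$ consecutive edges with $\partial\cK_n$ as a fixed combination of T1 and T2 while preserving the number of common edges, and hence do not depend on $p_*$. Applying Lemma \ref{lem:reductionmatrix} again removes the phantom types T3--T6 and yields a $2\times2$ reduced matrix $\M'$. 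The one genuinely new point to verify is that $p_*=1$ does \emph{not} introduce a vertex-only type T0 in the covering $\cR_n$ of \eqref{def:cover}: in the hexagonal geometry a hexagon of $\la^{-n}\cT$ touching $\partial\cK_n$ at a single vertex is already absorbed into $\cK_{n+1}$ or ceases to meet $\partial\cK_{n+1}$, so it never survives as an independent cell. This is precisely why $Z_0$ stays concentrated on type T1.

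For \textbf{Step~2 (decomposition of the potential children)} I would list, for a T1-parent and for a T2-parent, the potential children lying in the first rows of $\la^{-(n+1)}\cT$ above the one or two edges of the parent contained in $\cK_n$, and partition them into the same colour groups as in the spirit of Figure \ref{fig:Model0HexaR0AllTheChild}. Only the groups adjacent to the endpoints of these edges change status, since the corner hexagon there now belongs to $\cK_{n+1}$ with probability one rather than zero. For each group I would record its cardinality and the mean contribution $(x,y)$ of a member, i.e. its expected numbers of T1- and T2-children, computed from the relevant Bernoulli variables of parameter $p$ together with the reduction rules, thereby assembling the $p_*=1$ analogues of the tables collected in the Appendix. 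As before, the partition, and hence the cardinalities, depend on the residue $r$ of $\la$ modulo $3$, which splits the argument into the three cases $r=0,1,2$.

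In \textbf{Step~3 (assembly)} I would sum, over the colour groups, cardinality times mean contribution to obtain the rows $\texttt{R}_1$ and $\texttt{R}_2$, giving the three matrices $\M'_0,\M'_1,\M'_2$. Each is upper triangular with $(1,1)$-entry equal to $1$ and vanishing $(2,1)$-entry, so its spectrum is $\{1,\ \text{lower-right entry}\}$. Since by Lemma \ref{lem:rhod} one has $\rho_\M\ge\la\ge3>1$, the spectral radius is the lower-right entry of $\M'$, which is exactly the announced value in each case.

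The main obstacle is the bookkeeping at Step~2 that explains the clean dependence on $r$. Relative to the case $p_*=0$, the systematic addition of the corner hexagons should raise both the $(1,2)$- and the $(2,2)$-entries of $\M'$ by exactly one when $r\in\{1,2\}$, while leaving $\M'$ unchanged when $r=0$. Making this transparent requires tracking precisely how the always-present corner tiles alter which cells of the finer grid remain in $\cR_{n+1}$ near a vertex; the delicate feature is that when $3\mid\la$ the endpoint hexagon of the parent is aligned so that the extra Group~2 cell is absorbed without producing a new surviving child, whereas for $r\in\{1,2\}$ the misalignment creates exactly one additional surviving cell feeding the relevant rows. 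Verifying this alignment/misalignment dichotomy, rather than the ensuing affine counts in $\la$, is where the real work lies.
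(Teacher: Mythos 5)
Your proposal matches the paper's proof in substance: the paper likewise treats this statement as a delta from Proposition \ref{prop:Model0HexaCenter}, observing that only the corner colour groups (yellow and purple in its tables) change contribution when $p_*=1$, which adds $1$ to each entry of the second column of $\M'$ for $r\in\{1,2\}$ and leaves $\M'$ unchanged for $r=0$, after which the spectral radius is read off the triangular form. Your two supplementary points --- that no vertex-only type T0 arises in the hexagonal geometry and that the $r=0$ case is explained by the alignment of the finer grid at the parent's vertices --- are precisely the claims the paper asserts without detail, so your plan is correct and essentially the same.
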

\begin{proof}
We claim that only the cases in yellow and in purple need to be updated. When $r=1$, a yellow parent contributes by $(1,\frac12)$ instead of $(0,1)$ and a purple parent contributes by $((1-p)(0,1)+p(-1,2))$ instead of $((1-p)(1,0)+p(0,1))$. This modifies the matrix by adding $1$ to each entry in the second column. In the same way, when $r=2$, the matrix is also modified by adding $1$ to each entry of the second column. We refer the reader to the Tables \ref{tab:Model1HexaR1Child} and \ref{tab:Model1HexaR2Child}.
\end{proof}


\pass{\bf Remark.} We conclude the section with a few statements in several deterministic cases.
\begin{itemize}[parsep=-0.15cm,itemsep=0.25cm,topsep=0.2cm,wide=0.15cm,leftmargin=0.65cm]
\item[$-$] $\la=2$. Our assumption on Group $2$ in the description of the model in Section \ref{sec:intro} naturally excludes the case $\la=2$ from our investigation in the cases of the square and triangular tessellations. But in the hexagonal model, Propositions \ref{prop:Model0HexaCenter} and \ref{prop:Model1HexaCenter} extend to the case $\la=2$ where at each step, a half-hexagon is simply added to each side. The fractal dimensions are then both equal to $1$ so $\partial \cK_\infty$ is not a fractal set. Indeed, we can easily check that $\partial \cK_\infty$ is the dual hexagon with size $2$ rotated with respect to the initial hexagon $\cK_0$. 
\item[$-$] $\la\ge 3$. Assuming now that no hexagons are added at each step, i.e. $p=p_*=0$, then the limiting set $\partial\cK_\infty$ has fractal dimensions given by 
\begin{equation*}\label{eq:dimautosimhexa}
\dimh(\partial\cK_\infty)=\dimbox(\partial\cK_\infty) = \dfrac{\log(4\la'+r)}{\log(3\la'+r)} .
\end{equation*}
\end{itemize}
It may be proved that when $\la\ne 3\la'+2$, the set $\partial\cK_\infty$ is self-similar. It is the attractor of an affine ISF made up with exactly $\la+\lfloor\frac{\la}{3}\rfloor$ maps each contracting with ratio $\la$. In addition with the usual Open Set Condition, it provides an alternative proof for the value of the fractal dimension.

\subsection{The case of the square tessellation}\label{sec:ModelSquare}

\noi

In this section, we consider the case when $\cT$ is a square tessellation with the particular choice $p_i=p\in [0,1]$ and $p_*\in \{0,1\}$. We fix the origin at one of the vertices of a square. The alternative choice would have been to put the origin at the center of a square but in that case, when $n$ is even, we would get at every step $\cK_{n+1}=\cK_n^{\bullet}$ which is a deterministic square and when $n$ is odd, the model is the same as choosing the homothety center at one of the vertices of a square.

Let us fix a scaling factor $\la\ge3$ and a parameter $p\in[0,1]$. A square from $\la^{-n}\cT$ is added to $\cK_n$ accordingly to its position along an edge of $\partial\cK_n$ as shown in Figure \ref{fig:SquareSnake} below. As in the previous section on the hexagonal tessellation, we solve separately the two cases $p_*=0$ and $p_*=1$.

A notable difference with the hexagonal tessellation concerns the squares of $\la^{-n}\cT$ which intersect $\cK_n$ by only a vertex. Indeed, they do not belong to $\cK_{n+1}$ but when $p_*=1$, they have a non-empty intersection with $\partial \cK_{n+2}$ and $\partial\cK_m$ for any $m\ge n+2$, including $m=\infty$. Consequently, they belong to the set $\cR_{n+1}$ defined at \eqref{def:cover} and will be included as a special extra type in the set of children of our Galton-Watson coding. This means that the size of the reproduction matrix $\M$ will be $2\times 2$ when $p_*=0$ and $3\times 3$ when $p_*=1$. Alternatively, we could have modified the model so that these particular squares would be included to $\cK_{n+1}$ with probability $p_*\in \{0,1\}$. We only deal with the case $p_*=0$ below and assert that the case $p_*=1$ could be treated in a similar way.

Due to the geometrical differences between the hexagonal and square tessellations, we could show that the limiting set is simply connected in the hexagonal model but is not in the square model with probability $1$.

\renewcommand{\arraystretch}{0.8}
\begin{figure}[h!]
\centering
\begin{tabular}{cc}
\includegraphics[scale=0.35]{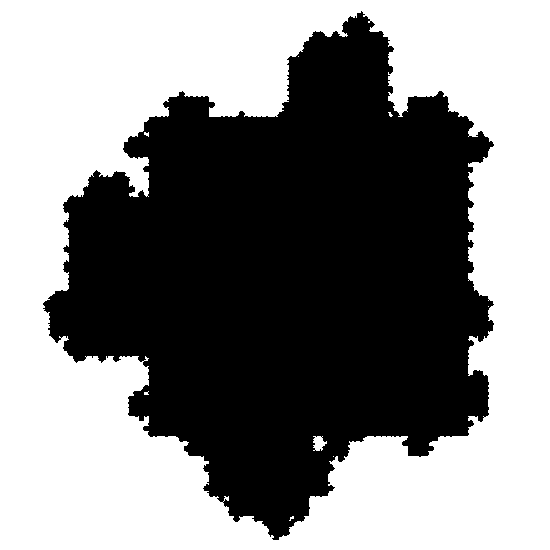}&
\includegraphics[scale=0.35]{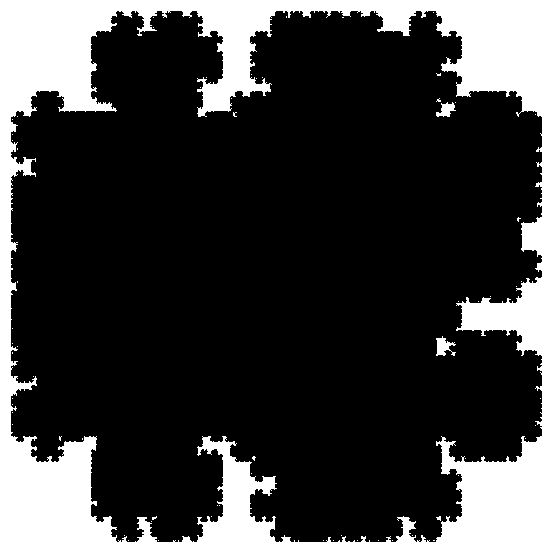}\\
\footnotesize{(a)}&\footnotesize{(b)}
\end{tabular}
\vspace*{-.2cm}
\caption{Example of the limit set $\cK_\infty$ obtained with the square tessellation.\\
(a) $p_*=0$, $\la=4$, $p=0.5$. The fractal dimension of the boundary is $\dimh(\partial\cK_\infty)=1.1822$. (b) $p_*=1$, $\la=4$, $p=0.5$. The fractal dimension of the boundary is $\dimh(\partial\cK_\infty)=1.2412$.}
\label{fig:SquareModelSimu}
\end{figure} 
\renewcommand{\arraystretch}{1.5}

Proposition \ref{prop:SquareModel} below provides the reduced reproduction matrix in both cases $p_*=0$ and $p_*=1$. In particular, when $p_*=0$, the matrix has size $2\times2$ with corresponding types T1 and T2, which makes it possible to calculate easily its spectral radius. When $p_*=1$, as discussed in Section \ref{sec:galton}, the matrix has size $3\times3$ with  the 3 types T1, T2 and T0, and the initialization consists in taking $Z_0=(1,0,1)$. In theory, there is a closed formula for its spectral radius as a function of $\la$ and $p$. Only in practice, it turns out to be very heavy, which is the reason why we have opted to omit it. 

\begin{prop}\label{prop:SquareModel}
For the square model with parameters $\la\ge3$, $p\in[0,1]$ and $p_*=0$, the reduced matrix $\M'$ is 
\begin{equation*} 
\M'=(\la-3)M_1'(p)+M_2'(p)
\end{equation*}
where
\begin{equation*}
M_1'(p)  =\left[
\begin{matrix} 1-2p(1-p) & 2p(1-p) \\ 2-4p(1-p)) & 4p(1-p) \end{matrix}
\right]\mbox{ and }
M_2'(p)  =\left[
\begin{matrix} 3-2p & 2p \\ 2(p+2)(1-p)^2 & 1+p(5-2p-p^2) \end{matrix}
\right].
\end{equation*}

\pass Moreover, the spectral radius of the matrix $\M$ is given by
\begin{align*}
\rho_\M & = \mfrac12 \Big(\la Q_1(p)+Q_2(p)+\sqrt{\la^2 Q_3(p)+\la Q_4(p)+Q_5(p)}
\Big),
 \end{align*}
where $Q_1(p)= -2 p^2 + 2 p + 1$, $Q_2(p) = -p^3 + 4 p^2 - 3 p + 1$, $Q_3(p) = 4p^4-8p^3+4p+1$, $Q_4(p) = -4p^5+4p^4+22p^3-14p-2$ and $Q_5(p) = p^6 + 16 p^5 - 42 p^4 - 18 p^3 + 9 p^2 + 10 p + 1$.

\pass For the square model with parameters $\la\ge3$, $p\in[0,1]$ and $p_*=1$, the reduced matrix $\M'$ is 
\begin{equation*}
\hspace{-2cm}\M'=(\la-3)M_1'(p)+M_2'(p)
\end{equation*}
where
\begin{align*}
M_1'(p) & =\left[
\begin{smallmatrix}
1-2p(1-p) & 2p(1-p) & 2p(1-p) \\ 2-4p(1-p) & 4p(1-p) & 4p(1-p) \\ 0 & 0 & 0
\end{smallmatrix}
\right]\mbox{ and } 
M_2'(p)  =\left[
\begin{smallmatrix}
1+2p & 2(1-p) & 2(1-p) \\ 2-2p(1-p^2) & 3 + p(1-2p-p^2) & (1-p)(2+3p-3p^2) \\
0 & 1 & 2
\end{smallmatrix}
\right].
\end{align*}
\end{prop}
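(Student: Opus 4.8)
The plan is to follow the same three-step scheme as in the proof of Proposition \ref{prop:Model0HexaCenter}, adapted to the square geometry, and then to read off the spectral radius from the resulting $2\times2$ matrix in the case $p_*=0$. \textbf{Step 1 (reduction).} First I would cut the set of types down to the survivors via Lemma \ref{lem:reductionmatrix} and the reduction rules of Table \ref{tab:ReductionRulesSquare}. For $p_*=0$ only T1 and T2 survive, the types T3 and T4 (a square sharing three or four edges with $\partial\cK_n$) being phantom and eliminated through $\texttt{R}_3=2\texttt{R}_2-\texttt{R}_1$ and the analogous relation for T4. For $p_*=1$ the vertex-only type T0 must be retained in addition: with $p_*=1$ the Group~2 squares at the ends of an edge are added to $\cK_{n+1}$ and thereby create new vertex contacts that persist in every $\cR_m$, $m\ge n+2$, which is exactly what raises the matrix size from $2\times2$ to $3\times3$.

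\textbf{Step 2 (decomposition of potential children).} Fixing a parent tile of a given survivor type, I would enumerate the potential children lying in the (at most two) rows of $\la^{-(n+1)}\cT$ directly above the edge(s) of the parent contained in $\cK_n$. As in the hexagonal case the status of each potential child depends only on the Bernoulli variables of the base squares in its immediate neighborhood, so the set of potential children partitions into color groups of identical mean contribution. The key observation is that along an edge carrying $\la$ base squares the interior columns all behave identically, while a bounded number of columns near the two endpoints behave differently because their outer neighbor is a Group~2 square added with probability $p_*\in\{0,1\}$; hence the cardinality of each color group is an affine function of $\la$. This produces precisely the splitting $\M'=(\la-3)M_1'(p)+M_2'(p)$, where $M_1'(p)$ records the per-column bulk contribution and $M_2'(p)$ aggregates the end contributions (the offset $3$ absorbing both the genuine end columns and the corrections to the columns adjacent to them).

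\textbf{Step 3 and spectral radius.} For each color group I would compute its individual mean contribution by conditioning on the one or two relevant Bernoulli$(p)$ variables, using the reduction rules of Step~1 to rewrite any T3 or T4 outcome in the $(\text{T1},\text{T2})$ basis, respectively the $(\text{T1},\text{T2},\text{T0})$ basis when $p_*=1$; summing cardinality times mean contribution over all groups yields the rows of $M_1'(p)$ and $M_2'(p)$ in the statement. The ubiquitous factor $p(1-p)$ in $M_1'$ reflects that a bulk child's type is governed by whether it and one neighbor are added. For $p_*=0$, since $\M$ is primitive and nonnegative and $\M'$ shares its spectral radius, $\rho_\M$ is the dominant root of the $2\times2$ characteristic polynomial,
\begin{equation*}
\rho_\M=\tfrac12\Big(\operatorname{tr}\M'+\sqrt{(\operatorname{tr}\M')^2-4\det\M'}\Big).
\end{equation*}
A direct computation from $\M'=(\la-3)M_1'(p)+M_2'(p)$ gives $\operatorname{tr}\M'=\la\,Q_1(p)+Q_2(p)$, and expanding the discriminant $(\operatorname{tr}\M')^2-4\det\M'$ as a polynomial in $\la$ produces $\la^2Q_3(p)+\la\,Q_4(p)+Q_5(p)$, which is exactly the claimed closed form.

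\textbf{Main obstacle.} I expect the principal difficulty to lie in the geometric enumeration at the two ends of an edge: the clean bulk analysis breaks down there, and these end-effects are what determine $M_2'$ and, when $p_*=1$, the entire T0 row and column and the correct initialization $Z_0=(1,0,1)$. The ensuing algebra needed to read off $Q_3,Q_4,Q_5$ from the discriminant is routine but lengthy polynomial manipulation, and accounting for the extra T0 type is precisely why no comparably compact formula for $\rho_\M$ is recorded in the case $p_*=1$.
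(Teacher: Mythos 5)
Your proposal follows the same three-step scheme as the paper's proof: reduction to the survivor types T1, T2 (plus T0 when $p_*=1$) via Lemma \ref{lem:reductionmatrix} and Table \ref{tab:ReductionRulesSquare}, decomposition of the potential children into color groups whose cardinalities are affine in $\la$ (yielding the split $(\la-3)M_1'+M_2'$), and summation of cardinality times mean contribution, with $\rho_\M$ read off as the dominant root of the $2\times2$ characteristic polynomial; your identification of $\operatorname{tr}\M'=\la Q_1(p)+Q_2(p)$ checks out. This matches the paper's argument, which likewise defers the case-by-case enumeration to the appendix tables.
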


\begin{proof} We adapt the strategy which has already been put into practice in the case of the hexagonal tessellation, i.e.: 
\begin{itemize}[parsep=-0.15cm,itemsep=0.2cm,topsep=0.2cm,wide=0.15cm,leftmargin=0.65cm]
\item[$-$] we list all possible types which are bound to appear in the reproduction matrix as well as the reduction rules which allow us to consider a matrix of size $2\times 2$ (resp. of size $3\times 3$ in the case $p_*=1$) instead (Step 1),
\item[$-$] we identify the set of potential children of a square parent and decompose it into a suitable colored partition such that each color corresponds to a particular contribution in mean (Step 2),
\item[$-$] we calculate explicitly the reduced reduction matrix and its spectral radius (Step 3).
\end{itemize}

\noi{\bf\it Step 1: Reduction of $\M$.} When $p_*=0$, there are 5 possible types denoted by T1, T2, T3, T4 and T1$'$, out of which type T1$'$ only occurs when $\la=3$. The reduction rules combined with Lemma \ref{lem:reductionmatrix} make it possible to keep only two survivor types: the type T1 of a square with exactly one edge belonging to $\cK_n$ and the type T2 of a square with exactly two consecutive edges belonging to $\cK_n$, see the first 3 columns of Table \ref{tab:ReductionRulesSquare}. When $p_*=1$, the main difference is that the set $\cK_{n+1}$ can possibly grow inside the squares which intersect $\cK_n$ by only one vertex or several vertices. This induces a new set of possible types, bringing the total to 13. Again, thanks to the reduction rules, we obtain a matrix of size $3\times 3$ with only 3 types: T1, T2 and T0 which corresponds to a square which intersects $\cK_n$ by only one vertex, see Table \ref{tab:ReductionRulesSquare}. In particular, the set of children of a square parent of type T0 is deterministic and consists in one square of type T2 and two squares of type T0.

\pass{\bf\it Step 2: Decomposition of the set of potential children of each parent.} In the same way as in the proof of Proposition \ref{prop:Model0HexaCenter}, we identify the whole set of potential children of a parent $\cT_{n,\ell}$ of type T1 or T2. In the case of the square tessellation, these potential children lie in the first two lines  of $\cT_{n+1}$ inside $\cT_{n,\ell}$ \textit{above} the edges (one or two) of $\cT_{n,\ell}$ which lie in $\cK_n$. We then put together the squares from those two lines which have the same mean contribution, i.e. we partition the set of potential children into groups which are labelled with different colors, see Table \ref{tab:SquareChild}.

\pass{\bf\it Step 3: Calculation of the reduced matrix coefficients.} The method is identical to the hexagonal case and for sake of brevity, we refer the reader to Tables \ref{tab:SquareChildModel0}, \ref{tab:SquareChildModel1T1} and \ref{tab:SquareChildModel1T2}  which summarize all the calculations. 
\end{proof}

\noi{\bf Remarks.} Notice that for the two deterministic cases when $(p,p_*)=(0,0)$ or $(1,1)$, we expect the set $\partial \cK_\infty$ to be of dimension $1$. This is clearly confirmed by our calculation of the reduced matrix which is $\M'=\left[
\begin{smallmatrix} \la & 0  \\ 2(\la-1) & 1 \end{smallmatrix}
\right]$ 
or $\M'=\left[
\begin{smallmatrix} \la & 0 & 0 \\ 2(\la-2) & 1 & 0 \\ 0 & 1 & 0 \end{smallmatrix}\right]$ respectively, with $\rho_\M=\la$ in both cases.

As in the hexagonal case, we represent in Figure \ref{fig:SquareModelSimuDim} both the theoretical and numerical dimension as a function of $p$. This requires to approximate the value of $\rho_\M$ with numerical methods.

\renewcommand{\arraystretch}{0.8}
\begin{figure}[h!]
\centering
\begin{tabular}{cc}
\includegraphics[scale=0.4]{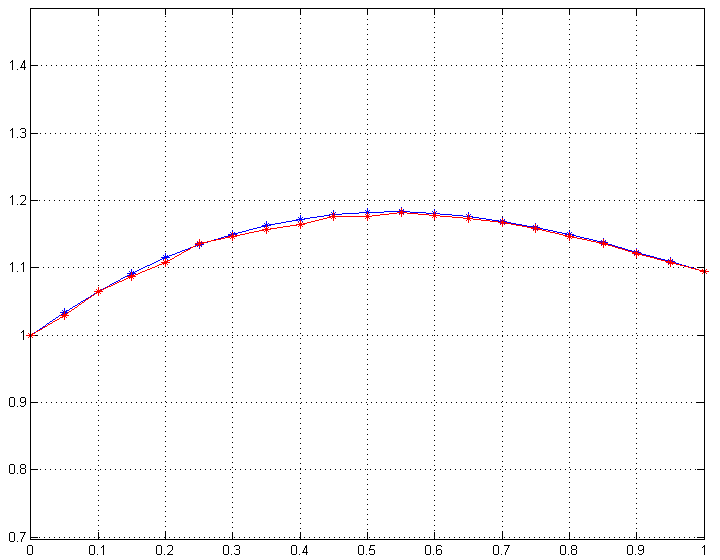} &
\includegraphics[scale=0.4]{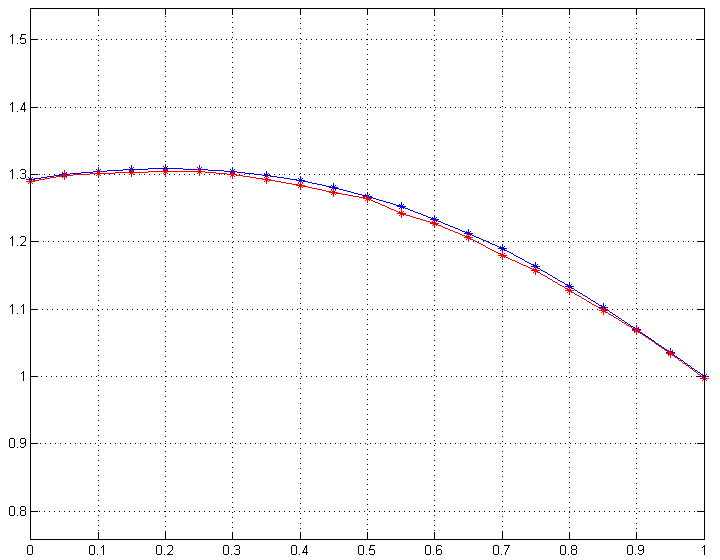}\\
\footnotesize{(a)} & \footnotesize{(b)}
\end{tabular}
\vspace*{-.35cm}
\caption{Comparison between the graphs of the theoretical (in blue) and numerical (in red) dimension of $\partial \cK_\infty$ as a function of $p\in [0,1]$ in the square case, when $p_*=0$, for $\la=4$ when $p_*=0$ (a) and $p_*=1$ (b).}
\label{fig:SquareModelSimuDim}
\end{figure} 
\renewcommand{\arraystretch}{1.5}

Let us end this section with a result about the ``holes'' that can occur in the case of the square tessellation (see Figure \ref{fig:SquareModelSimu}).

\begin{prop}\label{prop:squareholes}
Let $H_n$ be the number of holes included in the set $\cK_n$. There exist a positive random variables $H_{\infty}$ such that $\P(H_{\infty}>0)=1$ and such that almost surely
\begin{equation}\label{eq:limsquareholes}
\lim_{n\to \infty} \rho_\M^{-n} H_n=H_\infty.
\end{equation}
\end{prop}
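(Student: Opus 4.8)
The plan is to reproduce, for the hole count, the template used for the defect area in Section \ref{sec:proofperimaire}: classify holes by the generation at which they appear and code the increments through the Galton-Watson process $(Z_n)_{n\ge0}$. The starting geometric observation is a monotonicity statement. Since $\cK_n\subset\cK_{n+1}$, the complement only shrinks, so bounded connected components of $\R^2\setminus\cK_n$ (the holes) can only split or persist, never merge. Moreover a hole of $\cK_n$ is a union of tiles of $\la^{-n}\cT$, hence of scale at least $\la^{-n}$, while the step $\cK_n\to\cK_{n+1}$ only deposits sub-tiles of $\la^{-(n+1)}\cT$ along $\partial\cK_n$; consequently a hole retains a nonempty core of side at least $(\la-2)\la^{-(n+1)}>0$ and cannot vanish in finitely many steps. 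Thus $H_n$ is nondecreasing and $H_n=H_0+\sum_{k=0}^{n-1}\Delta_k$ with $\Delta_k:=H_{k+1}-H_k\ge0$ the number of holes that either appear or split off during the step $k\to k+1$.

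First I would establish the local decomposition $\Delta_n=\sum_{t=1}^{\tmax}\sum_{\ell=1}^{Z_n^t}N_\ell^t$, where $N_\ell^t$ counts the holes created at scale $\la^{-(n+1)}$ within the $\ell$-th tile of $\cR_n$ of type $t$. The point is that, thanks to the spatial decoupling enforced by the choice $p_*\in\{0,1\}$ recalled in Section \ref{sec:intro}, the creation or splitting of each such hole is a measurable function of the independent Bernoulli variables attached to the offspring of a single tile of $\cR_n$; hence each $N_\ell^t$ depends only on the type $t$ and, for fixed $t$, the variables $(N_\ell^t)_\ell$ are i.i.d., distributed as some $N^t$ with $\nu_t:=\E[N^t]\le\la^2<\infty$. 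This is exactly the structure exploited for the defect area at \eqref{eq:LGN1type}. Applying the law of large numbers together with the almost sure convergence $\rho_\M^{-n}Z_n^t\to Wv^t$ from Proposition \ref{prop:convbyAN}, I obtain as in \eqref{eq:LGN1type}
\[
\rho_\M^{-n}\Delta_n=\sum_{t=1}^{\tmax}\big(\rho_\M^{-n}Z_n^t\big)\frac{1}{Z_n^t}\sum_{\ell=1}^{Z_n^t}N_\ell^t\longrightarrow W\sum_{t=1}^{\tmax}v^t\nu_t=:W\bar\nu
\]
almost surely.

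It then remains to pass from the increments to $H_n$ itself. Writing $b_k:=\rho_\M^{-k}\Delta_k$, which converges almost surely to $W\bar\nu$, a summation by parts gives $\rho_\M^{-n}H_n=\rho_\M^{-n}H_0+\sum_{j=1}^{n}\rho_\M^{-j}b_{n-j}$. Since $\rho_\M\ge\la\ge3$ by Lemma \ref{lem:rhod} the weights $\rho_\M^{-j}$ are summable, and since $(b_k)$ is almost surely convergent, hence almost surely bounded, dominated convergence in this Toeplitz sum yields
\[
\lim_{n\to\infty}\rho_\M^{-n}H_n=W\bar\nu\sum_{j\ge1}\rho_\M^{-j}=\frac{W\bar\nu}{\rho_\M-1}=:H_\infty
\]
almost surely, which is \eqref{eq:limsquareholes}. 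For the positivity $\P(H_\infty>0)=1$ I would use that $W>0$ almost surely by Proposition \ref{prop:convbyAN} and that $v^t>0$ for every $t$ by Perron-Frobenius, so it suffices to check $\bar\nu>0$. For $p\in(0,1)$ this follows by exhibiting one explicit offspring realization of, say, a type-T1 tile whose Bernoulli outcome encloses a bounded void (a configuration visible in Figure \ref{fig:SquareModelSimu}); such an event has probability bounded below by a positive power of $p(1-p)$, giving $\nu_{\mathrm{T1}}>0$ and hence $\bar\nu>0$.

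The main obstacle is the local decomposition of the second paragraph: one must verify rigorously that every newly created or split-off hole can be attributed to exactly one tile of $\cR_n$, and that its creation is measurable with respect to the independent family of Bernoulli variables of that tile's offspring, so that no hole straddles two independent branches and none is double-counted. This is precisely where the decoupling property $p_*\in\{0,1\}$ of the model is essential, and it is also what guarantees that components cannot merge, so that the increments $\Delta_n$ are nonnegative and genuinely local. Setting up this bookkeeping carefully — in particular checking that a hole, once formed, can only be counted at its birth scale and not again at later generations — is the crux; the remaining steps are routine given the martingale and law-of-large-numbers machinery already developed for Theorem \ref{theo:perimaire}.
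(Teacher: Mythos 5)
Your proposal is correct and follows the same overall strategy as the paper: new holes are created at a rate proportional to $\rho_\M^n$, a hole once created is never filled, and the claim follows by summing a geometric series against the almost sure convergence of Proposition \ref{prop:convbyAN}. The one place where the paper is sharper, and where it dissolves what you identify as the crux, is the identification of the increment: a newly created hole at generation $k$ is \emph{exactly} a child tile of type T4 (a square all four of whose edges lie in $\partial\cK_k$), so that $H_n=\sum_{k=0}^n Z_k^4$ on the nose. Since $Z_k^4$ is a coordinate of the branching process, the paper needs no law of large numbers: it applies $\rho_\M^{-k}Z_k^4\to W v^4$ directly and sums. In your version the per-tile counts $N_\ell^t$ are nothing but the numbers $C_\ell^{t,4}$ of type-T4 children, so your $\nu_t$ is the entry $\M^{t,4}$ and your LLN step at \eqref{eq:LGN1type} reproduces, one generation later, the convergence already contained in Proposition \ref{prop:convbyAN} (the two limiting constants agree since the limit direction is the left Perron eigenvector). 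More importantly, the T4 identification settles your bookkeeping worry for free: a new hole is a single tile of $\cR_{k}$, hence is contained in exactly one tile of the previous generation by property (P1), so no hole can straddle two independent branches and none is double-counted; your separate verification becomes unnecessary. Your remark that positivity of $H_\infty$ requires $p\in(0,1)$ (an explicit configuration producing a T4 child with probability bounded below) is a caveat the paper leaves implicit, and is worth keeping.
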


\begin{proof}
The key observation is that each hole at step $k$ is exactly associated with a child of type T4, and given a hole, it will never be totally recovered by black squares of future generations. For instance, in Figure \ref{fig:friseHolesSquare}, a first hole appears at the generation $n$ as the product of a T4 tile in $\cR_n$. Then at the generation $n+1$, a smaller hole appears inside that T4 tile and two new holes appear at generation $n+2$, one of them being inside the previous one. 
\begin{figure}
\centering
\includegraphics[width=0.75\linewidth]{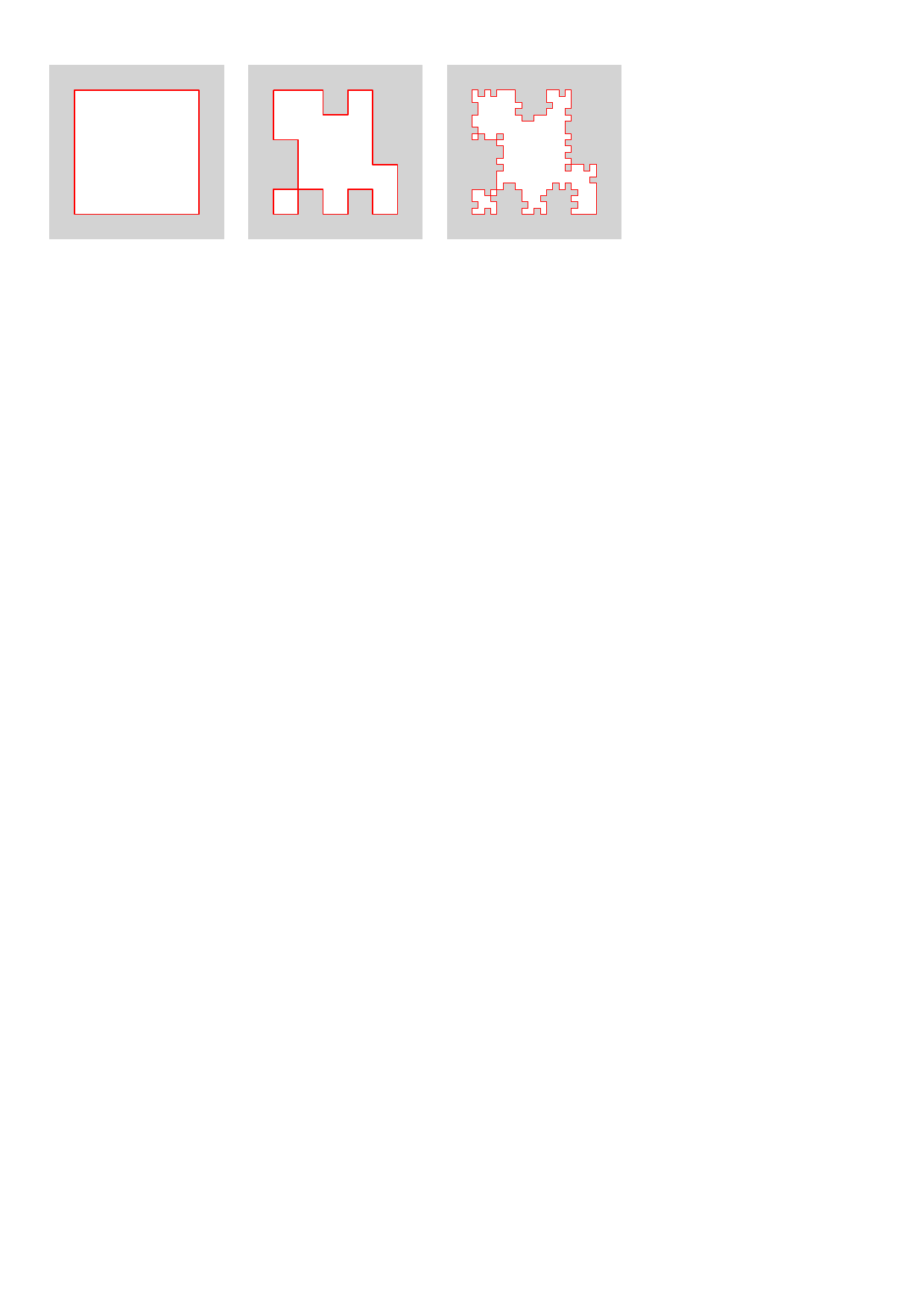}
\caption{Appearance of holes in the square model over 3 generations for $\la=4$}
\label{fig:friseHolesSquare}
\end{figure}
Consequently, we obtain 
\begin{equation*}
H_n = \sum_{k=0}^n Z_k^4 
=  \sum_{k=0}^n \langle Z_k,{\mathbf e_4}\rangle 
=  \sum_{k=0}^n \rho_\M^k\langle \rho_\M^{-k}Z_k,{\mathbf e_4}\rangle.
\end{equation*}
Because of the argument given above \eqref{eq:limcup} related to the rest of the geometric sum, any hole which appears is never fully filled. Since $\rho_\M^n\longrightarrow\infty$ and $\langle \rho_\M^{-k}Z_k,{\mathbf e_4}\rangle \longrightarrow \langle W,{\mathbf e_4}\rangle$, we obtain that when $n\to\infty$, $H_n$ is equivalent almost surely to $\mfrac{\rho_\M}{\rho_\M-1}\langle W,{\mathbf e_4}\rangle\rho_\M^n$, hence the result.
\end{proof}

\subsection{The case of the triangular tessellation}\label{sec:ModelTriangle}

\pass

In this section, we consider the case when $\cT$ is the triangular tessellation and when the origin is fixed at one of the vertices of a triangle. We consider the particular choice $p_*=0$. Curiously, because of the geometry of the triangular tessellation, the case $p_*=1$ involves a lot more new types appearing along the iteration of the process and therefore, we have decided to omit it.

\begin{prop}\label{prop:TriangleModel}
For the triangular model with parameters $\la\ge3$, $p\in[0,1]$ and $p_*=0$, the reduced matrix $\M'$ is 
\begin{equation*} 
\M'=(\la-3)M_1'(p)+M_2'(p)
\end{equation*}
where
\begin{equation*}
M_1'(p)  =\left[
\begin{matrix} 1+p-2p^2 & p^2 \\
2(1+p-2p^2) & 2p^2\end{matrix}
\right]\mbox{ and }
M_2'(p)  =\left[
\begin{matrix} 3+p & 0 \\ 
4+3p-6p^2 & 1-p+3p^2 \end{matrix}
\right].
\end{equation*}

\pass Moreover, the spectral radius of the matrix $\M$ is given by
\begin{align*}
\rho_\M & = \mfrac12 \Big(\la Q_1(p)+Q_2(p)+\sqrt{\la^2 Q_3(p)+\la Q_4(p)+_5(p)}
\Big),
\end{align*}
where $Q_1(p) = p+1$, $Q_2(p) = 3p^2-3p+1$, $Q_3(p) = -16p^4+8p^3+9p^2+2p+1$, $Q_4(p) = 72p^4-46p^3-40p^2-4p-2$ and $Q_5(p) = -63p^4+54p^3+31p^2+2p+1$.
\end{prop}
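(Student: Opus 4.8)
The plan is to reuse the three-step machinery already deployed for the hexagonal and square tessellations in the proofs of Propositions \ref{prop:Model0HexaCenter} and \ref{prop:SquareModel}, since the model here is exactly the general construction of Section \ref{sec:intro} specialized to the triangular grid with $p_*=0$; in particular Theorem \ref{theo:main} applies verbatim and the only genuine work is the identification of $\M$ and the computation of $\rho_\M$. \textbf{Step 1 (reduction of $\M$).} First I would list the finitely many types that can arise for a triangle $\cT_{n,\ell}\in\cR_n$, classified by the number of consecutive edges of $\cT_{n,\ell}$ lying in $\partial\cK_n$, and then invoke Lemma \ref{lem:reductionmatrix} to discard the phantom types. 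As recorded at the start of Section \ref{sec:annexetriangle}, the relevant reduction is $\texttt{R}_3=2\texttt{R}_2-\texttt{R}_1$ (together with the analogous rules for the higher types), so that only the two survivor types T1 (one shared edge) and T2 (two consecutive shared edges) remain and $\M$ has the same spectral radius as a $2\times 2$ matrix $\M'$.

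\textbf{Step 2 (colored decomposition of the offspring).} Next I would fix a parent of type T1, respectively T2, and enumerate its potential children, namely the triangles of $\la^{-(n+1)}\cT$ lying in the rows of the subdivision \emph{above} the one, respectively two, edges of $\cT_{n,\ell}$ contained in $\cK_n$. Exactly as in the hexagonal case, I would partition these potential children into color classes, each class consisting of triangles whose status is governed by the same small collection of independent $\mathrm{Bernoulli}(p)$ variables and which therefore has a common mean contribution $(\#\text{T1},\#\text{T2})$ after applying the reduction rules; half-triangles are allowed at the ends so as to keep the children nested inside the parent. The essential structural point, shared with the square case, is that the cardinality of each color class is an \emph{affine} function of $\la$: the interior positions along an edge each contribute a fixed amount, while the two endpoints and the apex produce $\la$-independent corrections. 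This is precisely what yields the decomposition $\M'=(\la-3)M_1'(p)+M_2'(p)$, with $M_1'(p)$ the per-unit increment and $M_2'(p)=\M'|_{\la=3}$ the base contribution. Summing, over each color class, the product of its cardinality with its mean contribution gives the two rows of $\M'$ and hence the stated matrices $M_1'(p)$ and $M_2'(p)$.

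\textbf{Step 3 (spectral radius).} Finally, since $\M'$ is a primitive $2\times 2$ matrix with the same spectral radius as $\M$, I would read off $\rho_\M$ from the quadratic characteristic polynomial as the larger root
\begin{equation*}
\rho_\M=\tfrac12\Big(\operatorname{tr}\M'+\sqrt{(\operatorname{tr}\M')^2-4\det\M'}\Big).
\end{equation*}
Because the entries of $\M'$ are affine in $\la$, the trace is affine and the discriminant $(\operatorname{tr}\M')^2-4\det\M'$ is a quadratic polynomial in $\la$; a direct expansion gives $\operatorname{tr}\M'=\la Q_1(p)+Q_2(p)$ with $Q_1(p)=1+p$ and $Q_2(p)=3p^2-3p+1$, and $(\operatorname{tr}\M')^2-4\det\M'=\la^2 Q_3(p)+\la Q_4(p)+Q_5(p)$ with the stated polynomials $Q_3,Q_4,Q_5$, which is exactly the formula claimed.

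The genuinely delicate part is Step 2: the whole content is the bookkeeping of the triangular offspring, where one must correctly handle the upward and downward triangles of the subdivision, the half-triangles at the ends of an edge, and the corner where two edges of a T2-parent meet, and then express each resulting type through the reduction rules. By contrast Step 3 is routine algebra. Note also that the triangular subdivision is nested, so property (P1) in the proof of Lemma \ref{lem:cover} is automatic and there is no remainder-of-$\la$ subtlety here as there was for the hexagon; this is why a single affine formula $(\la-3)M_1'(p)+M_2'(p)$ suffices for all $\la\ge 3$. I would relegate the exhaustive enumeration to the tables in the Appendix (Section \ref{sec:annexetriangle}), mirroring the presentation of the hexagonal and square cases.
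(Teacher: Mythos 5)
Your overall strategy is exactly the paper's: its proof of this proposition is a one-paragraph pointer to the same three-step routine (identify the types, reduce via Lemma \ref{lem:reductionmatrix} to the two survivor types T1 and T2, decompose the potential children of each parent into color classes whose cardinalities are affine in $\la$, and sum cardinality times mean contribution), with all the actual bookkeeping delegated to Table \ref{tab:TriangleChildModel0}. Your Step 3 (larger root of the characteristic polynomial of a $2\times2$ matrix whose trace is affine in $\la$) is also precisely how the closed form for $\rho_\M$ is obtained, and your observations that the nested triangular subdivision removes both the remainder-of-$\la$ case distinction and the need for property (P1) to be checked separately are correct.

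There is, however, one concrete slip that would corrupt the computation if carried out as written: you quote the reduction rule for the phantom type T3 as $\texttt{R}_3=2\texttt{R}_2-\texttt{R}_1$ and attribute it to Section \ref{sec:annexetriangle}. That is the hexagonal/square rule; for the triangular tessellation the paper's rule is $\mathrm{T3}=3\,\mathrm{T2}-3\,\mathrm{T1}$. Both combinations preserve the edge count $3$, but Lemma \ref{lem:reductionmatrix} requires the combination that actually reproduces the row of the unreduced matrix, and only one of them does. The difference is not cosmetic: a T3 child arises (for a T2-parent) when the downward triangle at the inner corner has all three of its upward neighbours added, an event of probability $p^3$ whose reduced contribution is $(-3,3)$ under the correct rule but $(-1,2)$ under yours. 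With $(-3,3)$ the $p^3$ terms cancel and one obtains the stated entries $4+3p-6p^2$ and $1-p+3p^2$ of $M_2'(p)$; with $(-1,2)$ one would instead get $4+3p-6p^2+2p^3$ and $1-p+3p^2-p^3$, hence a different $\M'$ and a different $\rho_\M$. A minor further point: the half-tiles you invoke at the ends of an edge are an artefact of the non-nested hexagonal subdivision and play no role here, as your own remark about nestedness already indicates.
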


\begin{proof}
The method follows the exact same pattern as in Propositions \ref{prop:Model0HexaCenter} and \ref{prop:SquareModel}, i.e. we start by identifying the types, we then reduce the matrix, decompose the set of potential children of one triangle with a fixed type, calculate the contribution of each potential child, see Table \ref{tab:TriangleChildModel0}, and finally calculate the entries of the reduced matrix. 
\end{proof}

Again, the theoretical and numerical versions of the dimension are represented as functions of $p$ in Figure \ref{fig:TriangleSimuDim}.

\renewcommand{\arraystretch}{0.8}
\begin{figure}[h!]
\centering
\begin{tabular}{cc}
\includegraphics[scale=0.575]{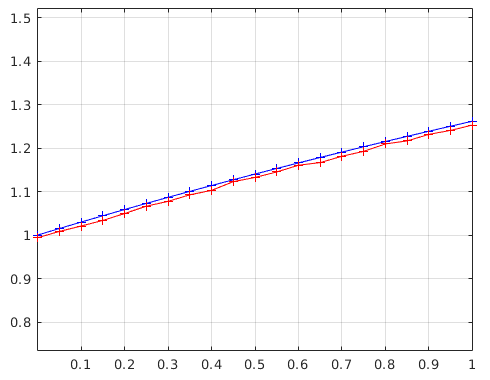} &
\includegraphics[scale=0.575]{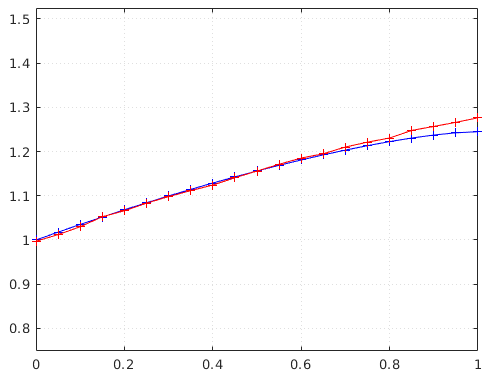} \\
\footnotesize{(a)} & \footnotesize{(b)}
\end{tabular}
\vspace*{-.35cm}
\caption{Comparison between the graphs of the theoretical (in blue) and numerical (in red) dimension of $\partial \cK_\infty$ as a function of $p\in [0,1]$ in the triangular case, when $p_*=0$, for $\la=3$ (a) and $\la=4$ (b).}
\label{fig:TriangleSimuDim}
\end{figure} 
\renewcommand{\arraystretch}{1.5}

\newpage
\addtocontents{toc}{\vspace{0.2cm}}%
\section{Final discussion}\label{sec:outro}

This section collects a few possible extensions and open problems related to our model. Obviously, the construction may generate many possible variants, like for instance rescaling at each step from a vertex of the tessellation $\cT$ in the hexagonal case, making the Bernoulli parameters $p$ and $p_*$ depend on the relative position of a tile of $\la^{-(n+1)}\cT$ with respect to $\partial \cK_n$ or considering other tessellations like the rhombille tiling. We expect most of them to satisfy the same properties, at the expense of inducing higher dimensional reproduction matrices and non-explicit spectral radii. We develop below what we consider to be the more stimulating questions.

\pass{\bf\it Considering higher dimension.} A natural extension of the model consists in considering regular tessellations in higher dimension. For instance, we can investigate the most natural case of the cubic tessellation of $\R^m$, $m\ge 3$, keeping in mind the parallel with the Eden model described at the beginning of Section \ref{sec:intro}. A similar coding by a Galton-Watson tree naturally occurs and the conclusion of Theorem \ref{theo:main} still holds. The difficulty lies in the calculation of the reproduction matrix. This requires to identify the different types which correspond to the possible intersection of a cube of $\cR_n$ with the set $\cK_n$. This might be one or several facets of the cube or even a combination of several lower-dimensional faces. 

\ppass{\bf\it Choosing a real scaling factor $\la$.} We can consider a more general {\it real} scaling parameter $\la\ge2$ and ask again for the convergence of the sequence $(\cK_n)_{n\ge0}$ and the dimension of the limit set $\partial\cK_\infty$. Actually, because of our condition for the tiles which intersect $\intt(\cK_n)$, we observe that in the case of the triangular (resp. square) tessellation we obtain an empty set $\cK_n^\circ$ and a set $\cK_n^\bullet$ equal to a square (resp. a triangle) in the case of the square (resp. triangular) tessellation. This implies that the sequence of random compact sets is trivial. The situation is more interesting in the case of the hexagonal tessellation because the sets $\cK_n$ are not trivial. This difference is due to the fact that a triangle (resp. a square) is a strongly self-similar set, that is it is equal to a finite union of smaller triangles (resp. squares) with disjoints interiors. This is not possible for an hexagon so that the exterior ring is never a trivial set. The sequence $(\cK_n)_{n\ge0}$ is clearly convergent as soon as $\la\ge 2$, thus we can ask again for the dimension of the limit set $\partial\cK_\infty$. This proves to be a quite more difficult question. We will need to distinguish the case when $\la$ is rational or not. In the first case, our model is still random although most of the time, tiles are added to $\cK_n$ deterministically, i.e. they belong to $\cK_n^\bullet$, while random tiles appear at certain stages only. We expect to find bounds for the number of tiles in $\cR_n$, and subsequently for the box-dimension of $\partial \cK_\infty$, through a comparison to another model associated with an integer parameter $\la$. In the second case, the model is not random: at each step of the construction, every tile from $\la^{-(n+1)}\cT$ along $\partial \cK_n$ belongs to $\cK_n^\bullet$ and consequently is kept in $\cK_{n+1}$. The general issue related to this model is that the curves $\partial \cK_n$ are not modified in a self-similar way, making it difficult to estimate the cardinality of the sets $\cR_n$. Let us finally remark that solving that question would certainly constitute an intermediary step towards the study of iterated Poisson-Voronoi models, as described below. 

\pass{\bf\it Randomizing the end tiles.} The most unnatural condition in the current model lies in the assumption that the tiles from $\la^{-(n+1)}\cT$ which are located at the two ends of an edge along $\partial \cK_n$ are governed by a deterministic Bernoulli variable with parameter $p_*\in \{0,1\}$, see the treatment of Group $2$ in the description in Section \ref{sec:intro}. That choice guarantees the complete independence of the lineage of all the tiles involved in $\cR_n$ and therefore, leads us to obtain an exact Galton-Watson process. Obviously, it would be more relevant to associate to all the tiles of $\la^{-(n+1)}\cT$ along $\partial \cK_n$, including Group $2$, an i.i.d. sequence of Bernoulli variables with common parameter $p$ as in a regular percolation model. Such a model would induce a local dependency of the lineage of neighboring tiles. Even though the underlying coding process would no longer be a perfect Galton-Watson tree, we would still expect a similar set of results to hold. This would involve a lot more technical work on the random tree and its associated martingale. Such extension certainly constitutes the material of a possible future work. In fact, the present paper is intended as a first contribution on iterated tessellations and brings to the table the key approach of coding by a tree which may hopefully be developed thereafter. \enlargethispage*{0.5cm}

\pass{\bf\it Reaching iterated Poisson-Voronoi models.} We already discussed in Section \ref{sec:intro} the connection between our contribution and the original work \cite{tchou01} on iterated Poisson-Voronoi tessellations. Actually, let us describe a slight modification of the initial construction done in \cite{tchou01} which could play the role of an intermediate model along the way from our model towards the limiting boundary set described in \cite{tchou01}. Let $\la>1$ and $(\cP_n)_{n\ge0}$ be a family of independent homogeneous Poisson point processes of $\R^d$ such that $\cP_n$ has intensity $\la^n$. Set $\cV_n$ for the Voronoi tessellation associated with $\cP_n$. Then we can define a sequence $(\cK_n)_{n\ge0}$ of random sets as follows: $\cK_0$ is the cell from $\cV_0$ containing the origin ; then $\cK_{n+1}$ is obtained by replacing $\cK_n$ with the set made up with all the cells $\cC$ from $\cV_{n+1}$ such that $\cC\cap\intt(\cK_n)\neq\emptyset$. By construction $\cK_{n+1}\supset \cK_n$ and we expect the sequence $(\cK_n)_{n\ge0}$ to be almost surely bounded and converge to a random set $\cK_\infty$ almost surely. Thus, we can ask again for the dimension of the limit set $\partial\cK_\infty$. To answer that question, we would certainly try to use the tiles of each tessellation as a basis for covering $\partial \cK_\infty$, exactly in the same way as the set $\cR_n$ was considered in Section \ref{sec:galton}. Nevertheless, one of the main difficulties in doing so lies in the non-uniformity of the shapes of the different Voronoi cells, which means in particular that some of them may have an elongated shape with an unusually large diameter. Moreover, the problem of counting the tiles from the set $\cR_n$ would then reduce in estimating the number of cells of a Poisson-Voronoi tessellation crossed by a craggy polygonal line. In particular, the section of a Poisson-Voronoi tessellation with one line already represents an issue and constitutes the substance of a recent paper \cite{Gus24}, see also the construction of the Markov path along the Voronoi nuclei in \cite{Bac00}. In conclusion, the different models derived from iterated Poisson-Voronoi tessellation certainly prove to generate stimulating but tough questions.

\addtocontents{toc}{\vspace{0.2cm}}%
\section{Appendix: Tables and Figures}\label{sec:annexe}

This last section is devoted to the collection of figures and tables needed to calculate each entry of the reproduction matrix whose expression is used for each of the three tessellations, see Propositions \ref{prop:Model0HexaCenter} and \ref{prop:Model1HexaCenter}, Proposition \ref{prop:SquareModel} and Proposition \ref{prop:TriangleModel} respectively. 

\subsection{The hexagonal tessellation} 

\noi

\renewcommand{\arraystretch}{1.5}
\setlength{\arrayrulewidth}{0.1pt}
\begin{table}[h!]
\centering
\resizebox{14.5cm}{!}{%
\begin{tabular}{|c|c|c|c|}
\cline{3-4}
\multicolumn{2}{c|}{} & \multicolumn{1}{c|}{$p_*=0$} & \multicolumn{1}{c|}{$p_*=1$} \\
\hline
\multirow{2}{*}{\raisebox{-1.75\height}{$r=0$}} 
& \raisebox{2.5\height}{Parent of type T1}
& \rule{0cm}{1.9cm}\includegraphics[scale=0.5]{Model0HexaR0ChildT1} 
& \rule{0cm}{1.9cm}\includegraphics[scale=0.5]{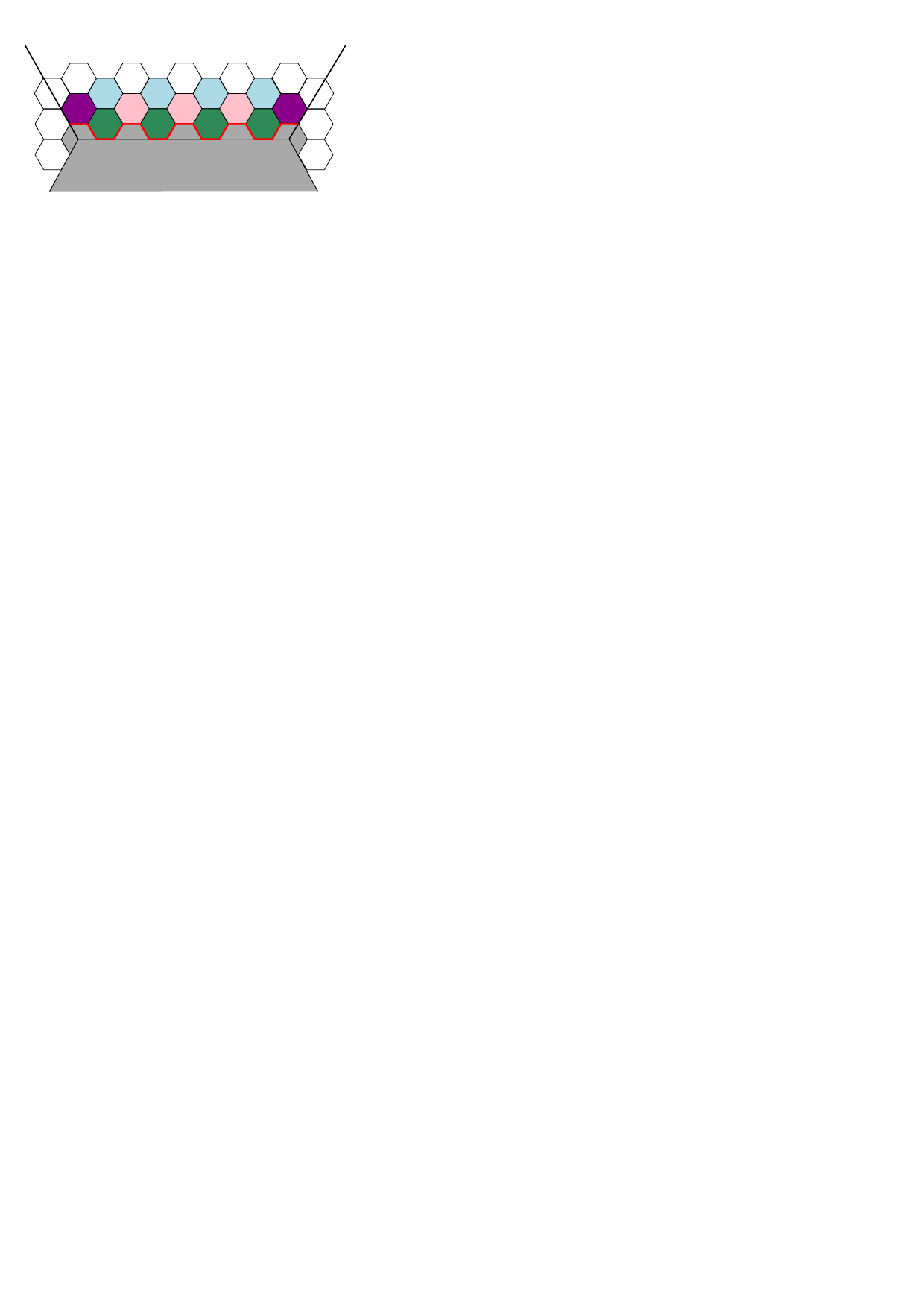} \\
\cline{2-4} 
& \raisebox{3.5\height}{Parent of type T2}
& \rule{0cm}{2.25cm}\includegraphics[scale=0.5]{Model0HexaR0ChildT2} 
& \rule{0cm}{2.25cm}\includegraphics[scale=0.5]{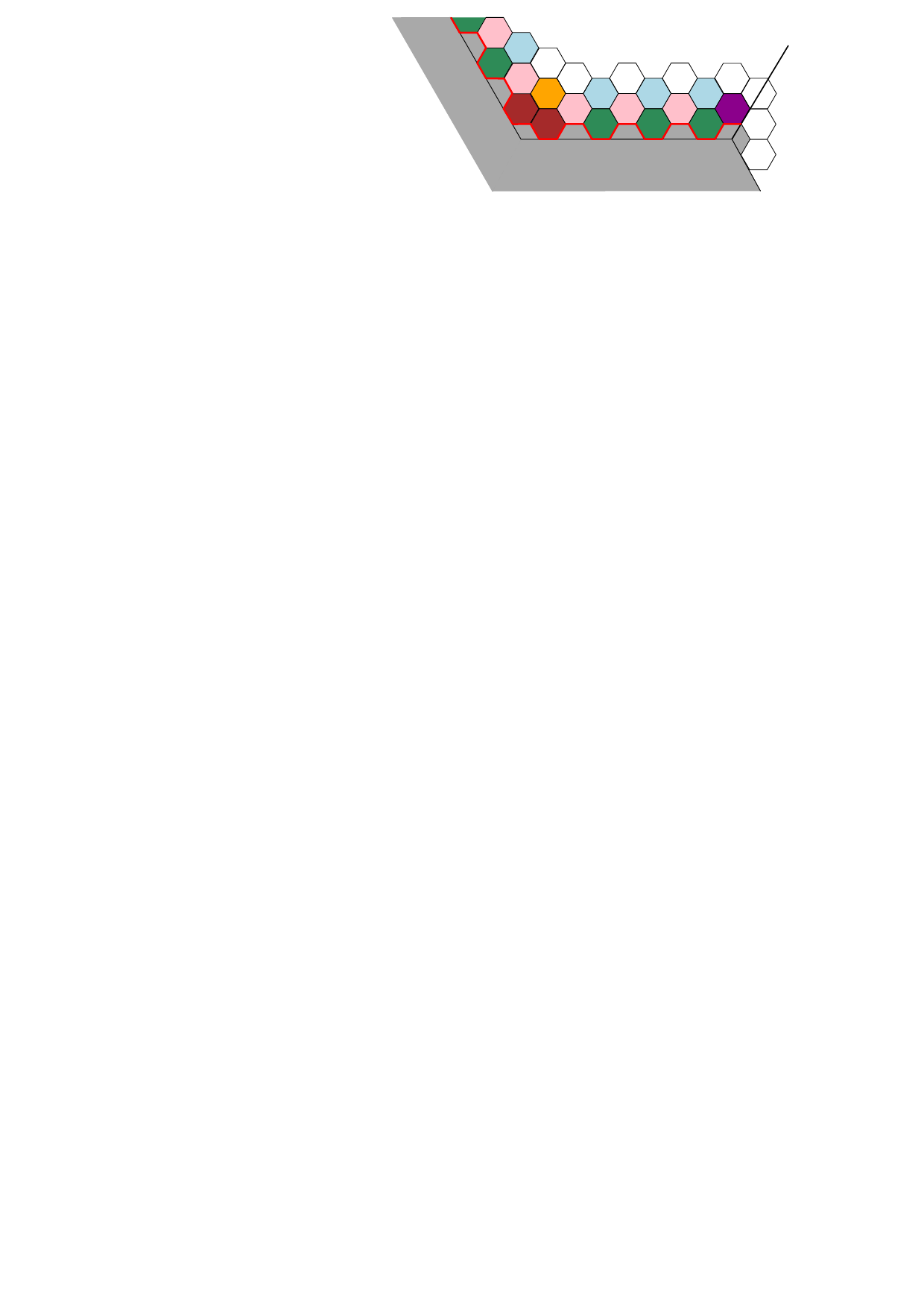} \\
\hline
\multirow{2}{*}{\raisebox{-1.75\height}{$r=1$}} 
& \raisebox{2.5\height}{Parent of type T1}
& \rule{0cm}{1.9cm}\includegraphics[scale=0.5]{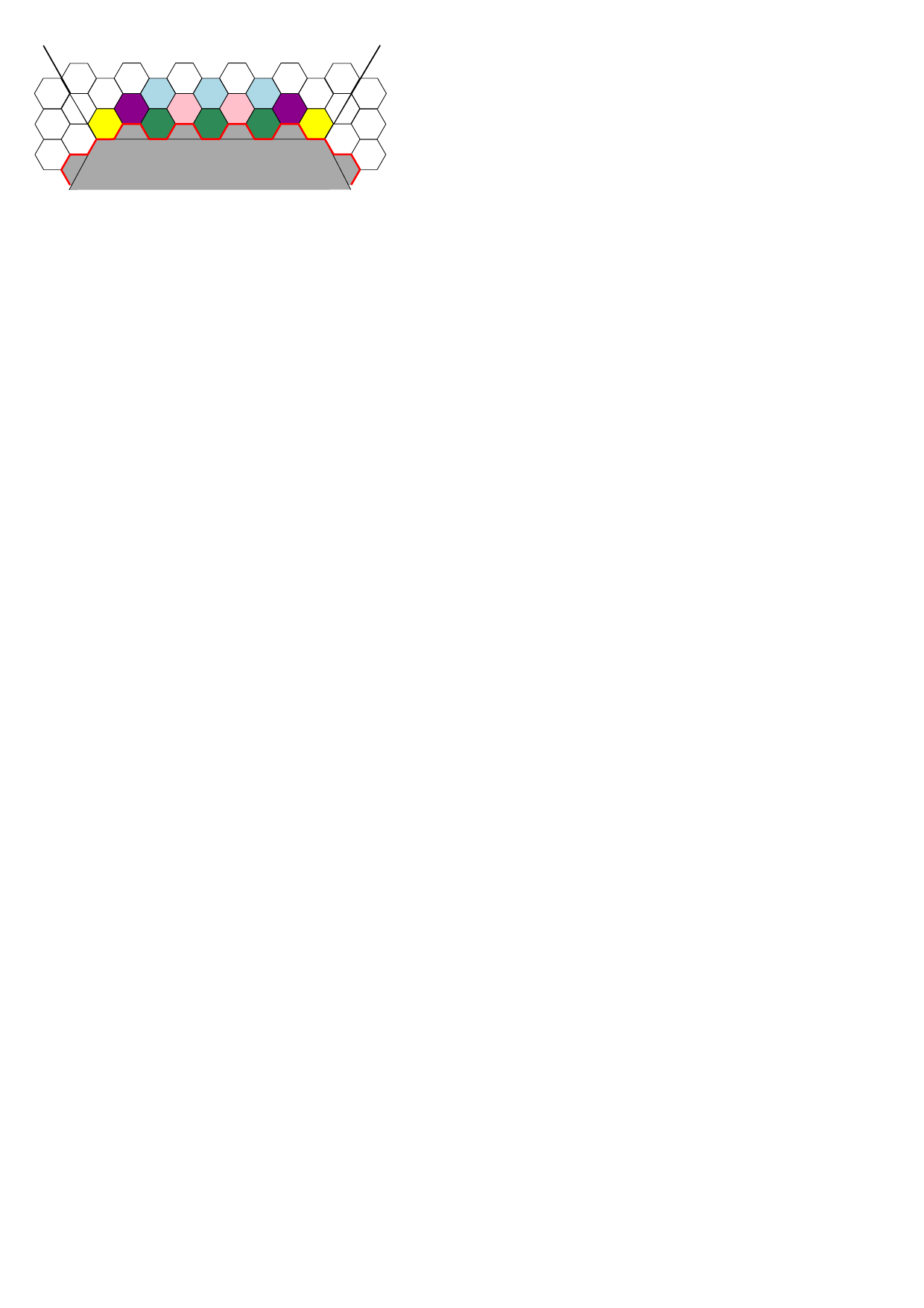} 
& \rule{0cm}{1.9cm}\includegraphics[scale=0.5]{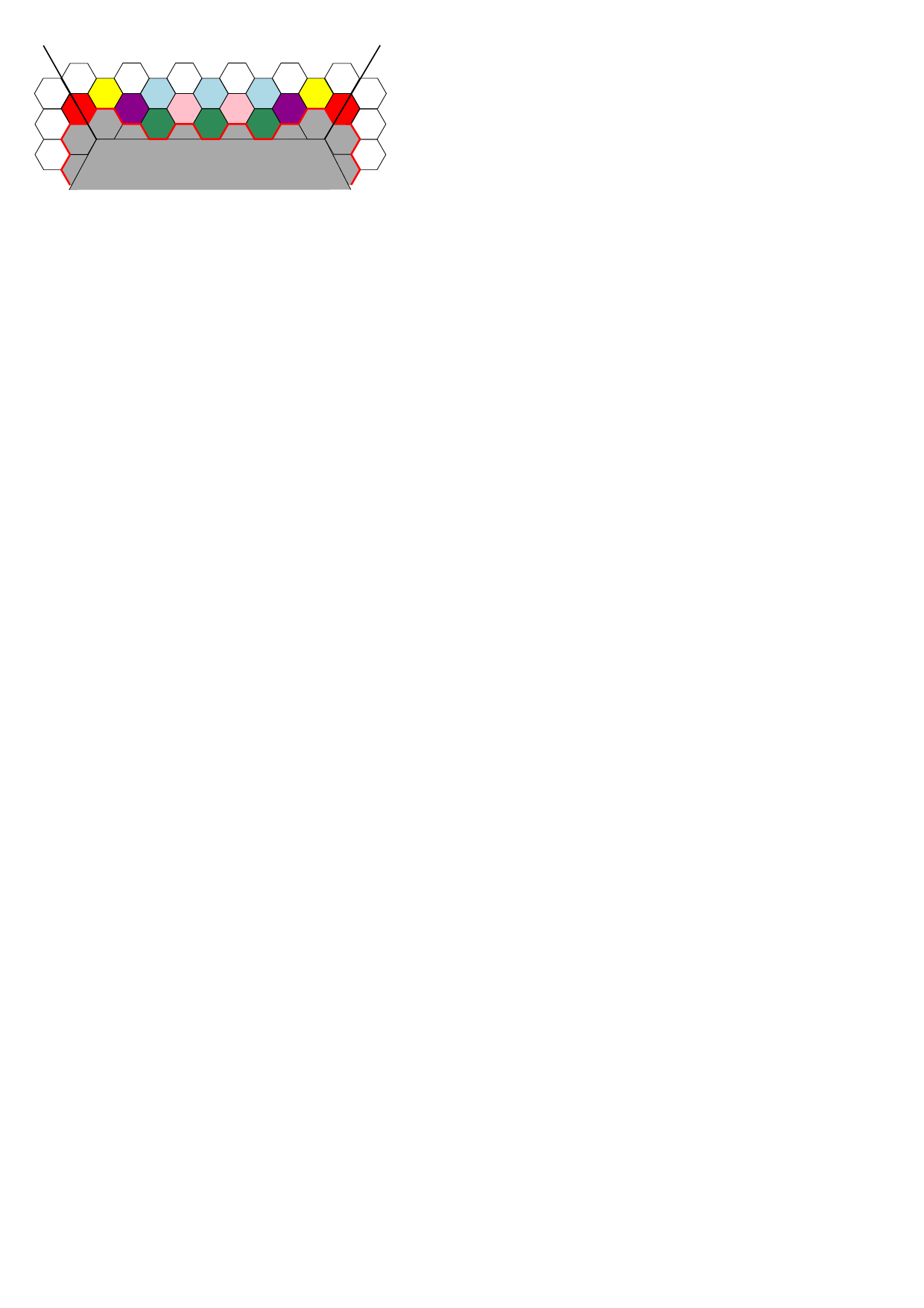} \\
\cline{2-4} 
& \raisebox{3.5\height}{Parent of type T2}
& \rule{0cm}{2.25cm}\includegraphics[scale=0.5]{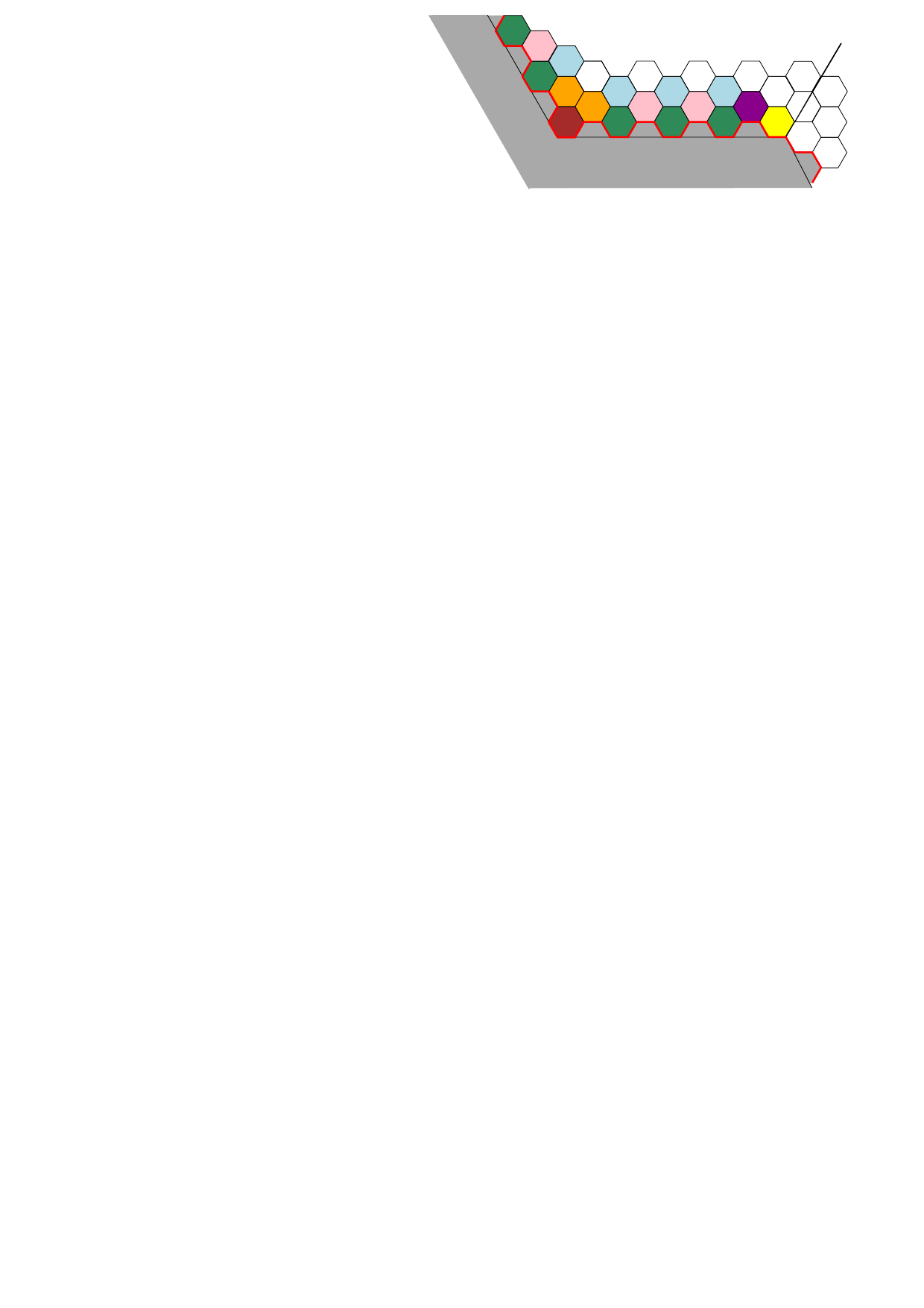} 
& \rule{0cm}{2.25cm}\includegraphics[scale=0.5]{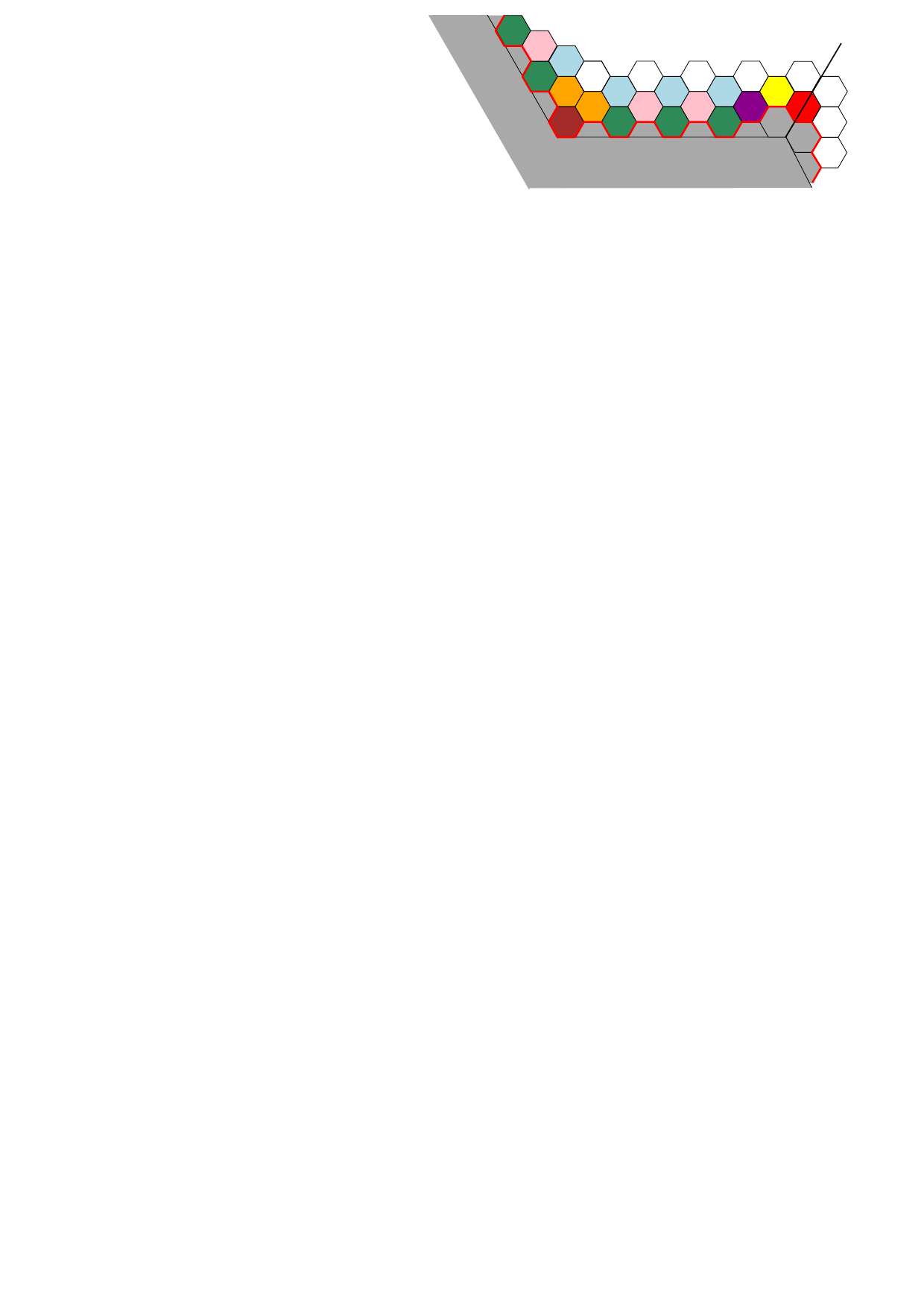} \\
\hline
\multirow{2}{*}{\raisebox{-1.75\height}{$r=2$}} 
& \raisebox{2.5\height}{Parent of type T1}
& \rule{0cm}{1.9cm}\includegraphics[scale=0.5]{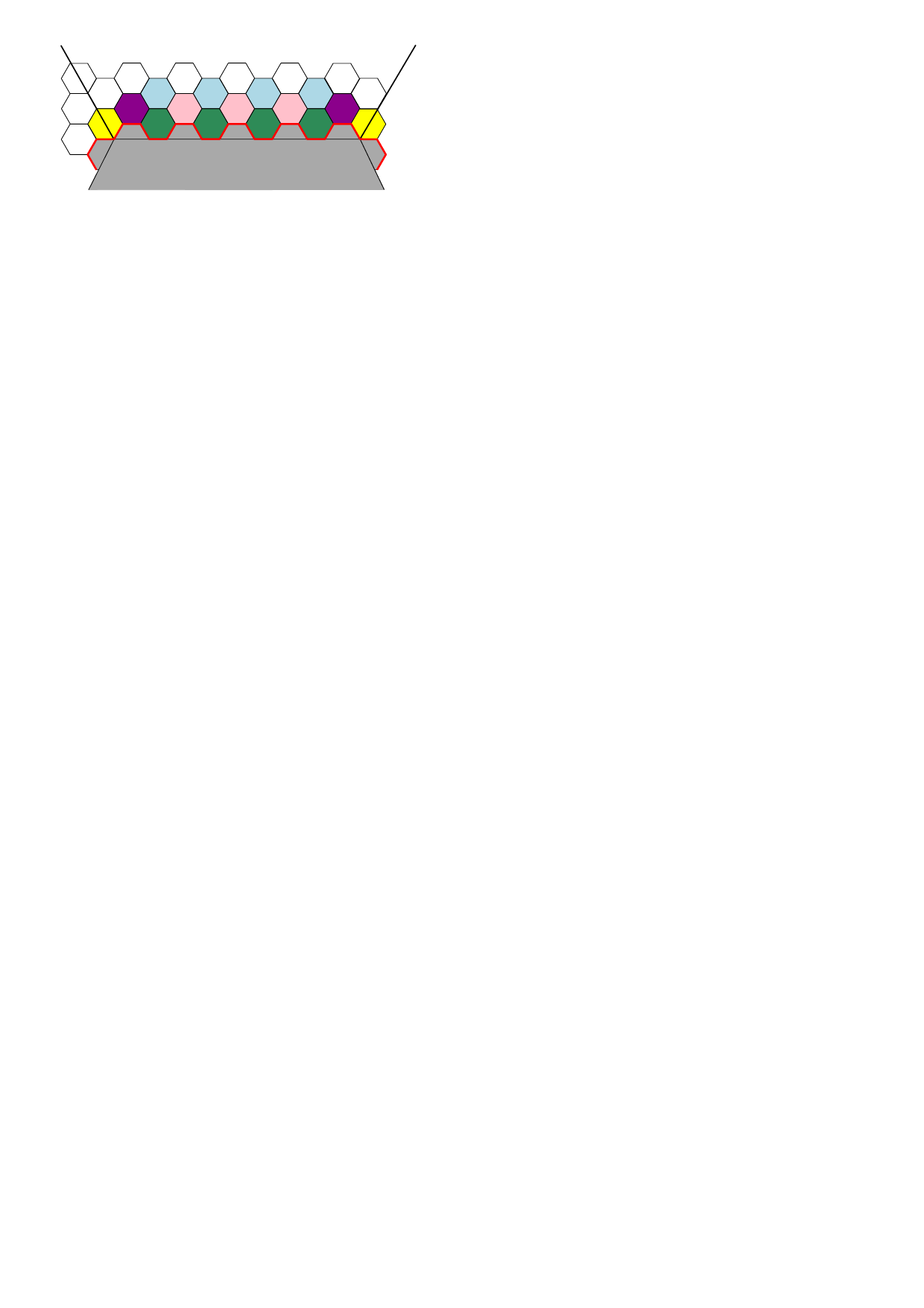} 
& \rule{0cm}{1.9cm}\includegraphics[scale=0.5]{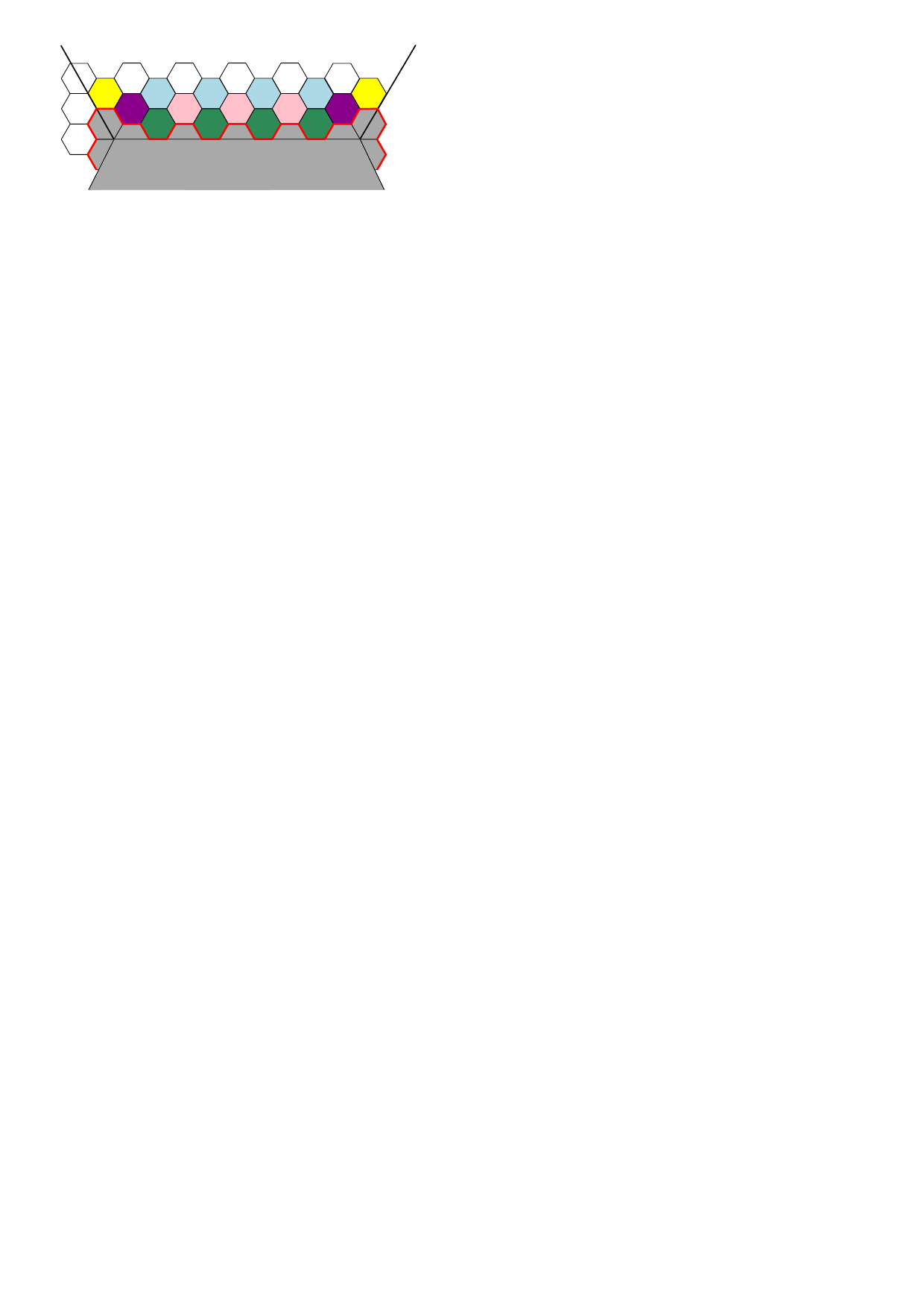} \\
\cline{2-4} 
& \raisebox{3.5\height}{Parent of type T2}
& \rule{0cm}{2.25cm}\includegraphics[scale=0.5]{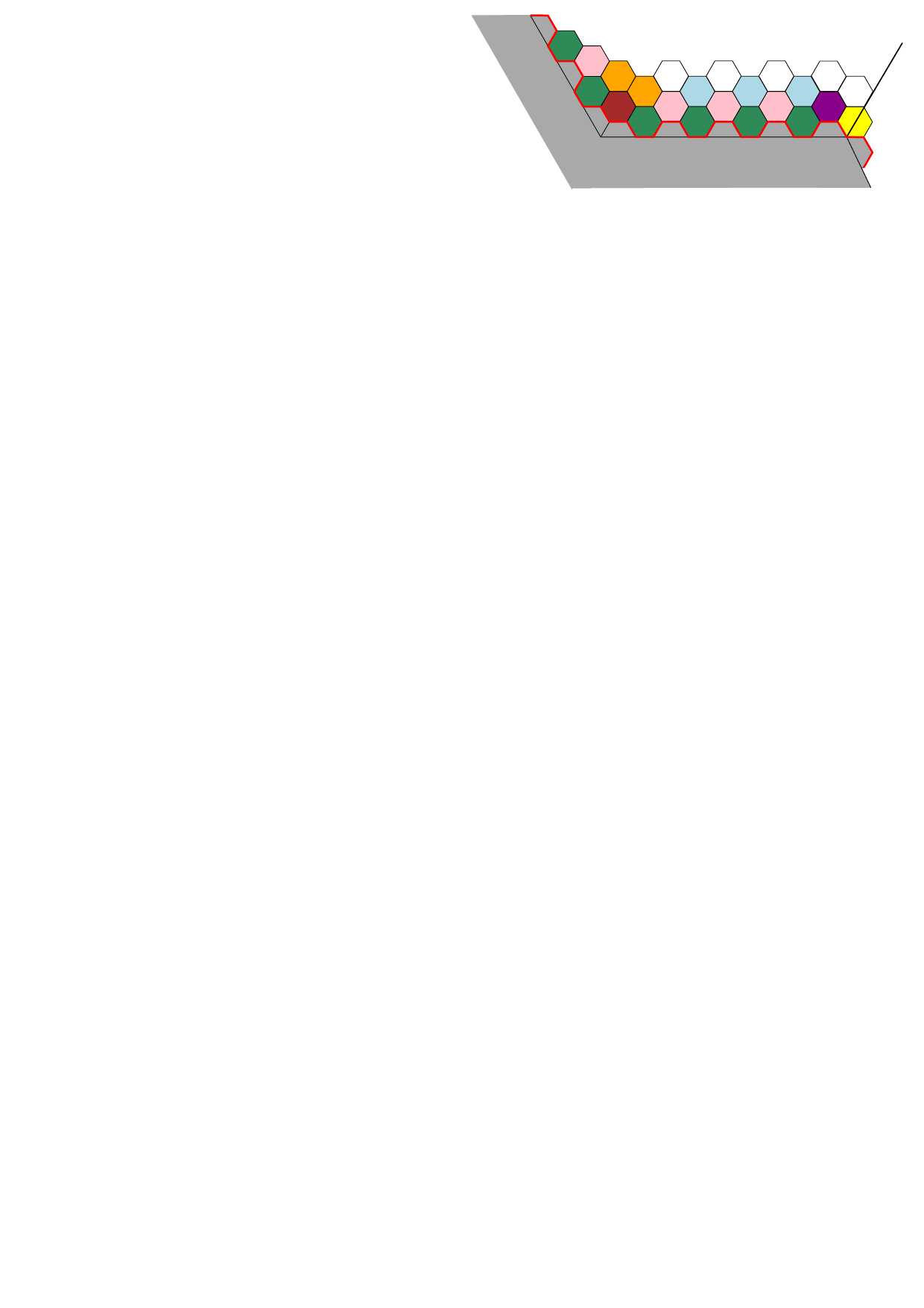} 
& \rule{0cm}{2.25cm}\includegraphics[scale=0.5]{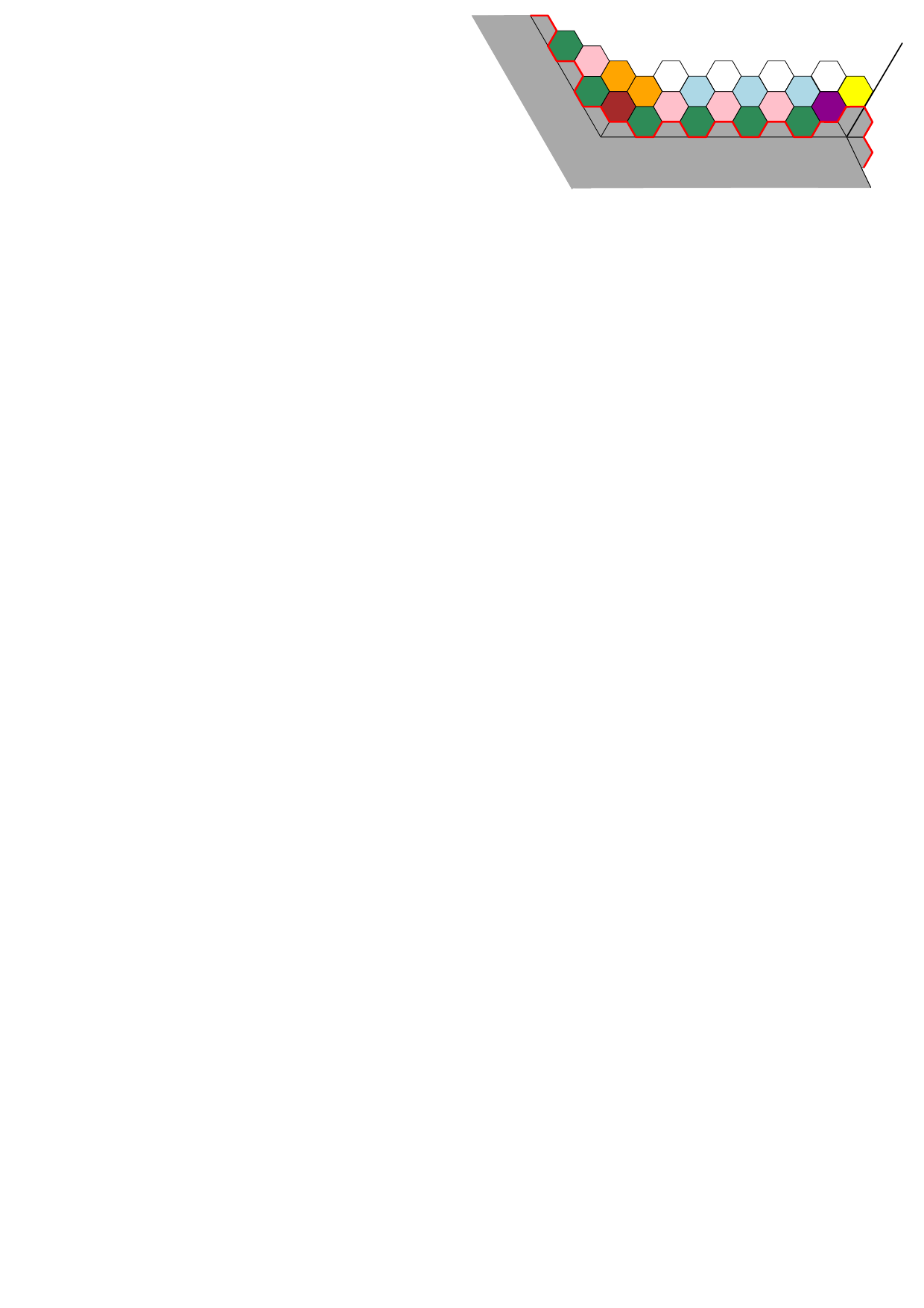} \\
\hline
\end{tabular}}
\pass\caption{The potential children of an hexagonal parent for the case of the centered hexagonal tessellation.}
\label{tab:HexagonChild}
\end{table}

\renewcommand{\arraystretch}{1.5}
\setlength{\arrayrulewidth}{0.1pt}
\begin{table}[h!]
\centering
\resizebox{13.5cm}{!}{%
\begin{tabular}{|c|c|c|c|c|c|}
\cline{1-4}
\multirow{4}{*}{\raisebox{2.25\height}{\includegraphics[scale=0.8]{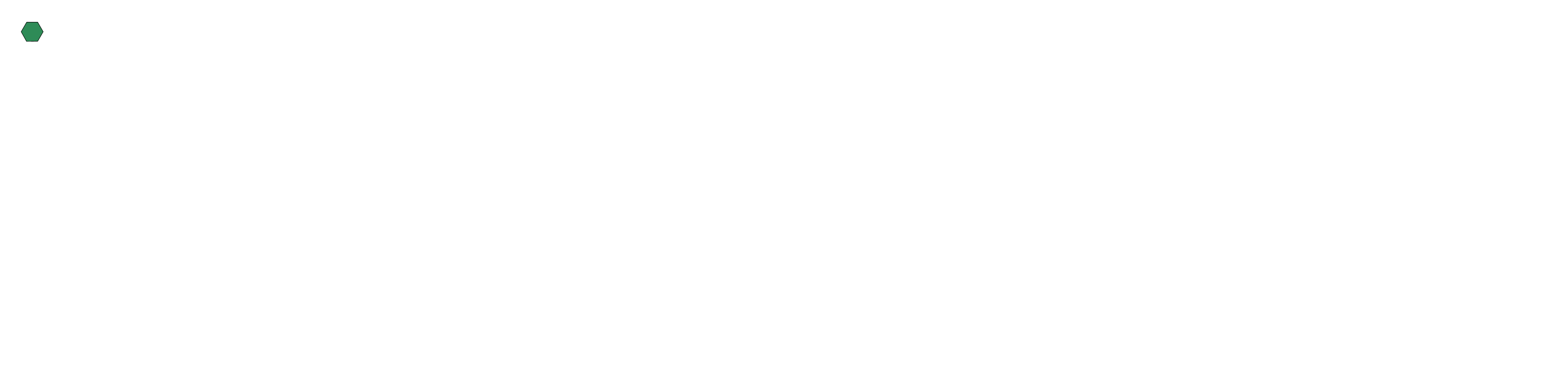}}} & \raisebox{0.75\height}{\begin{minipage}{3cm} $$ \begin{array}{cl}
\text{T1:} & \la' \\ \text{T2:} & 2(\la'-1) \end{array} $$ \end{minipage}} & \rule{0cm}{1.5cm}\includegraphics[scale=0.75]{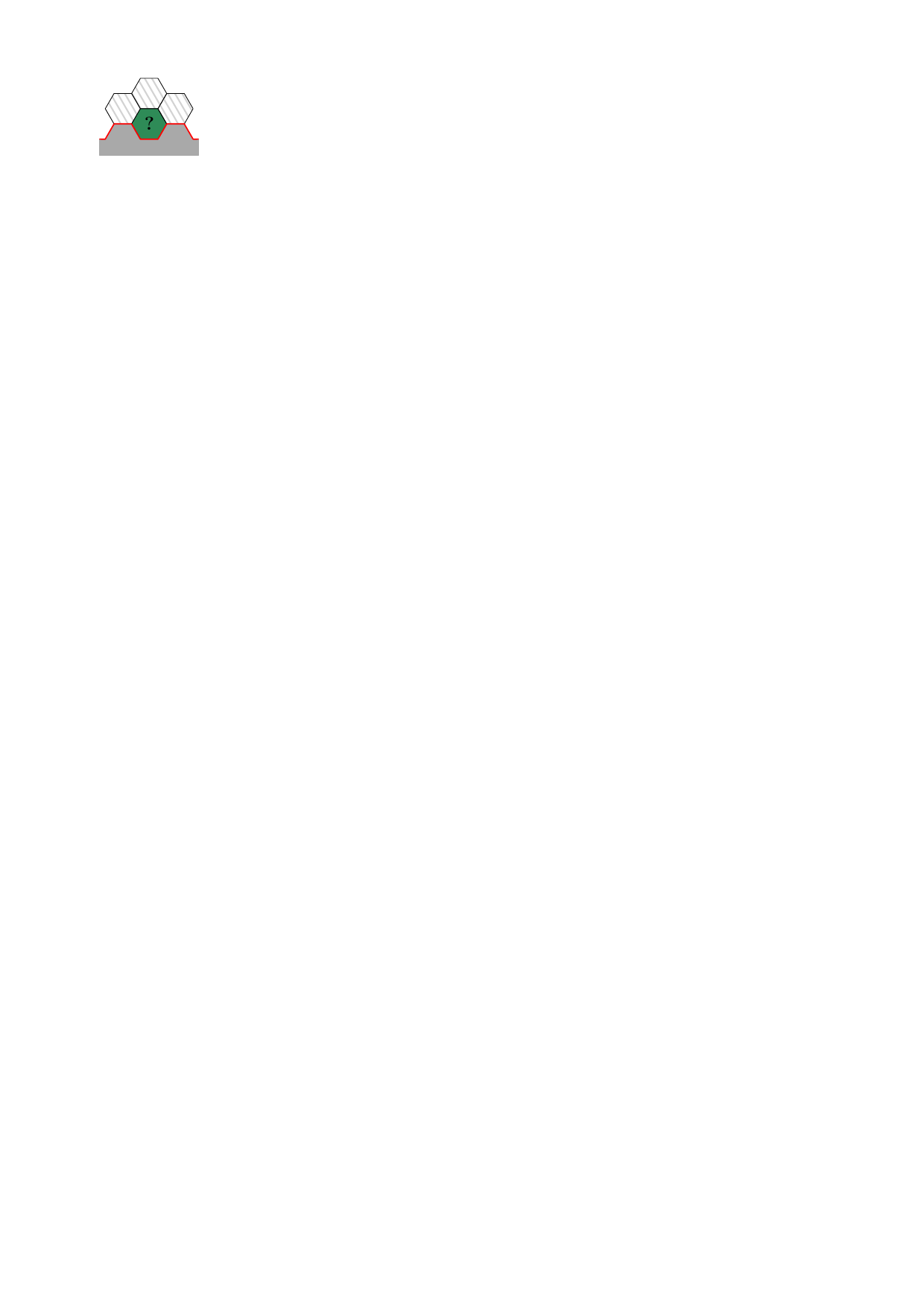} & \rule{0cm}{1.5cm}\includegraphics[scale=0.75]{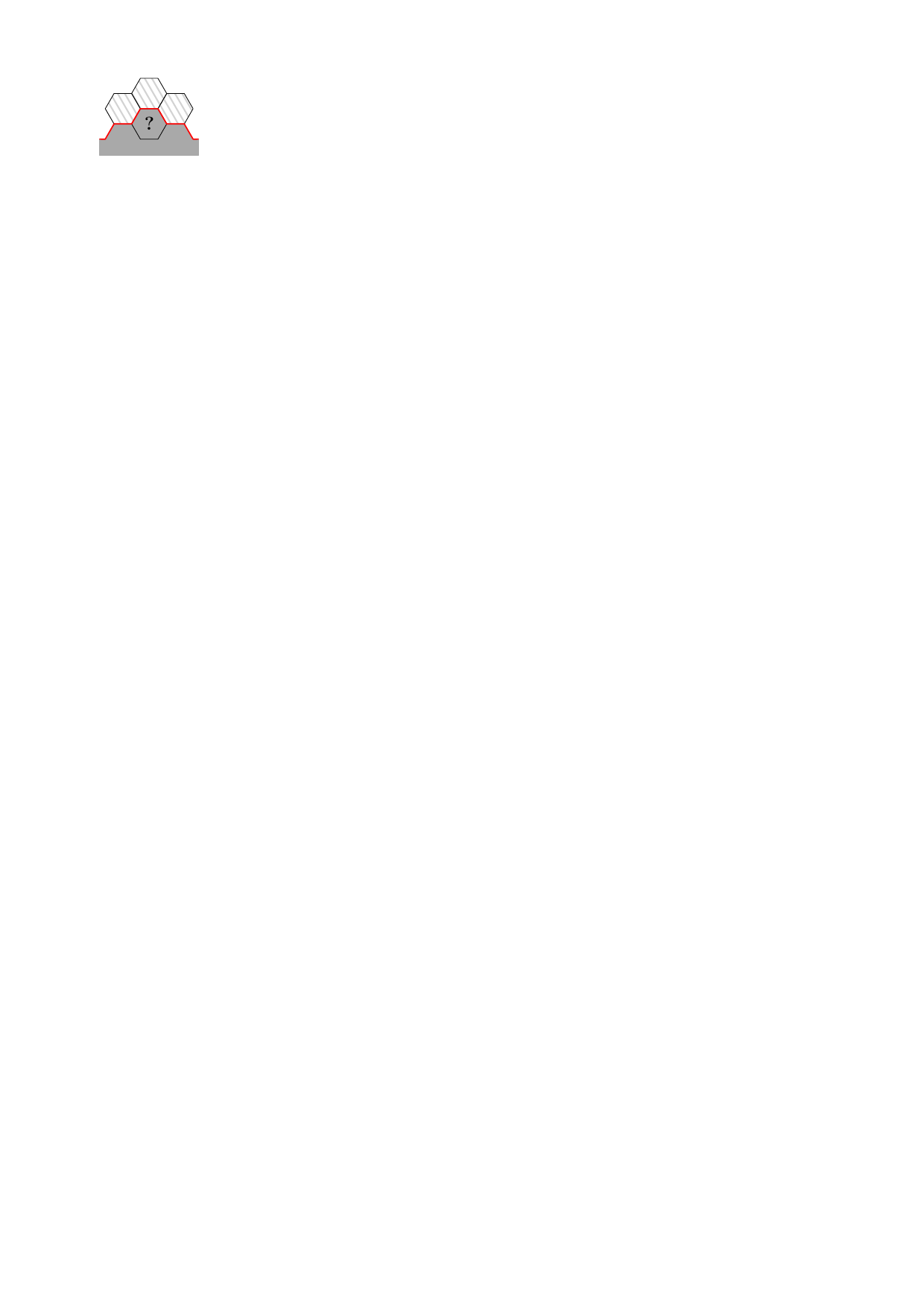} \\ \cline{2-4}
& $\eps$ & $0$ & $1$ \\ \cline{2-4}
& $\P(B_p^{(1)}=\eps)$ & $1-p$ & $p$ \\ \cline{2-4}
& $F^{(0)}_{\footnotesize\texttt{green}}(\eps)$ & $(-1,2)$ & $(0,0)$ \\ 
\cline{1-6}
\multirow{4}{*}{\raisebox{2.25\height}{\includegraphics[scale=0.8]{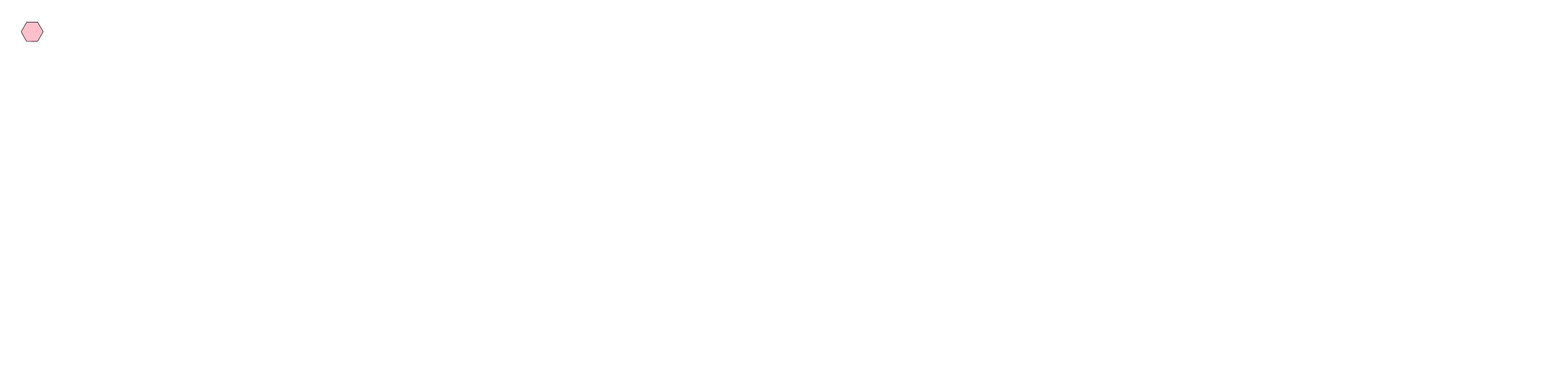}}} & \raisebox{0.75\height}{\begin{minipage}{3cm} $$ \begin{array}{cl}
\text{T1:} & \la'-1 \\ \text{T2:} & 2(\la'-1) \end{array} $$ \end{minipage}} & \rule{0cm}{1.5cm}\includegraphics[scale=0.75]{Model0HexaR0ChildT1Pink00} & \rule{0cm}{1.5cm}\includegraphics[scale=0.75]{Model0HexaR0ChildT1Pink01} & \rule{0cm}{1.5cm}\includegraphics[scale=0.75]{Model0HexaR0ChildT1Pink10} & \rule{0cm}{1.5cm}\includegraphics[scale=0.75]{Model0HexaR0ChildT1Pink11} \\ \cline{2-6}
& $\eps$ & $(0,0)$ & $(0,1)$ & $(1,0)$ & $(1,1)$\\ \cline{2-6}
& $\P(B_p^{(2)}=\eps)$ & $(1-p)^2$ & $(1-p)p$ & $p(1-p)$ & $p^2$ \\ \cline{2-6}
& $F^{(0)}_{\footnotesize\texttt{pink}}(\eps)$ & $(1,0)$ & $(0,1)$ & $(0,1)$ & $(-1,2)$ \\ \cline{2-6}
\cline{1-4}
\multirow{4}{*}{\raisebox{2.25\height}{\includegraphics[scale=0.8]{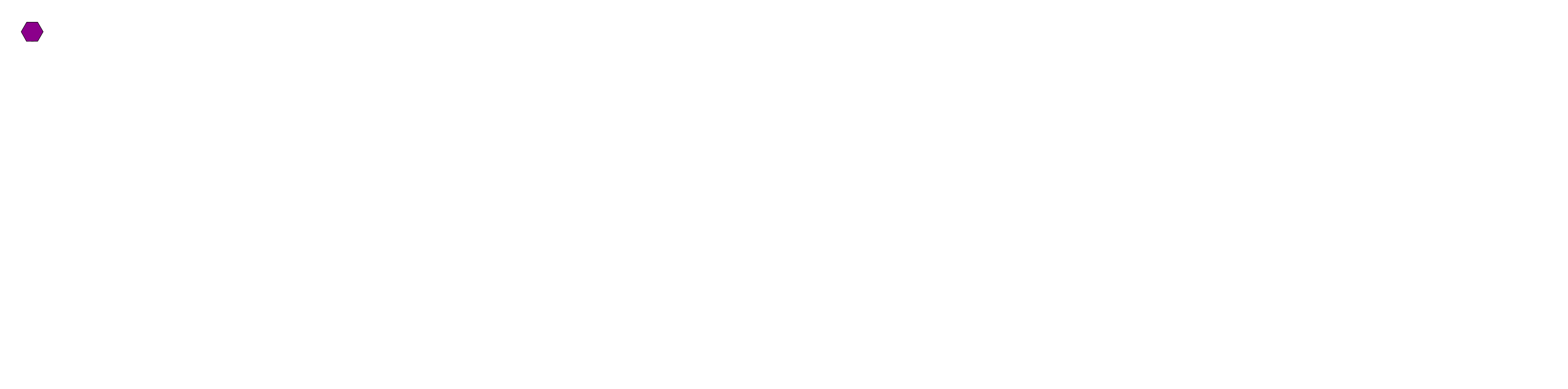}}} & \raisebox{1\height}{\begin{minipage}{3cm} $$ \begin{array}{cl}
\text{T1:} & 2 \\ \text{T2:} & 2 \end{array}$$ \end{minipage}} & \rule{0cm}{2cm}\includegraphics[scale=0.75]{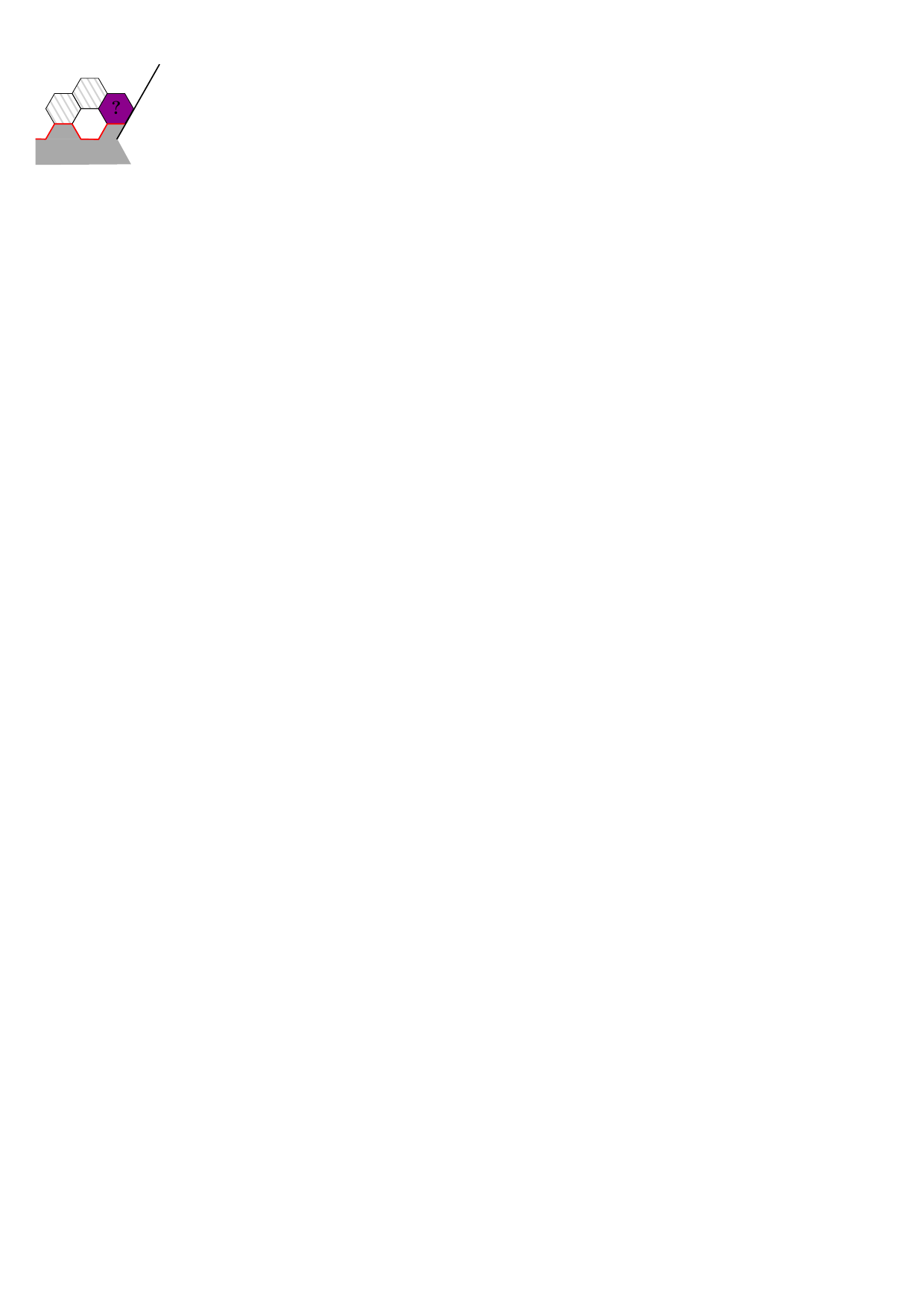} & \rule{0cm}{2cm}\includegraphics[scale=0.75]{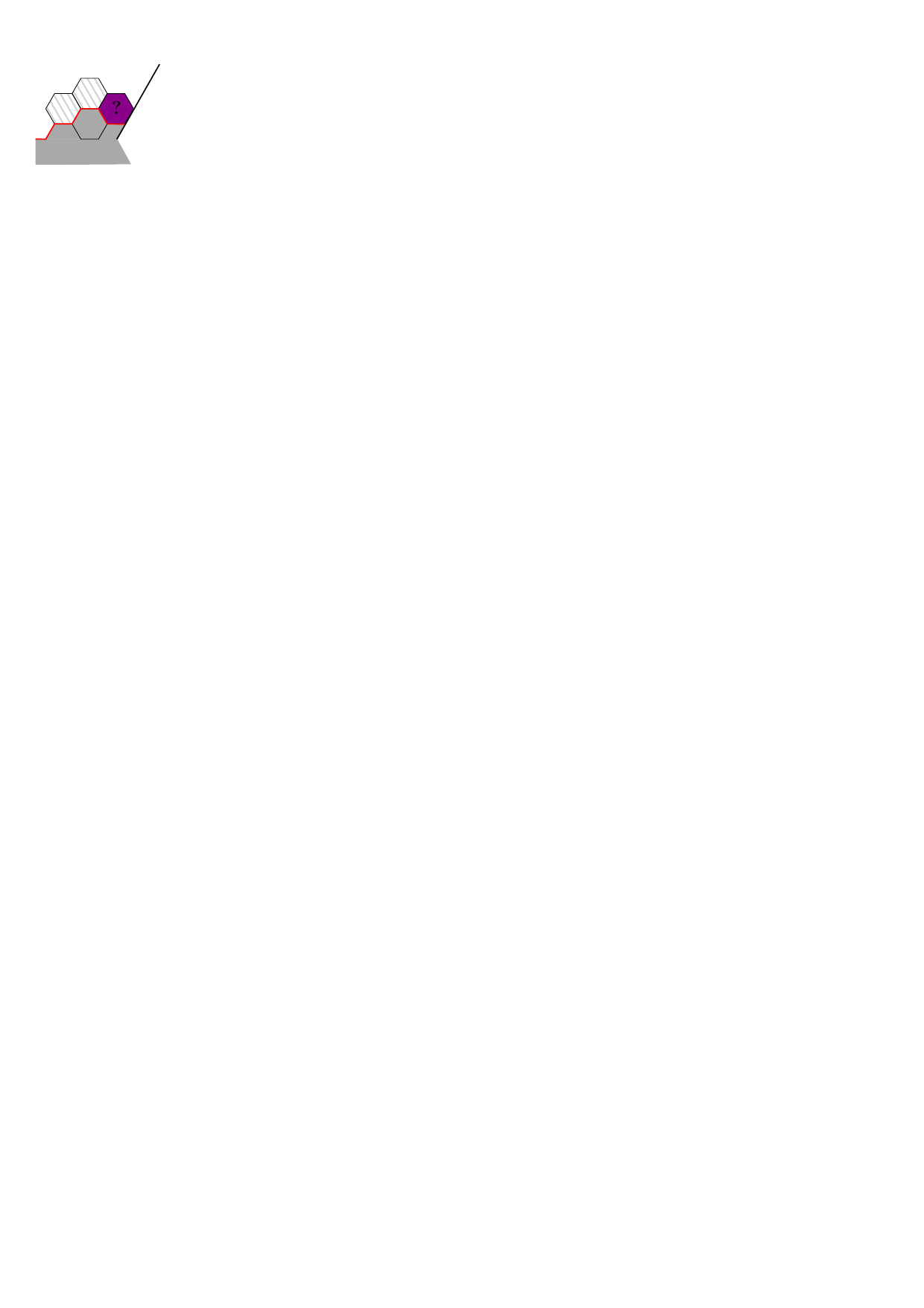} \\ \cline{2-4}
& $\eps$ & $0$ & $1$ \\ \cline{2-4}
& $\P(B_p^{(1)}=\eps)$ & $1-p$ & $p$ \\ \cline{2-4}
& $F^{(0)}_{\footnotesize\texttt{purple}}(\eps)$ & $(1,0)$ & $(0,1)$ \\ 
\cline{1-4}
\multirow{4}{*}{\raisebox{2.25\height}{\includegraphics[scale=0.8]{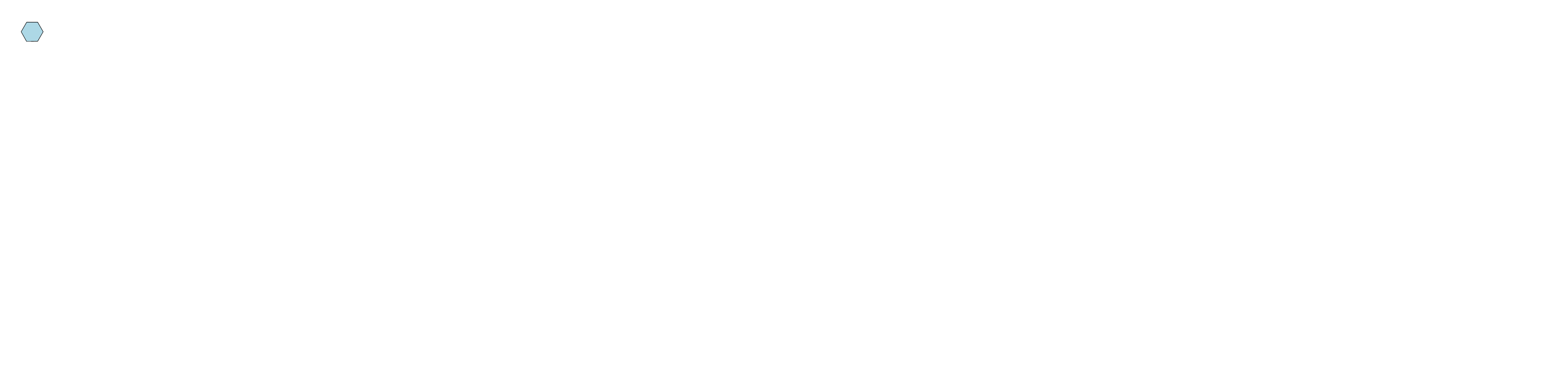}}} & \raisebox{0.75\height}{\begin{minipage}{3cm} $$ \begin{array}{cl}
\text{T1:} & \la' \\ \text{T2:} & 2(\la'-1) \end{array} $$ \end{minipage}} & \rule{0cm}{1.5cm}\includegraphics[scale=0.75]{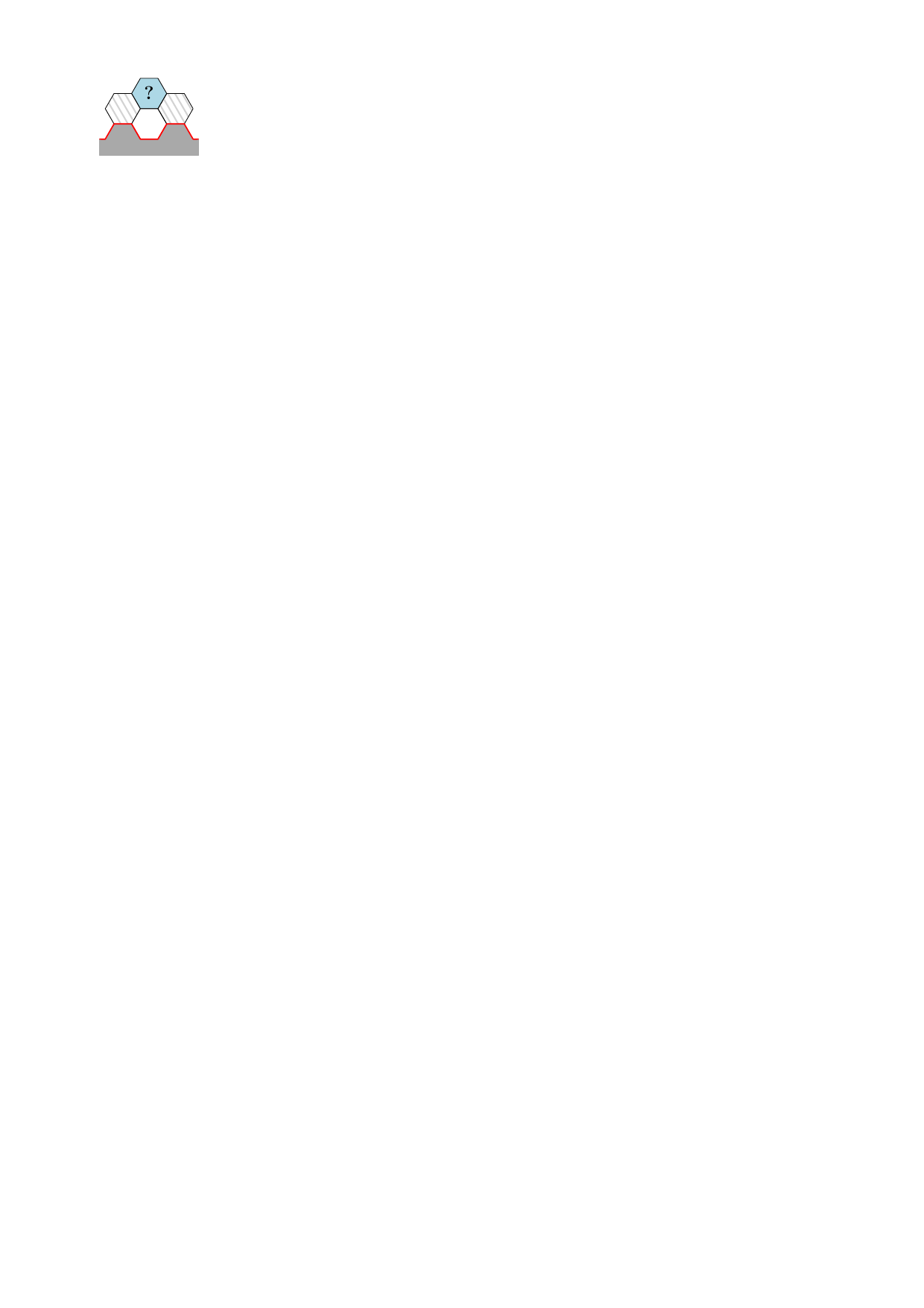} & \rule{0cm}{1.5cm}\includegraphics[scale=0.75]{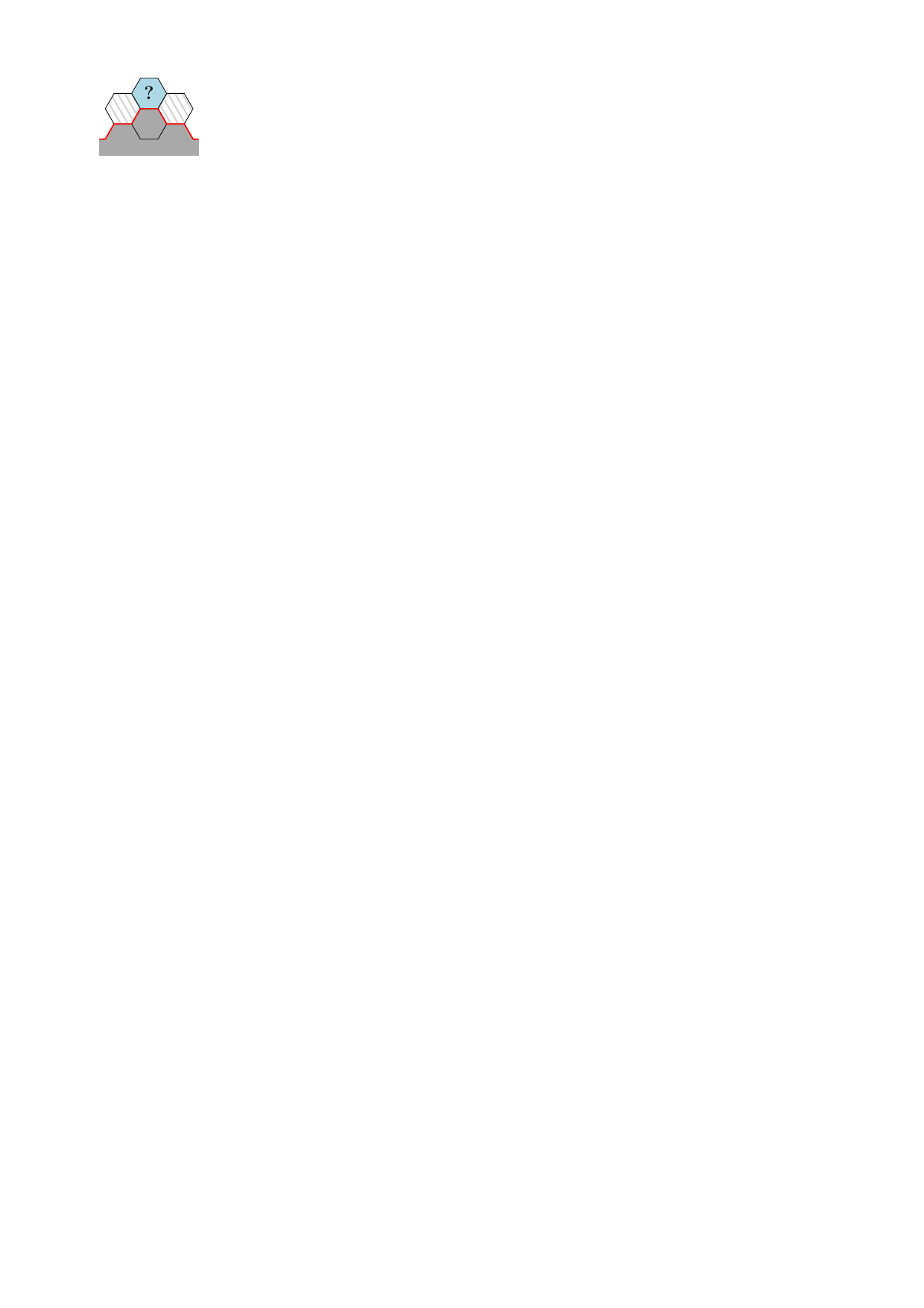} \\ \cline{2-4}
& $\eps$ & $0$ & $1$ \\ \cline{2-4}
& $\P(B_p^{(1)}=\eps)$ & $1-p$ & $p$ \\ \cline{2-4}
& $F^{(0)}_{\footnotesize\texttt{blue}}(\eps)$ & $(0,0)$ & $(1,0)$ \\ 
\cline{1-6}
\multirow{4}{*}{\raisebox{2.25\height}{\includegraphics[scale=0.8]{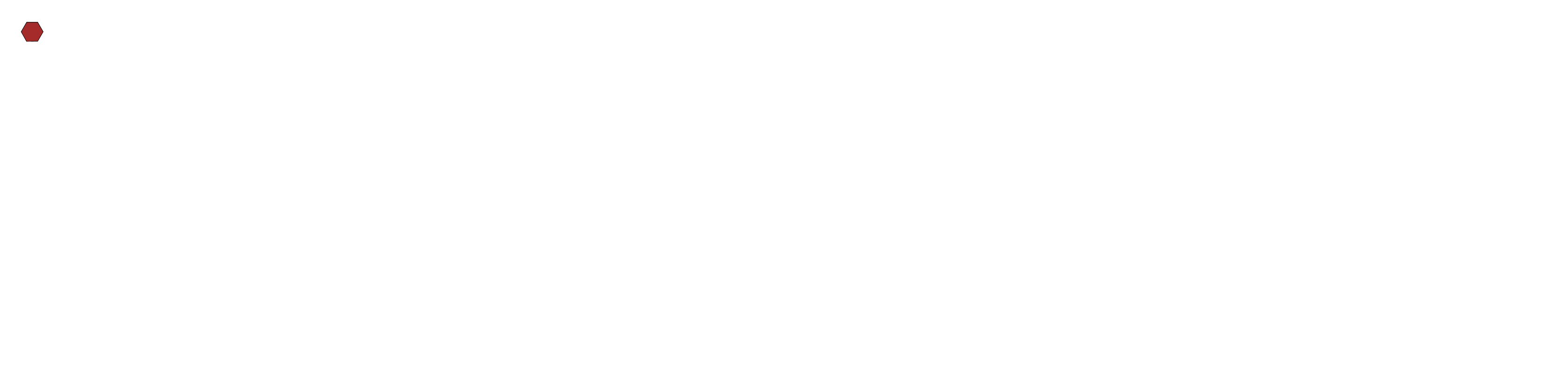}}} & \raisebox{1\height}{ \begin{minipage}{3cm} $$\begin{array}{cl}
\text{T1:} & 0 \\ \text{T2:} & 2 \end{array}$$ \end{minipage}} & \rule{0cm}{2cm}\includegraphics[scale=0.75]{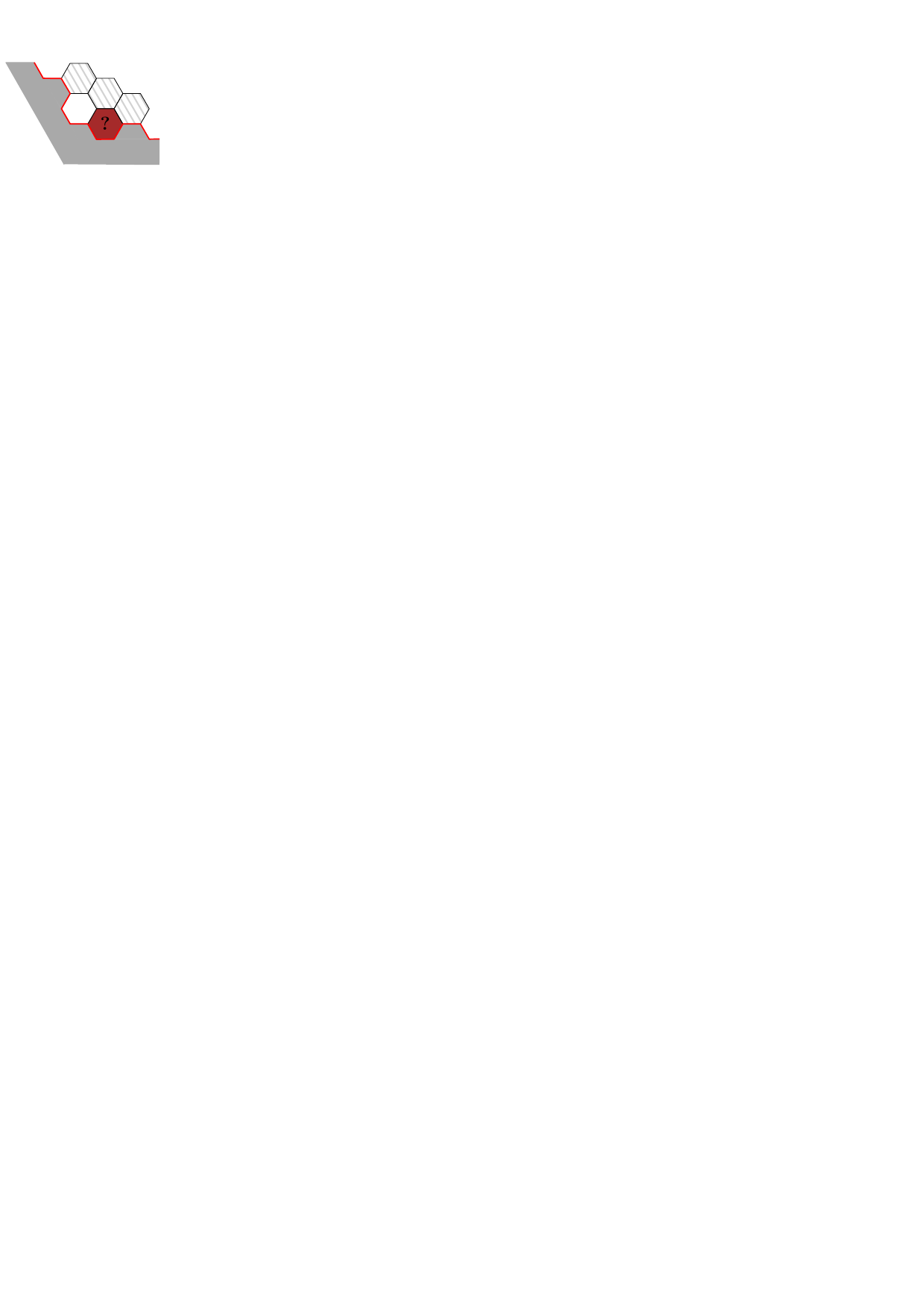} & \rule{0cm}{2cm}\includegraphics[scale=0.75]{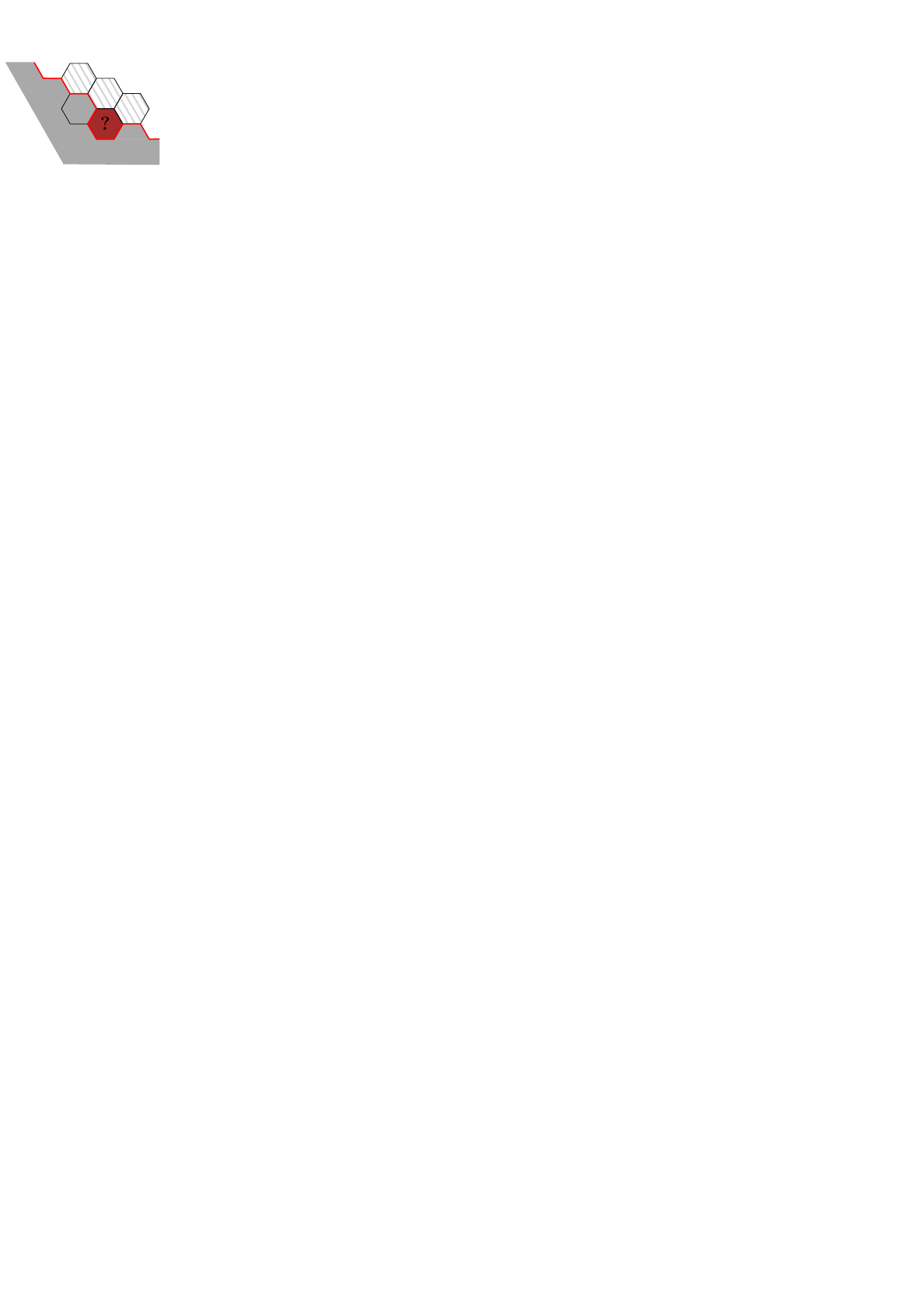} & \rule{0cm}{2cm}\includegraphics[scale=0.75]{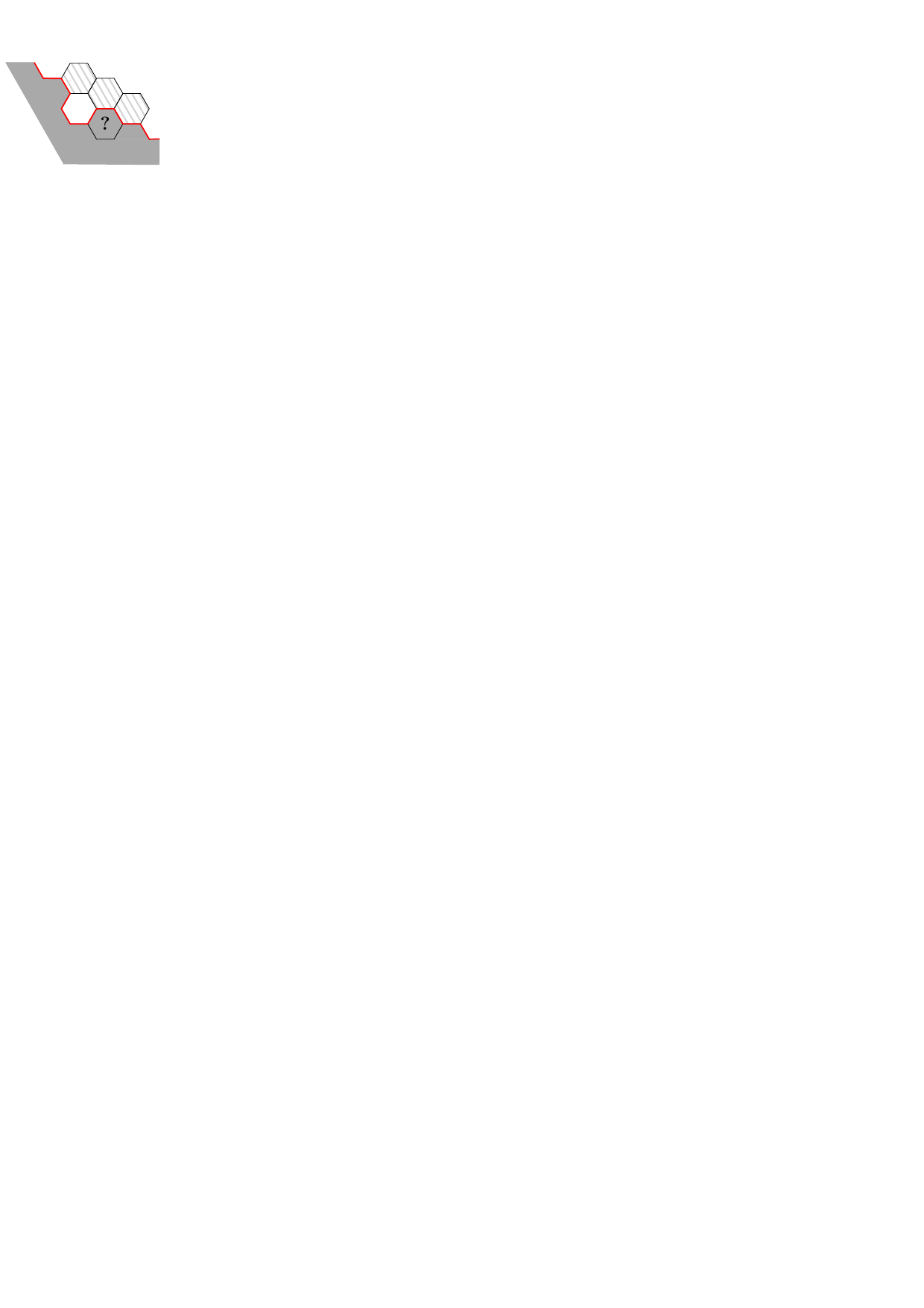} &
\rule{0cm}{2cm}\includegraphics[scale=0.75]{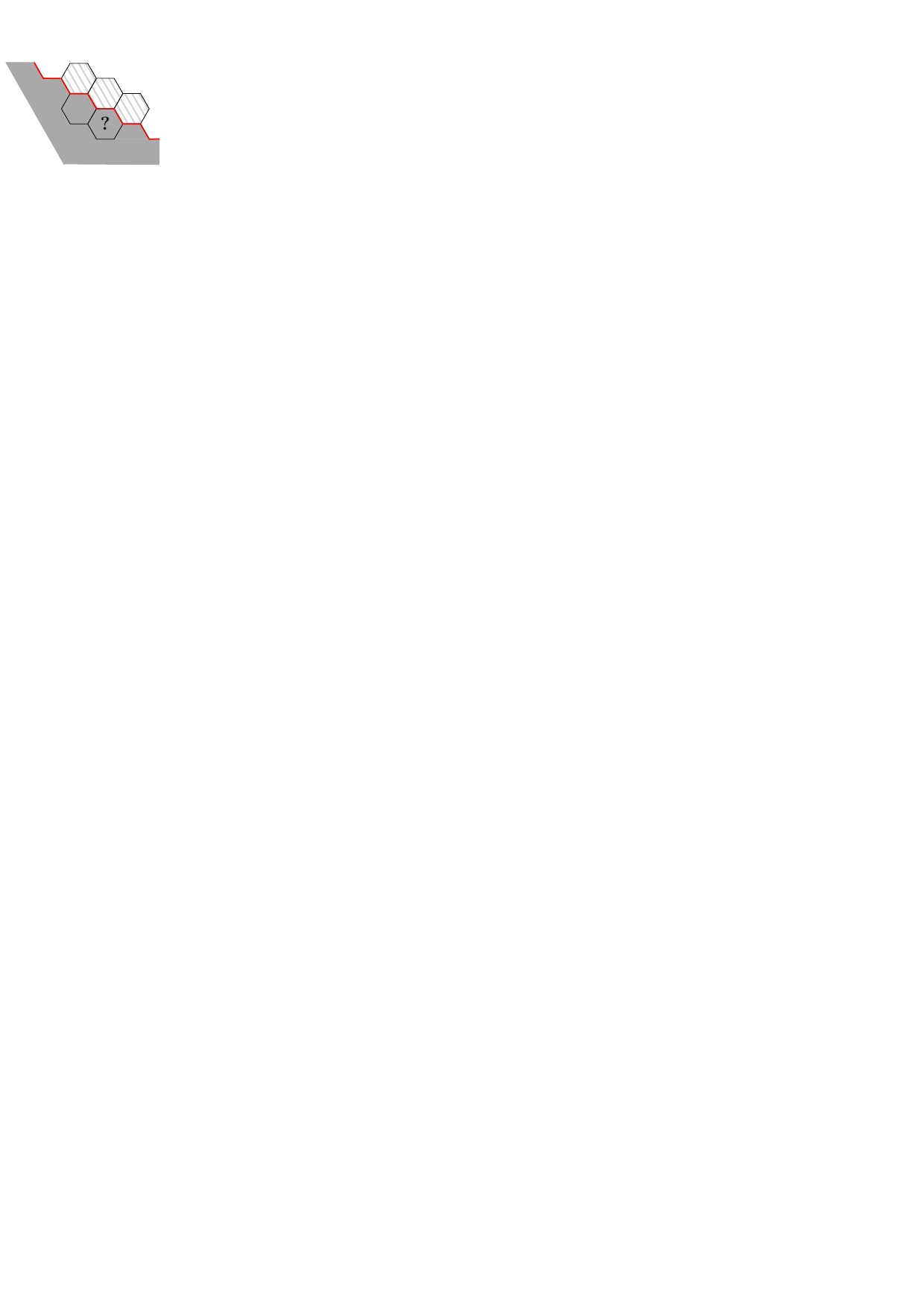} \\ \cline{2-6}
& $\eps$ & $(0,0)$ & $(0,1)$ & $(1,0)$ & $(1,1)$ \\ \cline{2-6}
& $\P(B_p^{(2)}=\eps)$ & $(1-p)^2$ & $(1-p)p$ & $p(1-p)$ & $p^2$ \\ \cline{2-6}
& $F^{(0)}_{\footnotesize\texttt{brown}}(\eps)$ & $(-1,2)$ & $(-2,3)$ & $(0,0)$ & $(0,0)$ \\ \cline{2-6}
\cline{1-6}
\multirow{4}{*}{\raisebox{2.25\height}{\includegraphics[scale=0.8]{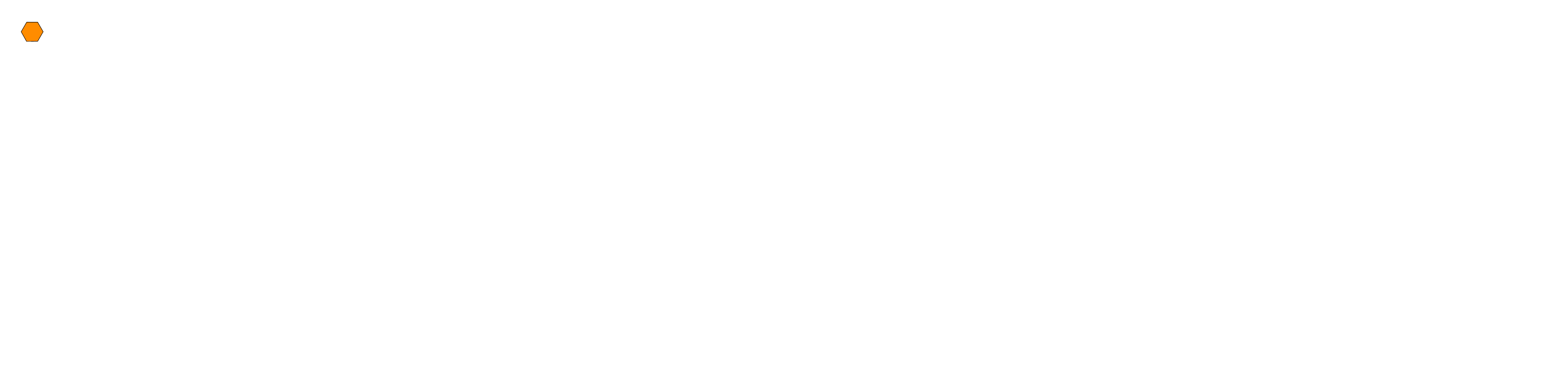}}} & \raisebox{1\height}{ \begin{minipage}{3cm} $$ \begin{array}{cl}
\text{T1:} & 0 \\ \text{T2:} & 1 \end{array} $$ \end{minipage}} & \rule{0cm}{2cm}\includegraphics[scale=0.75]{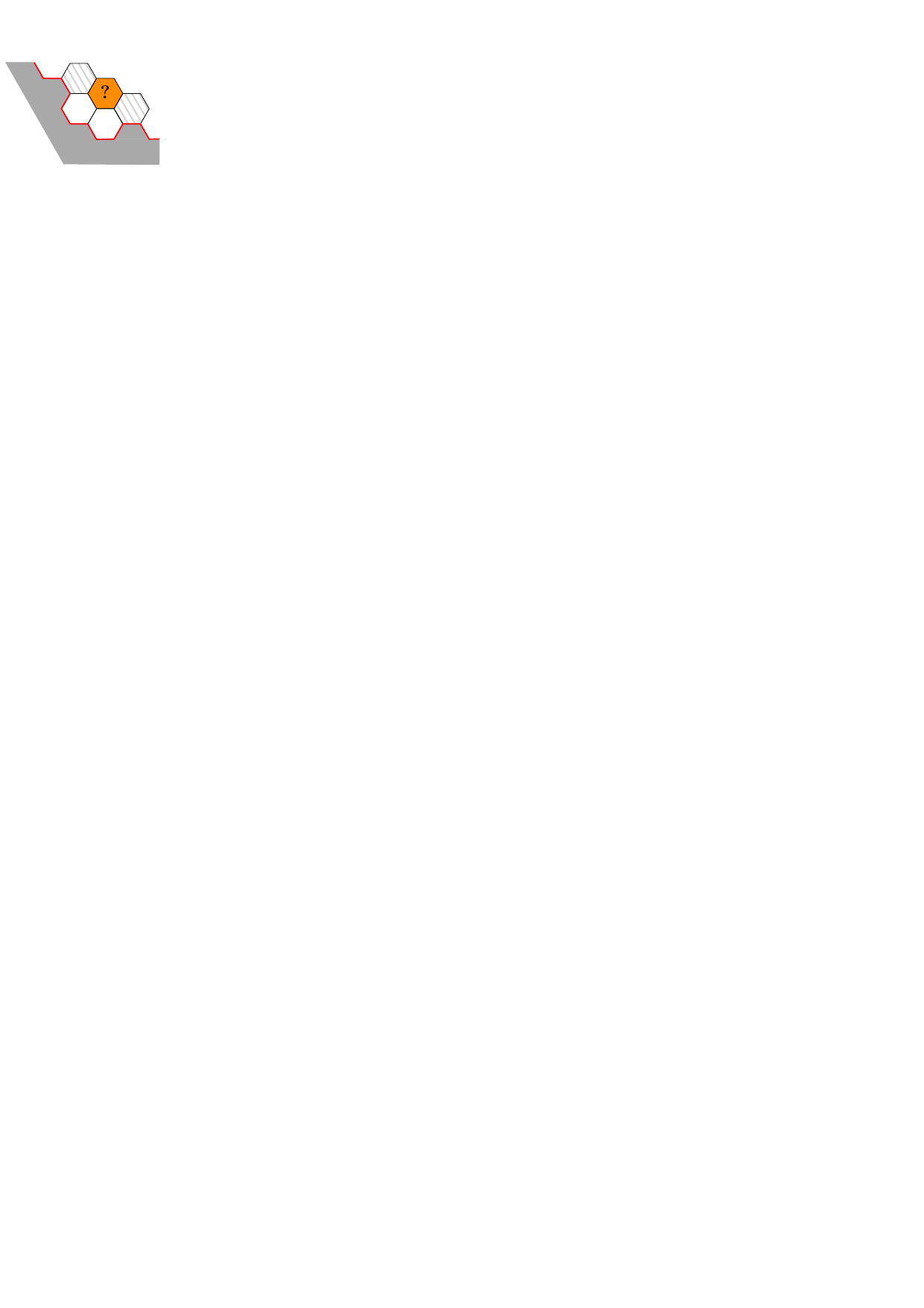} & \rule{0cm}{2cm}\includegraphics[scale=0.75]{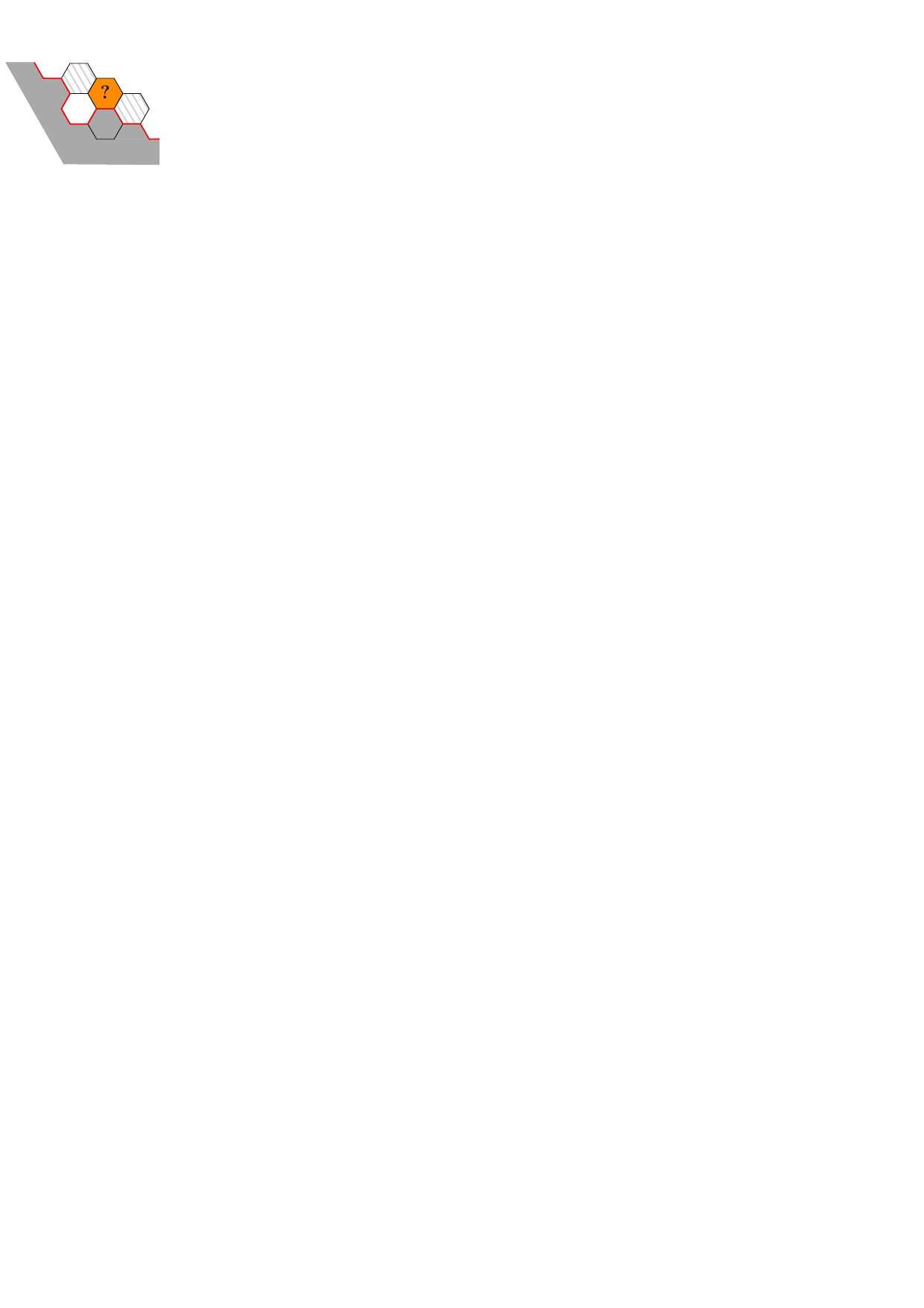} & \rule{0cm}{2cm}\includegraphics[scale=0.75]{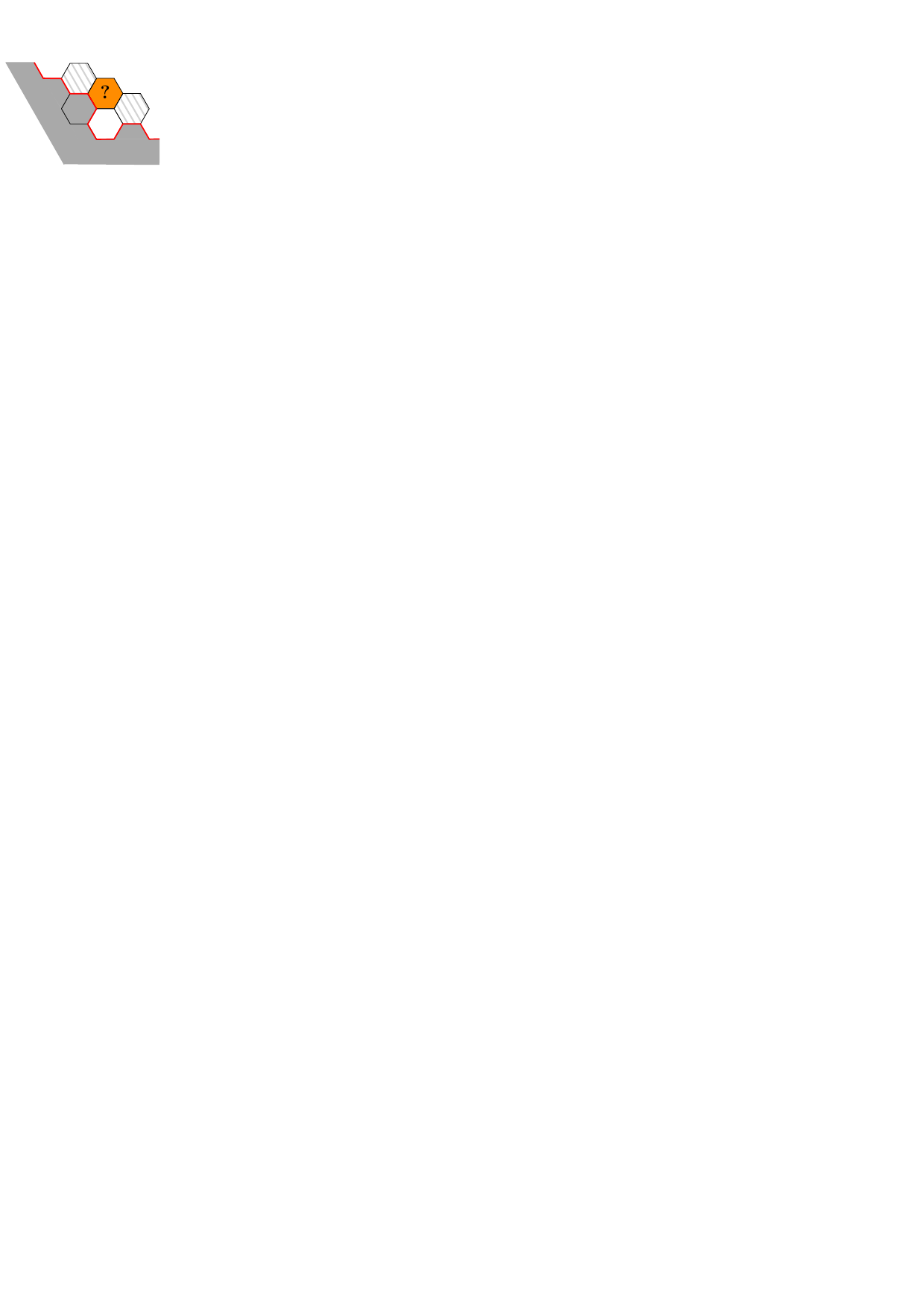} & \rule{0cm}{2cm}\includegraphics[scale=0.75]{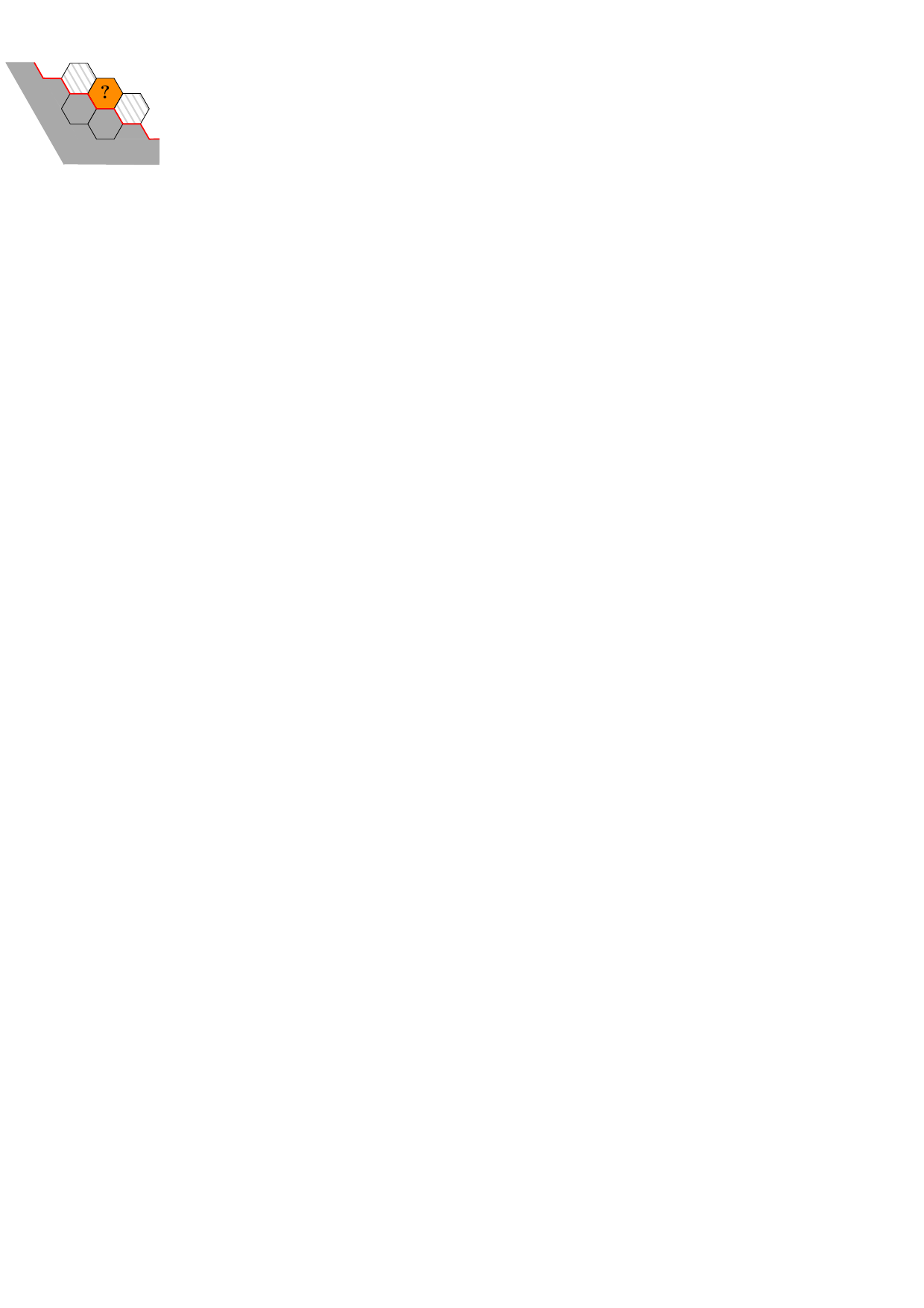} \\ \cline{2-6}
& $\eps$ & $(0,0)$ & $(0,1)$ & $(1,0)$ & $(1,1)$\\ \cline{2-6}
& $\P(B_p^{(2)}=\eps)$ & $(1-p)^2$ & $(1-p)p$ & $p(1-p)$ & $p^2$ \\ \cline{2-6}
& $F^{(0)}_{\footnotesize\texttt{orange}}(\eps)$ & $(0,0)$ & $(1,0)$ & $(1,0)$ & $(0,1)$ \\ \cline{2-6}
\cline{1-6}
\end{tabular}}
\pass\caption{Description of the children of an hexagonal parent when $p_*=0$ and $r=0$.}
\label{tab:Model0HexaR0Child}
\end{table}

\pass

\renewcommand{\arraystretch}{1.5}
\setlength{\arrayrulewidth}{0.1pt}
\begin{table}[h!]
\centering
\resizebox{12.25cm}{!}{%
\begin{tabular}{|c|c|c|c|c|c|}
\cline{1-3}
\multirow{4}{*}{\raisebox{2.25\height}{\includegraphics[scale=0.8]{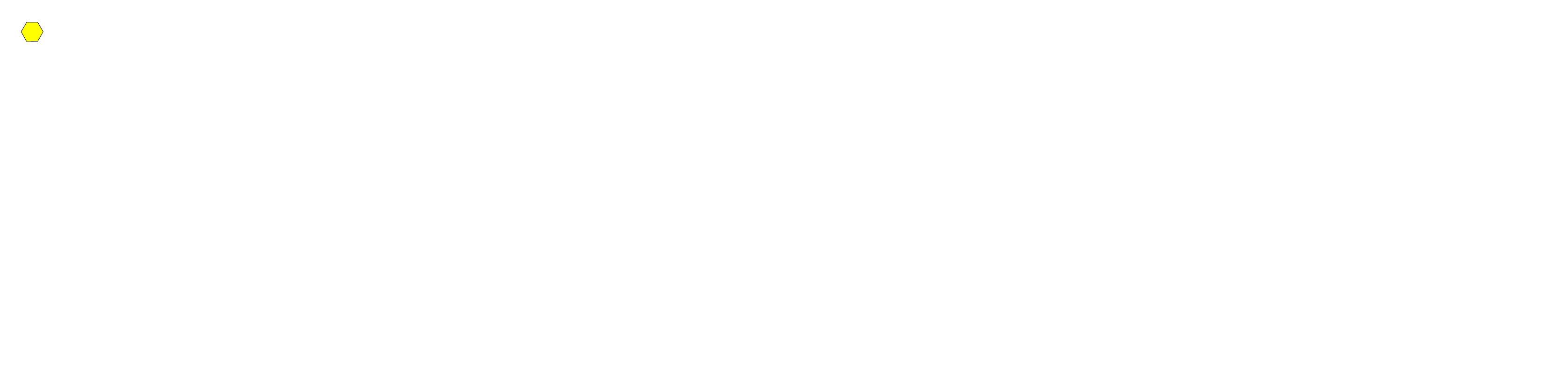}}} & \raisebox{1\height}{\begin{minipage}{3cm}$$\begin{array}{cl}
\text{T1:} & 2 \\ \text{T2:} & 2 \end{array}$$\end{minipage}} & \rule{0cm}{2cm}\includegraphics[scale=0.75]{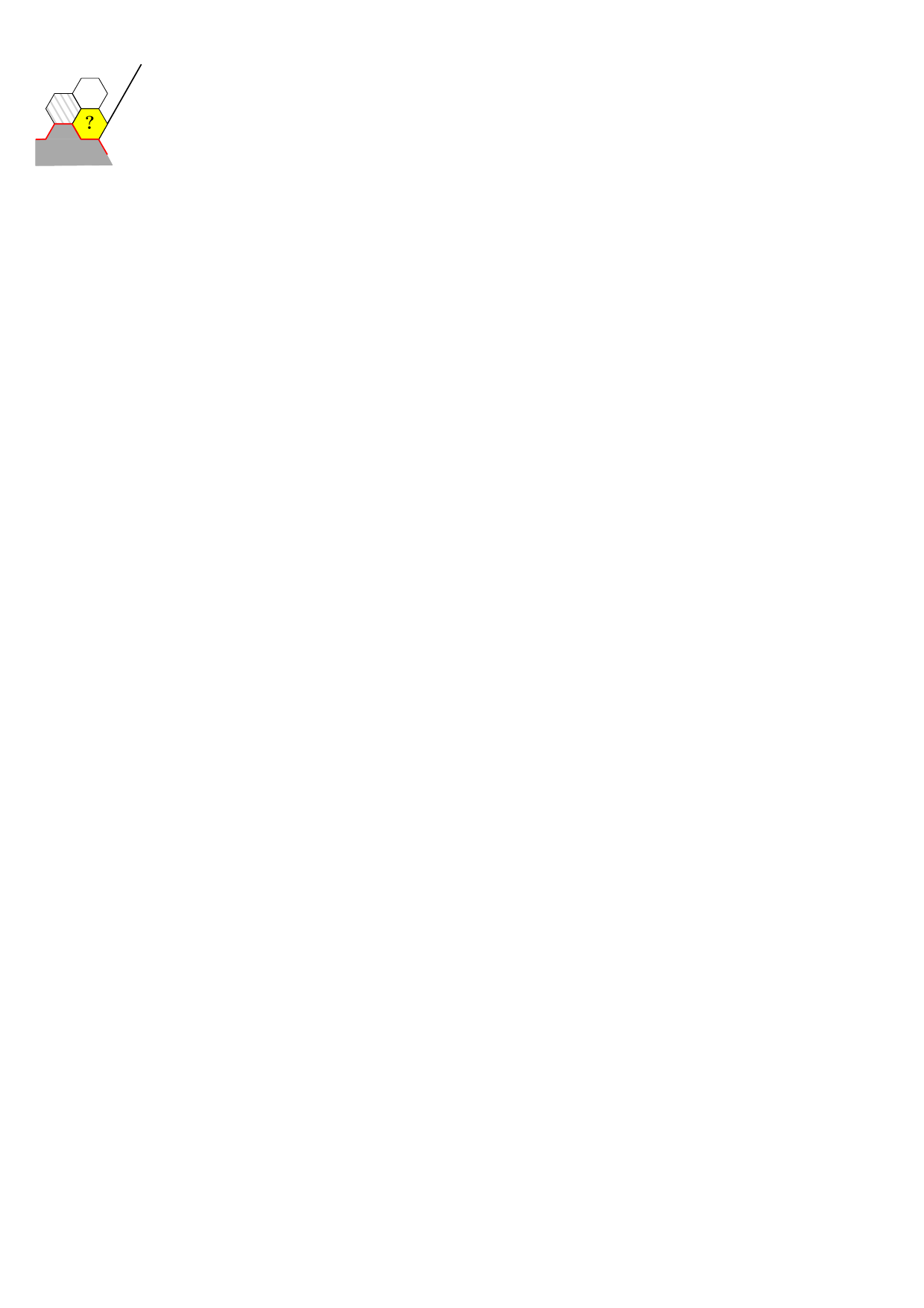} \\ \cline{2-3}
& $\eps$ & $0$ \\ \cline{2-3}
& $\P(B_p^{(1)}=\eps)$ & $1$ \\ \cline{2-3}
& $F^{(1)}_{\footnotesize\texttt{yellow}}(\eps)$ & $(0,1)$  \\ 
\cline{1-4}
\multirow{4}{*}{\raisebox{2.25\height}{\includegraphics[scale=0.8]{HexaGreen}}} & \raisebox{0.75\height}{\begin{minipage}{3cm}$$ \begin{array}{cl}
\text{T1:} & \la'-1 \\ \text{T2:} & 2(\la'-1)  \end{array} $$\end{minipage}} & \rule{0cm}{1.5cm}\includegraphics[scale=0.75]{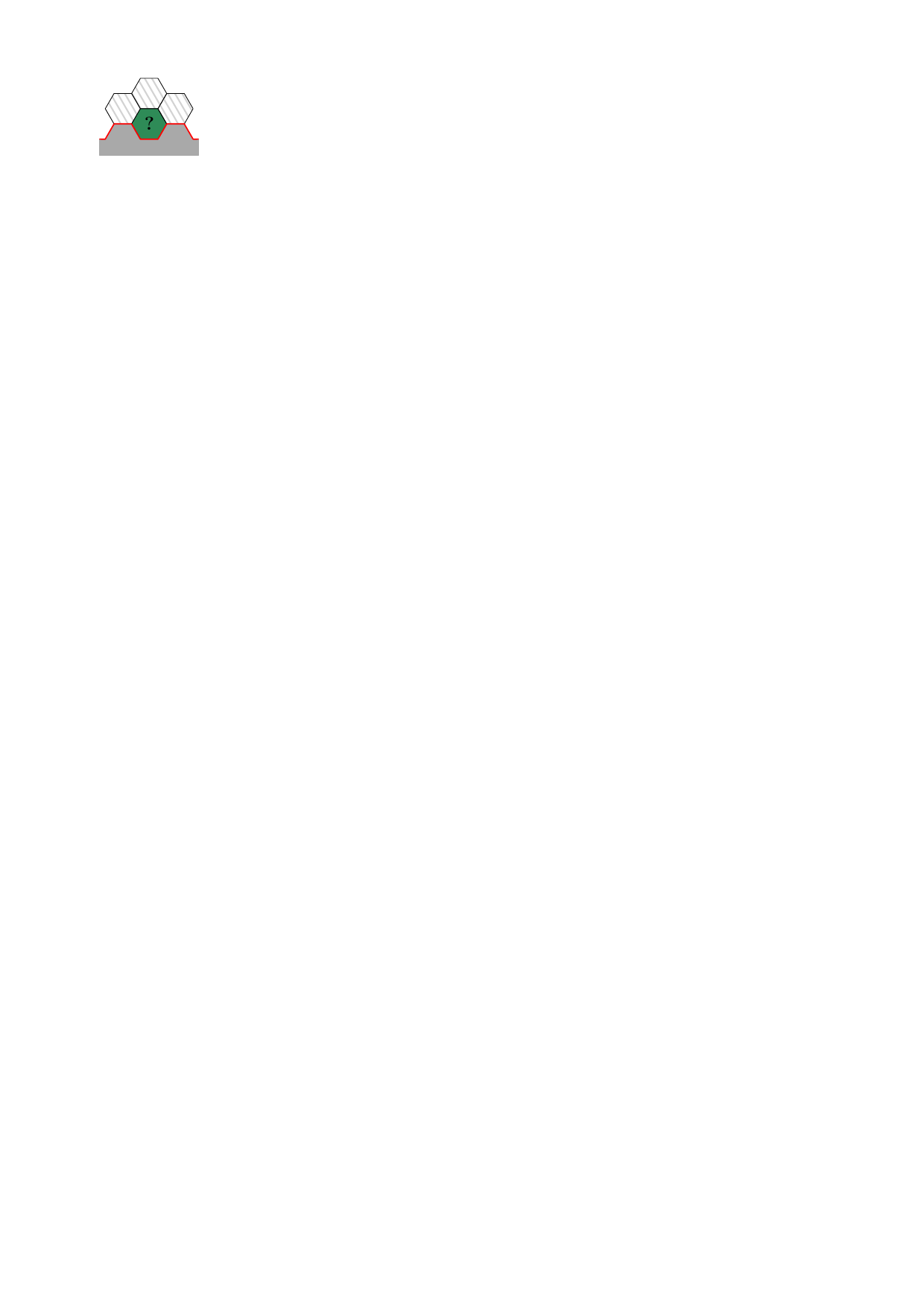} & \rule{0cm}{1.5cm}\includegraphics[scale=0.75]{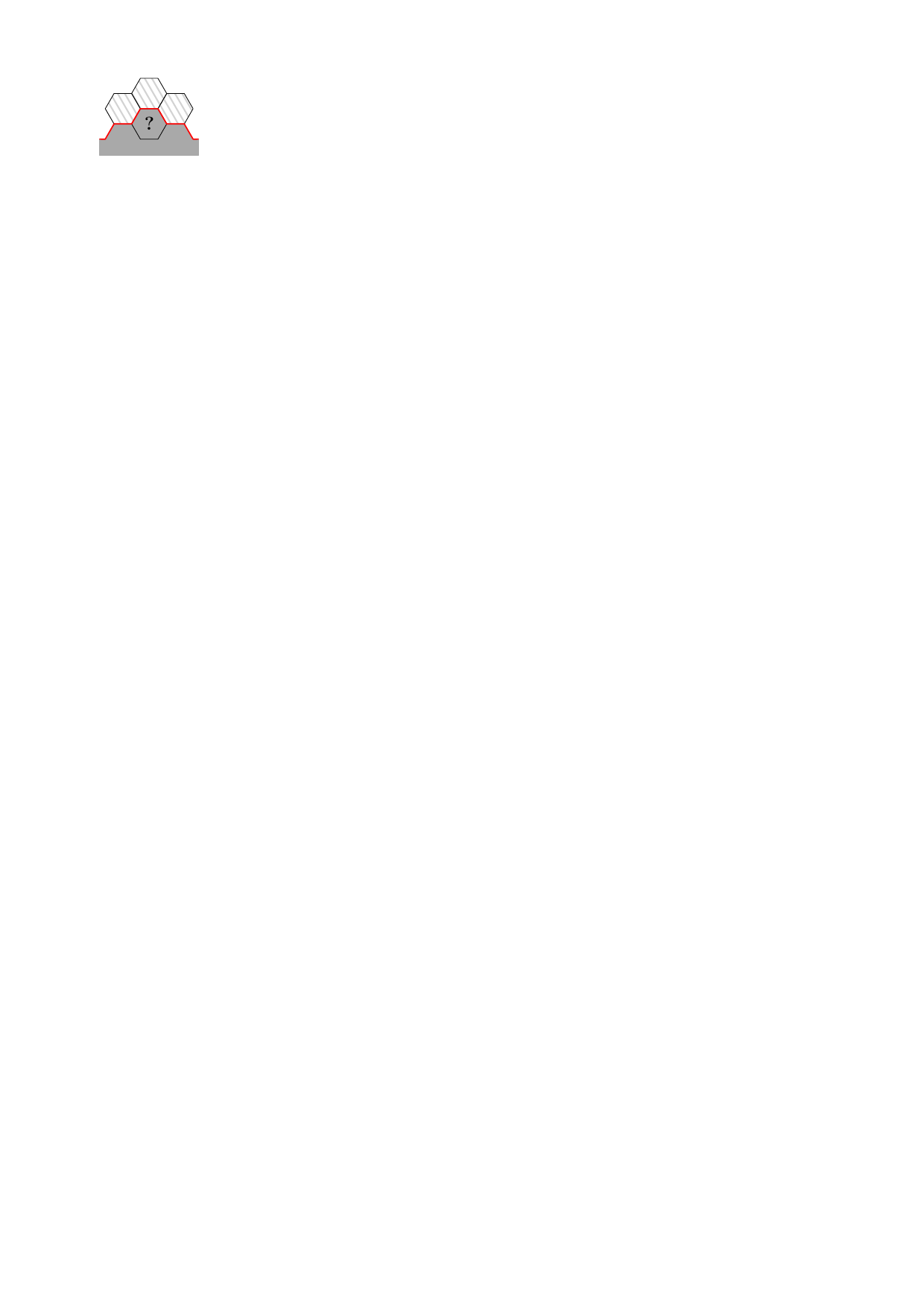} \\ \cline{2-4}
& $\eps$ & $0$ & $1$ \\ \cline{2-4}
& $\P(B_p^{(1)}=\eps)$ & $1-p$ & $p$ \\ \cline{2-4}
& $F^{(1)}_{\footnotesize\texttt{green}}(\eps)$ & $(-1,2)$ & $(0,0)$ \\ 
\cline{1-6}
\multirow{4}{*}{\raisebox{2.25\height}{\includegraphics[scale=0.8]{HexaPink}}} & \raisebox{0.75\height}{\begin{minipage}{3cm}$$\begin{array}{cl}
\text{T1:} & \la'-2 \\ \text{T2:} & 2(\la'-2) \end{array}$$\end{minipage}} & \rule{0cm}{1.5cm}\includegraphics[scale=0.75]{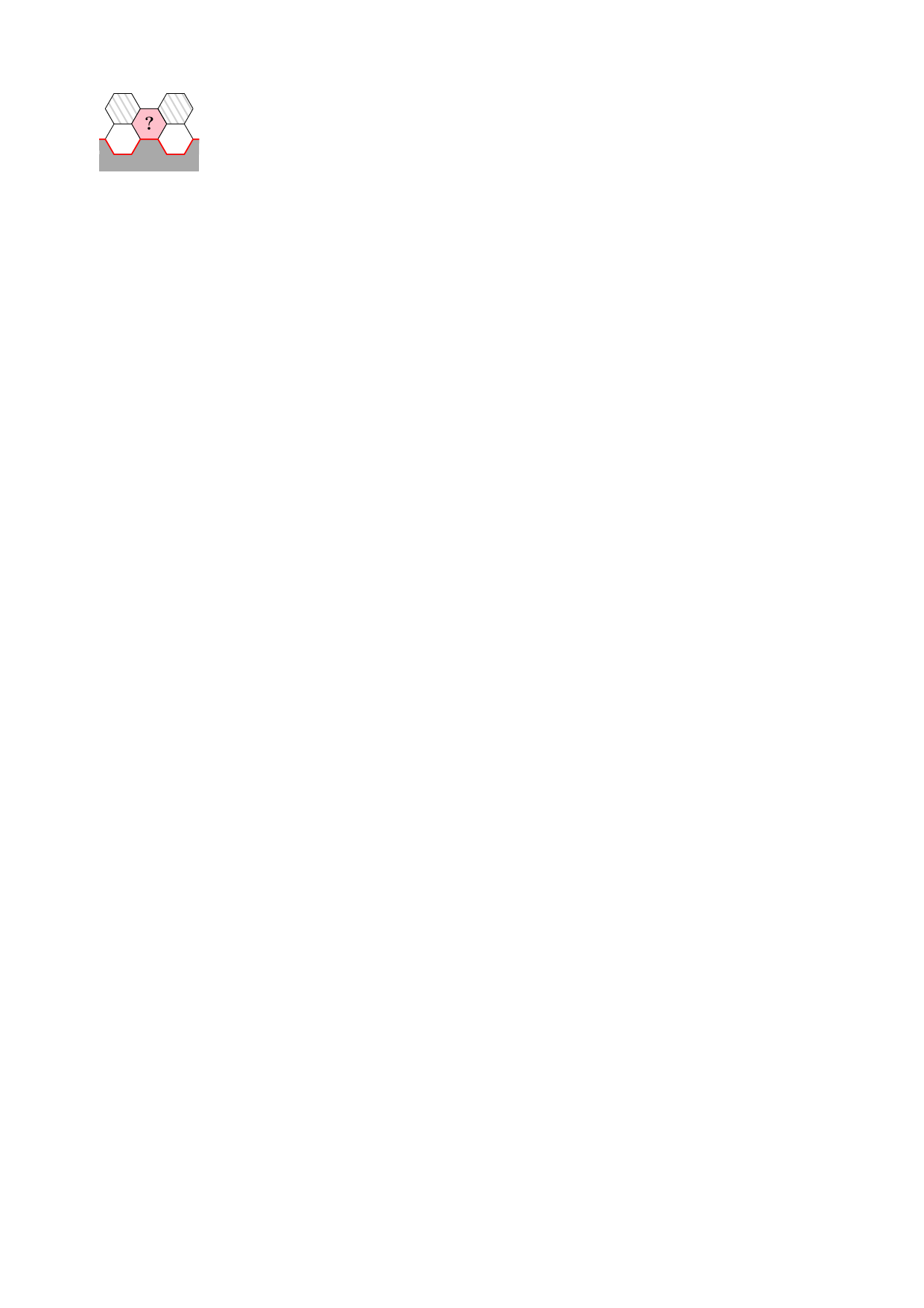} & \rule{0cm}{1.5cm}\includegraphics[scale=0.75]{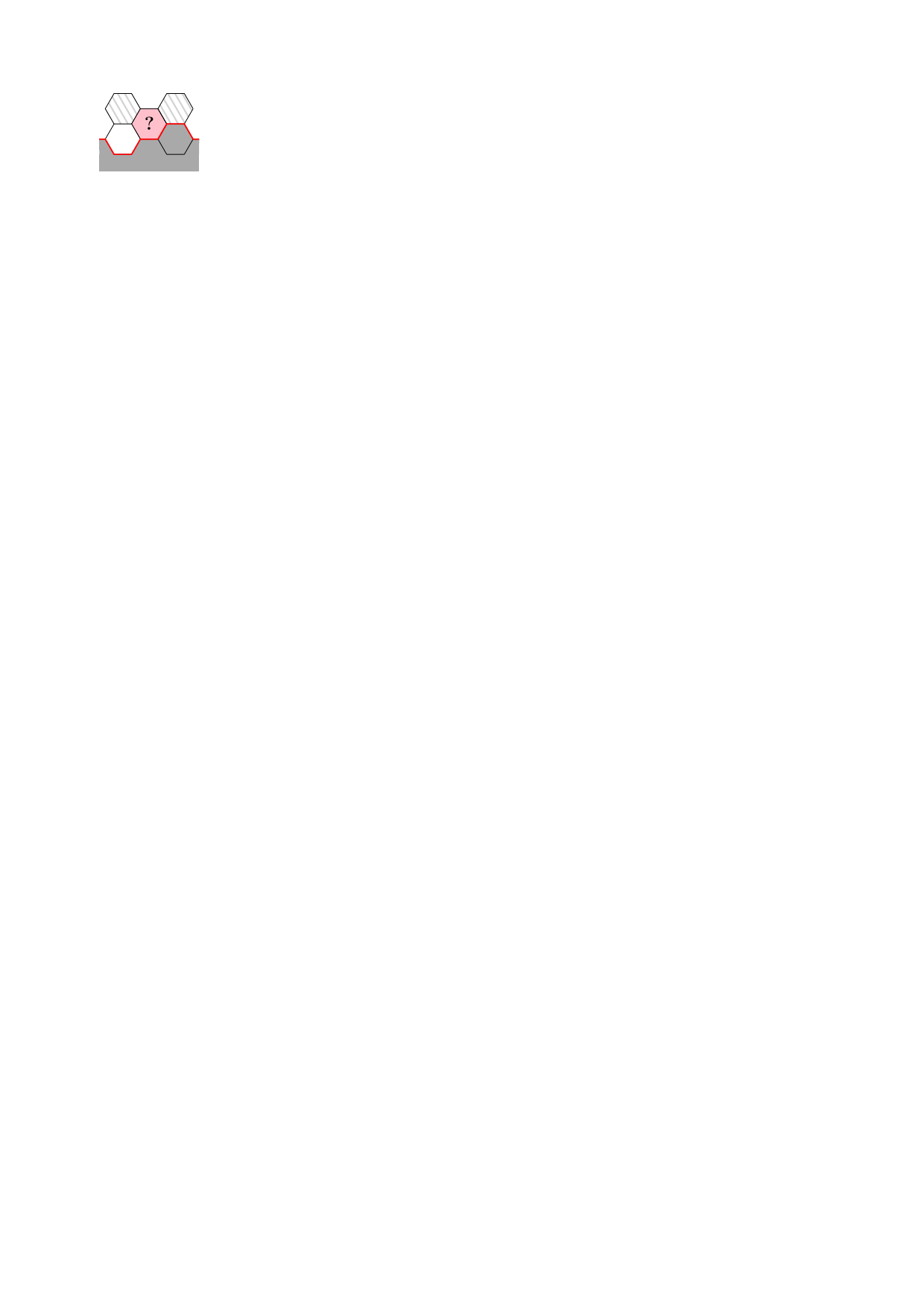} & \rule{0cm}{1.5cm}\includegraphics[scale=0.75]{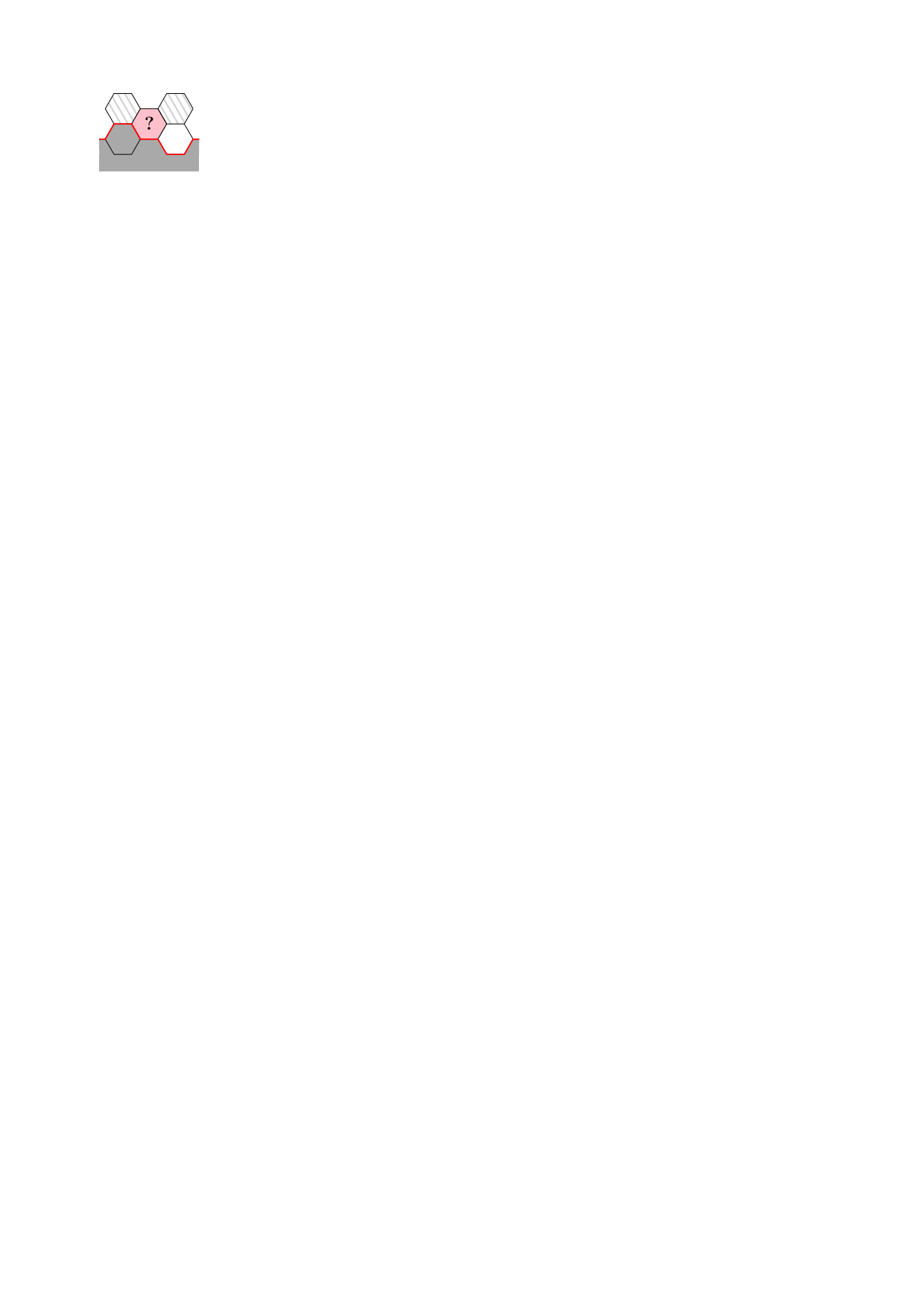} & \rule{0cm}{1.5cm}\includegraphics[scale=0.75]{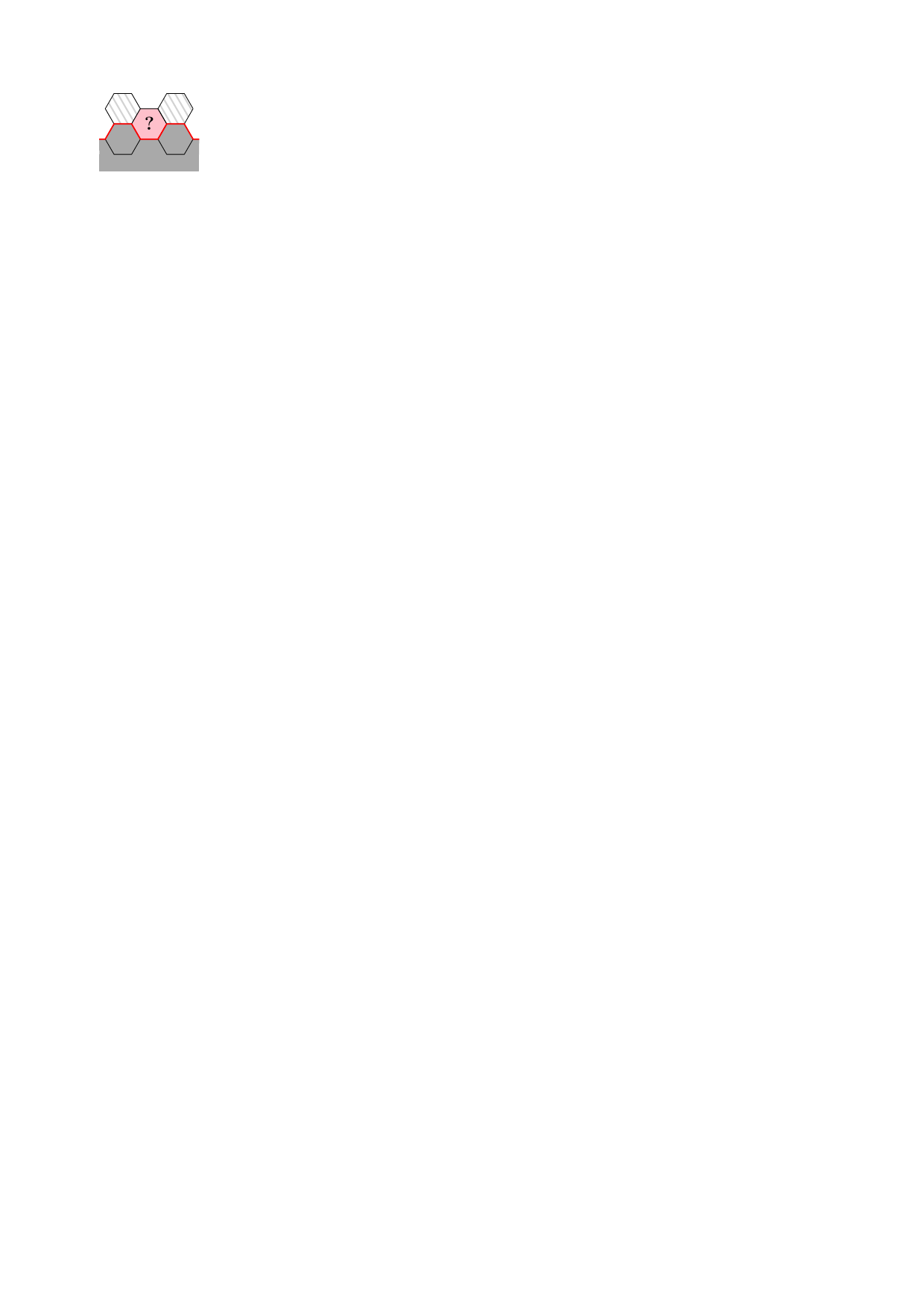} \\ \cline{2-6}
& $\eps$ & $(0,0)$ & $(0,1)$ & $(1,0)$ & $(1,1)$\\ \cline{2-6}
& $\P(B_p^{(2)}=\eps)$ & $(1-p)^2$ & $(1-p)p$ & $p(1-p)$ & $p^2$ \\ \cline{2-6}
& $F^{(1)}_{\footnotesize\texttt{pink}}(\eps)$ & $(1,0)$ & $(0,1)$ & $(0,1)$ & $(-1,2)$ \\ 
\cline{1-6}
\multirow{4}{*}{\raisebox{2.25\height}{\includegraphics[scale=0.8]{HexaPurple}}} & \raisebox{1\height}{\begin{minipage}{3cm}$$\begin{array}{cl}
\text{T1:} & 2 \\ \text{T2:} & 2 \end{array}$$\end{minipage}} & \rule{0cm}{2cm}\includegraphics[scale=0.75]{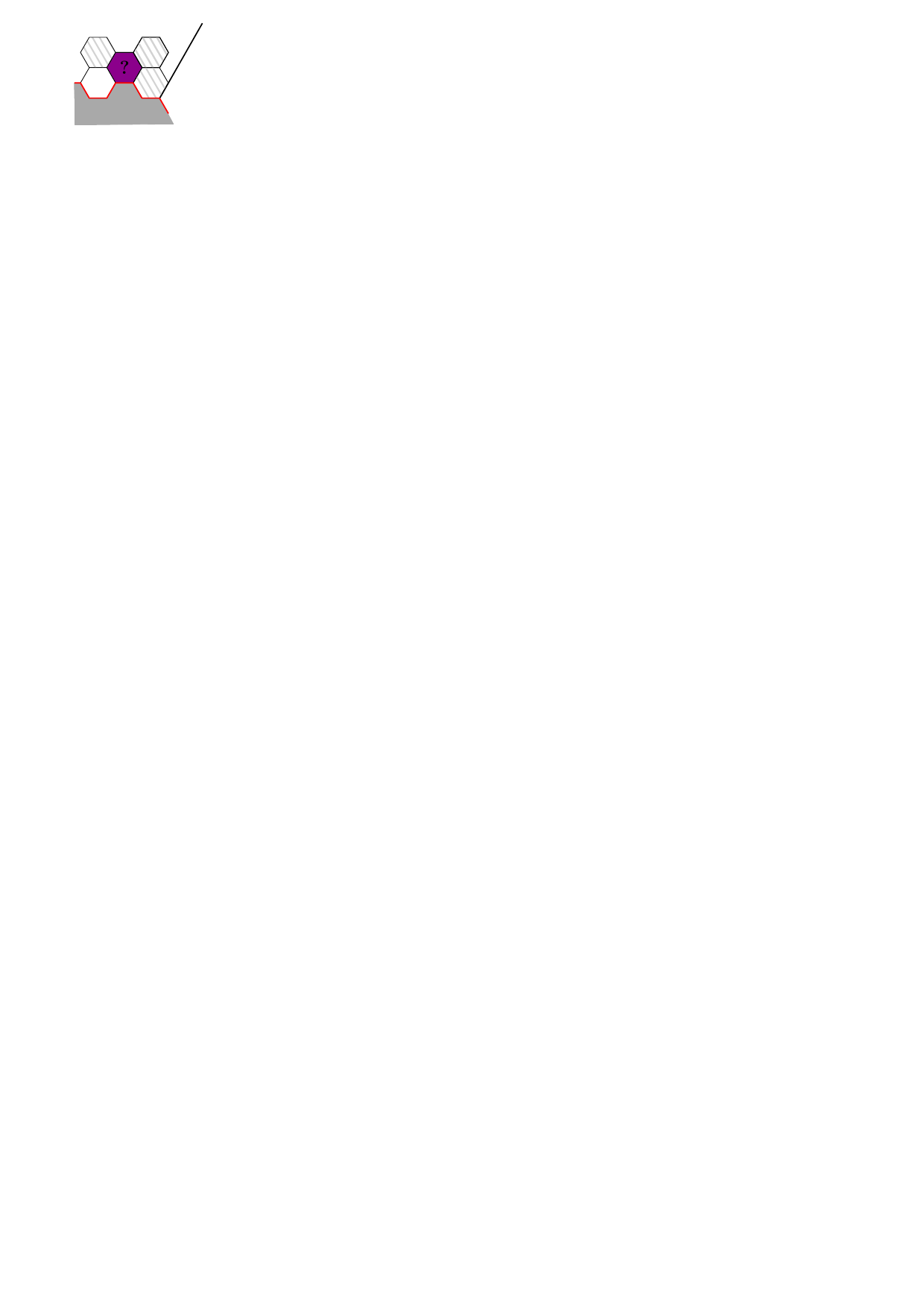} & \rule{0cm}{2cm}\includegraphics[scale=0.75]{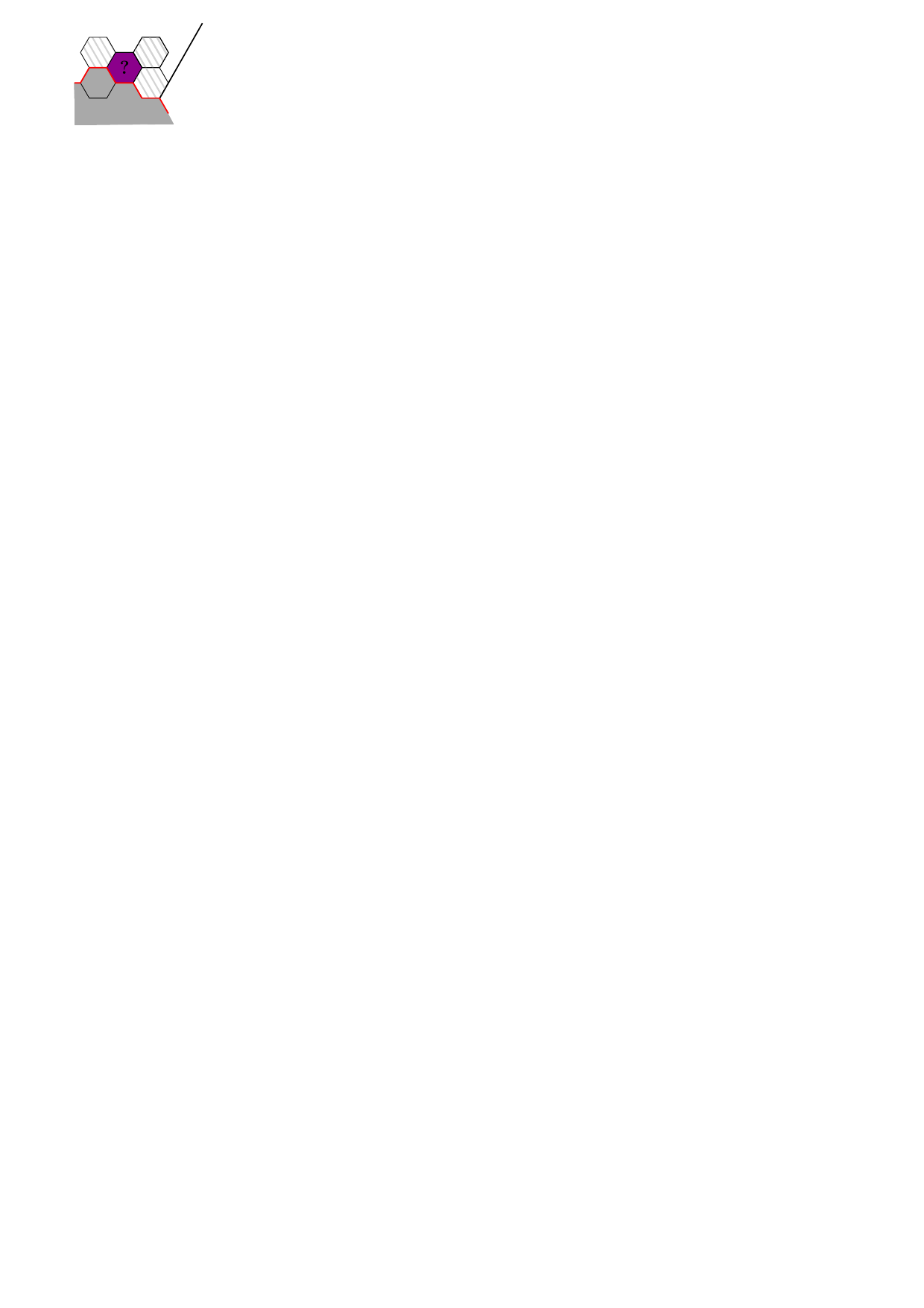} \\ \cline{2-4}
& $\eps$ & $0$ & $1$ \\ \cline{2-4}
& $\P(B_p^{(1)}=\eps)$ & $1-p$ & $p$ \\ \cline{2-4}
& $F^{(1)}_{\footnotesize\texttt{purple}}(\eps)$ & $(1,0)$ & $(0,1)$ \\ 
\cline{1-4}
\multirow{4}{*}{\raisebox{2.25\height}{\includegraphics[scale=0.8]{HexaBlue}}} & \raisebox{0.75\height}{\begin{minipage}{3cm}$$\begin{array}{cl}
\text{T1:} & \la'-1 \\ \text{T2:} & 2(\la'-1)  \end{array}$$\end{minipage}} & \rule{0cm}{1.5cm}\includegraphics[scale=0.75]{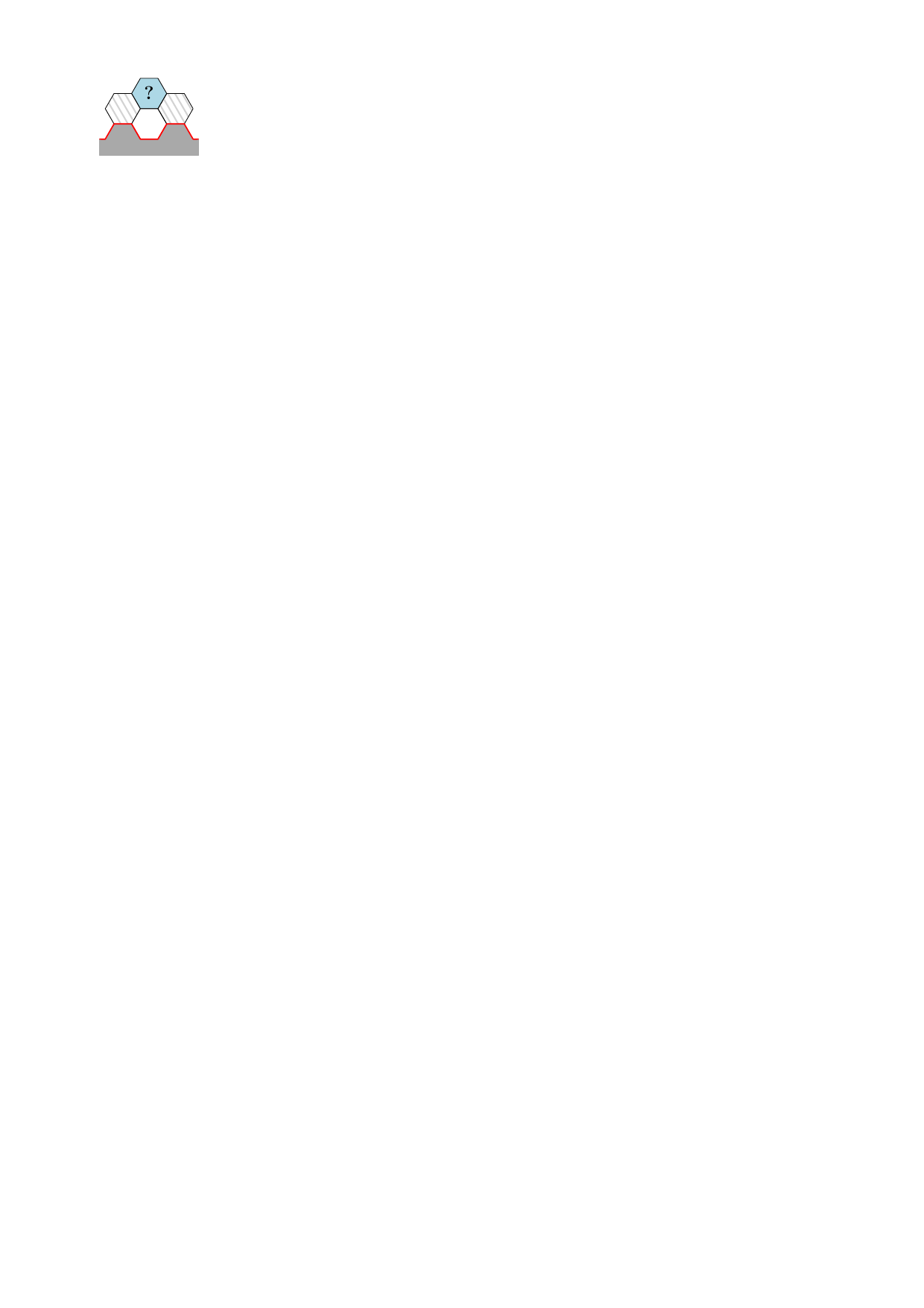} & \rule{0cm}{1.5cm}\includegraphics[scale=0.75]{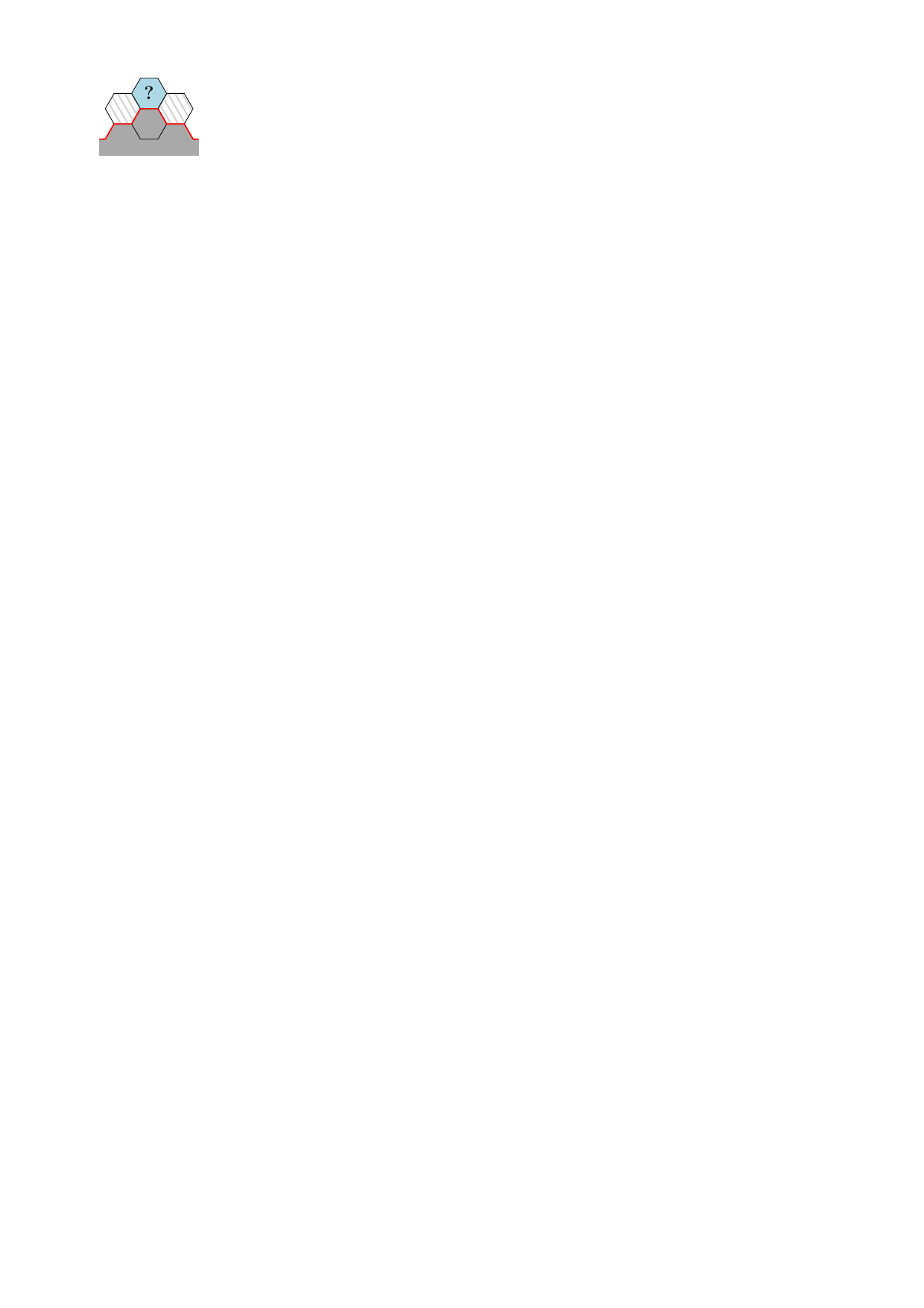} \\ \cline{2-4}
& $\eps$ & $0$ & $1$ \\ \cline{2-4}
& $\P(B_p^{(1)}=\eps)$ & $1-p$ & $p$ \\ \cline{2-4}
& $F^{(1)}_{\footnotesize\texttt{blue}}(\eps)$ & $(0,0)$ & $(1,0)$ \\ 
\cline{1-4}
\multirow{4}{*}{\raisebox{2.25\height}{\includegraphics[scale=0.8]{HexaBrown}}} & \raisebox{1\height}{\begin{minipage}{3cm}$$\begin{array}{cl}
\text{T1:} & 0 \\ \text{T2:} & 1 \end{array}$$\end{minipage}} & \rule{0cm}{2.1cm}\includegraphics[scale=0.75]{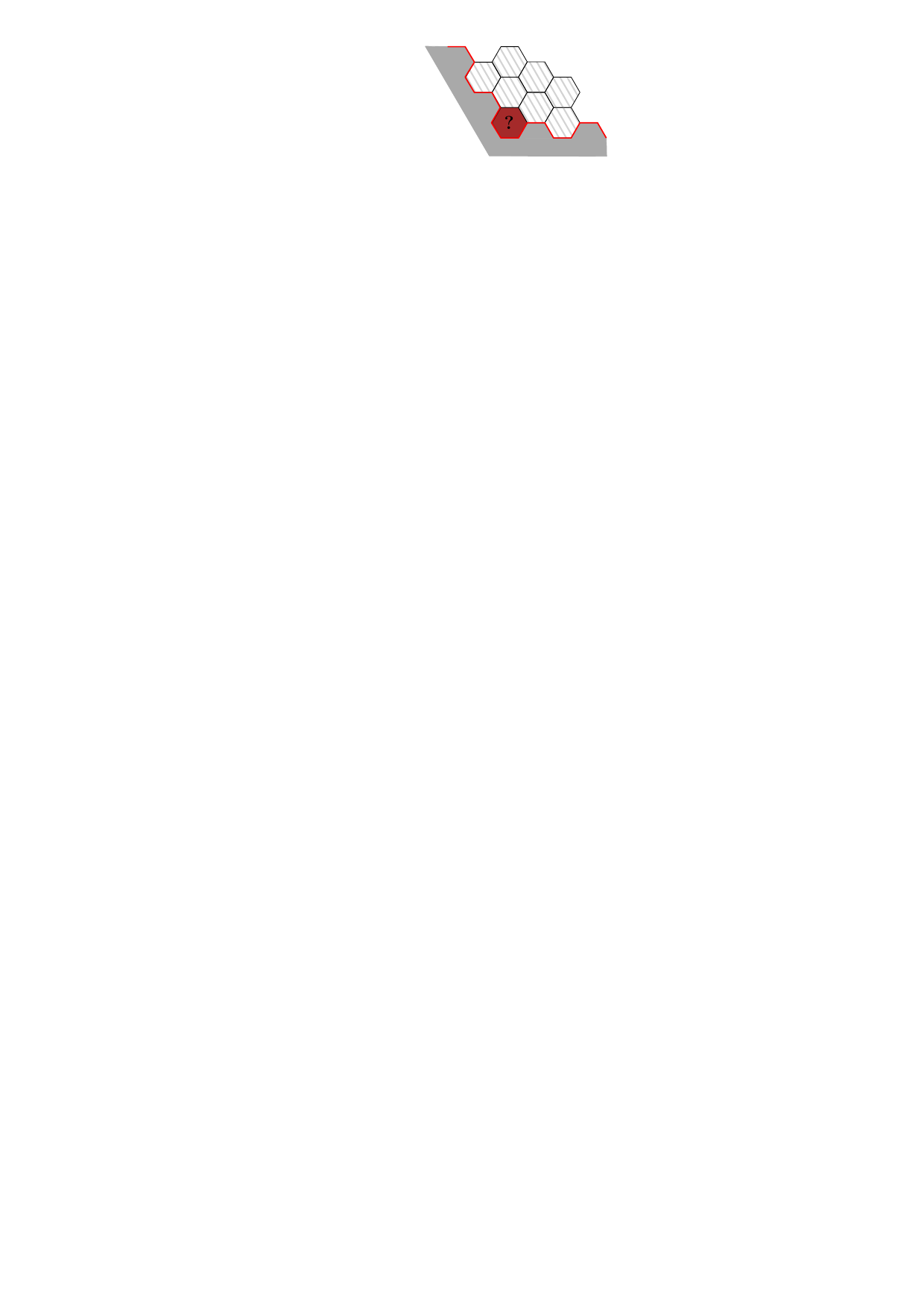} & \rule{0cm}{2.1cm}\includegraphics[scale=0.75]{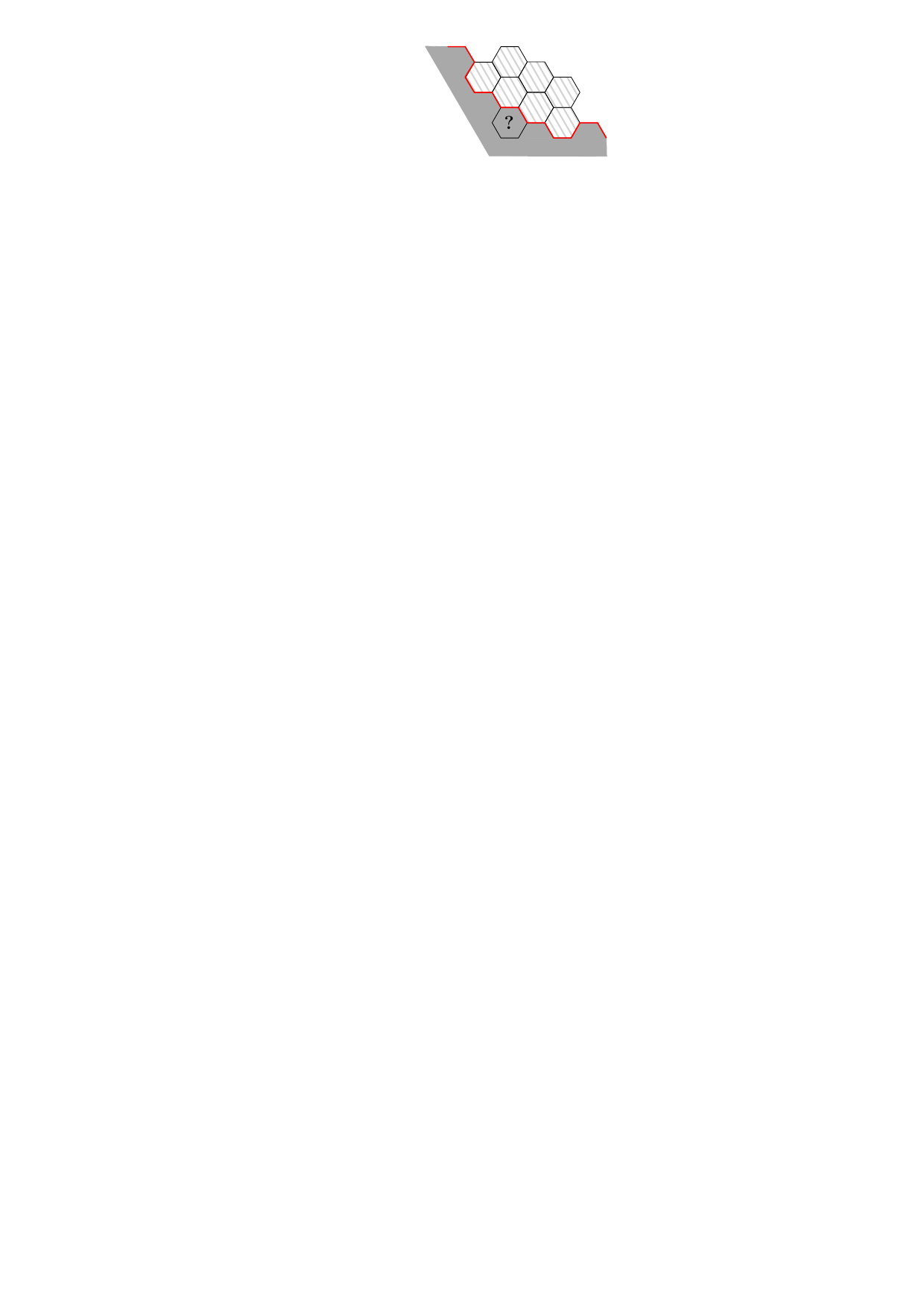} \\ \cline{2-4}
& $\eps$ & $0$ & $1$ \\ \cline{2-4}
& $\P(B_p^{(1)}=\eps)$ & $1-p$ & $p$ \\ \cline{2-4}
& $F^{(1)}_{\footnotesize\texttt{brown}}(\eps)$ & $(-2,3)$ & $(0,0)$ \\ 
\cline{1-6}
\multirow{4}{*}{\raisebox{2.25\height}{\includegraphics[scale=0.8]{HexaOrange}}} & \raisebox{1\height}{\begin{minipage}{3cm}$$\begin{array}{cl}
\text{T1:} & 0 \\ \text{T2:} & 2 \end{array}$$\end{minipage}} & \rule{0cm}{2.1cm}\includegraphics[scale=0.75]{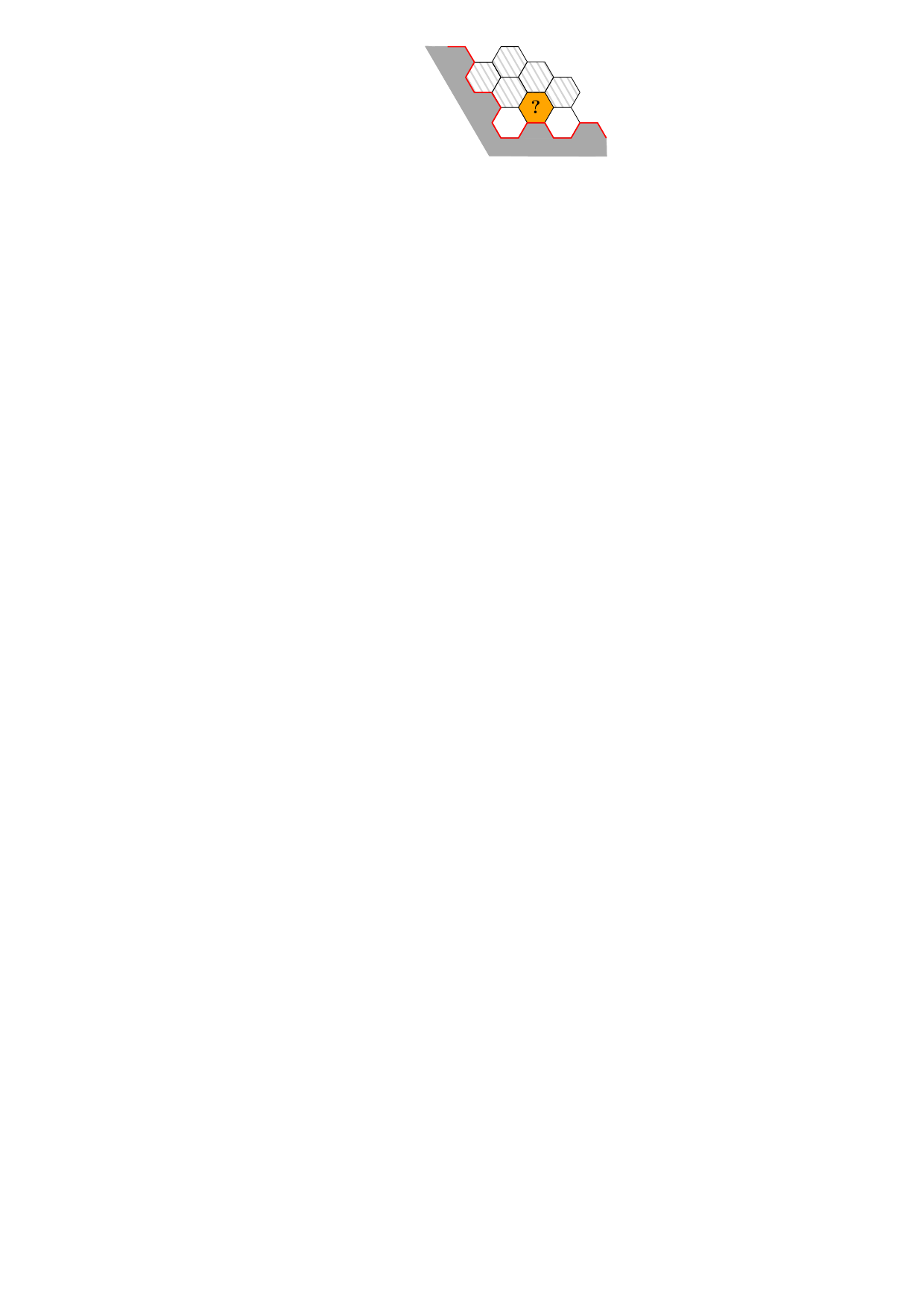} & \rule{0cm}{2.1cm}\includegraphics[scale=0.75]{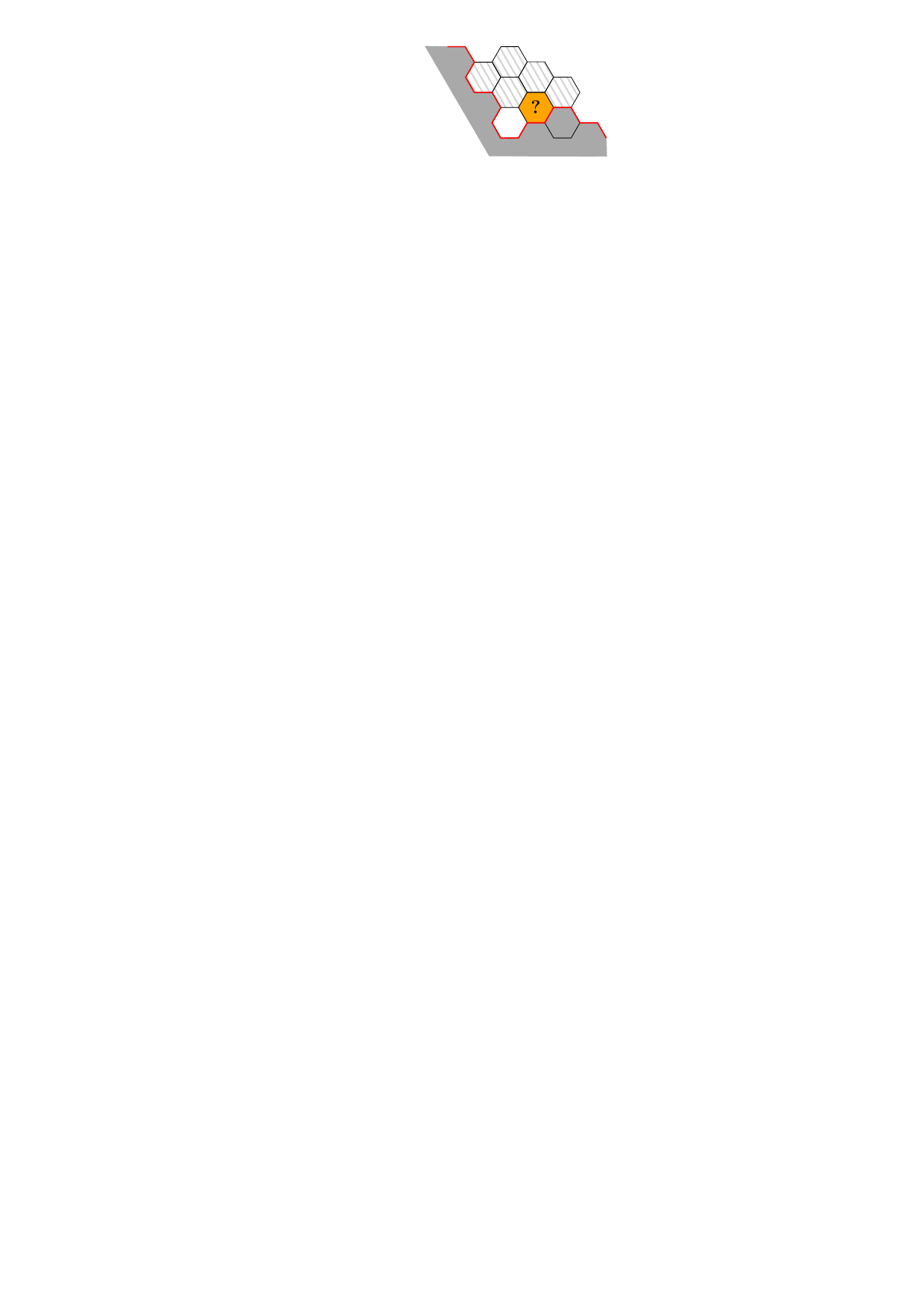} & \rule{0cm}{2.1cm}\includegraphics[scale=0.75]{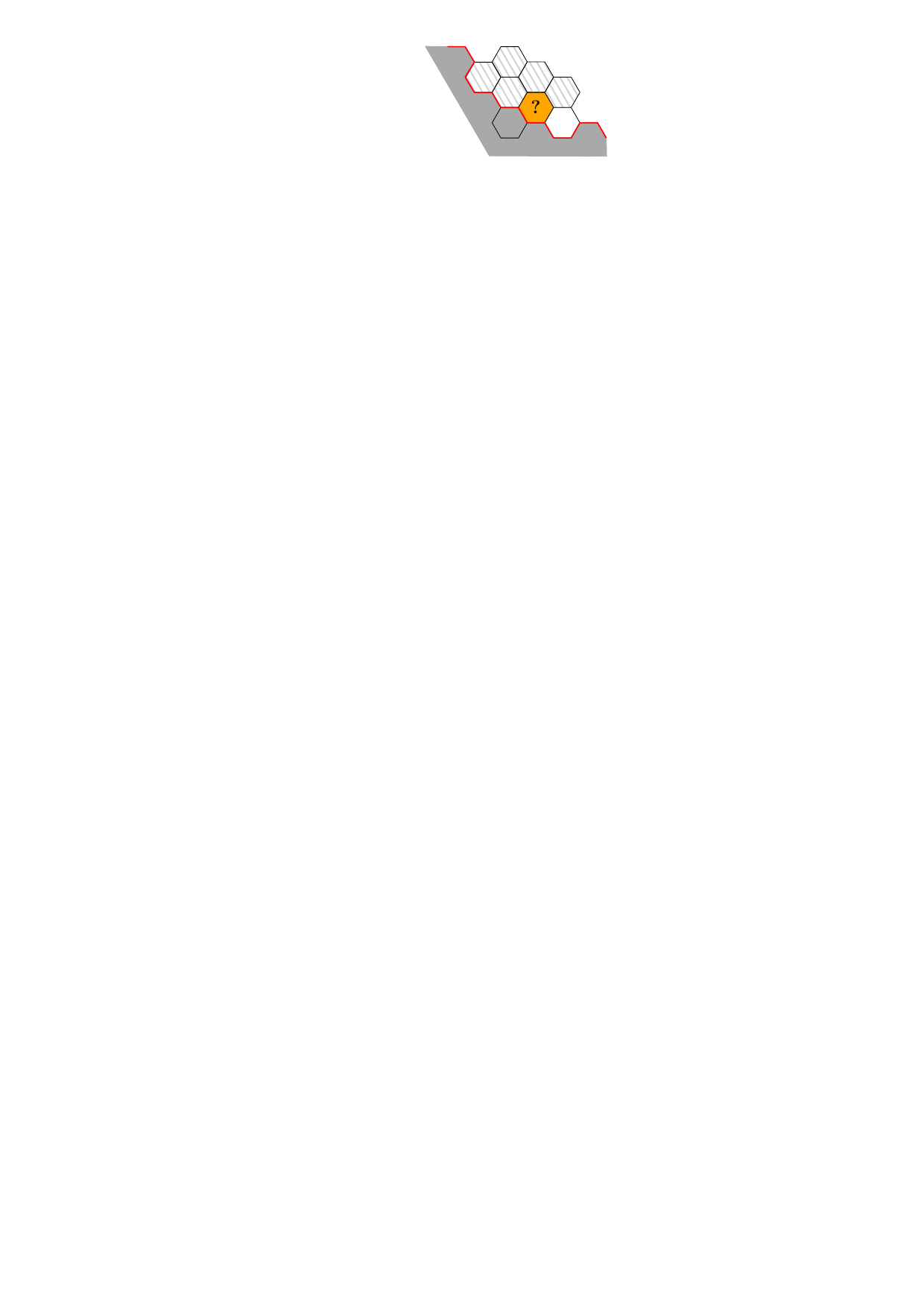} & \rule{0cm}{2.1cm}\includegraphics[scale=0.75]{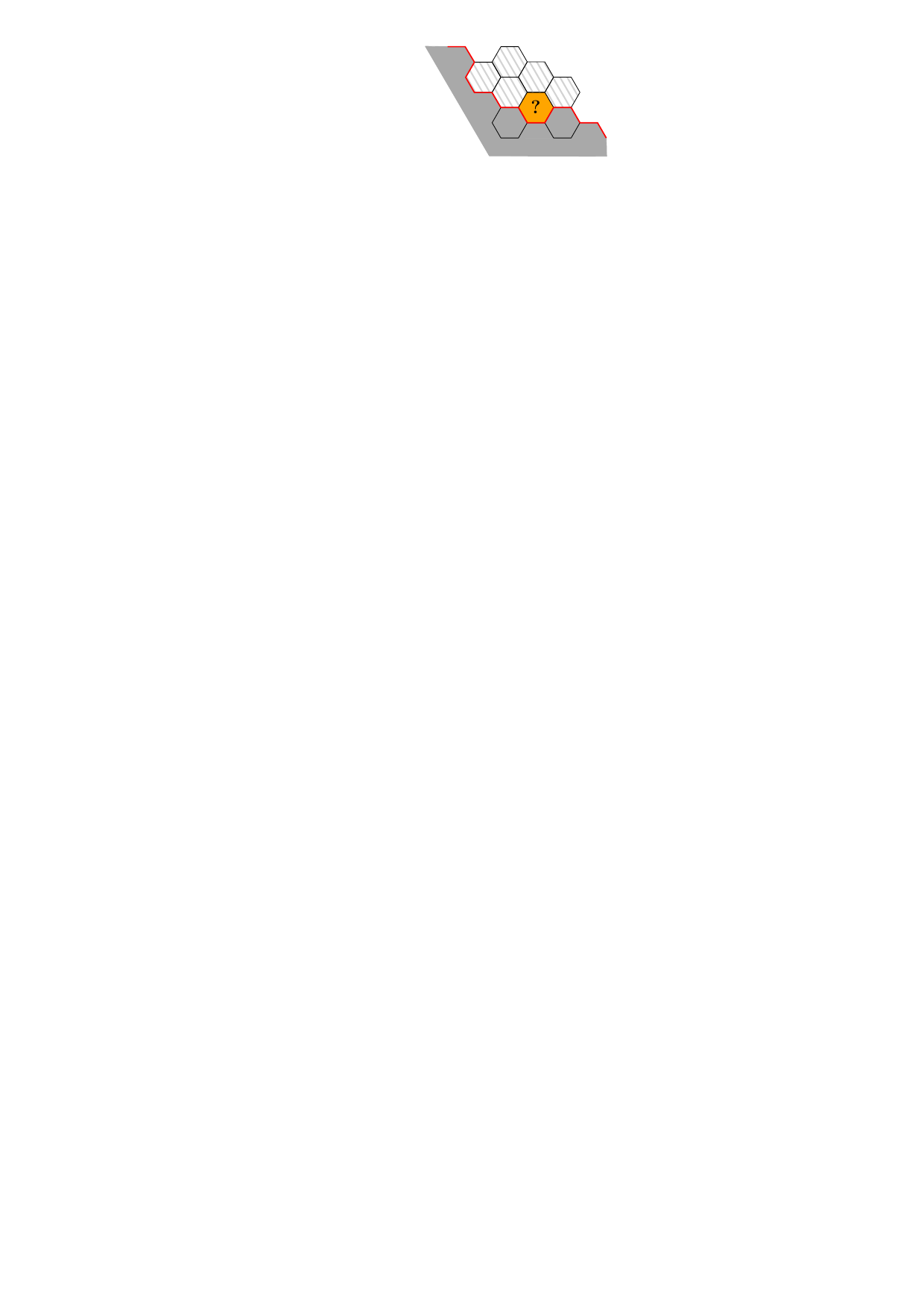} \\ \cline{2-6}
& $\eps$ & $(0,0)$ & $(0,1)$ & $(1,0)$ & $(1,1)$\\ \cline{2-6}
& $\P(B_p^{(2)}=\eps)$ & $(1-p)^2$ & $(1-p)p$ & $p(1-p)$ & $p^2$ \\ \cline{2-6}
& $F^{(1)}_{\footnotesize\texttt{orange}}(\eps)$ & $(1,0)$ & $(0,1)$ & $(0,1)$ & $(-1,2)$ \\ \cline{2-6}
\cline{1-6}
\end{tabular}}
\pass\caption{Description of the children of an hexagonal parent when $p_*=0$ and $r=1$. In the case $\la'=1$, i.e. $\la=4$, the formulas are also consistent because the pink and purple groups are replaced by a new one with only one hexagon.}
\label{tab:Model0HexaR1Child}
\end{table}

\renewcommand{\arraystretch}{1.5}
\setlength{\arrayrulewidth}{0.1pt}
\begin{table}[h!]
\centering
\resizebox{12.5cm}{!}{%
\begin{tabular}{|c|c|c|c|c|c|}
\cline{1-3}
\multirow{4}{*}{\raisebox{2.25\height}{\includegraphics[scale=0.8]{HexaYellow}}} & \raisebox{1\height}{\begin{minipage}{3cm}$$\begin{array}{cl}
\text{T1:} & 2 \\ \text{T2:} & 2 \end{array}$$\end{minipage}} & \rule{0cm}{2cm}\includegraphics[scale=0.75]{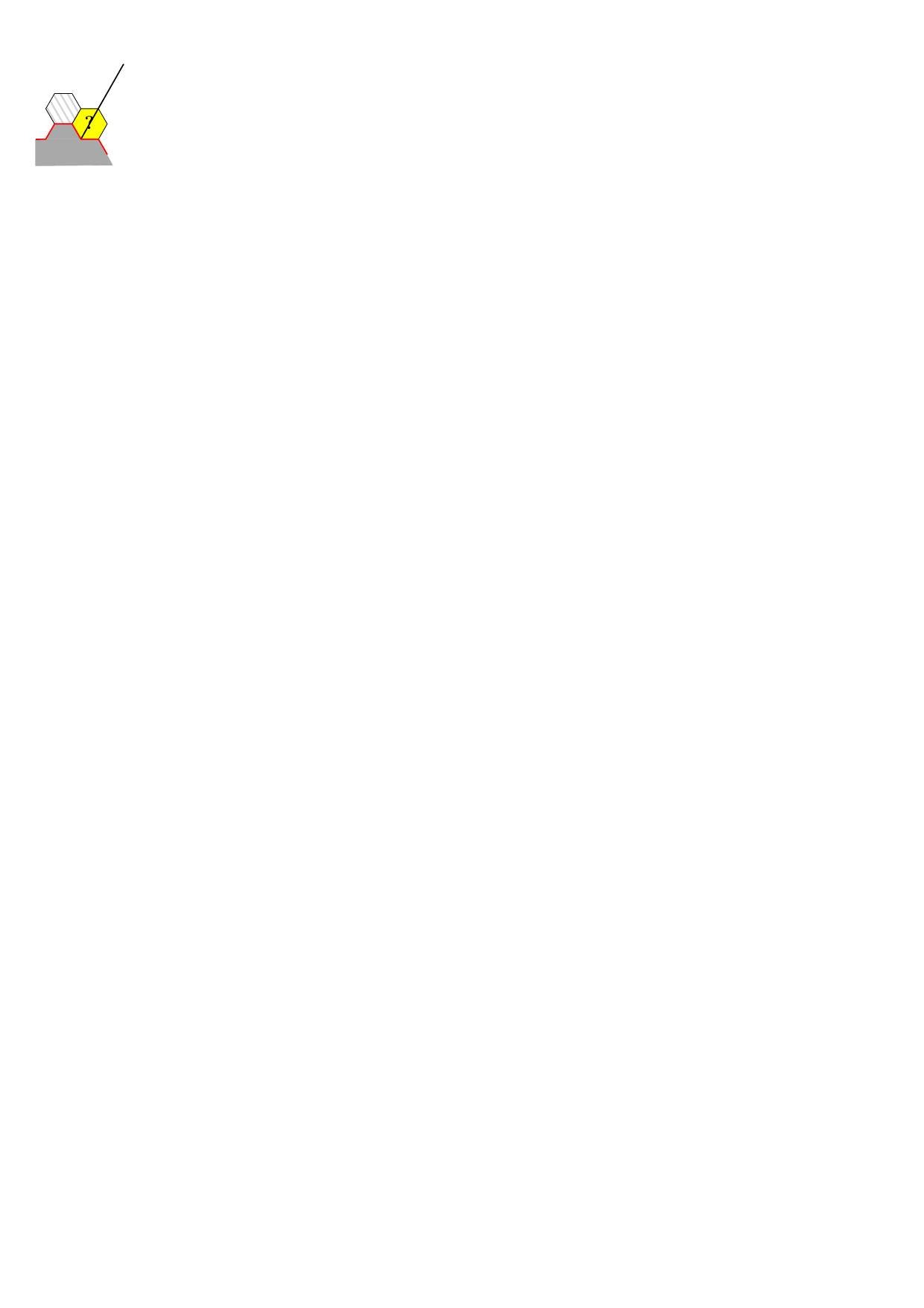} \\ \cline{2-3}
& $\eps$ & $0$ \\ \cline{2-3}
& $\P(B_p^{(1)}=\eps)$ & $1$ \\ \cline{2-3}
& $F^{(2)}_{\footnotesize\texttt{yellow}}(\eps)$ & $(0,\frac12)$  \\ 
\cline{1-4}
\multirow{4}{*}{\raisebox{2.25\height}{\includegraphics[scale=0.8]{HexaGreen}}} & \raisebox{0.75\height}{\begin{minipage}{3cm}$$ \begin{array}{cl}
\text{T1:} & \la' \\ \text{T2:} & 2\la' \end{array} $$\end{minipage}} & \rule{0cm}{1.5cm}\includegraphics[scale=0.75]{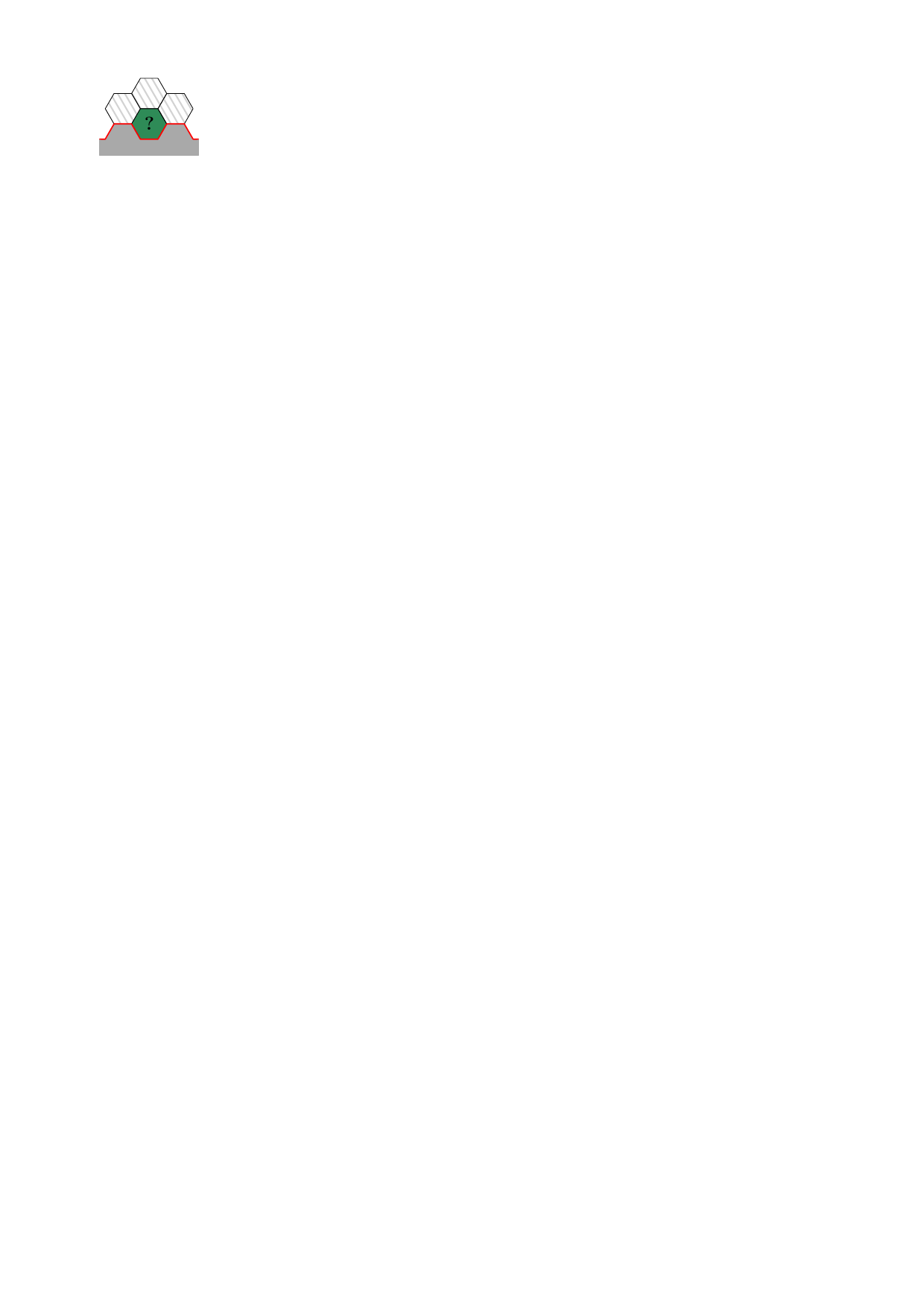} & \rule{0cm}{1.5cm}\includegraphics[scale=0.75]{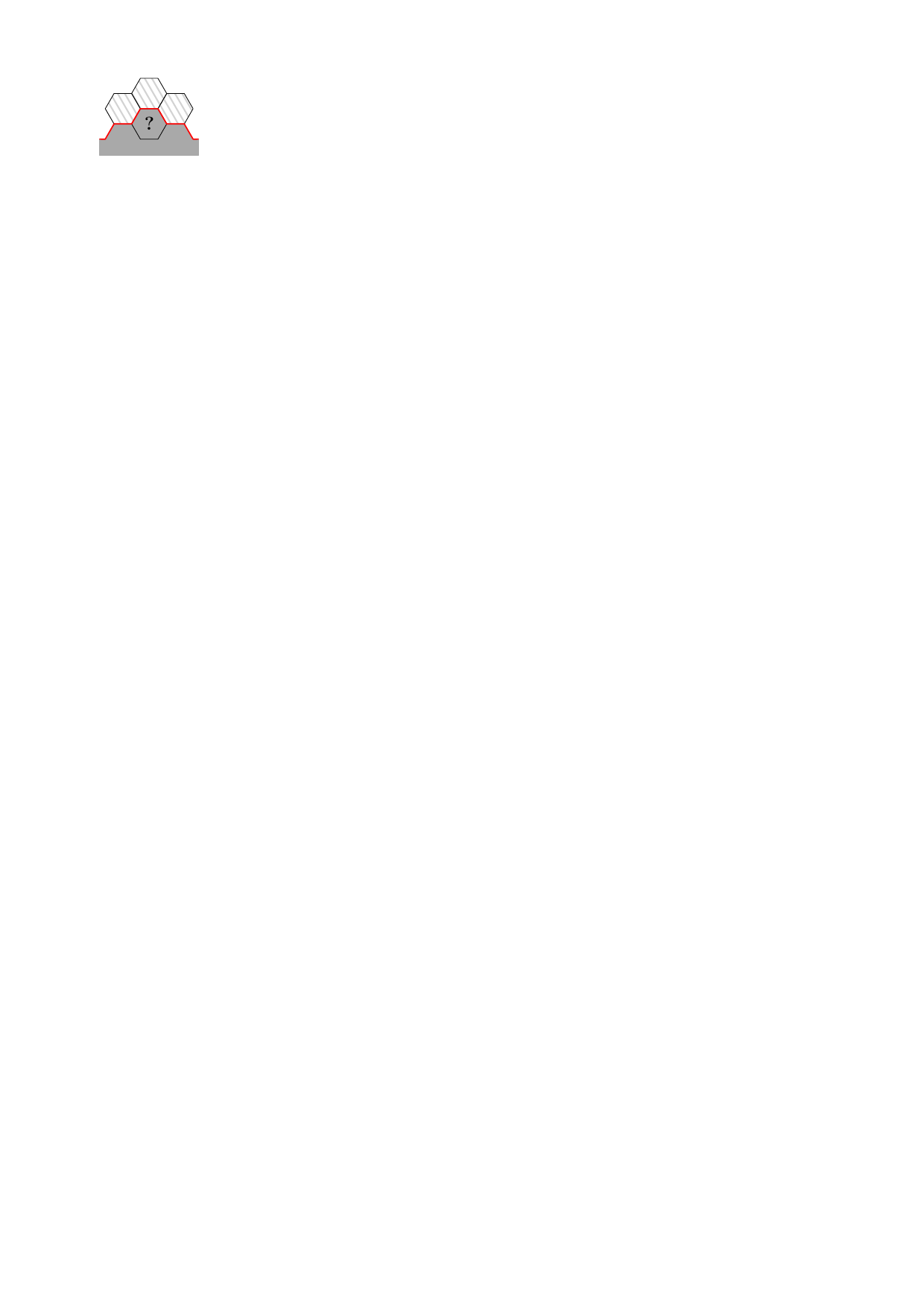} \\ \cline{2-4}
& $\eps$ & $0$ & $1$ \\ \cline{2-4}
& $\P(B_p^{(1)}=\eps)$ & $1-p$ & $p$ \\ \cline{2-4}
& $F^{(2)}_{\footnotesize\texttt{green}}(\eps)$ & $(-1,2)$ & $(0,0)$ \\
\cline{1-6}
\multirow{4}{*}{\raisebox{2.25\height}{\includegraphics[scale=0.8]{HexaPink}}} & \raisebox{0.75\height}{\begin{minipage}{3cm}$$\begin{array}{cl}
\text{T1:} & \la'-1 \\ \text{T2:} & 2(\la'-1) \end{array}$$\end{minipage}} & \rule{0cm}{1.5cm}\includegraphics[scale=0.75]{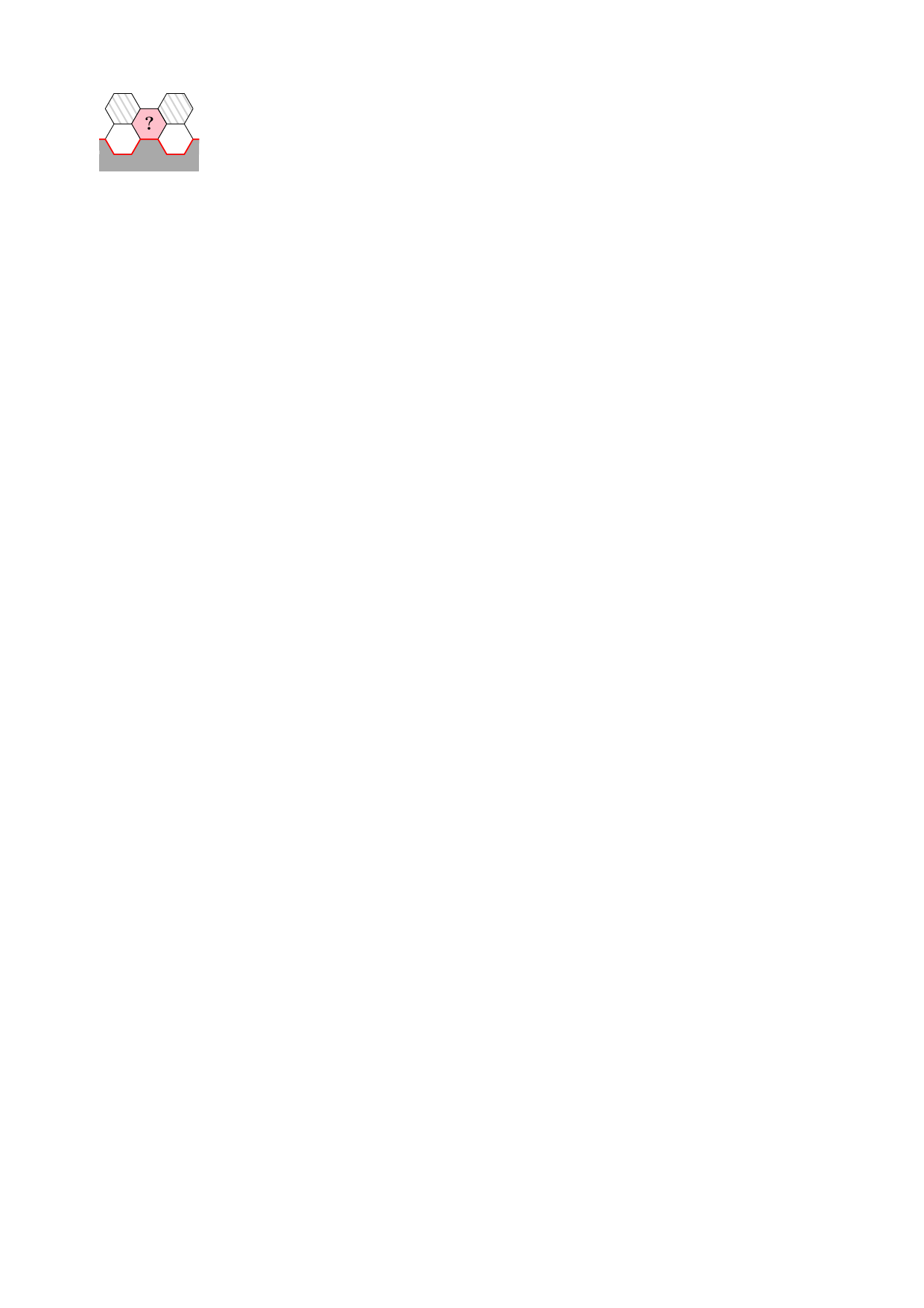} & \rule{0cm}{1.5cm}\includegraphics[scale=0.75]{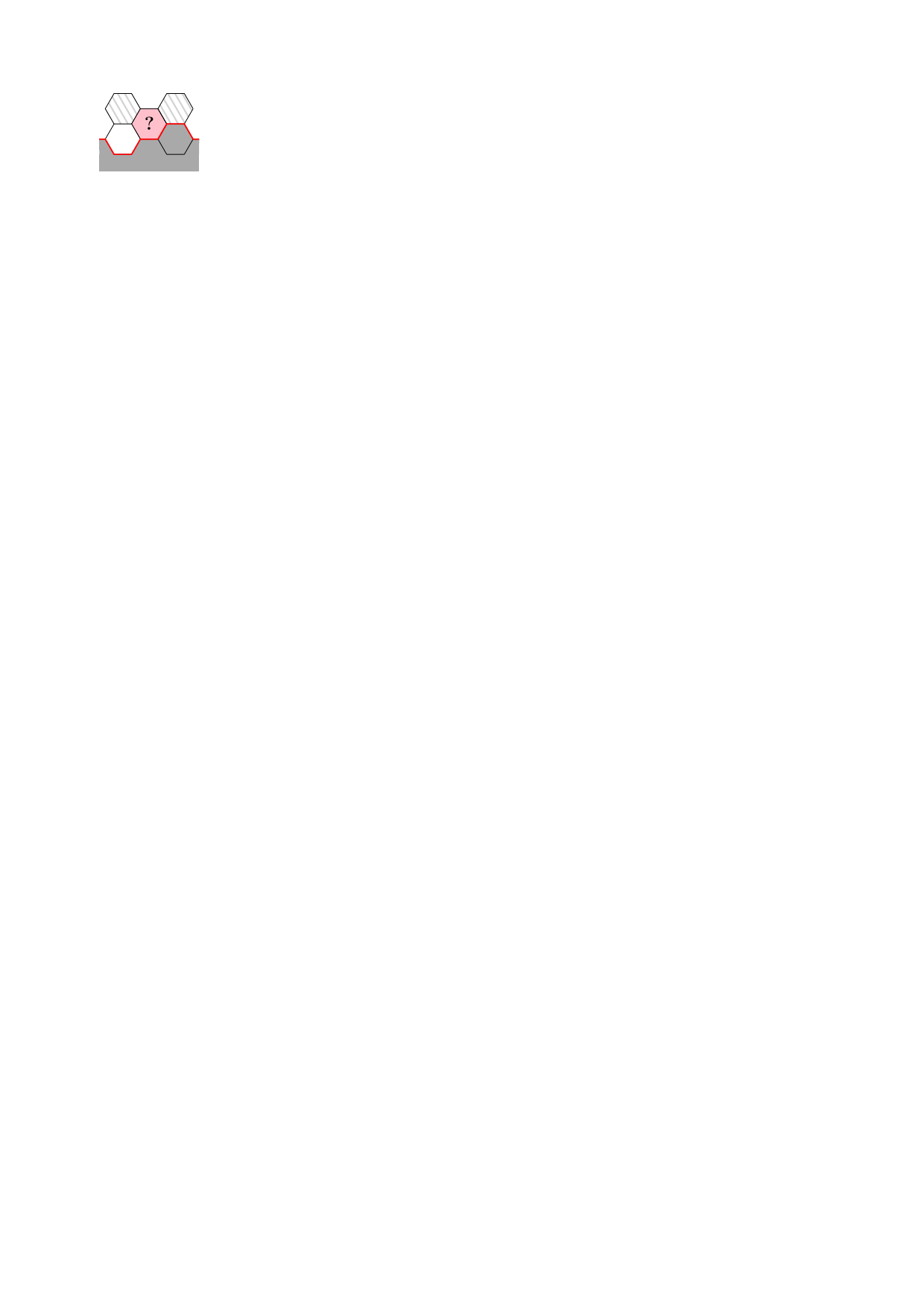} & \rule{0cm}{1.5cm}\includegraphics[scale=0.75]{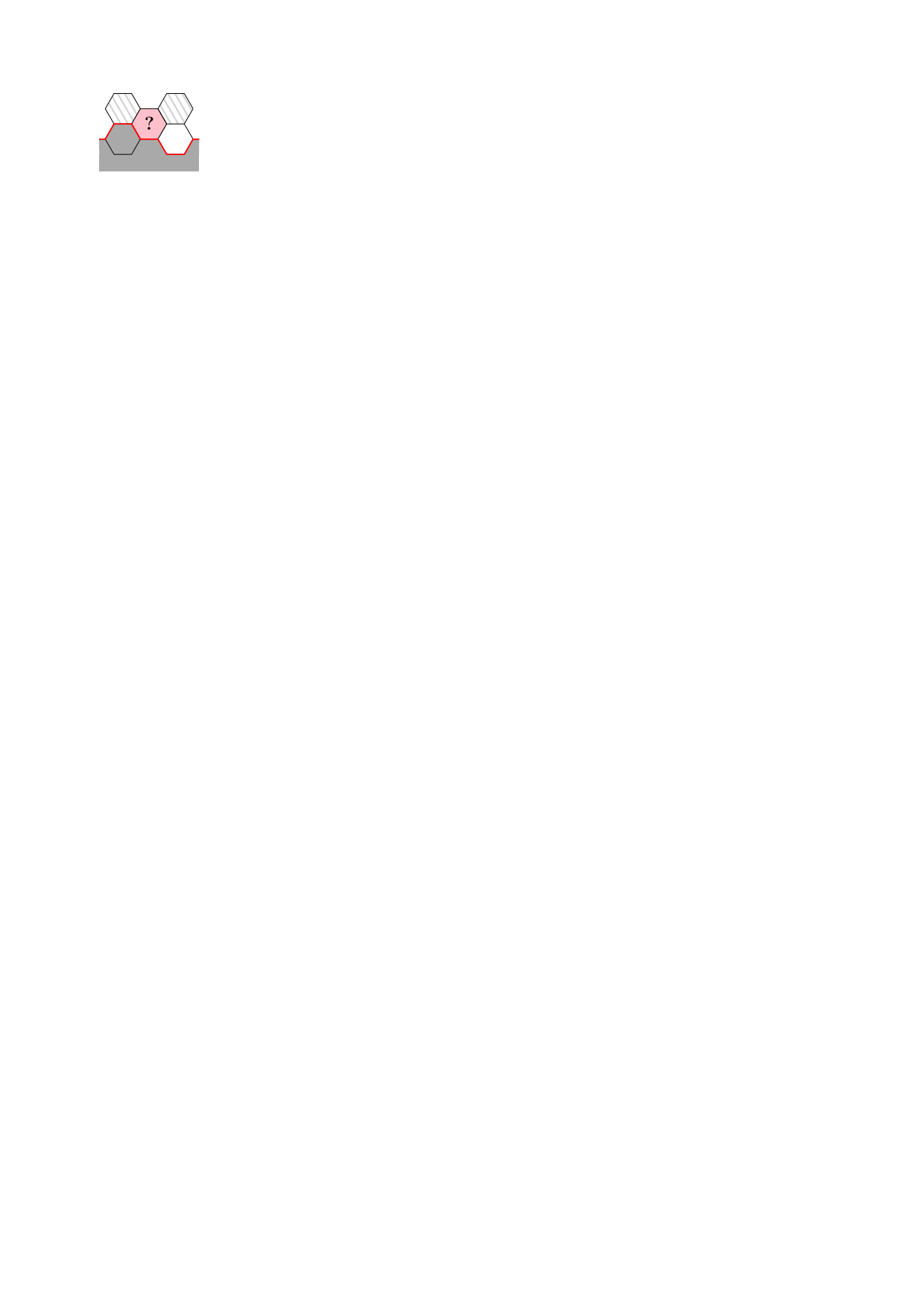} & \rule{0cm}{1.5cm}\includegraphics[scale=0.75]{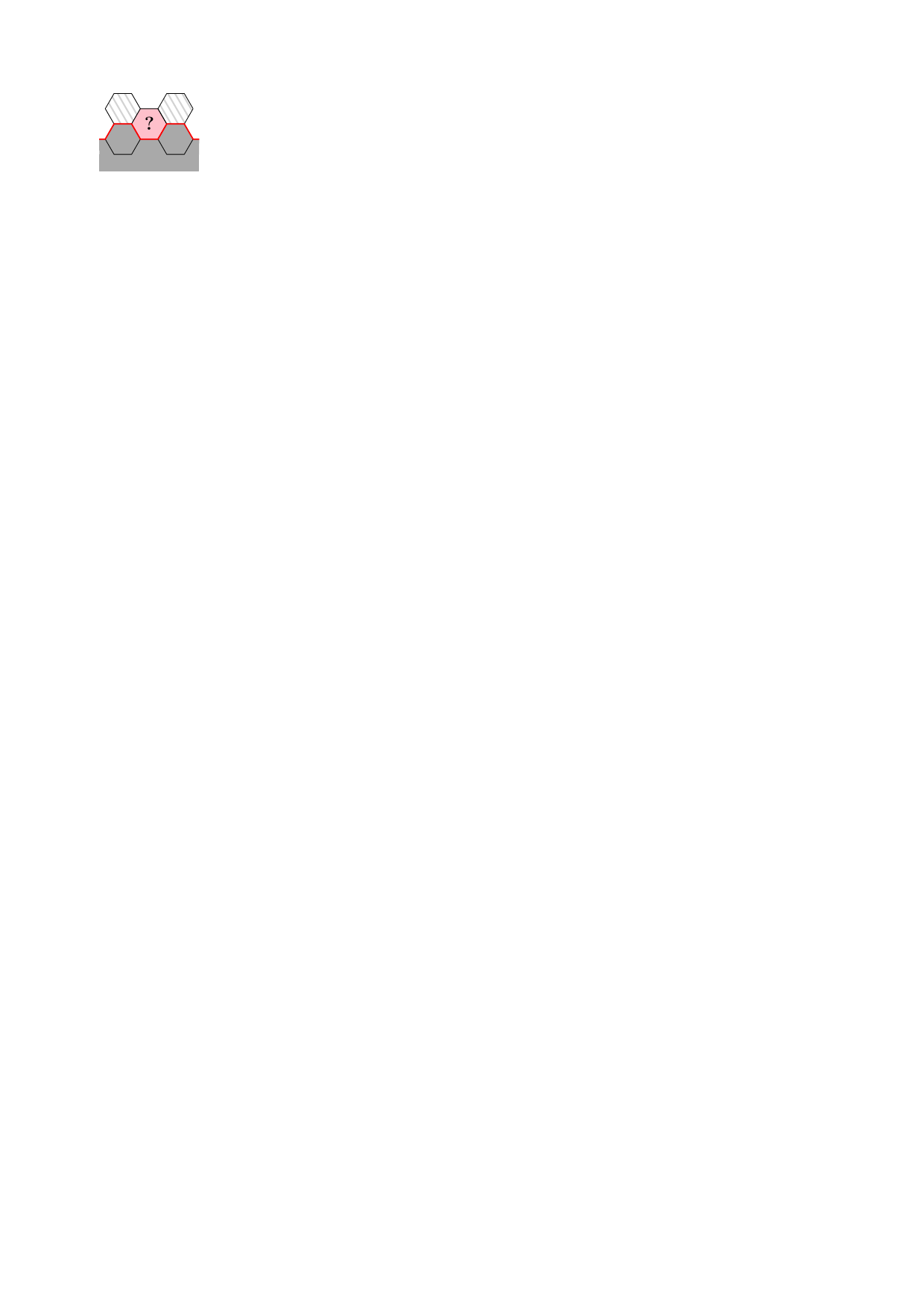} \\ \cline{2-6}
& $\eps$ & $(0,0)$ & $(0,1)$ & $(1,0)$ & $(1,1)$\\ \cline{2-6}
& $\P(B_p^{(2)}=\eps)$ & $(1-p)^2$ & $(1-p)p$ & $p(1-p)$ & $p^2$ \\ \cline{2-6}
& $F^{(2)}_{\footnotesize\texttt{pink}}(\eps)$ & $(1,0)$ & $(0,1)$ & $(0,1)$ & $(-1,2)$ \\ 
\cline{1-6}
\multirow{4}{*}{\raisebox{2.25\height}{\includegraphics[scale=0.8]{HexaPurple}}} & \raisebox{1\height}{\begin{minipage}{3cm}$$\begin{array}{cl}
\text{T1:} & 2 \\ \text{T2:} & 2 \end{array}$$\end{minipage}} & \rule{0cm}{2cm}\includegraphics[scale=0.75]{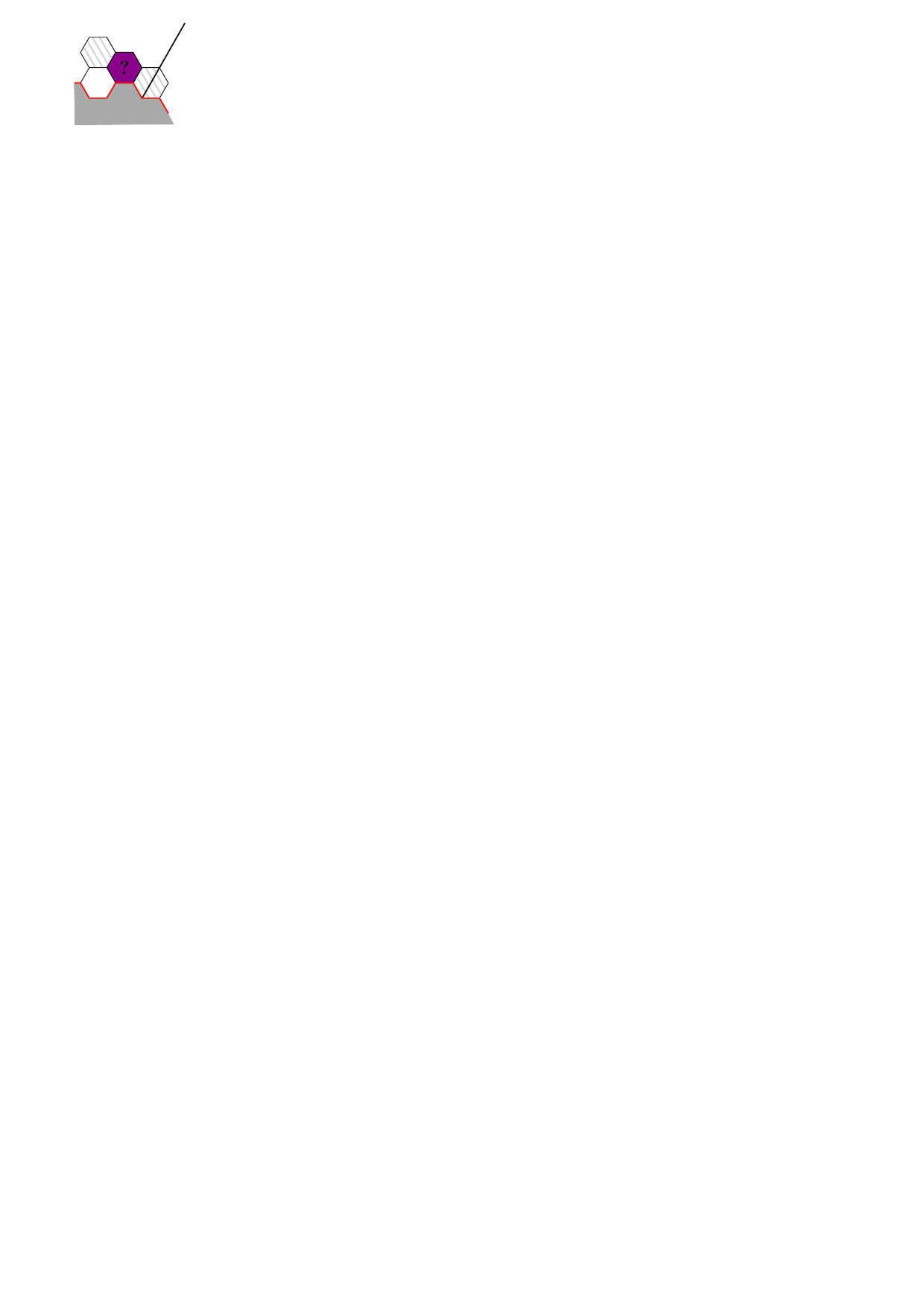} & \rule{0cm}{2cm}\includegraphics[scale=0.75]{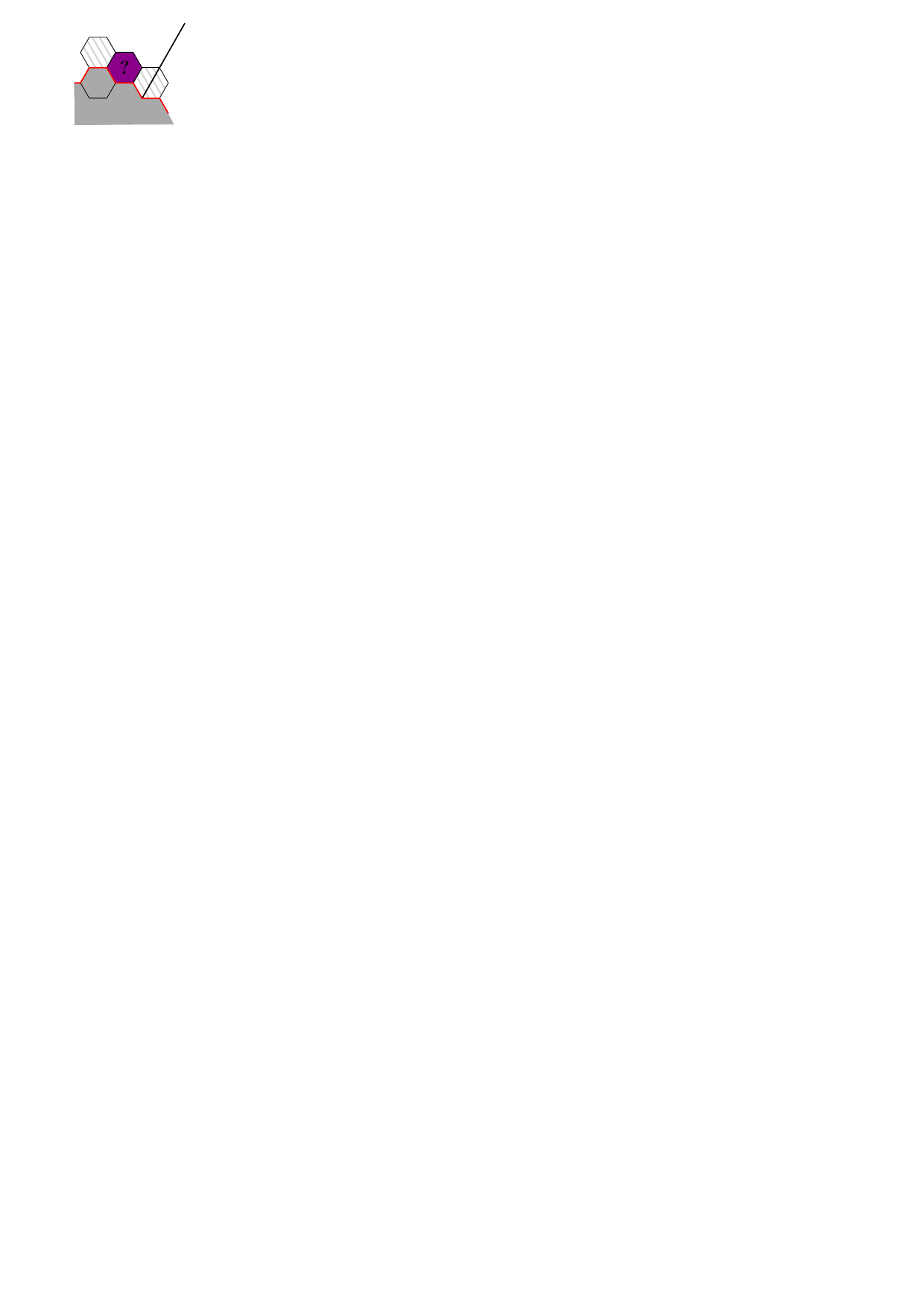} \\ \cline{2-4}
& $\eps$ & $0$ & $1$ \\ \cline{2-4}
& $\P(B_p^{(1)}=\eps)$ & $1-p$ & $p$ \\ \cline{2-4}
& $F^{(2)}_{\footnotesize\texttt{purple}}(\eps)$ & $(1,0)$ & $(0,1)$ \\
\cline{1-4}
\multirow{4}{*}{\raisebox{2.25\height}{\includegraphics[scale=0.8]{HexaBlue}}} & \raisebox{0.75\height}{\begin{minipage}{3cm}$$\begin{array}{cl}
\text{T1:} & \la' \\ \text{T2:} & 2(\la'-1) \end{array}$$\end{minipage}} & \rule{0cm}{1.5cm}\includegraphics[scale=0.75]{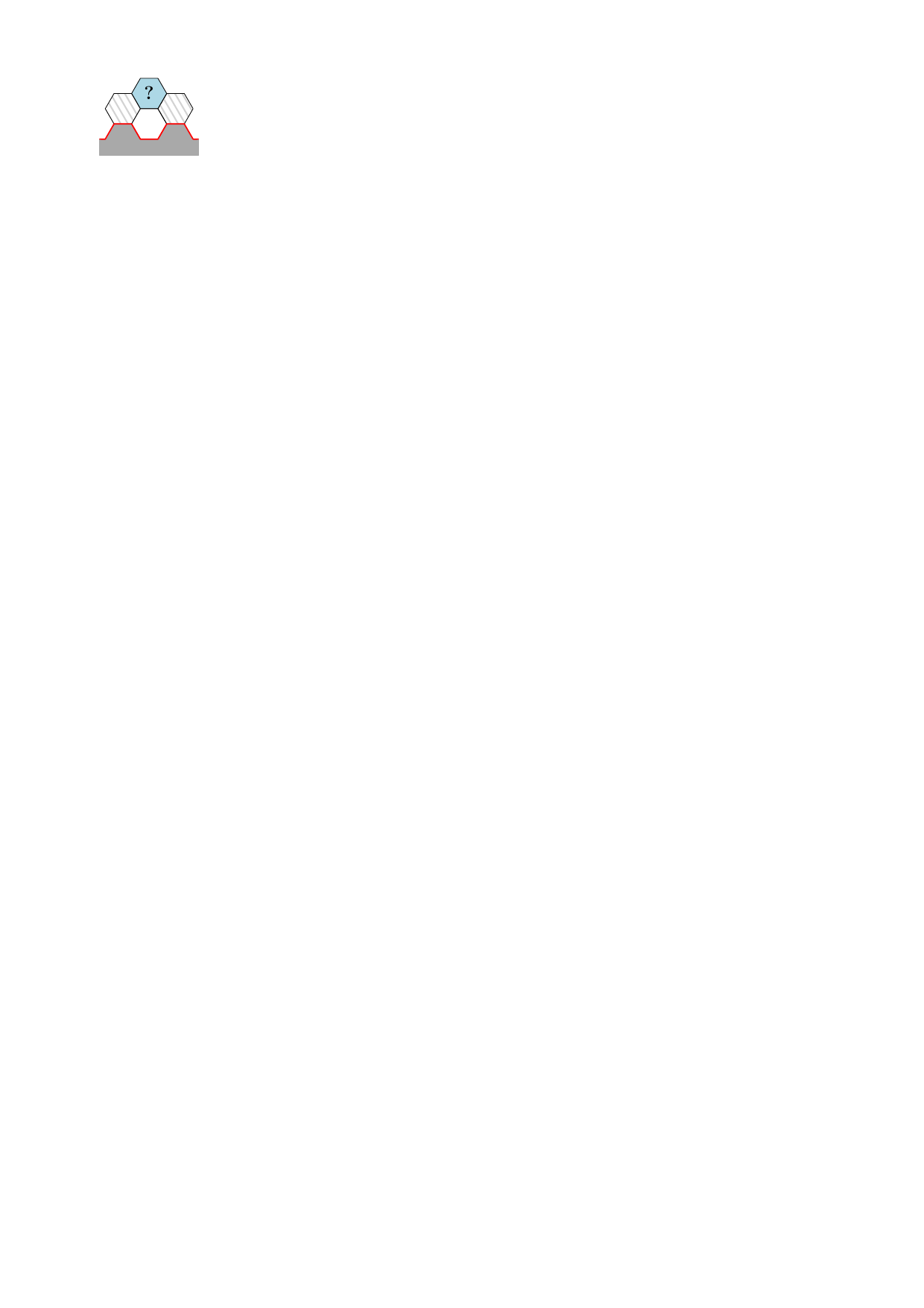} & \rule{0cm}{1.5cm}\includegraphics[scale=0.75]{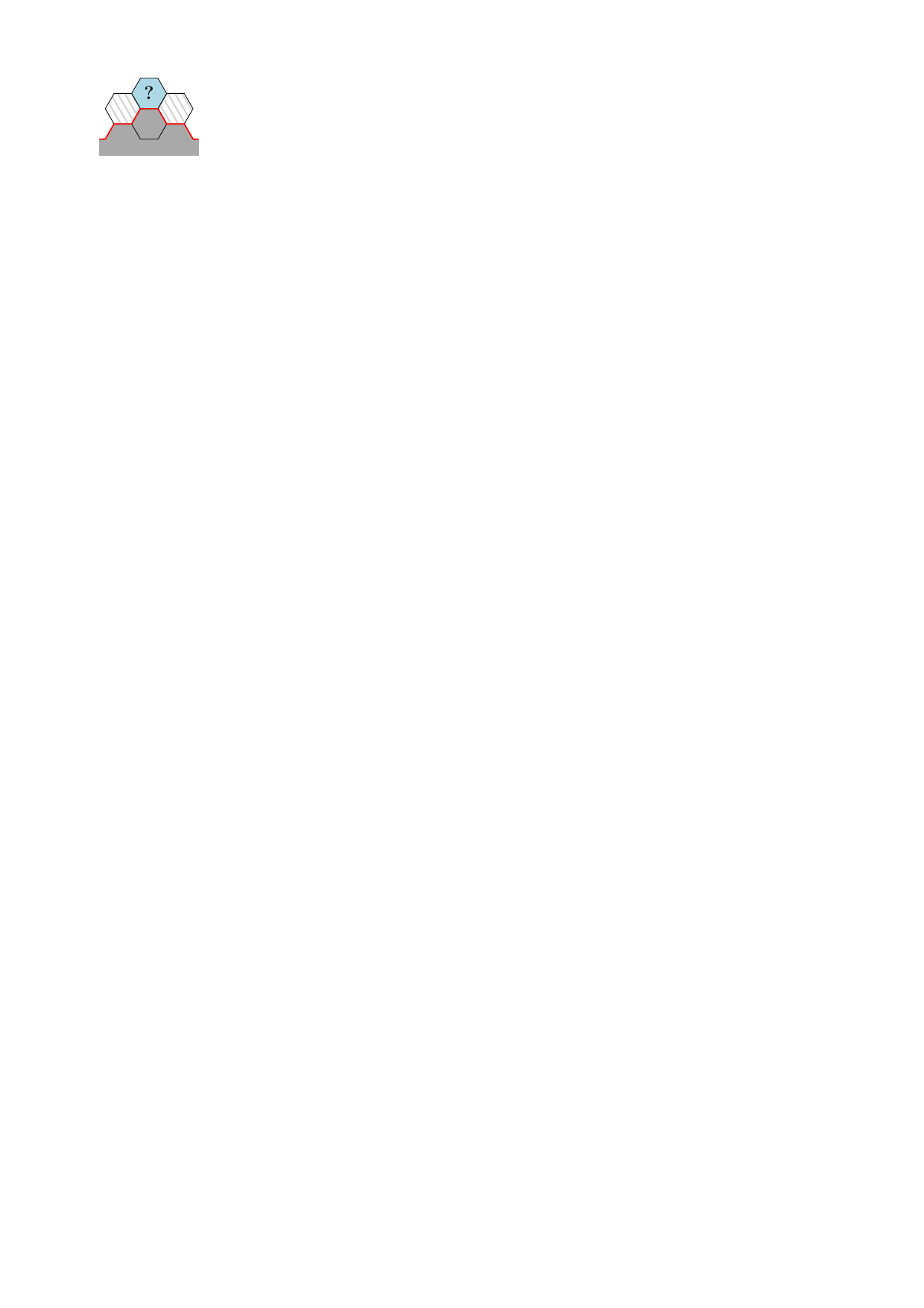} \\ \cline{2-4}
& $\eps$ & $0$ & $1$ \\ \cline{2-4}
& $\P(B_p^{(1)}=\eps)$ & $1-p$ & $p$ \\ \cline{2-4}
& $F^{(2)}_{\footnotesize\texttt{blue}}(\eps)$ & $(0,0)$ & $(1,0)$ \\ 
\cline{1-6}
\multirow{4}{*}{\raisebox{2.25\height}{\includegraphics[scale=0.8]{HexaBrown}}} & \raisebox{0.85\height}{\begin{minipage}{3cm}$$\begin{array}{cl}
\text{T1:} & 0 \\ \text{T2:} & 1 \end{array}$$\end{minipage}} & \rule{0cm}{1.8cm}\includegraphics[scale=0.75]{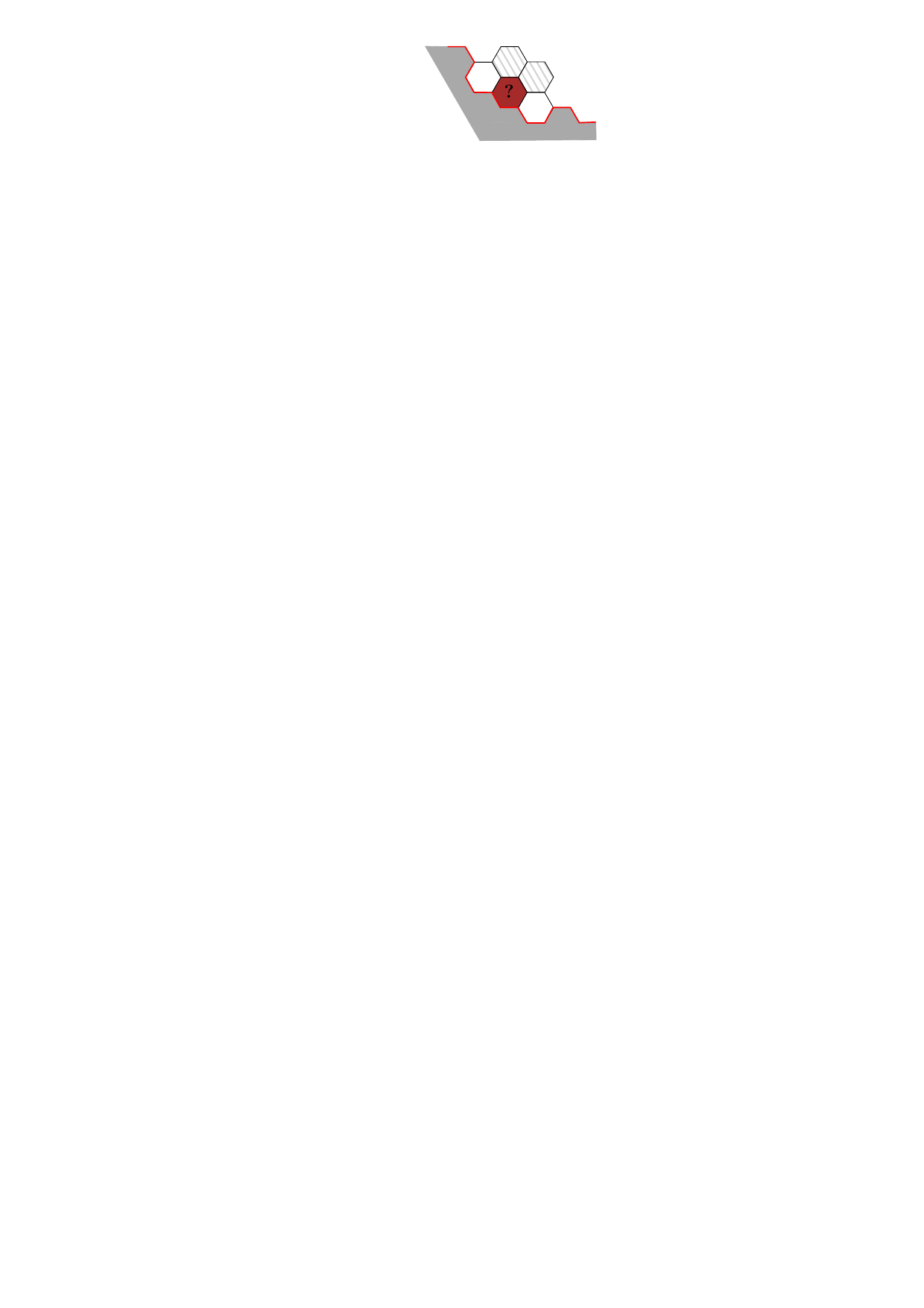} & \rule{0cm}{1.8cm}\includegraphics[scale=0.75]{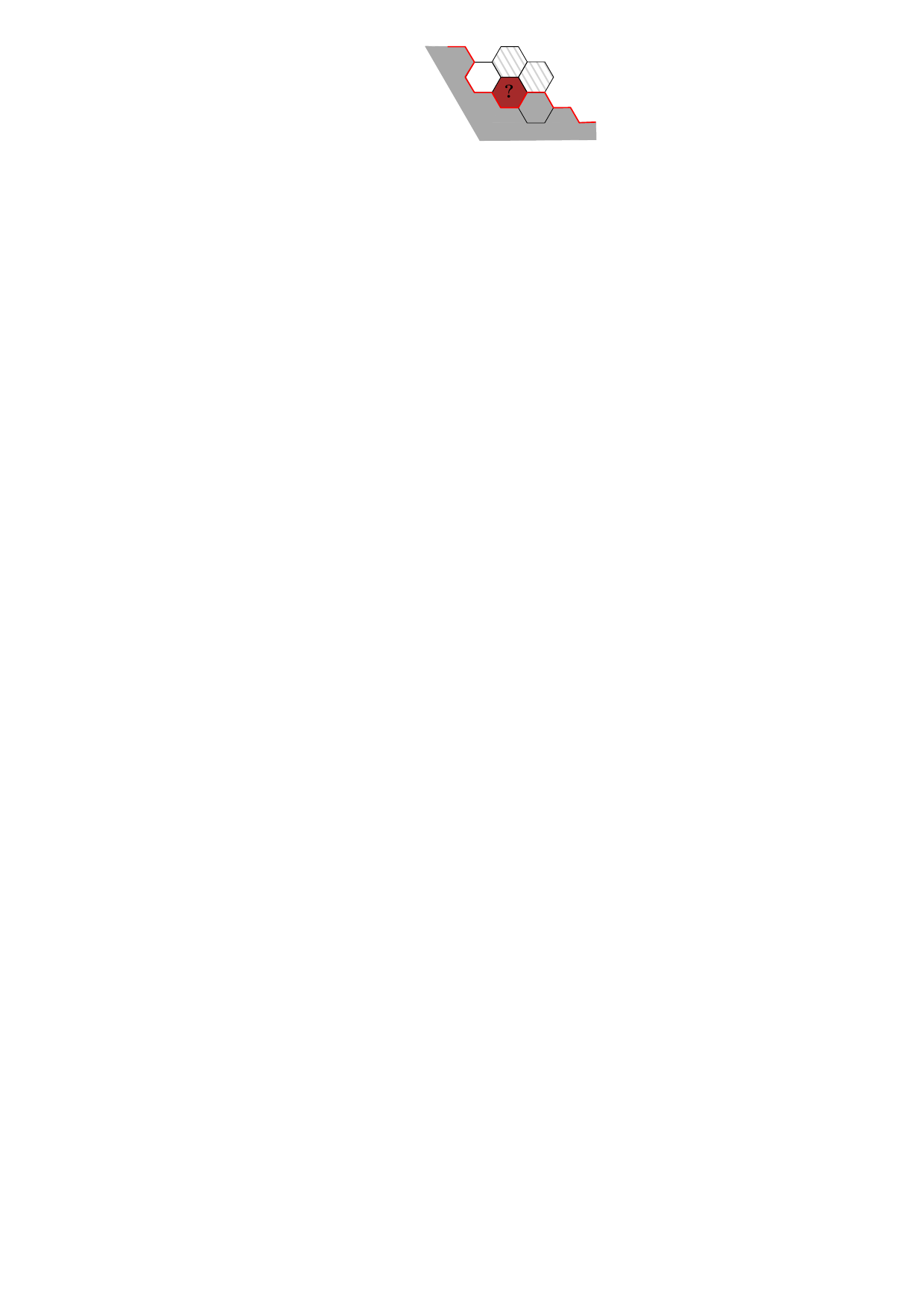} & \rule{0cm}{1.8cm}\includegraphics[scale=0.75]{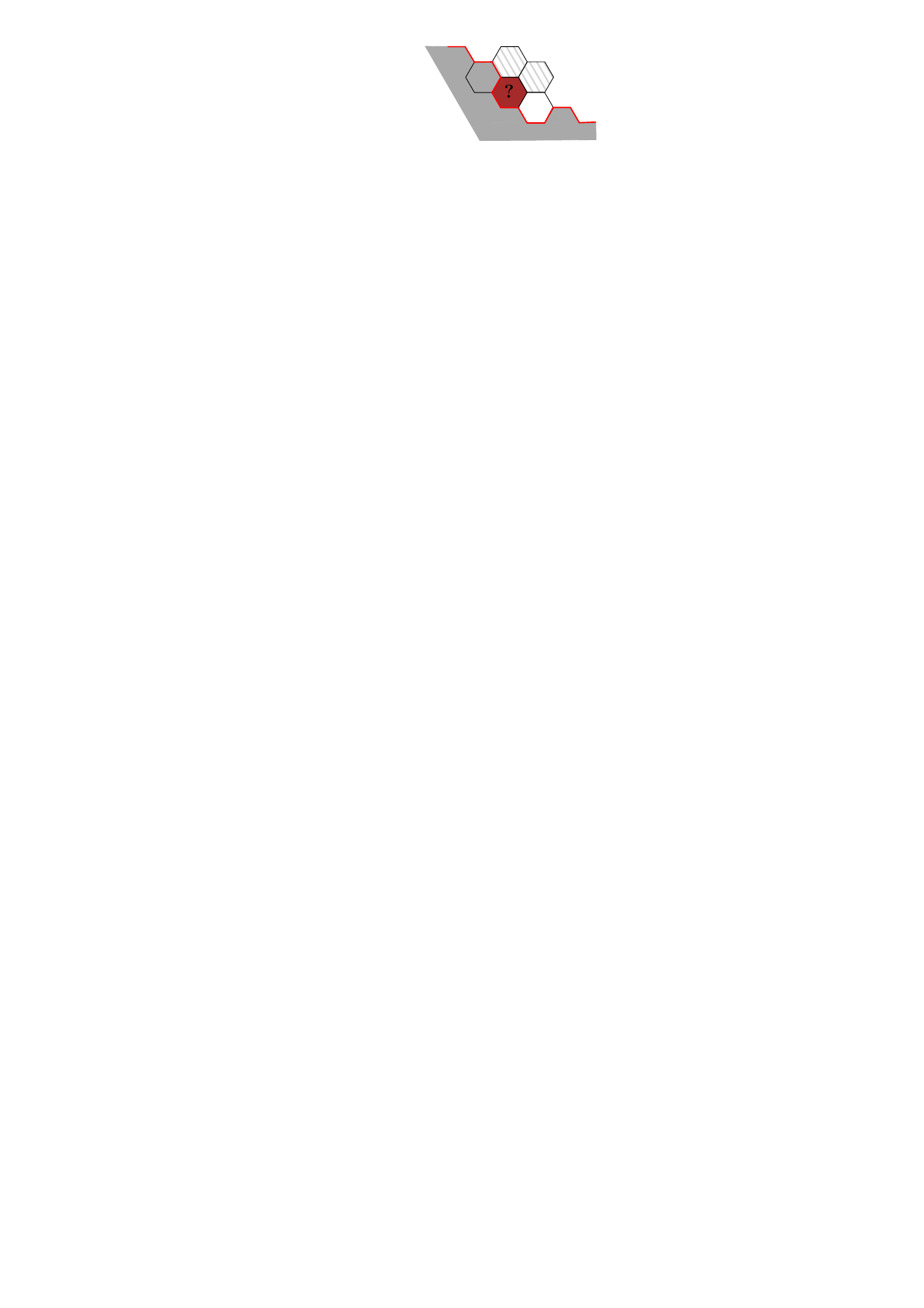} & \rule{0cm}{1.8cm}\includegraphics[scale=0.75]{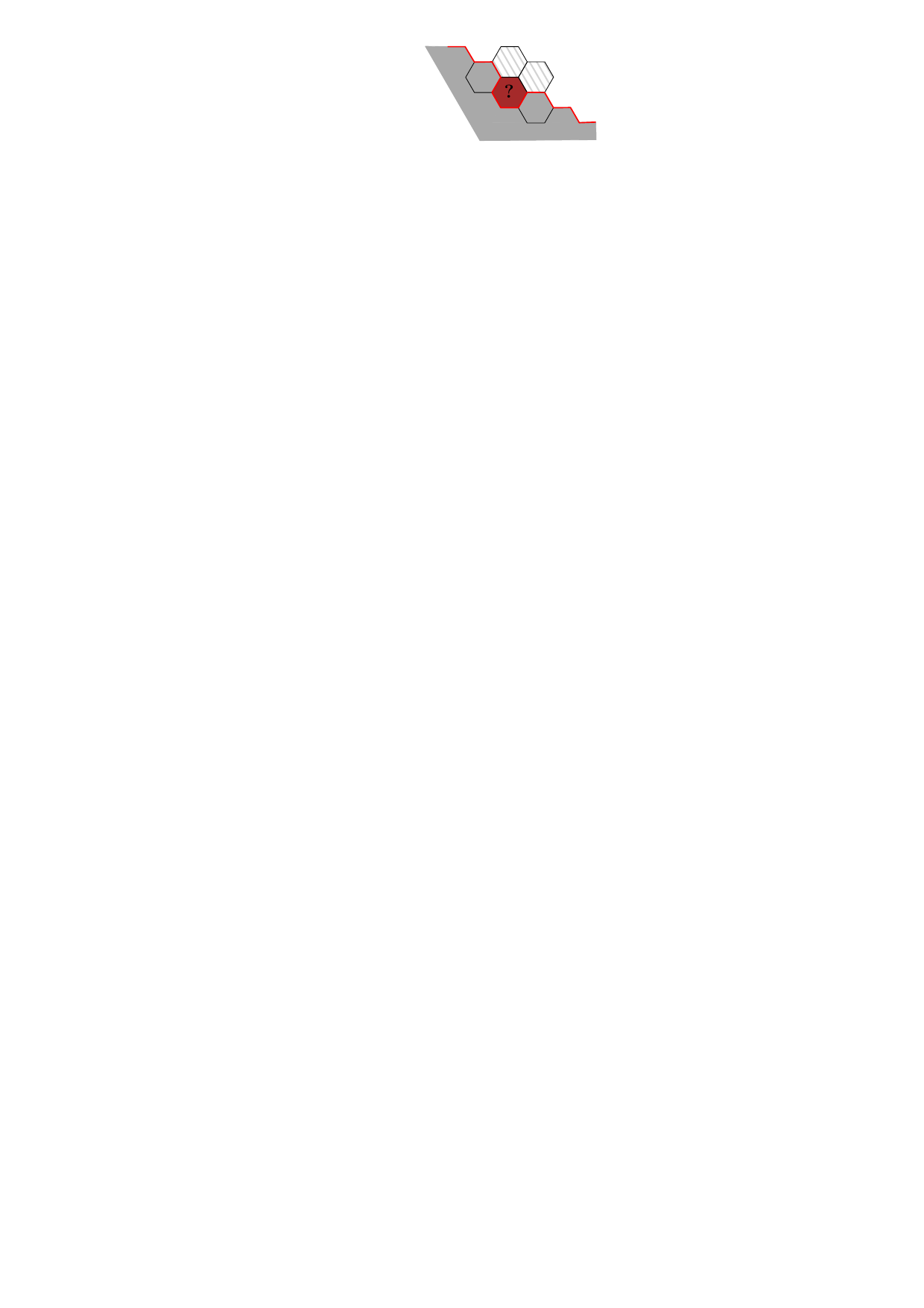} \\ \cline{2-6}
& $\eps$ & $(0,0)$ & $(0,1)$ & $(1,0)$ & $(1,1)$\\ \cline{2-6}
& $\P(B_p^{(2)}=\eps)$ & $(1-p)^2$ & $(1-p)p$ & $p(1-p)$ & $p^2$ \\ \cline{2-6}
& $F^{(2)}_{\footnotesize\texttt{brown}}(\eps)$ & $(0,1)$ & $(-1,2)$ & $(-1,2)$ & $(-2,3)$ \\ \cline{2-6}
\cline{1-6}
\multirow{4}{*}{\raisebox{2.25\height}{\includegraphics[scale=0.8]{HexaOrange}}} & \raisebox{0.85\height}{\begin{minipage}{3cm}$$\begin{array}{cl}
\text{T1:} & 0 \\ \text{T2:} & 2 \end{array}$$\end{minipage}} & \rule{0cm}{1.8cm}\includegraphics[scale=0.75]{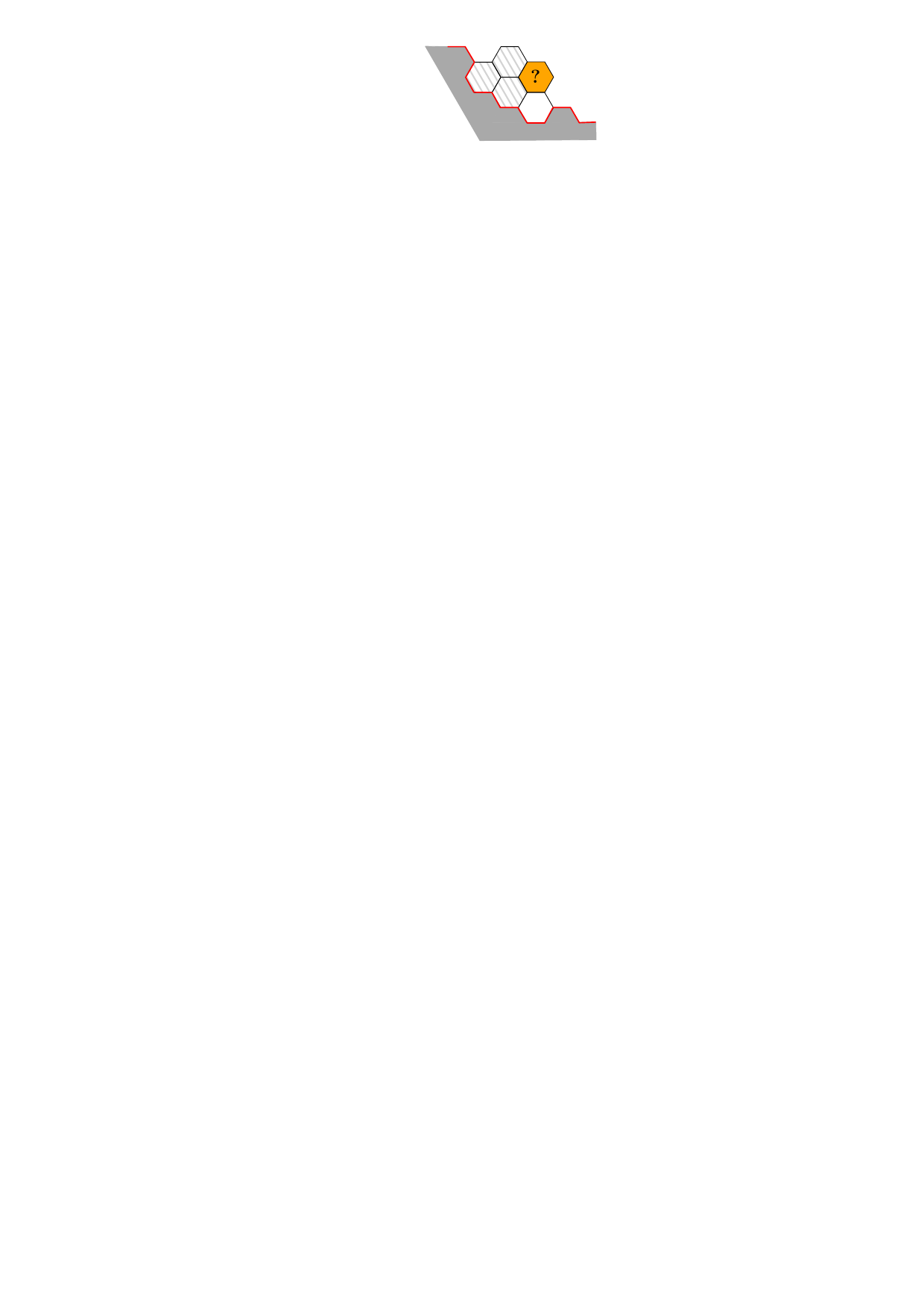} & \rule{0cm}{1.8cm}\includegraphics[scale=0.75]{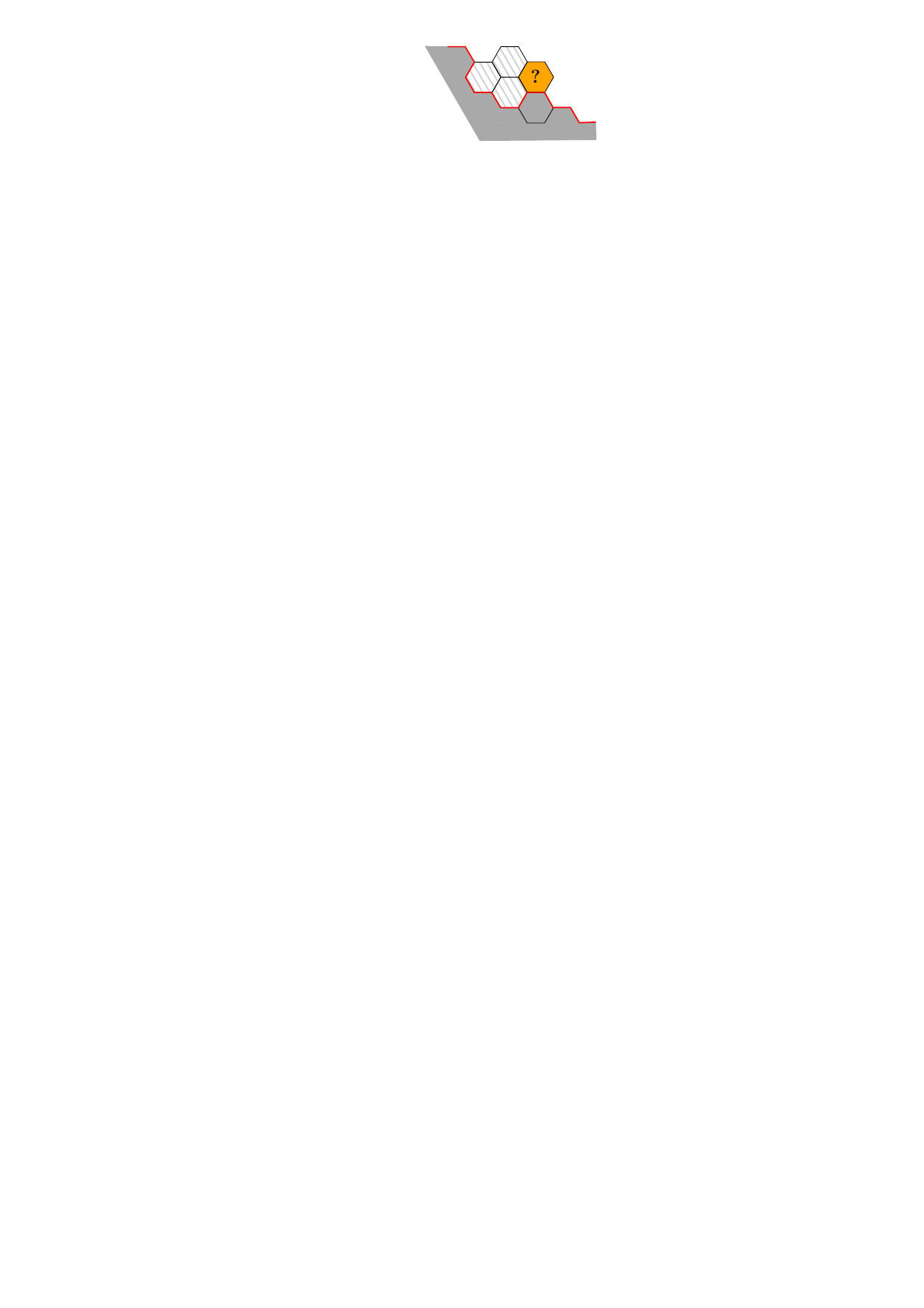}  \\ \cline{2-4}
& $\eps$ & $0$ & $1$ \\ \cline{2-4}
& $\P(B_p^{(1)}=\eps)$ & $1-p$ & $p$ \\ \cline{2-4}
& $F^{(2)}_{\footnotesize\texttt{orange}}(\eps)$ & $(0,0)$ & $(1,0)$ \\
\cline{1-4}
\end{tabular}}
\pass\caption{Description of the children of an hexagonal parent when $p_*=0$ and $r=2$.}
\label{tab:Model0HexaR2Child}
\end{table}

\newpage

\renewcommand{\arraystretch}{1.5}
\setlength{\arrayrulewidth}{0.1pt}
\begin{table}[h!]
\centering
\resizebox{7cm}{!}{%
\begin{tabular}{|c|c|c|c|c|c|}
\cline{1-3}
\multirow{4}{*}{\raisebox{2.25\height}{\includegraphics[scale=0.8]{HexaYellow}}} & \raisebox{1\height}{\begin{minipage}{3cm}$$\begin{array}{cl}
\text{T1:} & 2 \\ \text{T2:} & 2 \end{array}$$\end{minipage}} & \rule{0cm}{2cm}\includegraphics[scale=0.75]{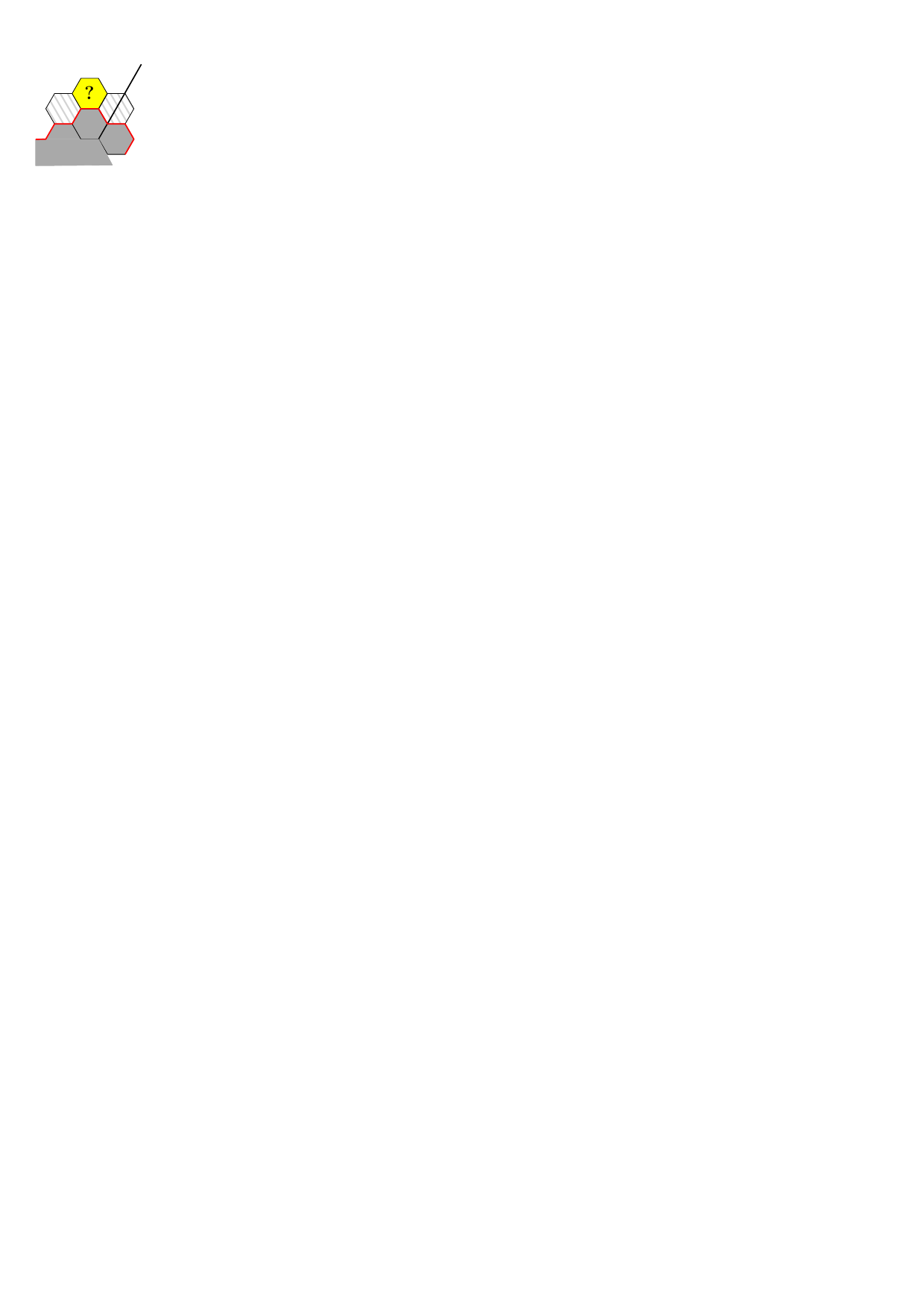} \\ \cline{2-3}
& $\eps$ & $1$ \\ \cline{2-3}
& $\P(B_p^{(1)}=\eps)$ & $1$ \\ \cline{2-3}
& $F^{(1)}_{\footnotesize\texttt{yellow}}(\eps)$ & $(1,0)$  \\ 
\cline{1-3}
\multirow{4}{*}{\raisebox{2.25\height}{\includegraphics[scale=0.8]{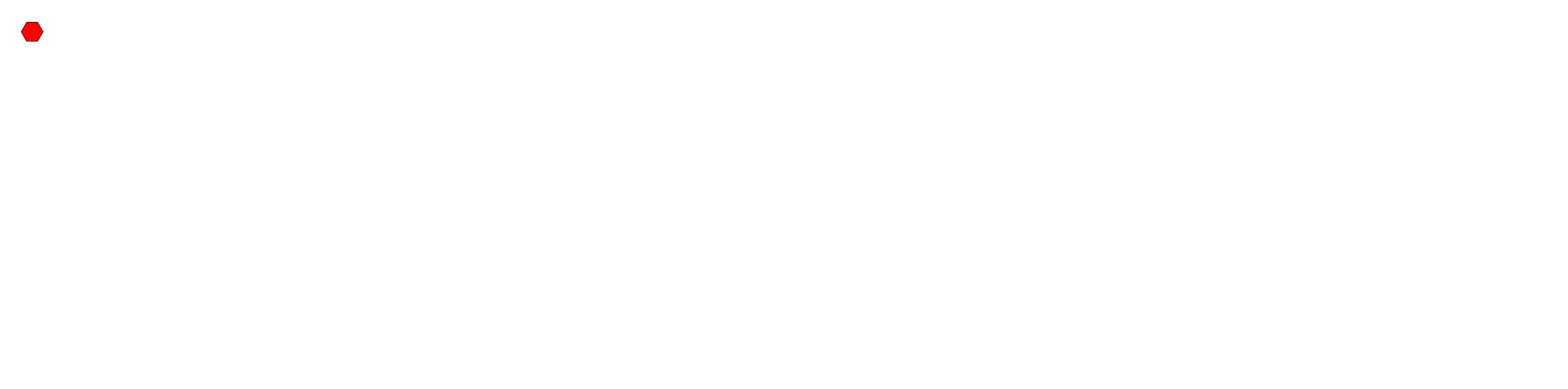}}} & \raisebox{1\height}{\begin{minipage}{3cm}$$\begin{array}{cl}
\text{T1:} & 2 \\ \text{T2:} & 2 \end{array}$$\end{minipage}} & \rule{0cm}{2cm}\includegraphics[scale=0.75]{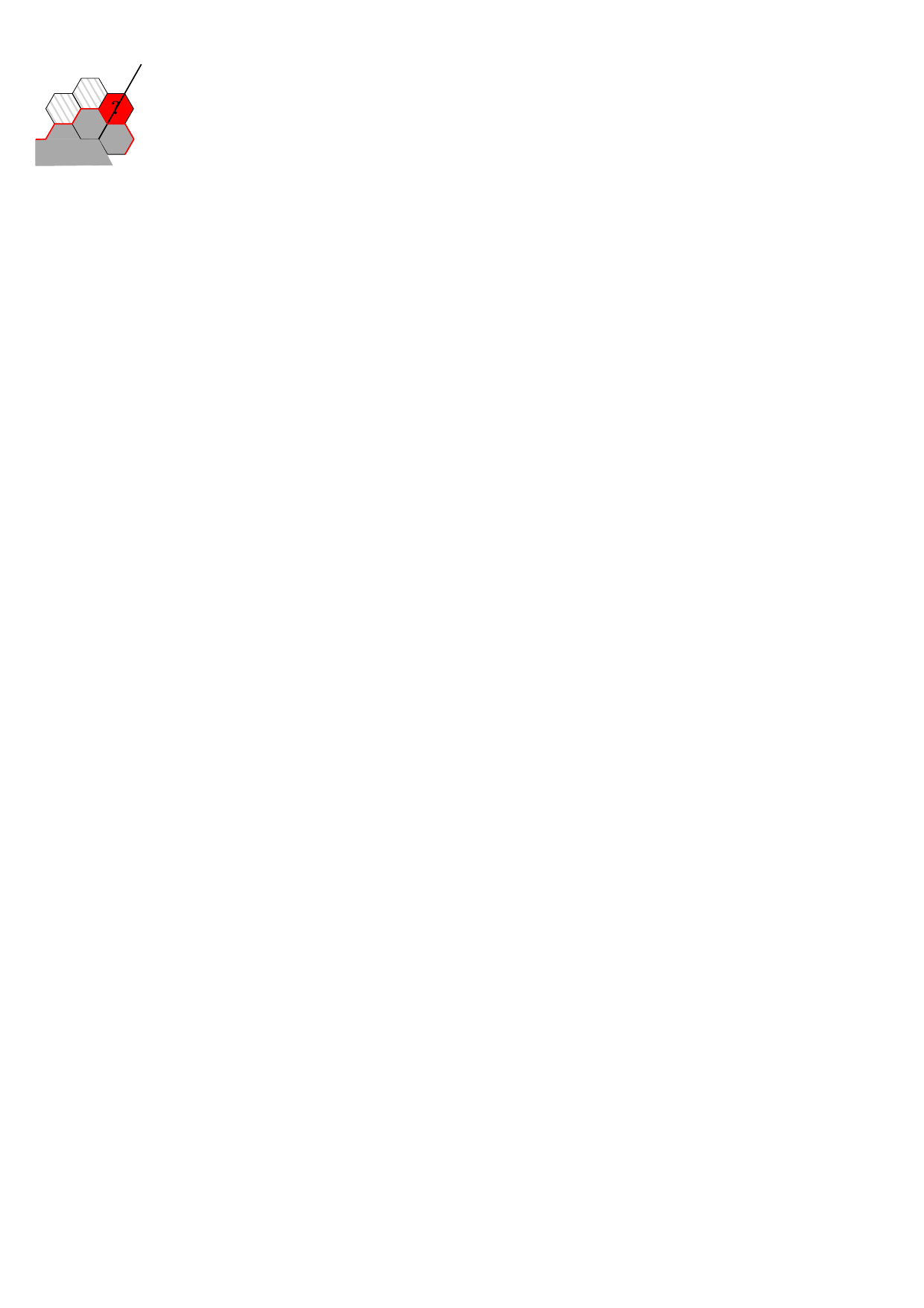} \\ \cline{2-3}
& $\eps$ & $1$ \\ \cline{2-3}
& $\P(B_p^{(1)}=\eps)$ & $1$ \\ \cline{2-3}
& $F^{(1)}_{\footnotesize\texttt{red}}(\eps)$ & $(0,\frac12)$  \\ 
\cline{1-4}
\multirow{4}{*}{\raisebox{2.25\height}{\includegraphics[scale=0.8]{HexaPurple}}} & \raisebox{1\height}{\begin{minipage}{3cm}$$\begin{array}{cl}
\text{T1:} & 2 \\ \text{T2:} & 2 \end{array}$$\end{minipage}} & \rule{0cm}{2cm}\includegraphics[scale=0.75]{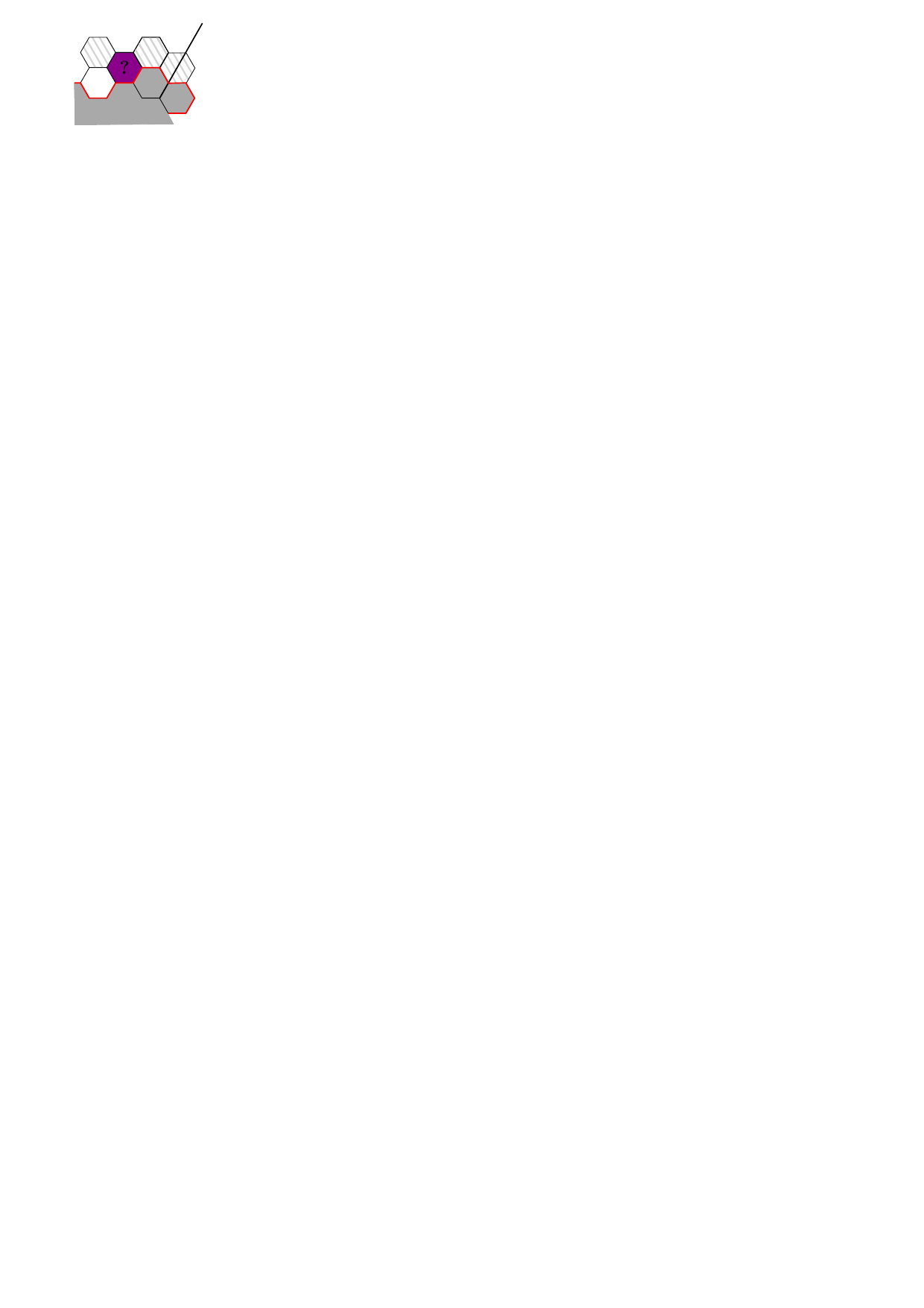} & \rule{0cm}{2cm}\includegraphics[scale=0.75]{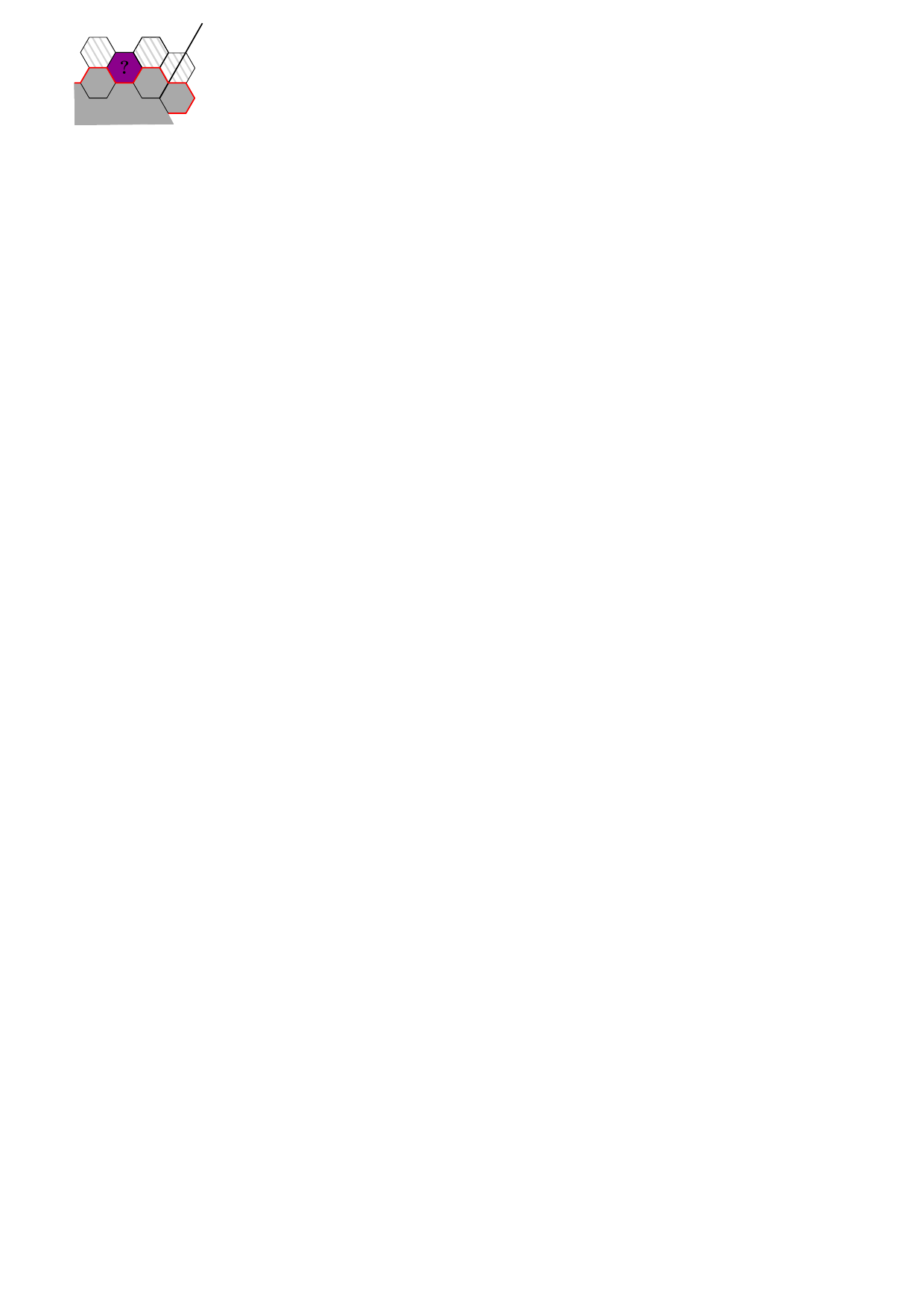} \\ \cline{2-4}
& $\eps$ & $0$ & $1$ \\ \cline{2-4}
& $\P(B_p^{(1)}=\eps)$ & $1-p$ & $p$ \\ \cline{2-4}
& $F^{(1)}_{\footnotesize\texttt{purple}}(\eps)$ & $(0,1)$ & $(-1,2)$ \\ 
\cline{1-4}
\end{tabular}}
\pass\caption{Description of the children of an hexagonal parent when $p_*=1$ and $r=1$.}
\label{tab:Model1HexaR1Child}
\end{table}

\renewcommand{\arraystretch}{1.5}
\setlength{\arrayrulewidth}{0.1pt}
\begin{table}[h!]
\centering
\resizebox{7cm}{!}{%
\begin{tabular}{|c|c|c|c|c|c|}
\cline{1-3}
\multirow{4}{*}{\raisebox{2.25\height}{\includegraphics[scale=0.8]{HexaYellow}}} & \raisebox{1\height}{\begin{minipage}{3cm}$$\begin{array}{cl}
\text{T1:} & 2 \\ \text{T2:} & 2 \end{array}$$\end{minipage}} & \rule{0cm}{2cm}\includegraphics[scale=0.75]{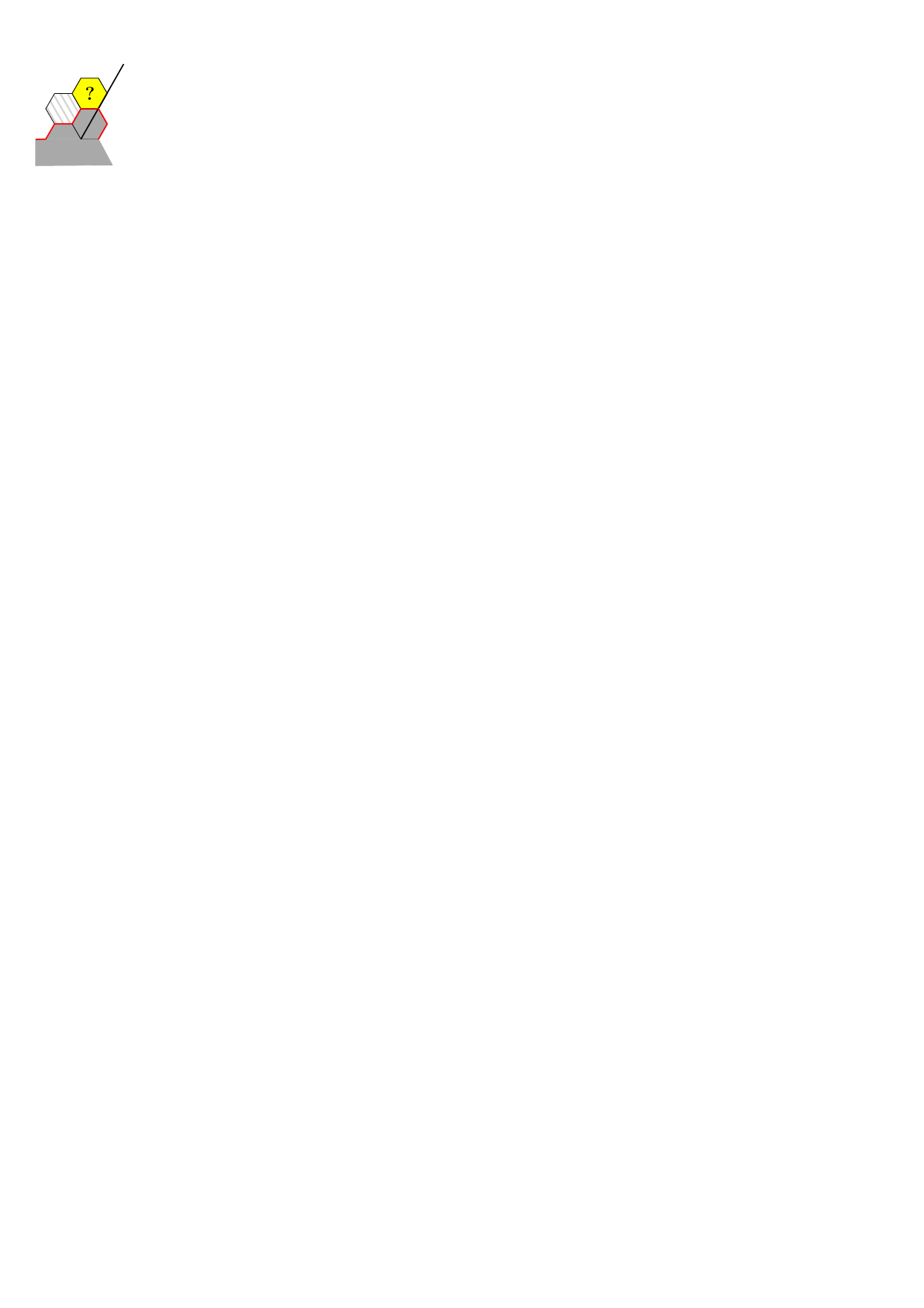} \\ \cline{2-3}
& $\eps$ & $1$ \\ \cline{2-3}
& $\P(B_p^{(1)}=\eps)$ & $1$ \\ \cline{2-3}
& $F^{(2)}_{\footnotesize\texttt{yellow}}(\eps)$ & $(1,0)$  \\
\cline{1-4}
\multirow{4}{*}{\raisebox{2.25\height}{\includegraphics[scale=0.8]{HexaPurple}}} & \raisebox{1\height}{\begin{minipage}{3cm}$$\begin{array}{cl}
\text{T1:} & 2 \\ \text{T2:} & 2 \end{array}$$\end{minipage}} & \rule{0cm}{2cm}\includegraphics[scale=0.75]{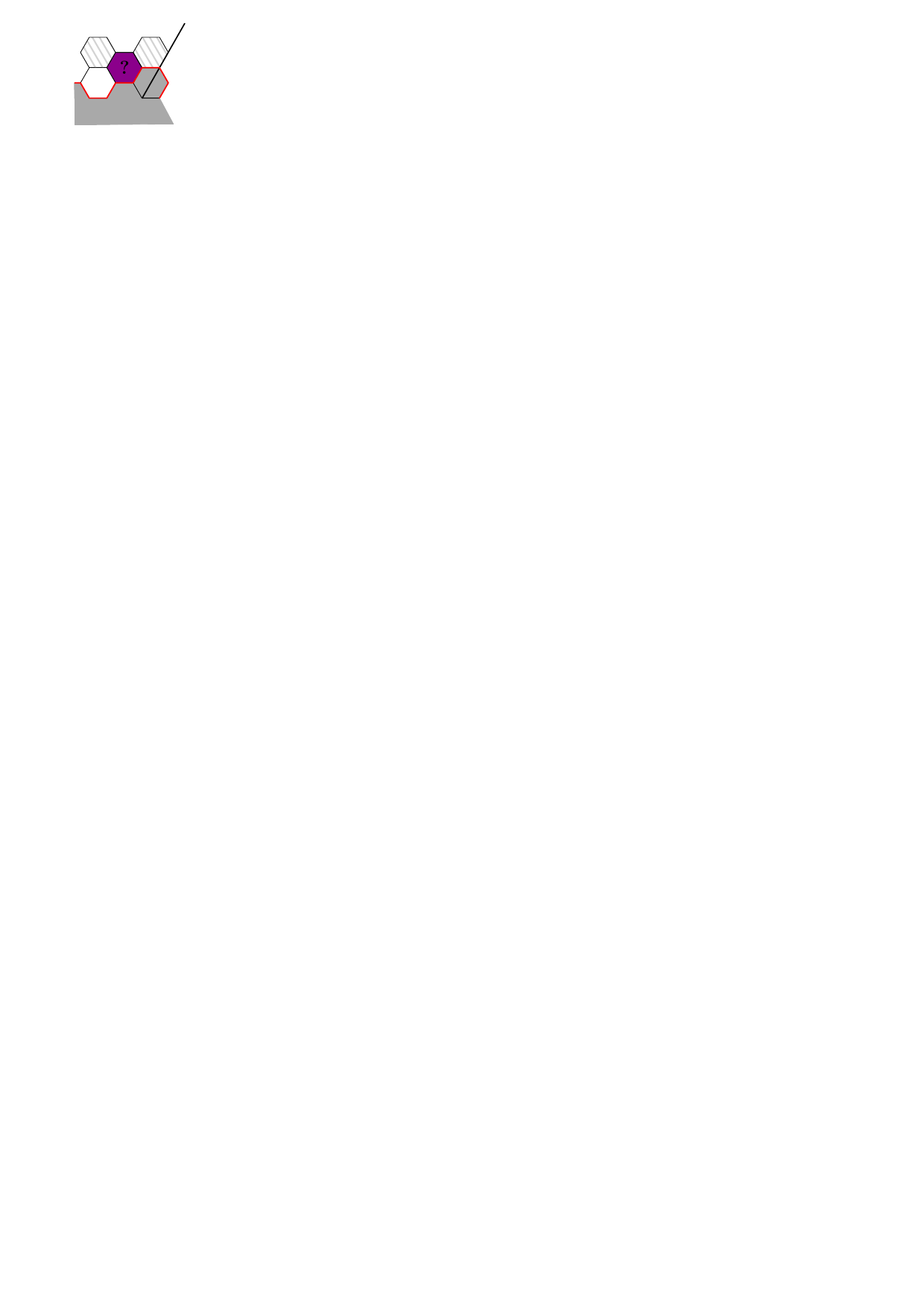} & \rule{0cm}{2cm}\includegraphics[scale=0.75]{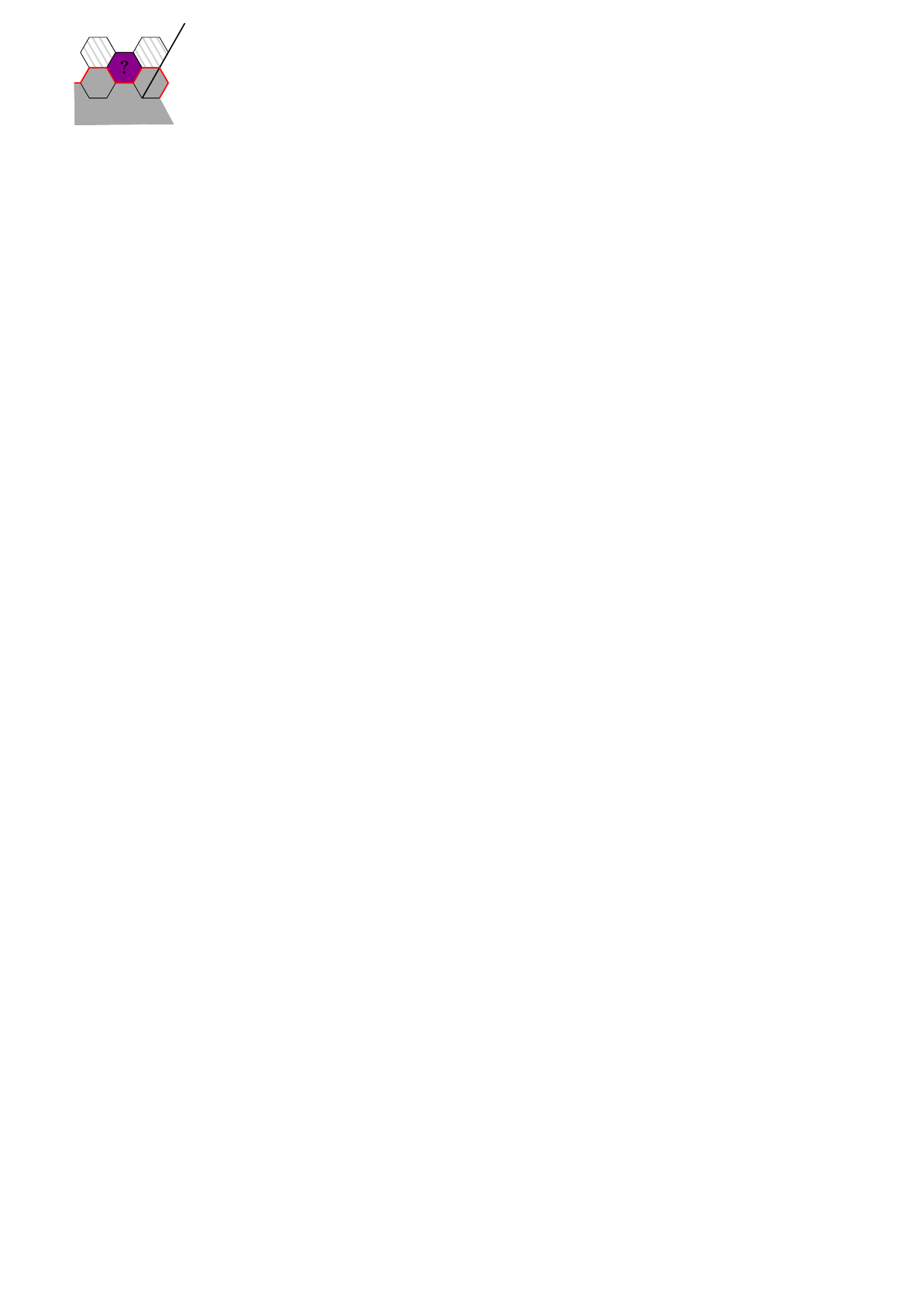} \\ \cline{2-4}
& $\eps$ & $0$ & $1$ \\ \cline{2-4}
& $\P(B_p^{(1)}=\eps)$ & $1-p$ & $p$ \\ \cline{2-4}
& $F^{(2)}_{\footnotesize\texttt{purple}}(\eps)$ & $(0,1)$ & $(-1,2)$ \\
\cline{1-4}
\end{tabular}}
\pass\caption{Description of the children of an hexagonal parent when $p_*=1$ and $r=2$.}
\label{tab:Model1HexaR2Child}
\end{table}

\newpage
\subsection{The square tessellation} 

\noi

\renewcommand{\arraystretch}{1.5}
\setlength{\arrayrulewidth}{0.1pt}
\begin{table}[h!]
\centering
\begin{tabular}{|c|c|c|c|c|c|c|c|c|c|c|c|}
\cline{3-12}
\multicolumn{1}{c}{} & & \multicolumn{10}{c|}{Phantom types} \\
\cline{3-12}
 \multicolumn{1}{c}{} & 
 & \rule{0cm}{0.75cm}\raisebox{-0.05\height}{\includegraphics[scale=0.5]{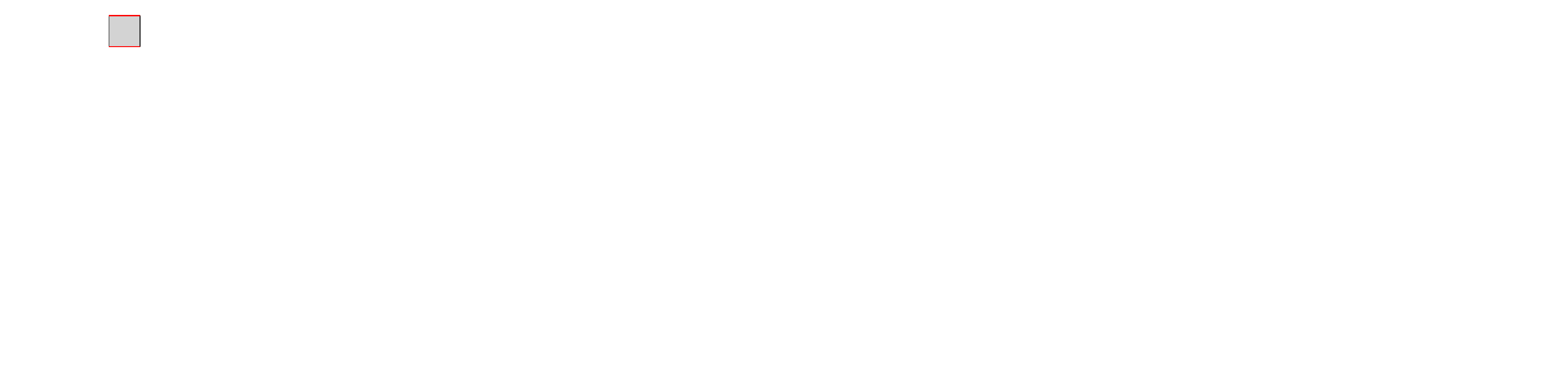}} 
 & \rule{0cm}{0.75cm}\raisebox{-0.05\height}{\includegraphics[scale=0.5]{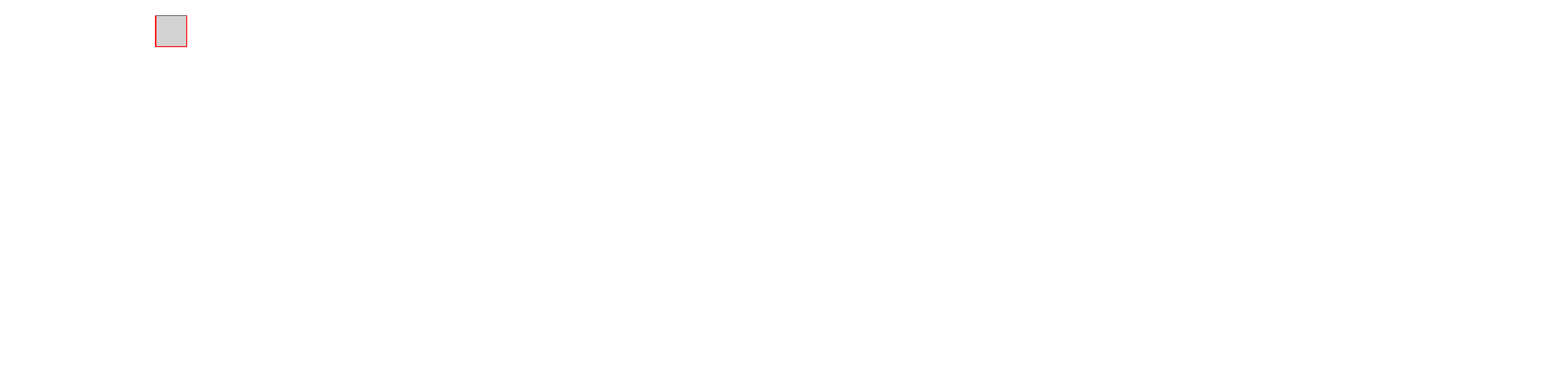}} 
 & \rule{0cm}{0.75cm}\raisebox{-0.05\height}{\includegraphics[scale=0.5]{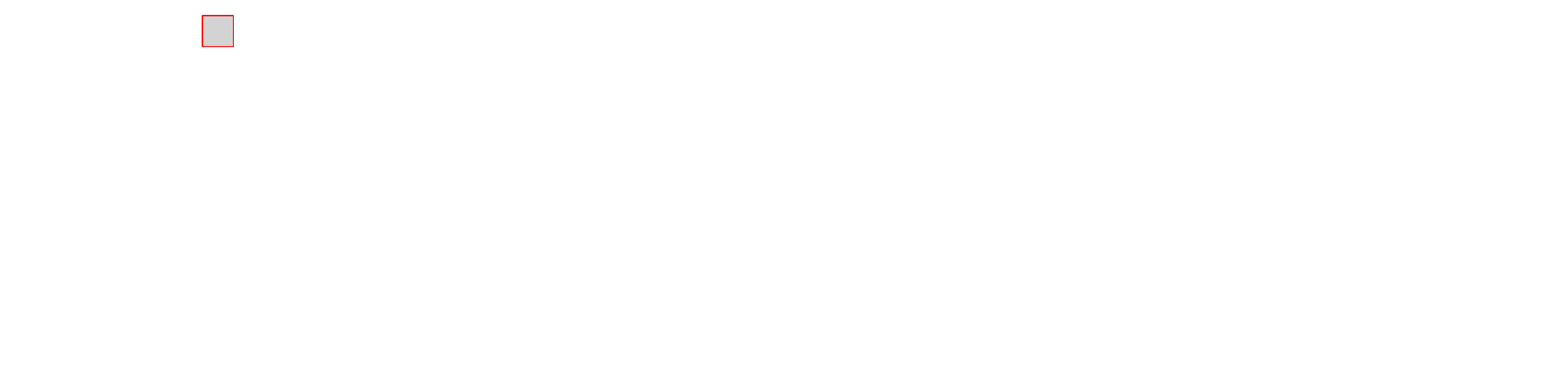}}
 & \rule{0cm}{0.75cm}\raisebox{-0.1\height}{\includegraphics[scale=0.5]{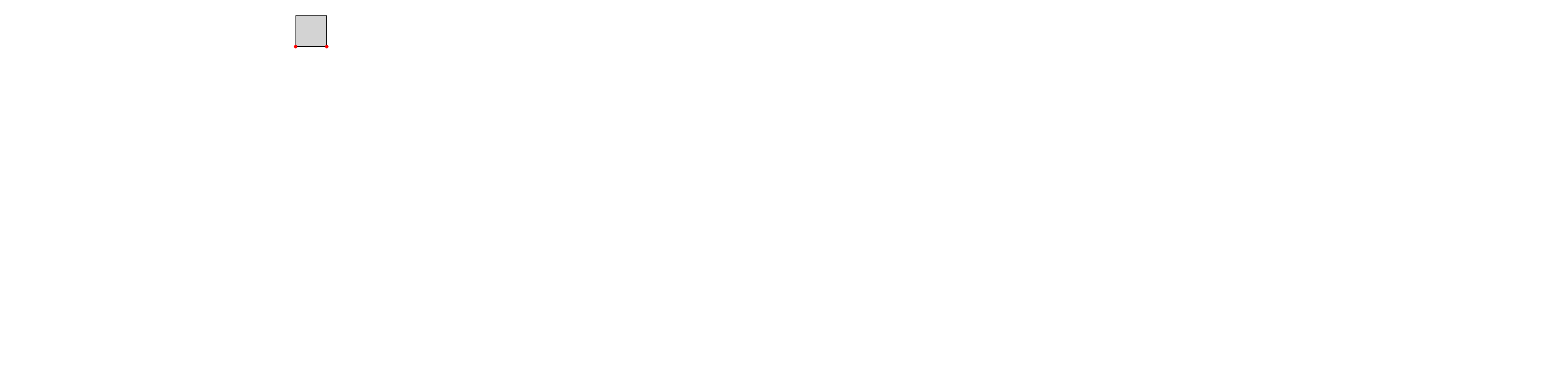}} 
 & \rule{0cm}{0.75cm}\raisebox{-0.1\height}{\includegraphics[scale=0.5]{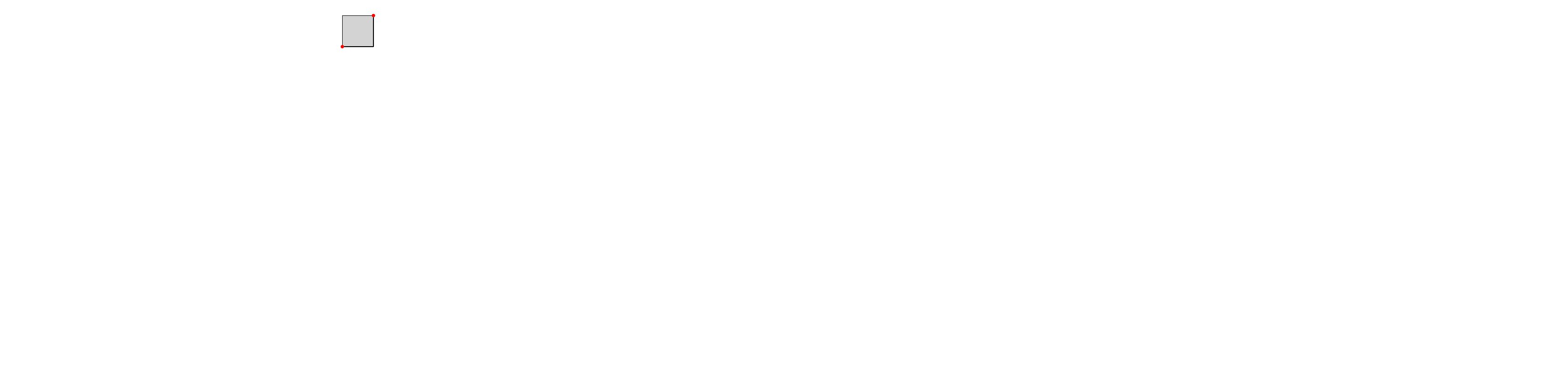}}
 &\rule{0cm}{0.75cm}\raisebox{-0.1\height}{\includegraphics[scale=0.5]{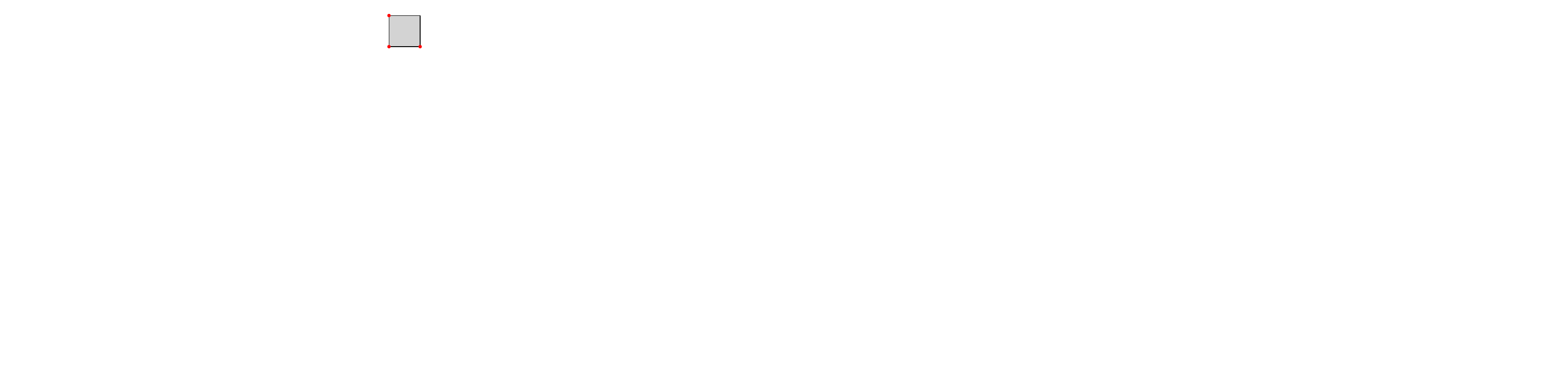}} 
 & \rule{0cm}{0.75cm}\raisebox{-0.1\height}{\includegraphics[scale=0.5]{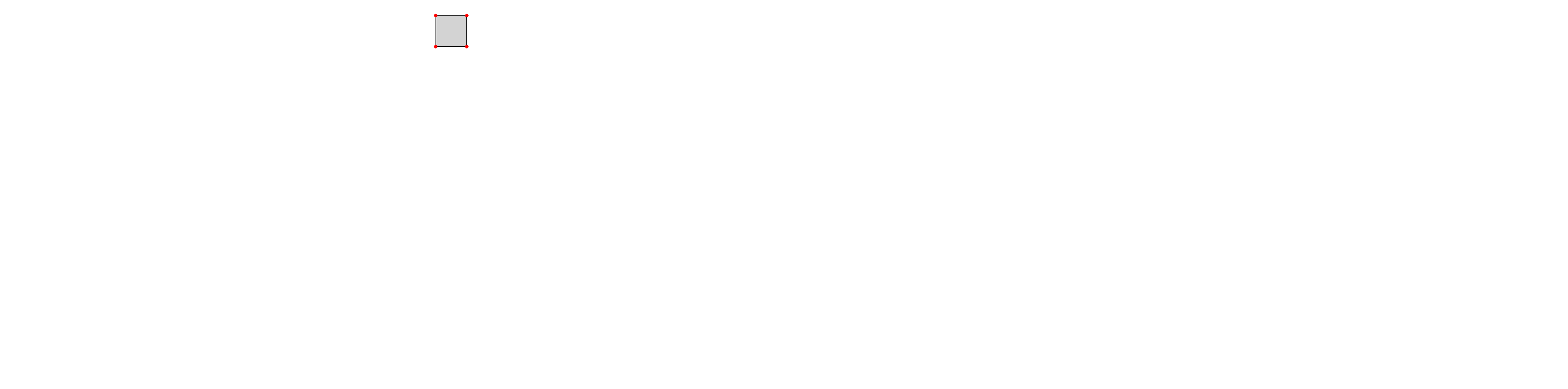}} 
 & \rule{0cm}{0.75cm}\raisebox{-0.05\height}{\includegraphics[scale=0.5]{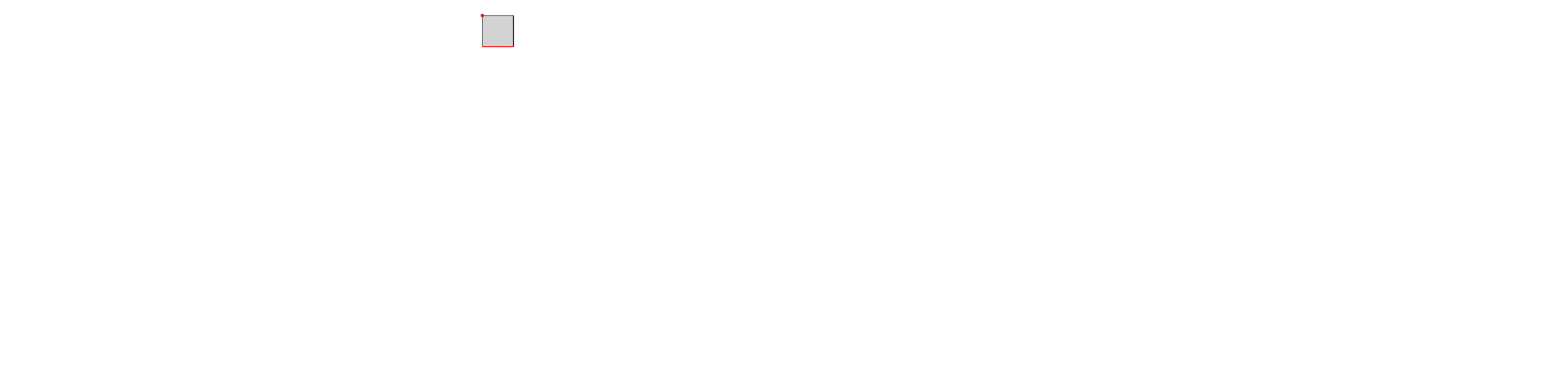}} 
 & \rule{0cm}{0.75cm}\raisebox{-0.05\height}{\includegraphics[scale=0.5]{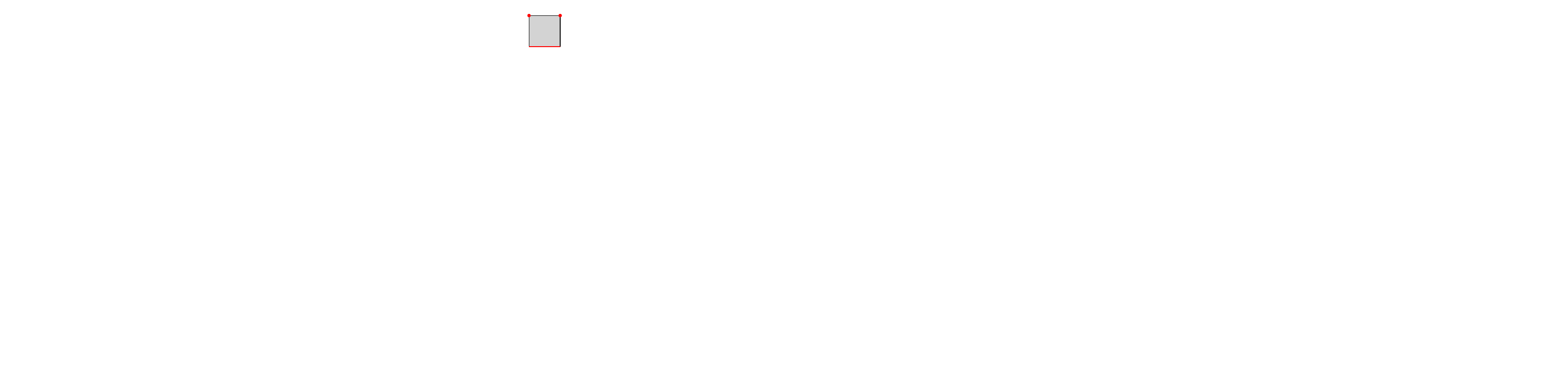}} 
 & \rule{0cm}{0.75cm}\raisebox{-0.05\height}{\includegraphics[scale=0.5]{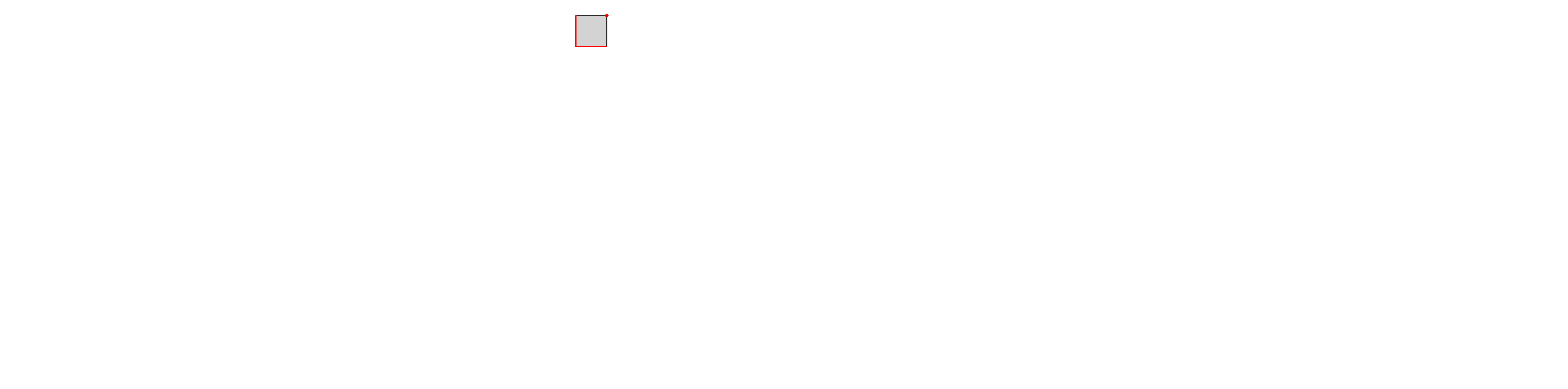}} \\
\hline
\multirow{3}{.35cm}{\rotatebox{90}{\hspace{0.3cm}Survivor types}}
& \rule{0cm}{0.75cm} \raisebox{0.5\height}{T1}\hspace{0.2cm} \raisebox{-.02\height}{\includegraphics[scale=0.5]{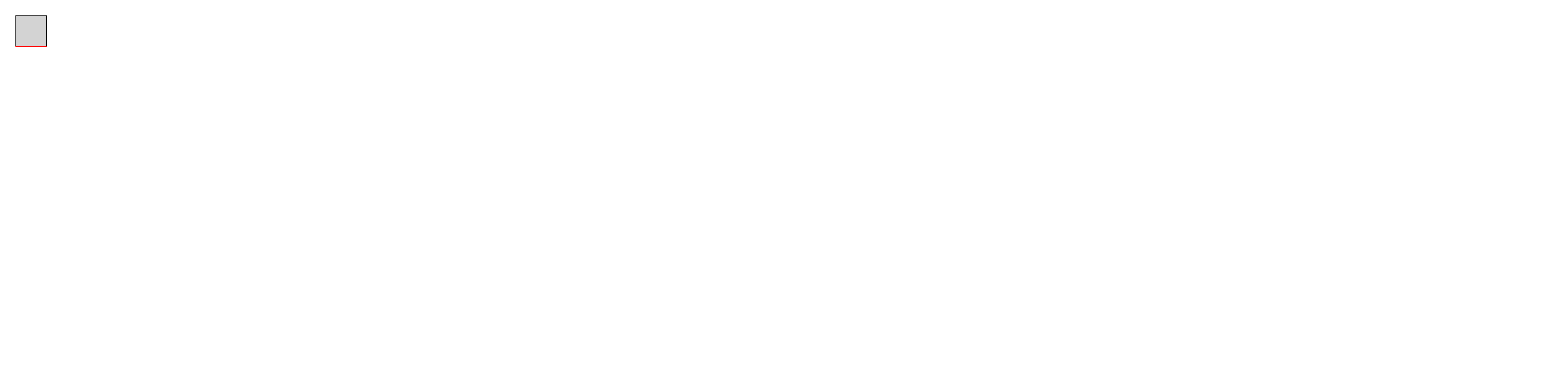}}
& \raisebox{0.5\height}{$2$} & \raisebox{0.5\height}{$-1$} & \raisebox{0.5\height}{$-4$} & \raisebox{0.5\height}{$0$} & \raisebox{0.5\height}{$0$} & \raisebox{0.5\height}{$0$} & \raisebox{0.5\height}{$0$} & \raisebox{0.5\height}{$1$} & \raisebox{0.5\height}{$1$} & \raisebox{0.5\height}{$0$} \\
\cline{2-12}
& \rule{0cm}{0.75cm} \raisebox{0.5\height}{T2}\hspace{0.2cm} \raisebox{-.02\height}{\includegraphics[scale=0.5]{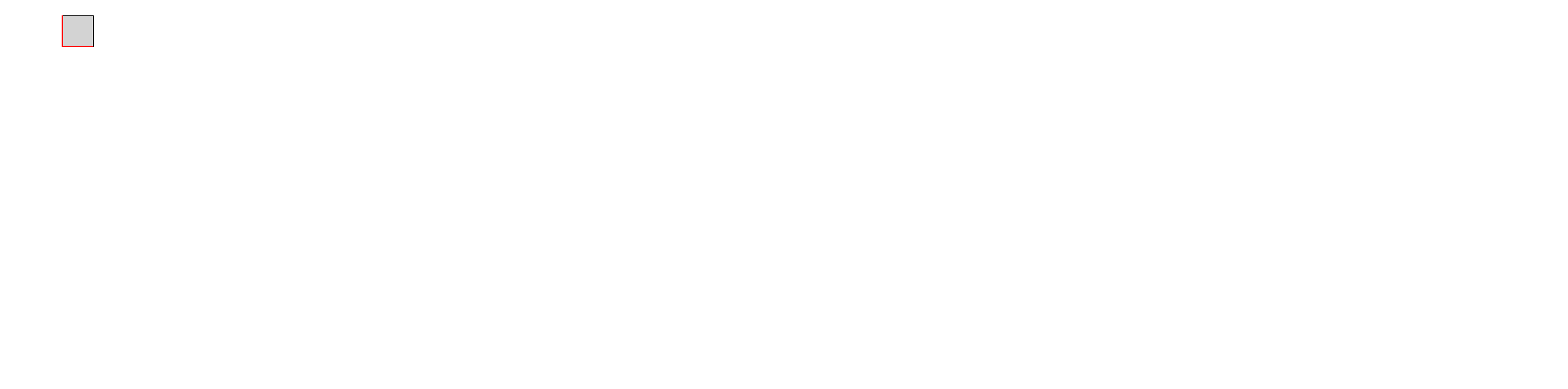}} 
& \raisebox{0.5\height}{$0$} & \raisebox{0.5\height}{$2$} & \raisebox{0.5\height}{$4$} & \raisebox{0.5\height}{$0$} & \raisebox{0.5\height}{$0$} & \raisebox{0.5\height}{$0$} & \raisebox{0.5\height}{$0$} & \raisebox{0.5\height}{$0$} & \raisebox{0.5\height}{$0$} & \raisebox{0.5\height}{$1$}  \\ 
\cline{2-12}
& \rule{0cm}{0.75cm} \raisebox{0.5\height}{T0}\hspace{0.2cm} \raisebox{-.07\height}{\includegraphics[scale=0.5]{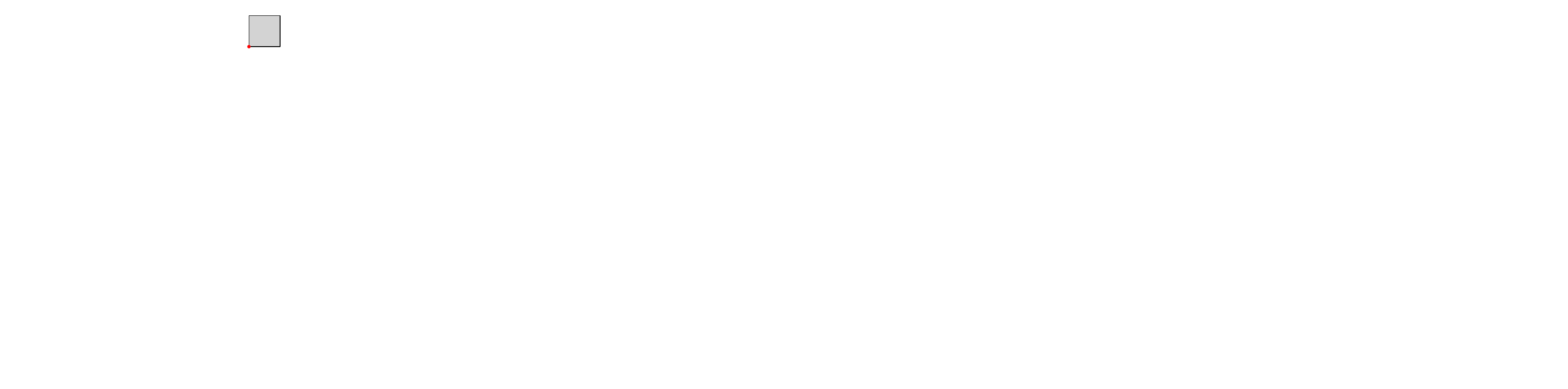}} 
& \raisebox{0.5\height}{$0$} & \raisebox{0.5\height}{$0$} & \raisebox{0.5\height}{$0$} & \raisebox{0.5\height}{$2$} & \raisebox{0.5\height}{$2$} & \raisebox{0.5\height}{$3$} & \raisebox{0.5\height}{$4$} & \raisebox{0.5\height}{$1$} & \raisebox{0.5\height}{$2$} & \raisebox{0.5\height}{$1$} \\
\hline
\end{tabular}
\pass\caption{The explicit rules in the case $p_*=1$ for the reduction of the different types T$i$. Each phantom type T$i$ may be expressed as a linear combination of the $3$ survivor types T1, T2 and T0 only. The rules in the case $p_*=0$ are exactly the same, save for the fact that T0 is not a survivor type, which removes the last line of the table.}
\label{tab:ReductionRulesSquare}
\end{table}

\renewcommand{\arraystretch}{1.5}
\setlength{\arrayrulewidth}{0.1pt}
\begin{table}[h!]
\centering
\resizebox{9.5cm}{!}{%
\begin{tabular}{|c|c|c|}
\cline{2-3}
\multicolumn{1}{c|}{} & \multicolumn{1}{c|}{$p_*=0$} & \multicolumn{1}{c|}{$p_*=1$} \\
\hline
\raisebox{7\height}{Parent of type T1}
& \rule{0cm}{4cm}\includegraphics[scale=0.75]{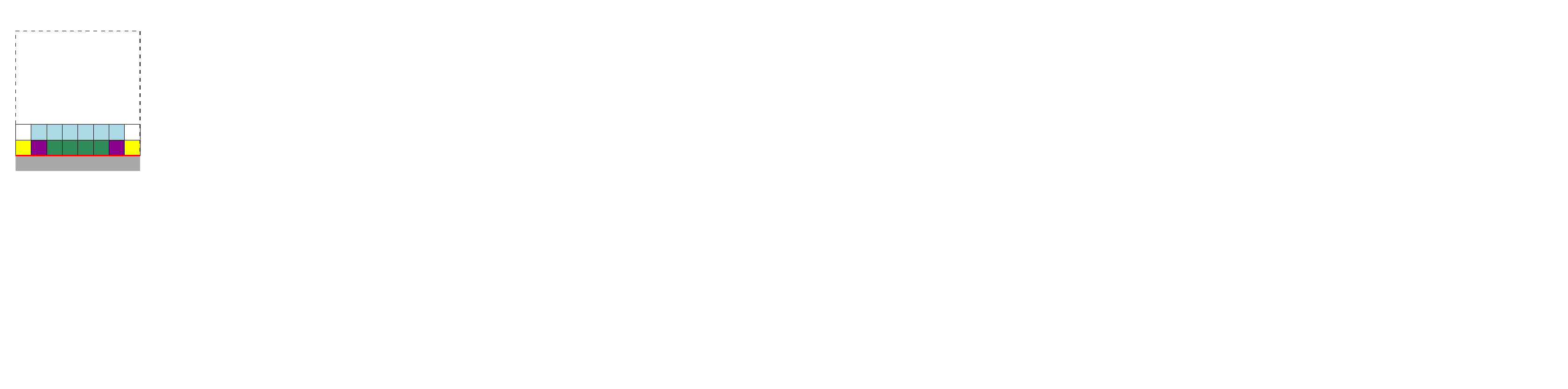} 
& \rule{0cm}{4cm}\includegraphics[scale=0.75]{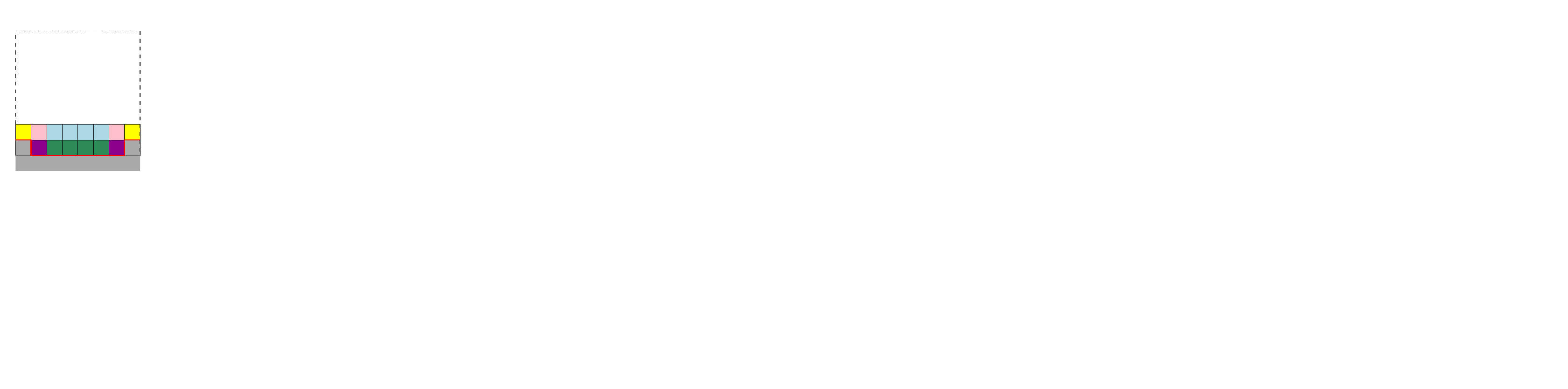} \\
\hline
\raisebox{7\height}{Parent of type T2}
& \rule{0cm}{4cm}\includegraphics[scale=0.75]{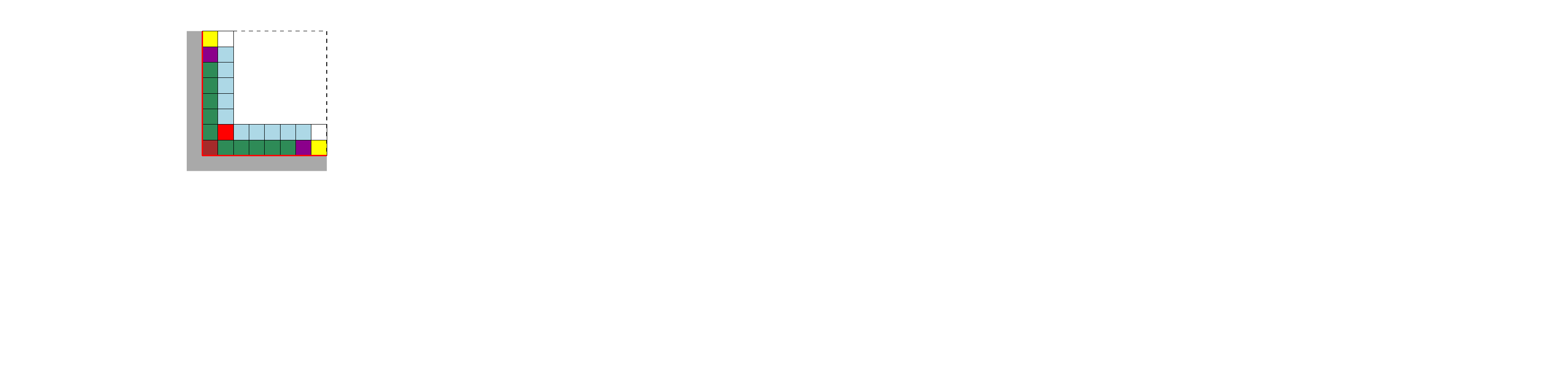} 
& \rule{0cm}{4cm}\includegraphics[scale=0.75]{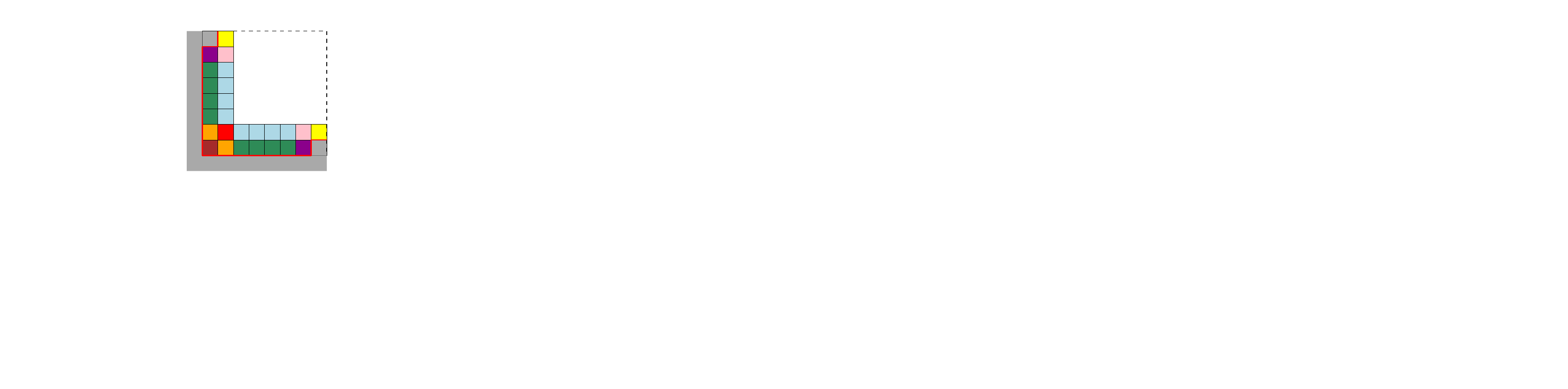} \\
\hline
\raisebox{8\height}{Parent of type T0}
& & \rule{0cm}{4.5cm}\includegraphics[scale=0.75]{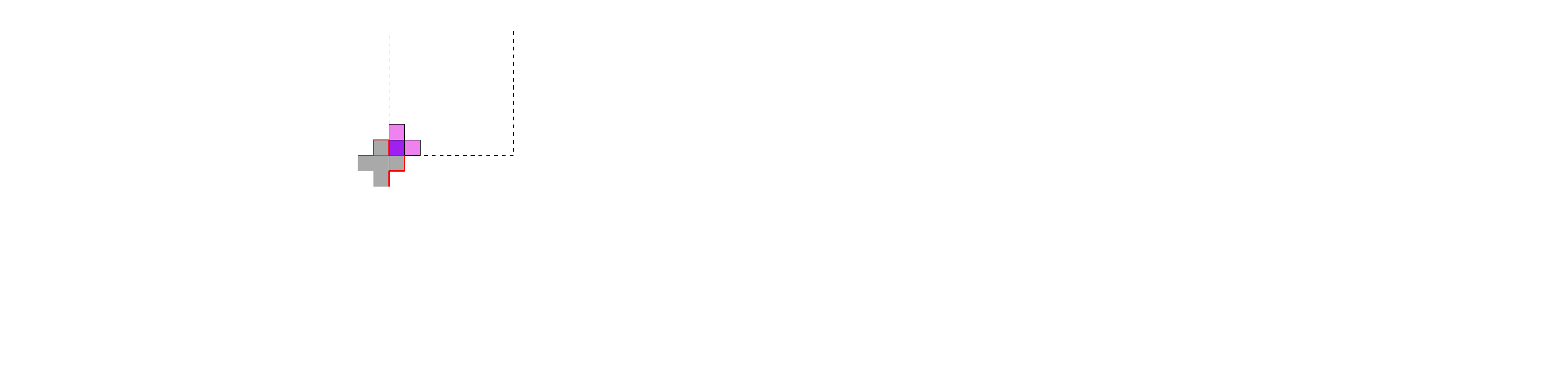} \\
\cline{1-1}\cline{3-3}
\end{tabular}
}
\pass\caption{The potential children of a square parent for the case of the square tessellation.}
\label{tab:SquareChild}
\end{table}

\newpage

\renewcommand{\arraystretch}{1.5}
\setlength{\arrayrulewidth}{0.1pt}
\begin{table}[h!]
\centering
\resizebox{11cm}{!}{%
\begin{tabular}{|c|c|c|c|c|c|}
\cline{1-6}
\multirow{4}{*}{\raisebox{2\height}{\includegraphics[scale=0.8]{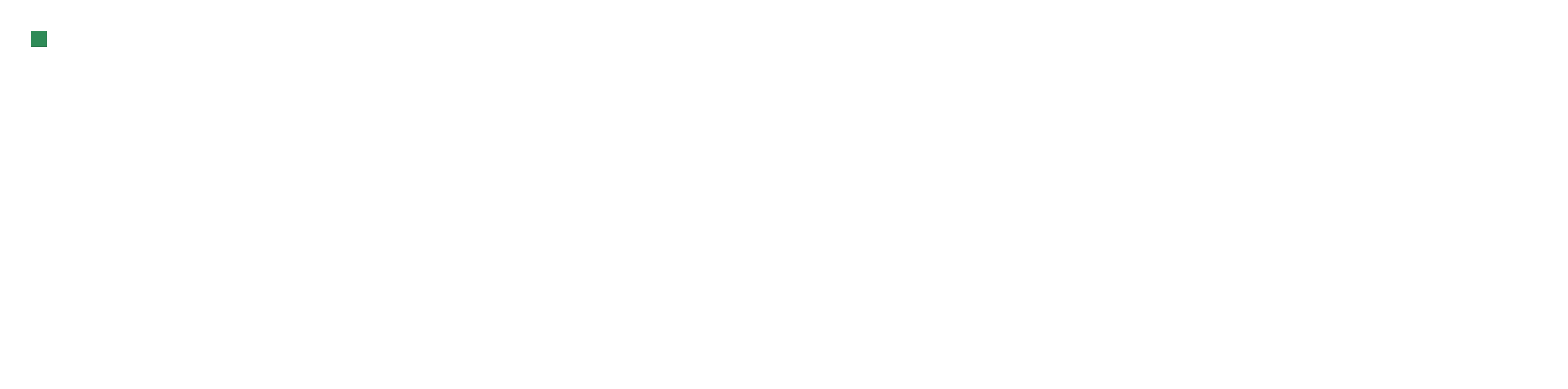}}} & \raisebox{0.75\height}{\begin{minipage}{3cm}$$\begin{array}{cl}
\text{T1:} & \la-4 \\ \text{T2:} & 2(\la-3)  \end{array}$$\end{minipage}} & \rule{0cm}{1.5cm}\includegraphics[scale=0.75]{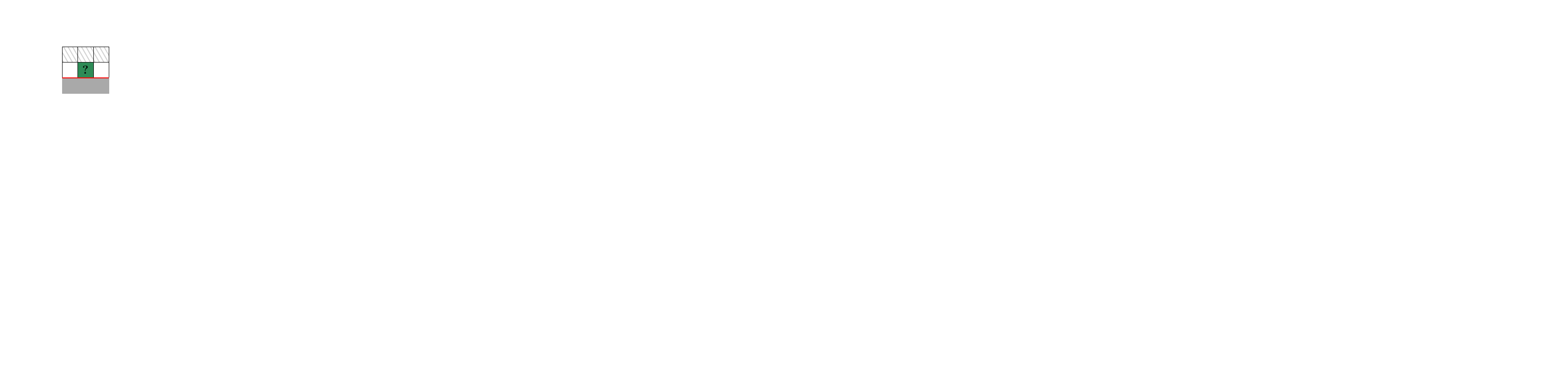} & \rule{0cm}{1.5cm}\includegraphics[scale=0.75]{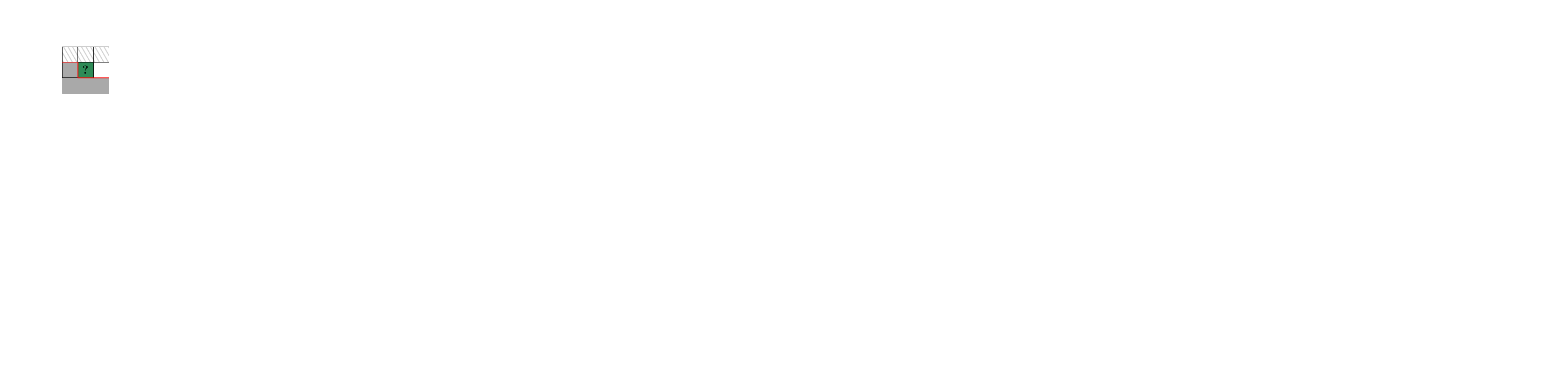} & \rule{0cm}{1.5cm}\includegraphics[scale=0.75]{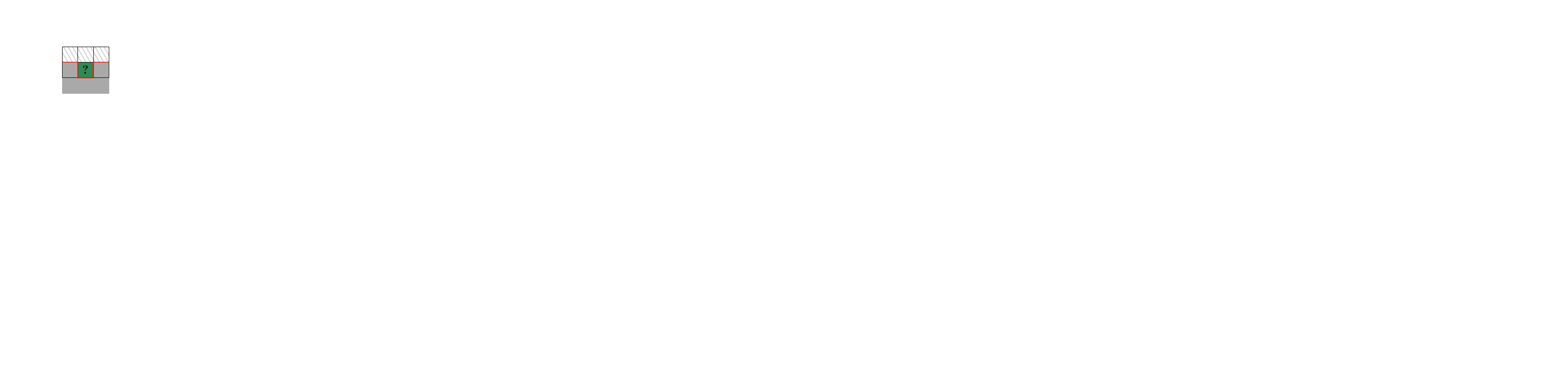} & \rule{0cm}{1.5cm}\includegraphics[scale=0.75]{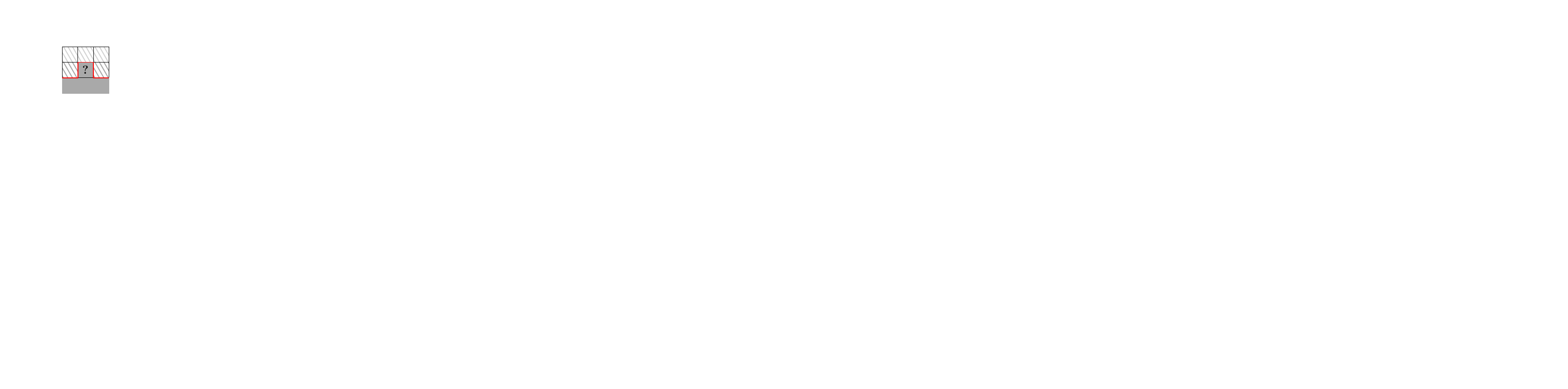} \\ \cline{2-6}
& $\eps$ & $(0,0,0)$ & $(1,0,0)$ or $(0,0,1)$ & $(1,0,1)$ & $(\eps',1,\eps'')$\\ \cline{2-6}
& $\P(B_p^{(3)}=\eps)$ & $(1-p)^3$ & $2p(1-p)^2$ & $p^2(1-p)$ & $p$ \\ \cline{2-6}
& $F_{\footnotesize\texttt{green}}(\eps)$ & $(1,0)$ & $(0,1)$ & $(-1,2)$ & $(0,0)$ \\ 
\cline{1-6}
\multirow{4}{*}{\raisebox{2\height}{\includegraphics[scale=0.8]{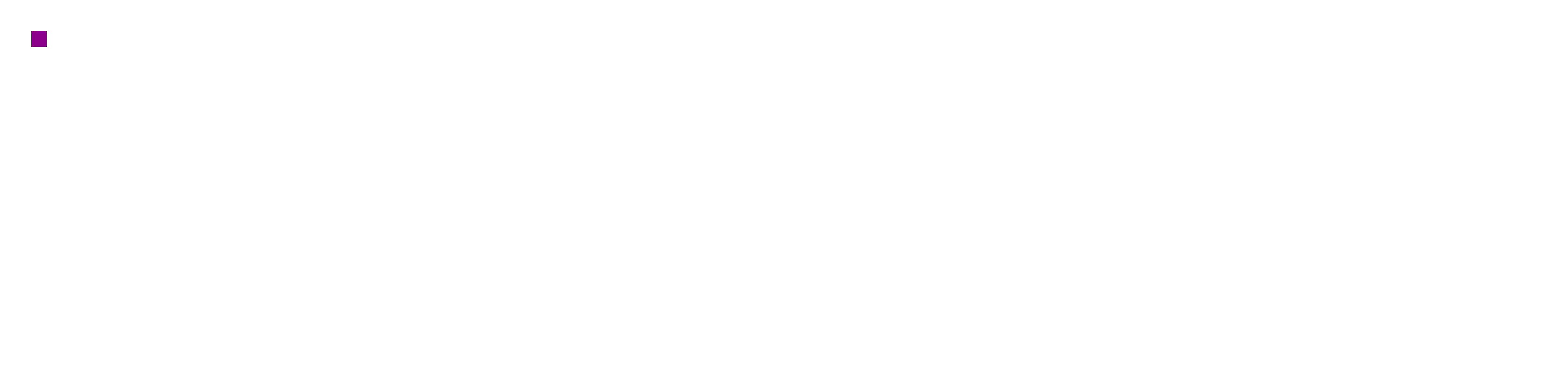}}} & \raisebox{0.75\height}{\begin{minipage}{3cm}$$\begin{array}{cl}
\text{T1:} & 2  \\ \text{T2:} & 2 \end{array}$$\end{minipage}} & \rule{0cm}{1.5cm}\includegraphics[scale=0.75]{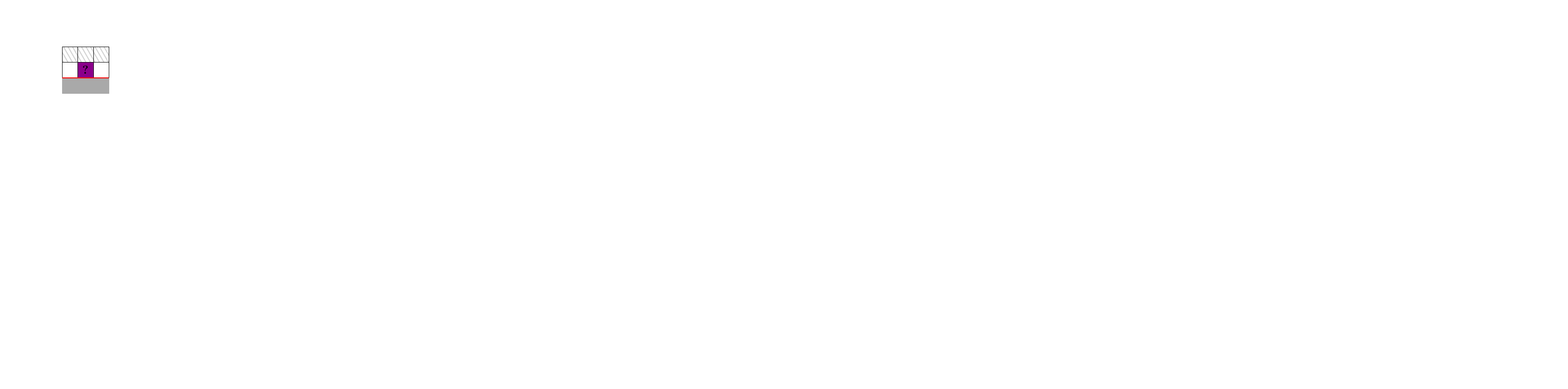} & \rule{0cm}{1.5cm}\includegraphics[scale=0.75]{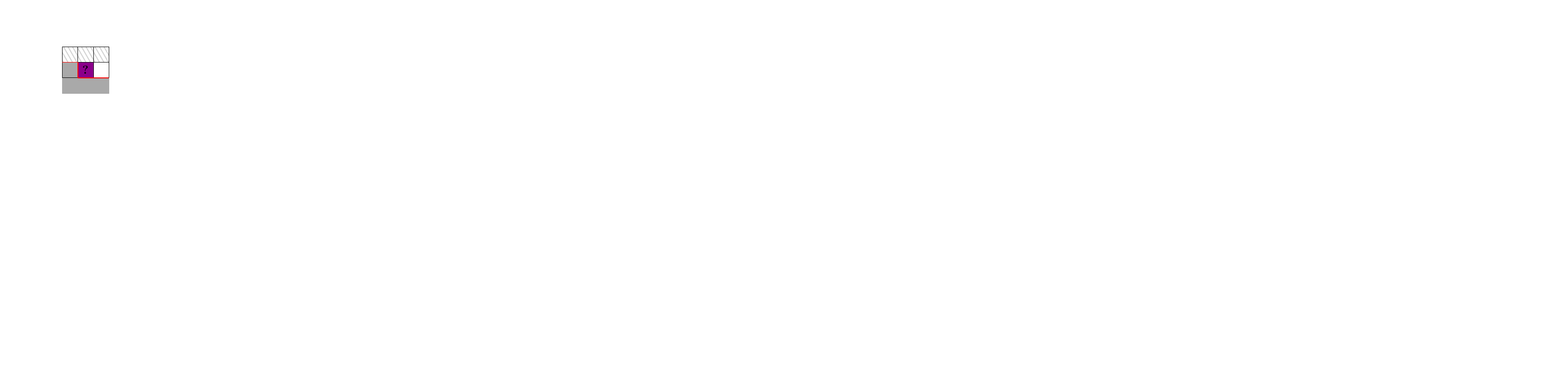} & \rule{0cm}{1.5cm}\includegraphics[scale=0.75]{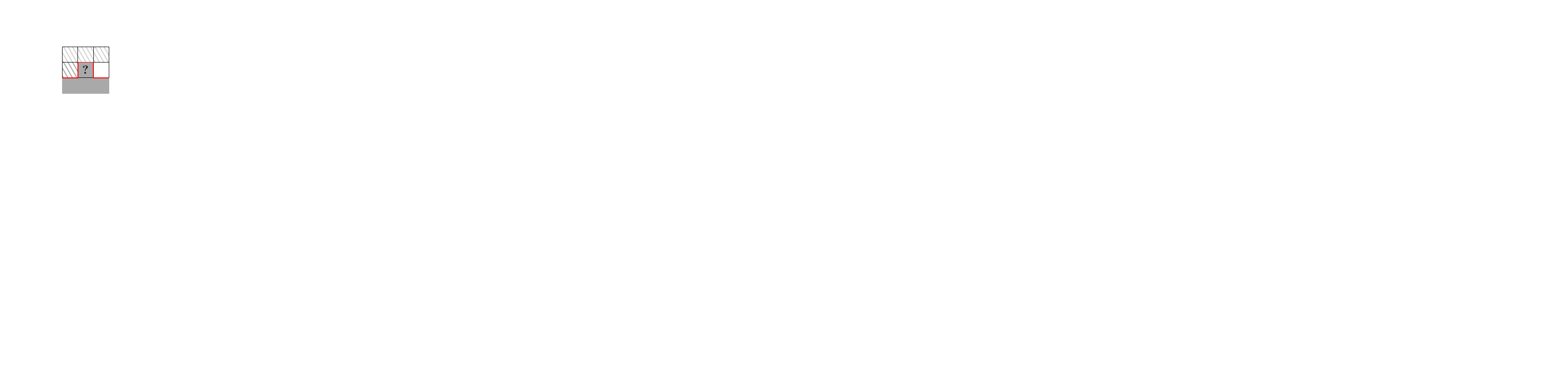} \\ \cline{2-5}
& $\eps$ & $(0,0)$ & $(1,0)$ & $(\eps',1)$ \\ \cline{2-5}
& $\P(B_p^{(2)}=\eps)$ & $(1-p)^2$ & $p(1-p)$ & $p $ \\ \cline{2-5}
& $F_{\footnotesize\texttt{purple}}(\eps)$ & $(1,0)$ & $(0,1)$ & $(0,0)$ \\ 
\cline{1-5}
\multirow{4}{*}{\raisebox{2\height}{\includegraphics[scale=0.8]{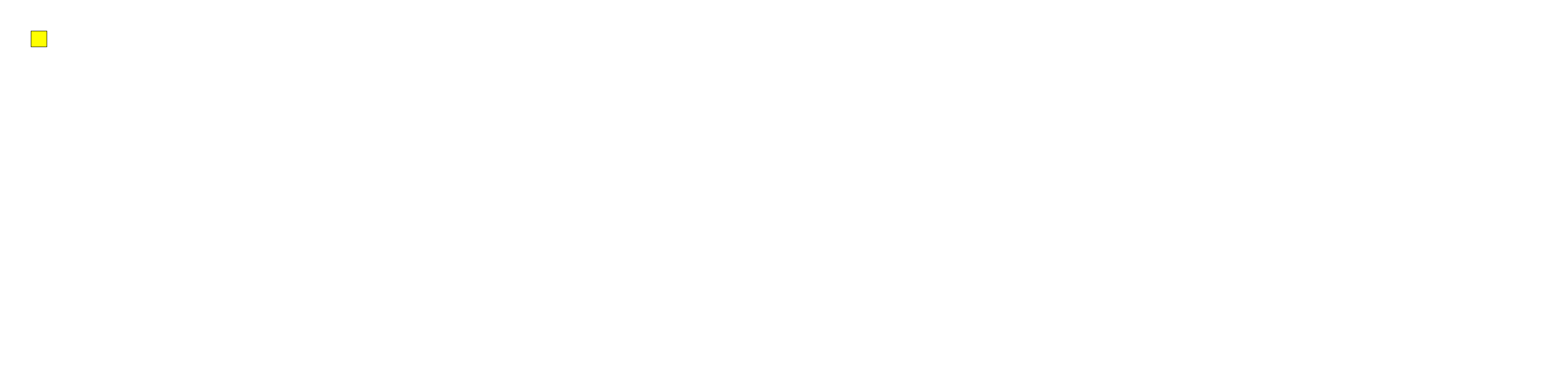}}} & \raisebox{0.75\height}{\begin{minipage}{3cm}$$\begin{array}{cl}
\text{T1:} & 2  \\ \text{T2:} & 2 \end{array}$$\end{minipage}} & \rule{0cm}{1.5cm}\includegraphics[scale=0.75]{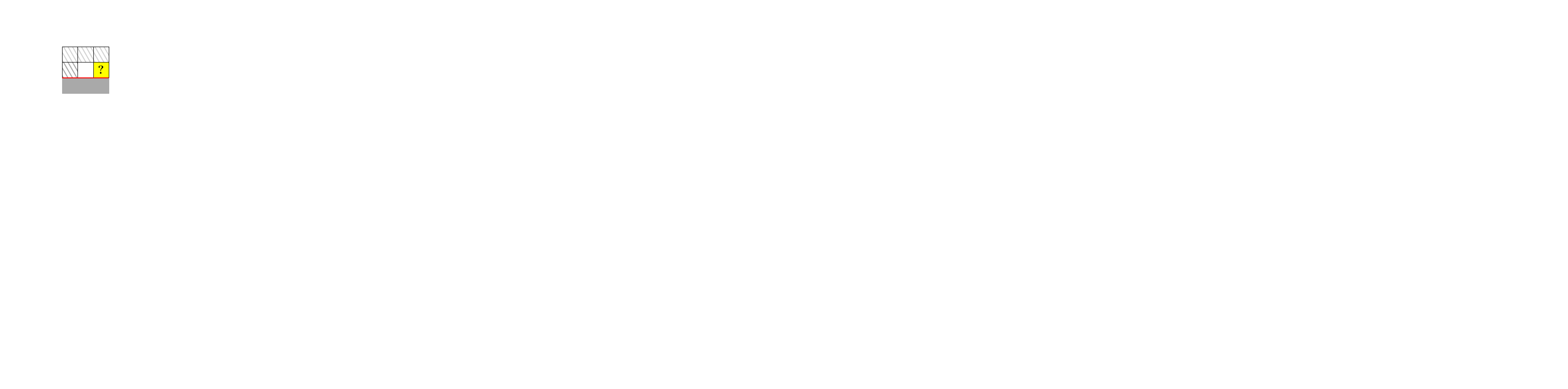} & \rule{0cm}{1.5cm}\includegraphics[scale=0.75]{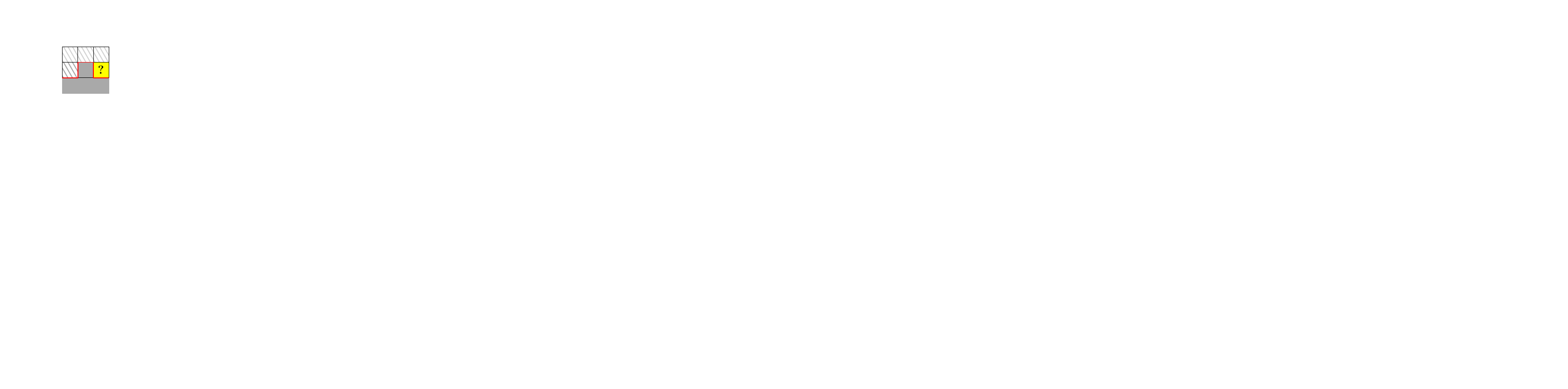} \\ \cline{2-4}
& $\eps$ & $0$ & $1$ \\ \cline{2-4}
& $\P(B_p^{(1)}=\eps)$ & $1-p$ & $p$ \\ \cline{2-4}
& $F_{\footnotesize\texttt{yellow}}(\eps)$ & $(1,0)$ & $(0,1)$ \\ 
\cline{1-4}
\multirow{4}{*}{\raisebox{2\height}{\includegraphics[scale=0.8]{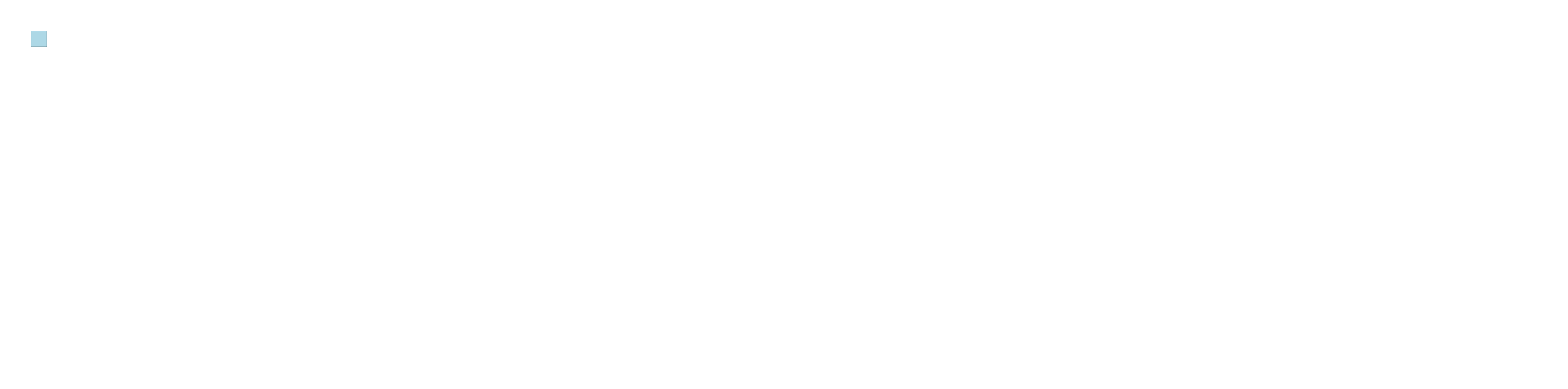}}} & \raisebox{0.75\height}{\begin{minipage}{3cm}$$\begin{array}{cl}
\text{T1:} & \la-2 \\ \text{T2:} & 2(\la-3) \end{array}$$\end{minipage}} & \rule{0cm}{1.5cm}\includegraphics[scale=0.75]{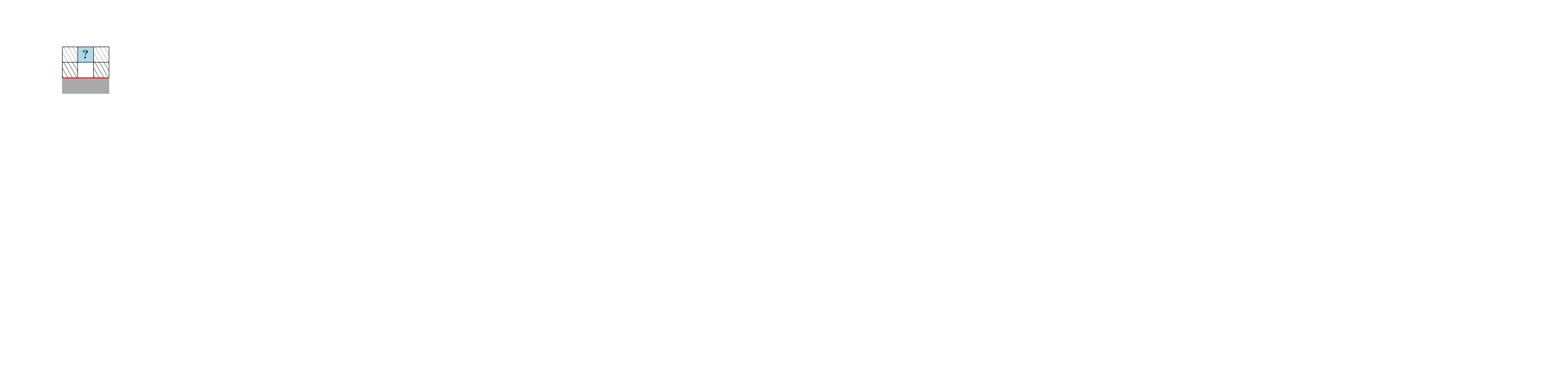} & \rule{0cm}{1.5cm}\includegraphics[scale=0.75]{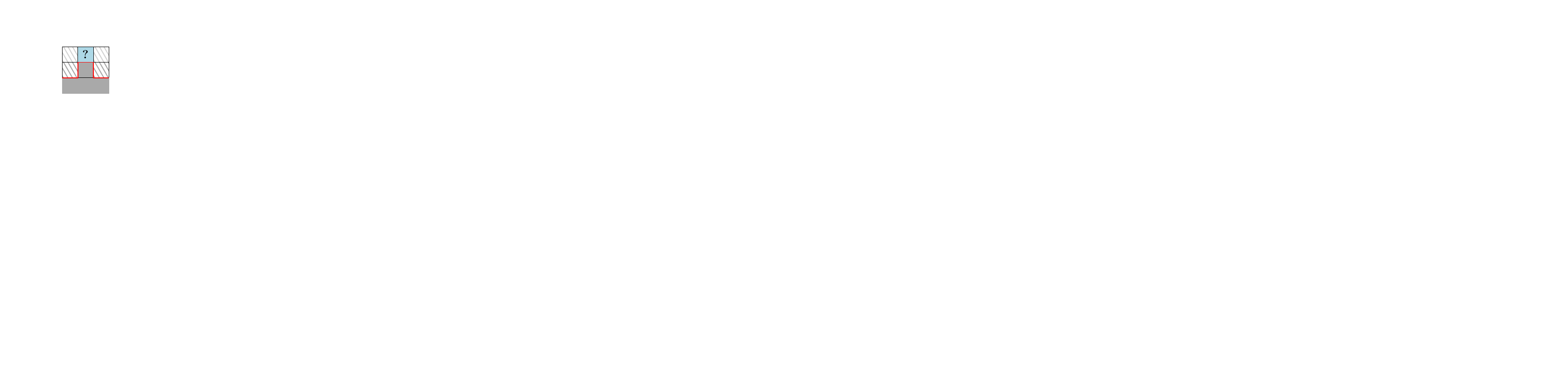} \\ \cline{2-4}
& $\eps$ & $0$ & $1$ \\ \cline{2-4}
& $\P(B_p^{(1)}=\eps)$ & $1-p$ & $p$ \\ \cline{2-4}
& $F_{\footnotesize\texttt{blue}}(\eps)$ & $(0,0)$ & $(1,0)$ \\ \cline{1-6}
\multirow{4}{*}{\raisebox{2\height}{\includegraphics[scale=0.8]{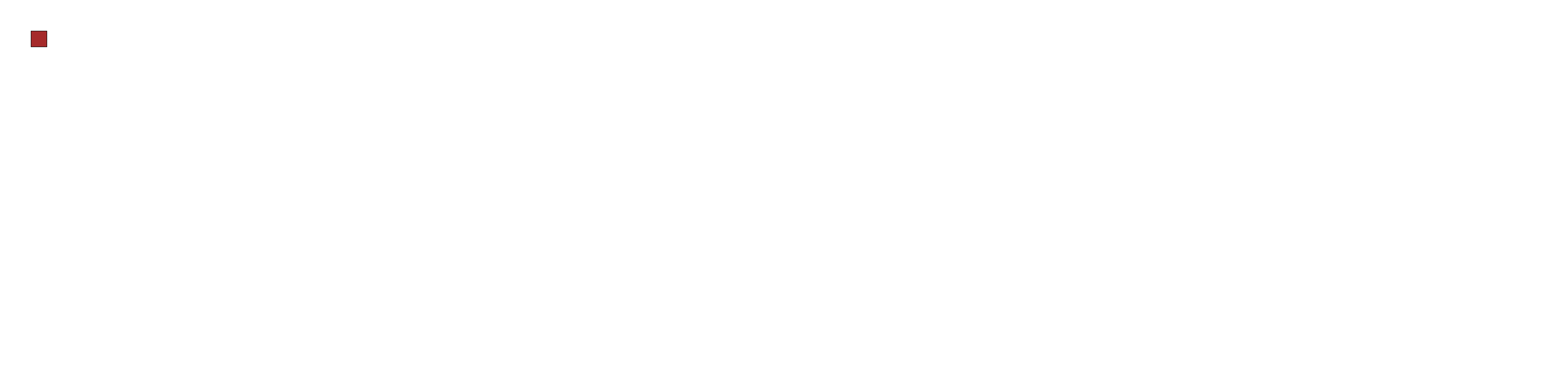}}} & \raisebox{0.75\height}{\begin{minipage}{3cm}$$\begin{array}{cl}
\text{T1:} & 0 \\ \text{T2:} & 1  \end{array}$$\end{minipage}} & \rule{0cm}{1.5cm}\includegraphics[scale=0.75]{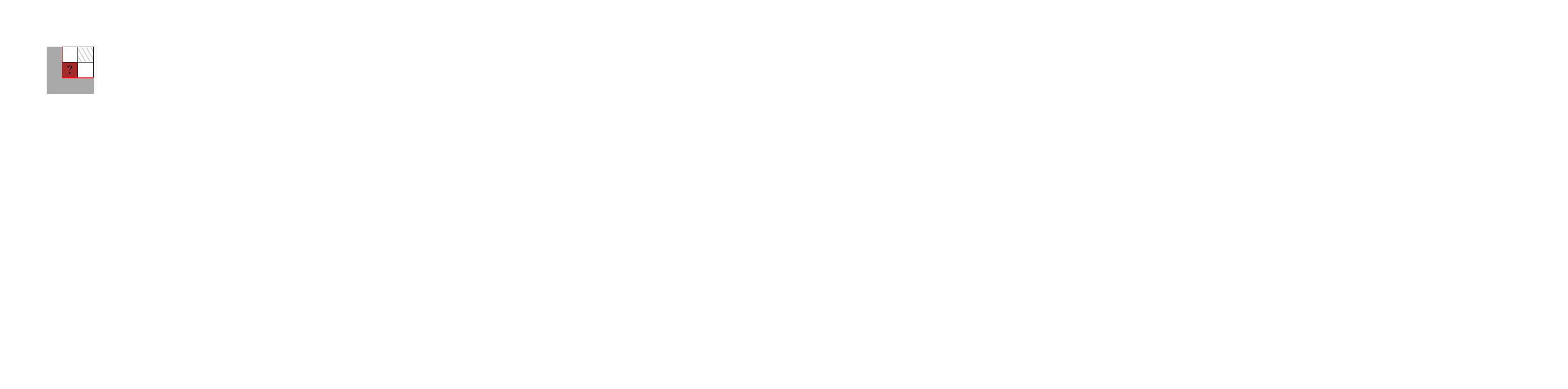} & \rule{0cm}{1.5cm}\includegraphics[scale=0.75]{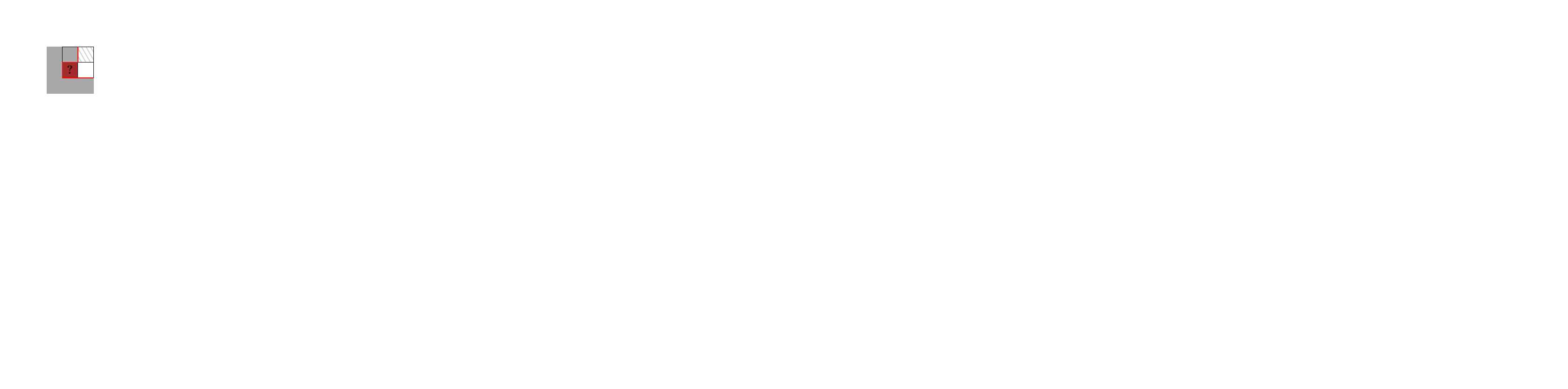} & \rule{0cm}{1.5cm}\includegraphics[scale=0.75]{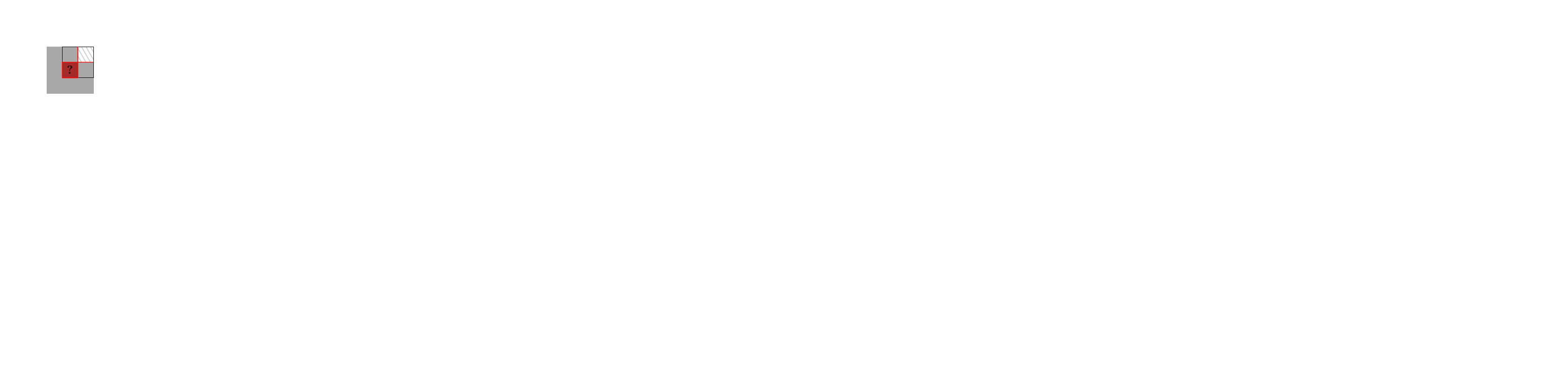} & \rule{0cm}{1.5cm}\includegraphics[scale=0.75]{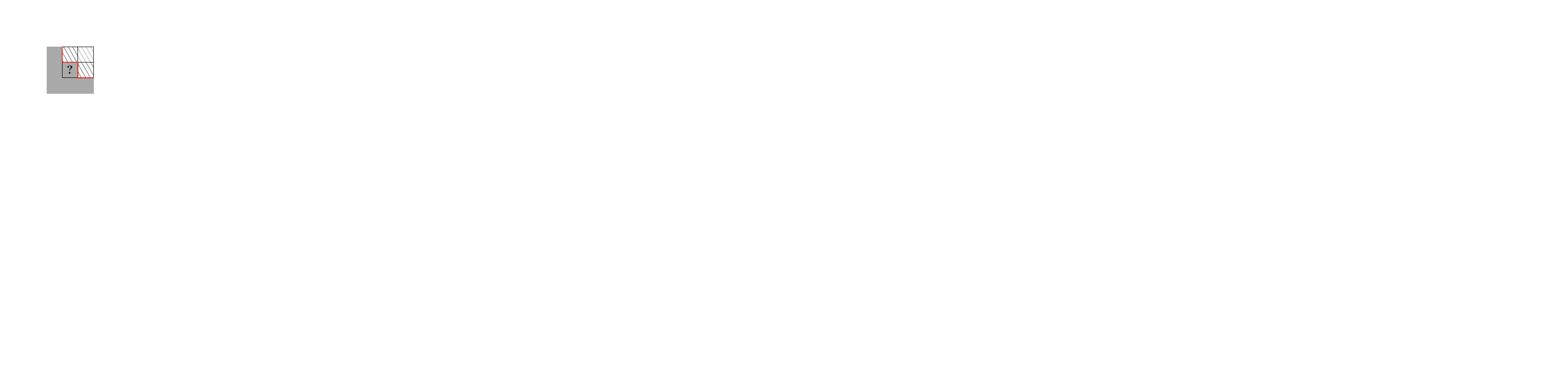} \\ \cline{2-6}
& $\eps$ & $(0,0,0)$ & $(1,0,0)$ or $(0,0,1)$ & $(1,0,1)$ & $(\eps',1,\eps'')$\\ \cline{2-6}
& $\P(B_p^{(3)}=\eps)$ & $(1-p)^3$ & $2p(1-p)^2$ & $p^2(1-p)$ & $p$ \\ \cline{2-6}
& $F_{\footnotesize\texttt{brown}}(\eps)$ & $(0,1)$ & $(-1,2)$ & $(-4,4)$ & $(0,0)$ \\ 
\cline{1-6}
\multirow{4}{*}{\raisebox{2\height}{\includegraphics[scale=0.8]{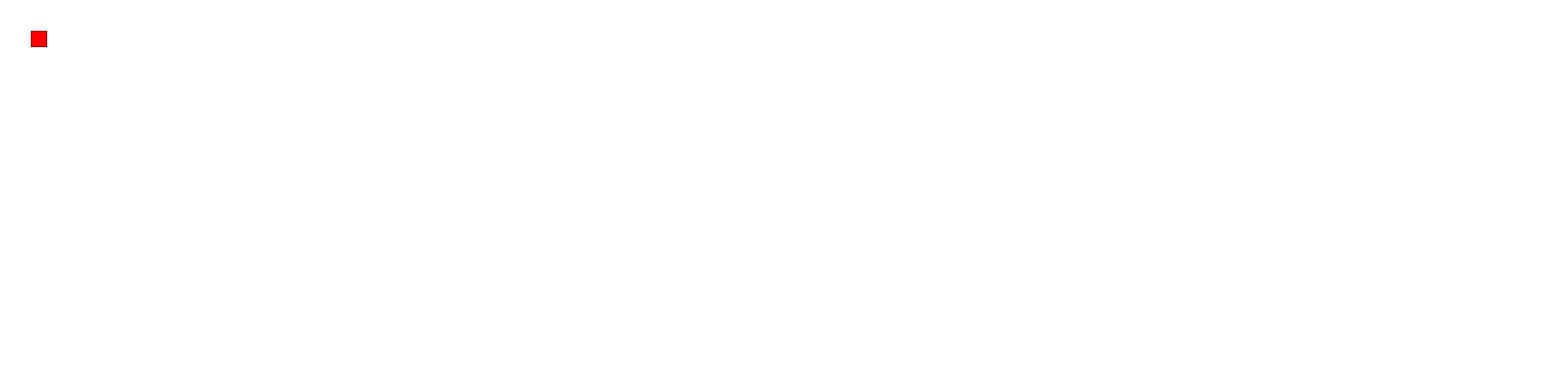}}} & \raisebox{0.75\height}{\begin{minipage}{3cm}$$\begin{array}{cl}
\text{T1:} & 0 \\ \text{T2:} & 1 \end{array}$$\end{minipage}} & \rule{0cm}{1.5cm}\includegraphics[scale=0.75]{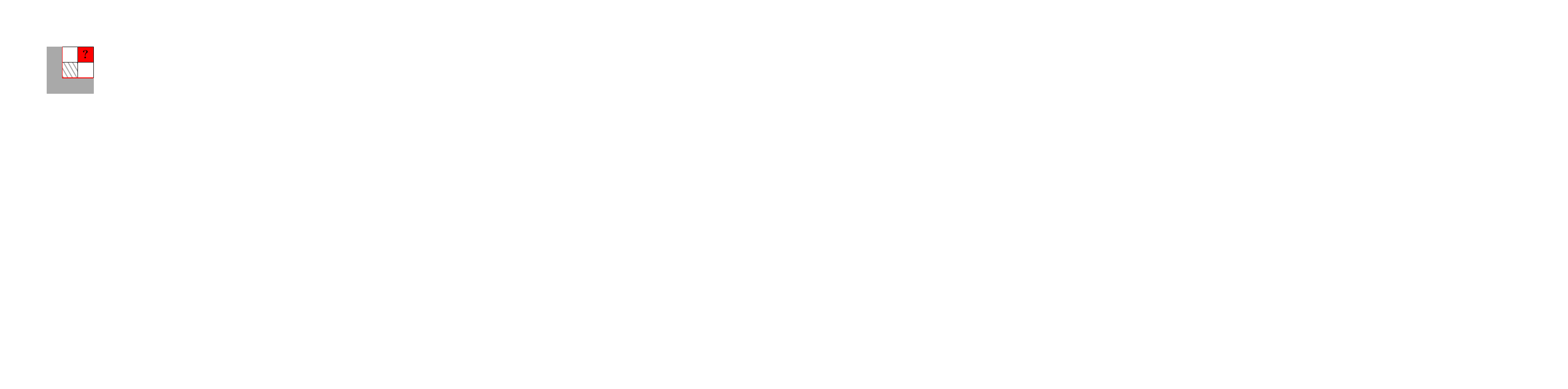} & \rule{0cm}{1.5cm}\includegraphics[scale=0.75]{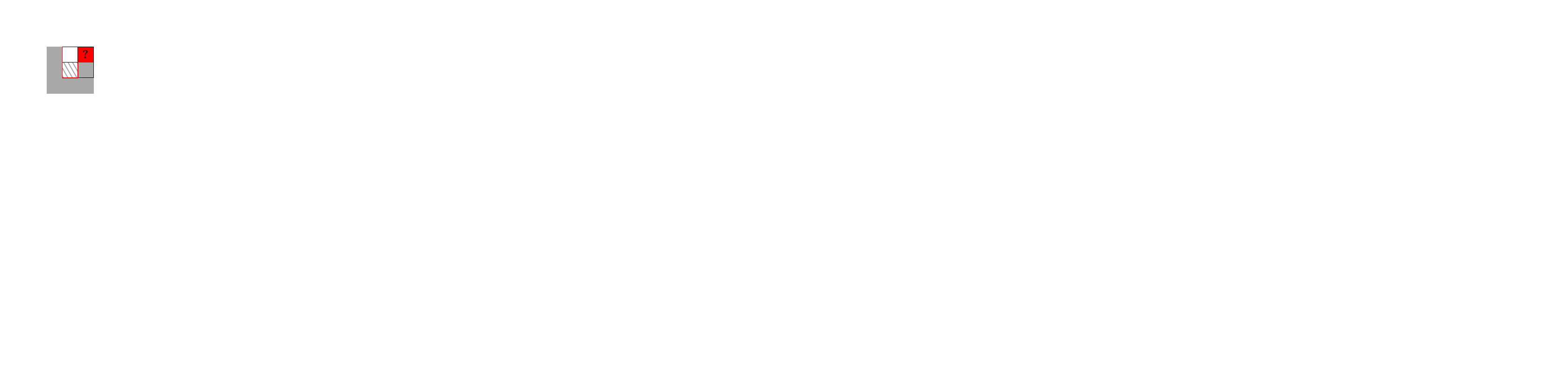} & \rule{0cm}{1.5cm}\includegraphics[scale=0.75]{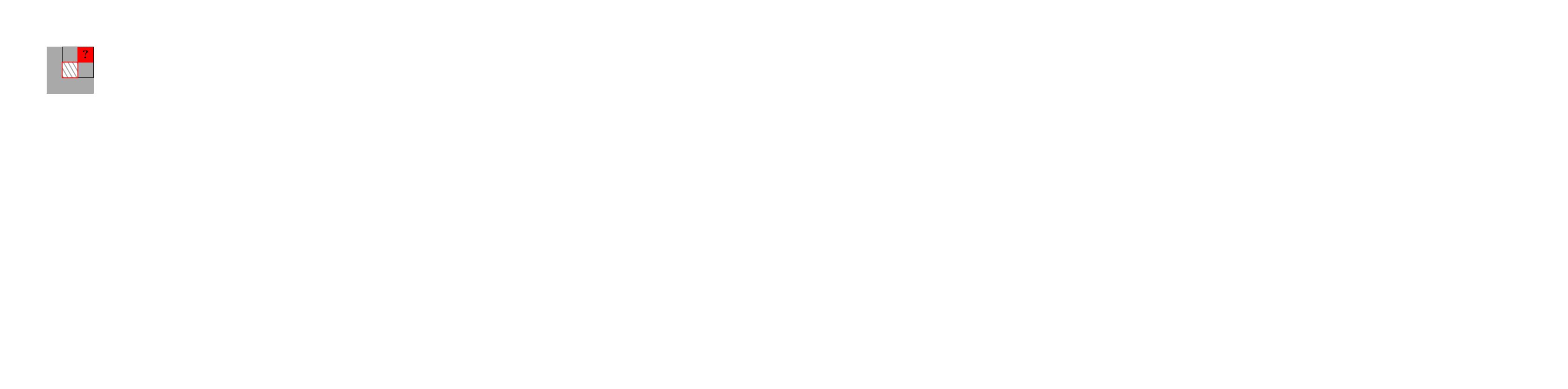} \\ \cline{2-5}
& $\eps$ & $(0,0)$ & $(0,1)$ or $(1,0)$ & $(1,1)$ \\ \cline{2-5}
& $\P(B_p^{(2)}=\eps)$ & $(1-p)^2$ & $2p(1-p)$ & $p^2$ \\ \cline{2-5}
& $F_{\footnotesize\texttt{red}}(\eps)$ & $(0,0)$ & $(1,0)$ & $(0,1)$ \\ 
\cline{1-5}
\end{tabular}}
\pass\caption{Description of the potential children of a square parent of type T1 or type T2 for the model with $p_*=0$ for the case $\la\ge4$.}
\label{tab:SquareChildModel0}
\end{table}

\newpage

\renewcommand{\arraystretch}{1.5}
\setlength{\arrayrulewidth}{0.1pt}
\begin{table}[h!]
\centering
\resizebox{13cm}{!}{%
\begin{tabular}{|c|c|c|c|c|c|}
\cline{1-6}
\multirow{4}{*}{\raisebox{2\height}{\includegraphics[scale=0.8]{SquareGreen}}} & \raisebox{0.75\height}{\begin{minipage}{3cm}$$\begin{array}{cl}
\text{T1:} & \la-4 \\ \text{T2:} & 2(\la-4)  \end{array}$$\end{minipage}} & \rule{0cm}{1.5cm}\includegraphics[scale=0.75]{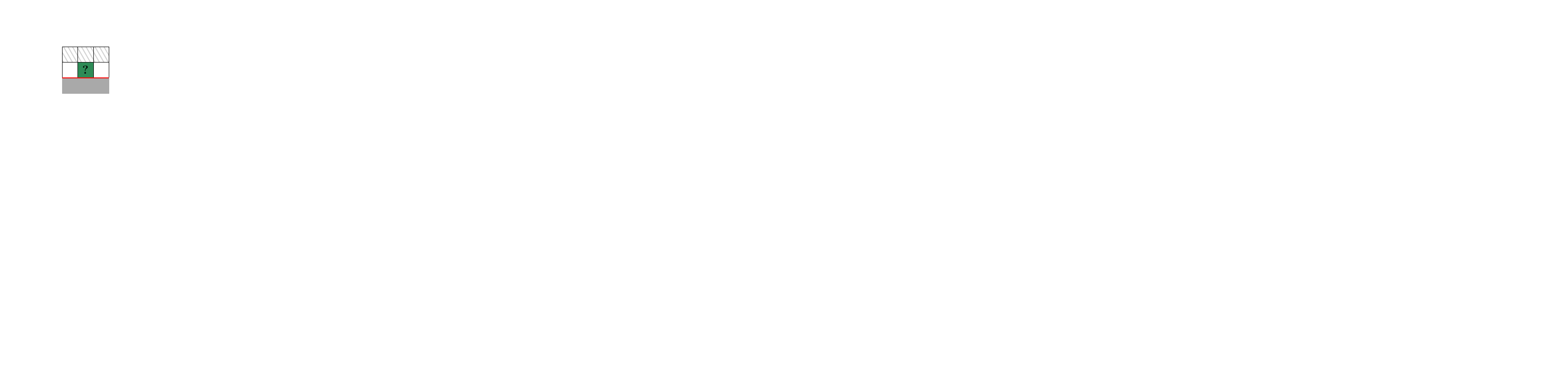} & \rule{0cm}{1.5cm}\includegraphics[scale=0.75]{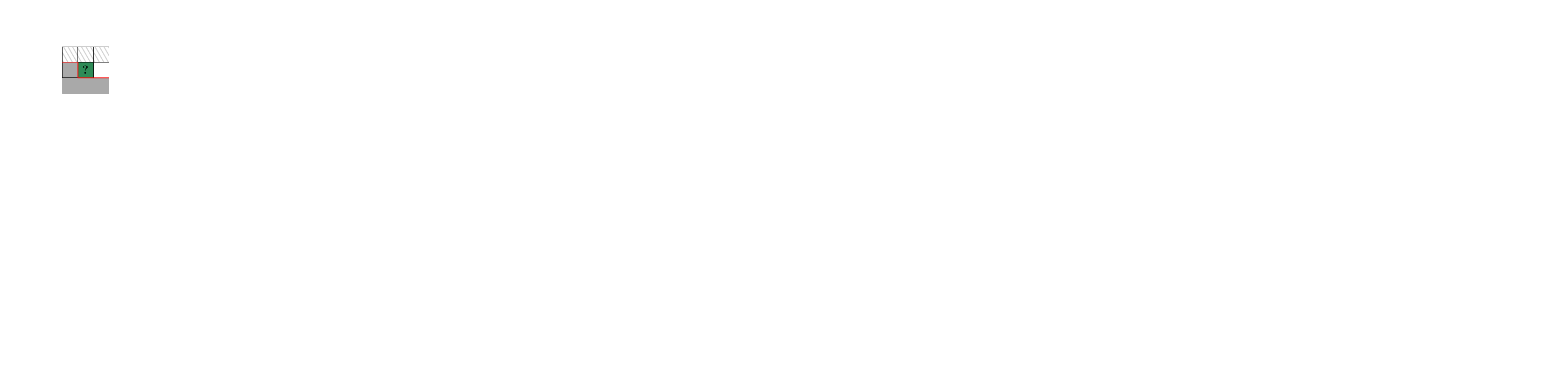} & \rule{0cm}{1.5cm}\includegraphics[scale=0.75]{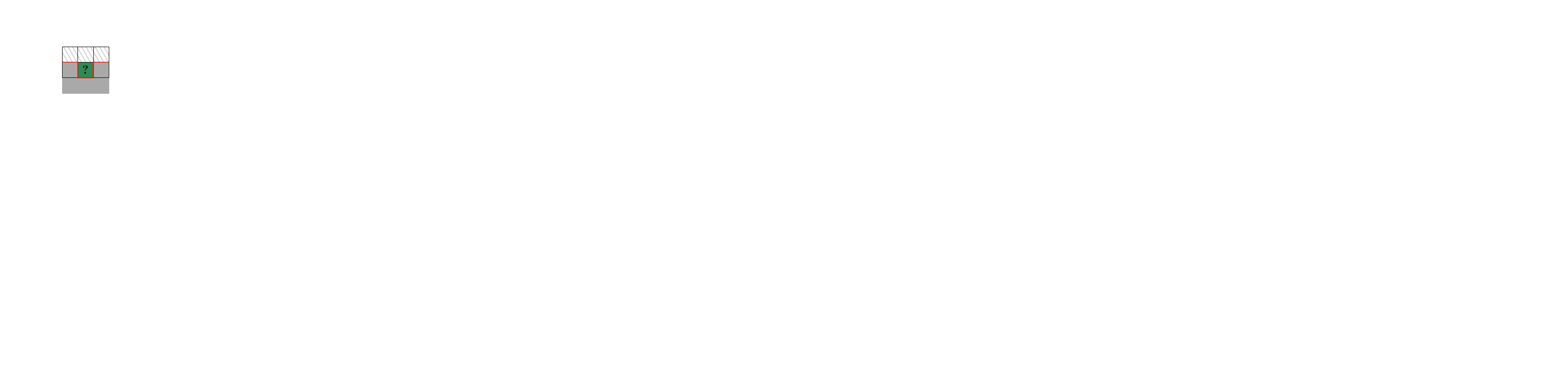} & \rule{0cm}{1.5cm}\includegraphics[scale=0.75]{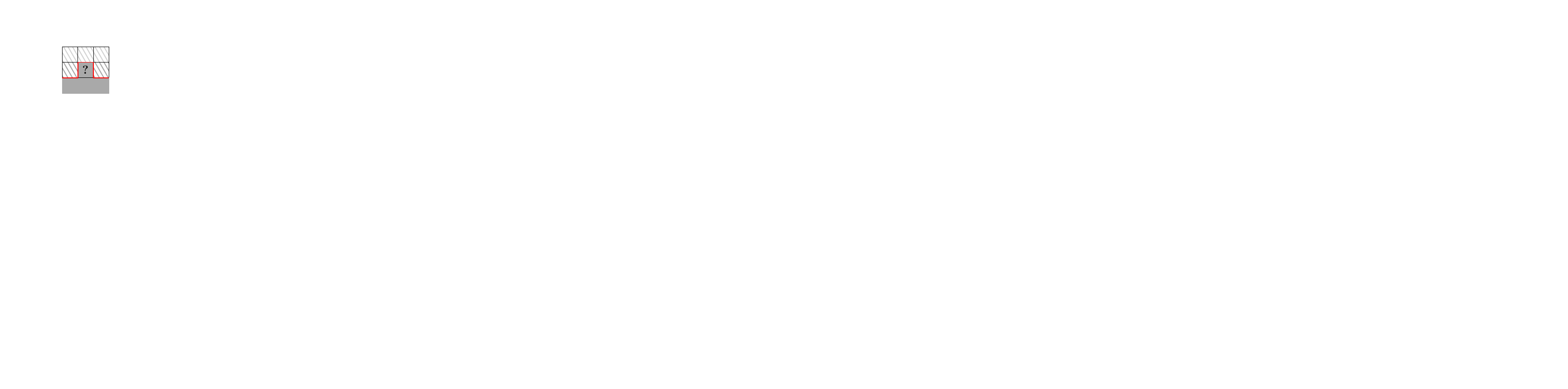} \\ \cline{2-6}
& $\eps$ & $(0,0,0)$ & $(1,0,0)$ or $(0,0,1)$ & $(1,0,1)$ & $(\eps',1,\eps'')$\\ \cline{2-6}
& $\P(B_p^{(3)}=\eps)$ & $(1-p)^3$ & $2p(1-p)^2$ & $p^2(1-p)$ & $p$ \\ \cline{2-6}
& $F_{\footnotesize\texttt{green}}(\eps)$ & $(1,0,0)$ & $(0,1,0)$ & $(-1,2,0)$ & $(0,0,0)$ \\ 
\cline{1-6}
\multirow{4}{*}{\raisebox{2\height}{\includegraphics[scale=0.8]{SquarePurple}}} & \raisebox{0.75\height}{\begin{minipage}{3cm}$$\begin{array}{cl}
\text{T1:} & 2  \\ \text{T2:} & 2 \end{array}$$\end{minipage}} & \rule{0cm}{1.5cm}\includegraphics[scale=0.75]{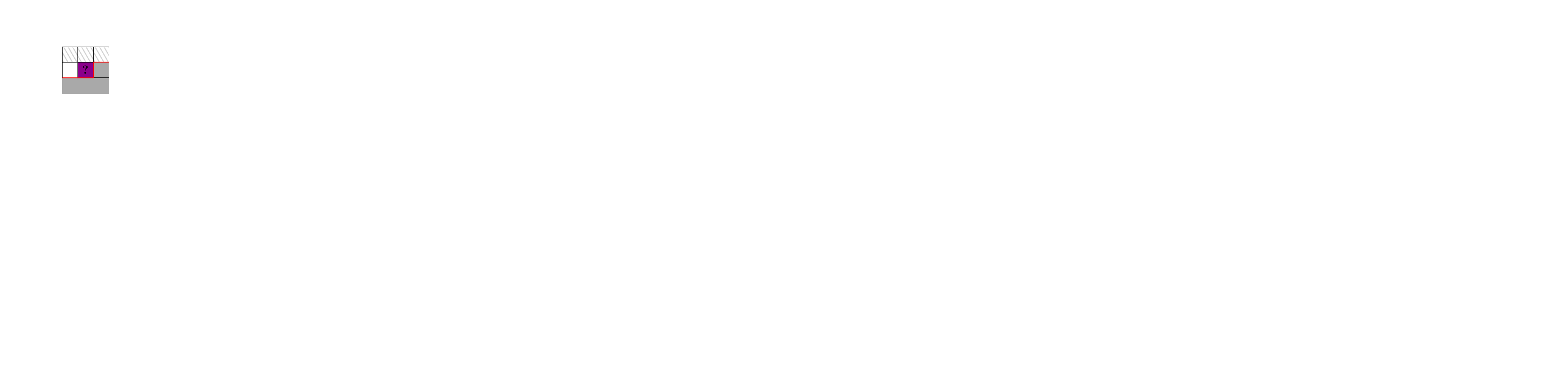} & \rule{0cm}{1.5cm}\includegraphics[scale=0.75]{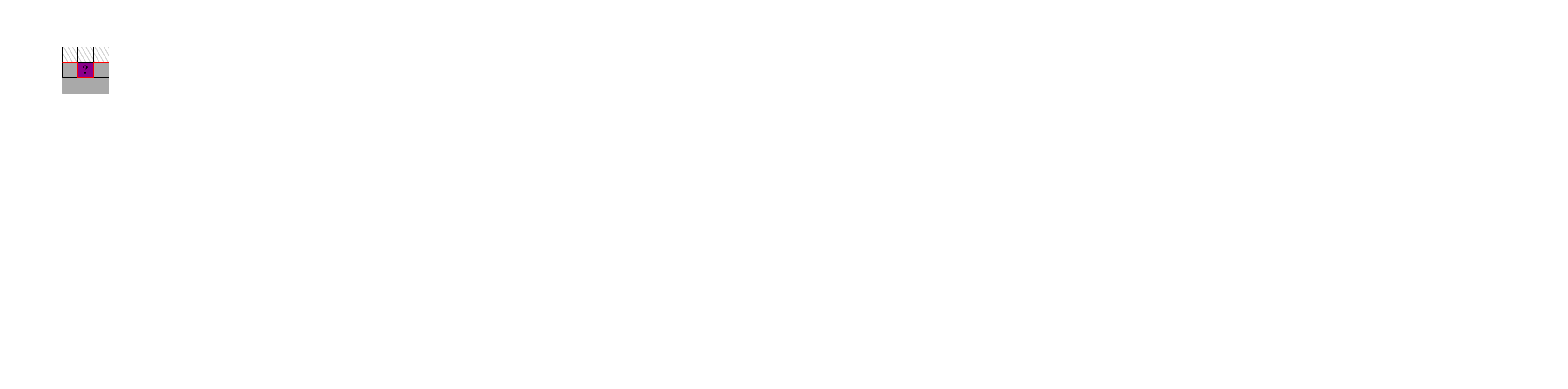} & \rule{0cm}{1.5cm}\includegraphics[scale=0.75]{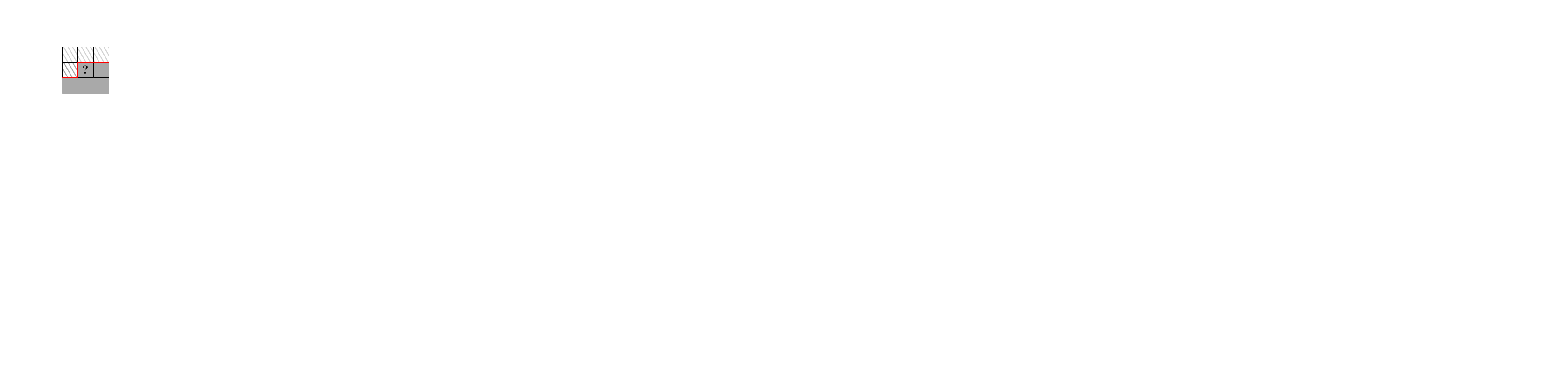} \\ \cline{2-5}
& $\eps$ & $(0,0)$ & $(1,0)$ & $(\eps',1)$ \\ \cline{2-5}
& $\P(B_p^{(2)}=\eps)$ & $(1-p)^2$ & $p(1-p)$ & $p$ \\ \cline{2-5}
& $F_{\footnotesize\texttt{purple}}(\eps)$ & $(0,1,0)$ & $(-1,2,0)$ & $(0,0,0)$ \\ 
\cline{1-6}
\multirow{4}{*}{\raisebox{2\height}{\includegraphics[scale=0.8]{SquareBlue}}} & \raisebox{0.75\height}{\begin{minipage}{3cm}$$\begin{array}{cl}
\text{T1:} & \la-4  \\ \text{T2:} & 2(\la-4) \end{array}$$\end{minipage}} & \rule{0cm}{1.5cm}\includegraphics[scale=0.75]{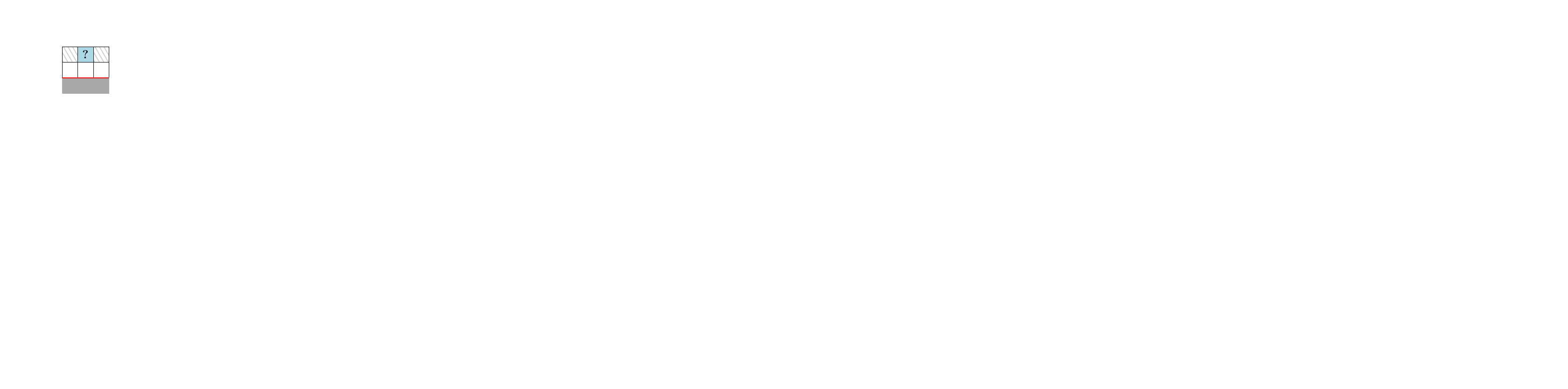} & \rule{0cm}{1.5cm}\includegraphics[scale=0.75]{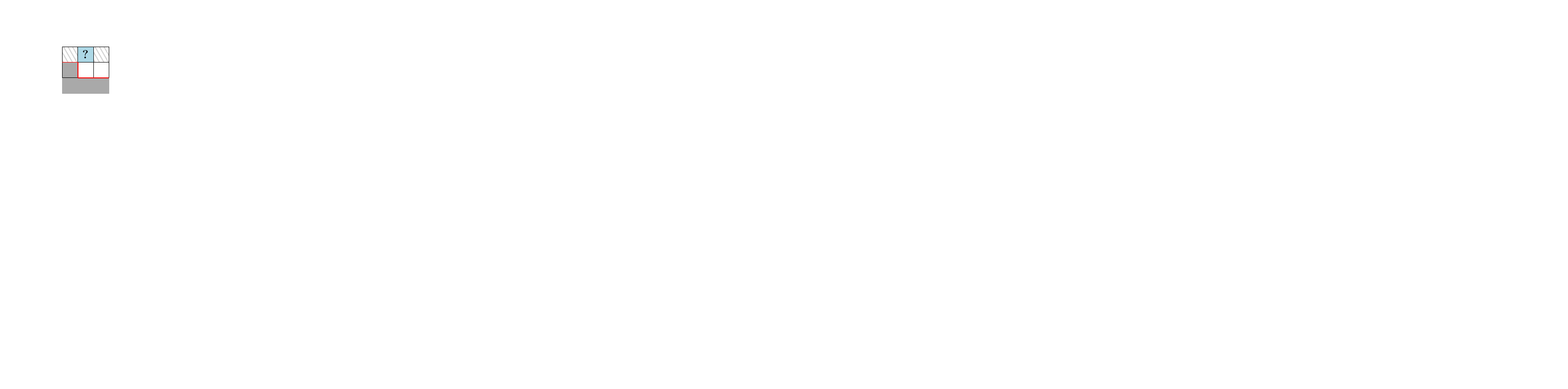} & \rule{0cm}{1.5cm}\includegraphics[scale=0.75]{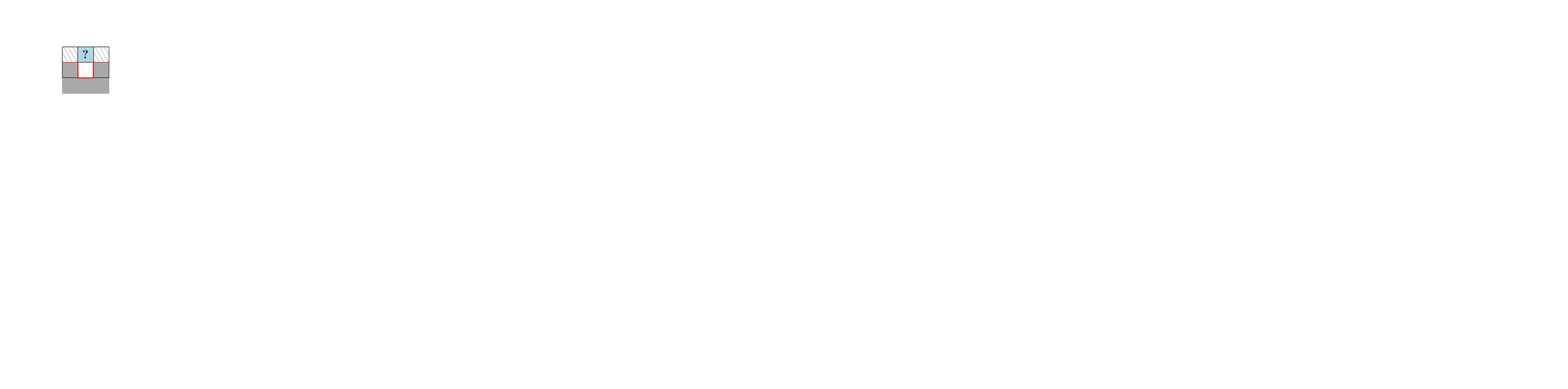} & \rule{0cm}{1.5cm}\includegraphics[scale=0.75]{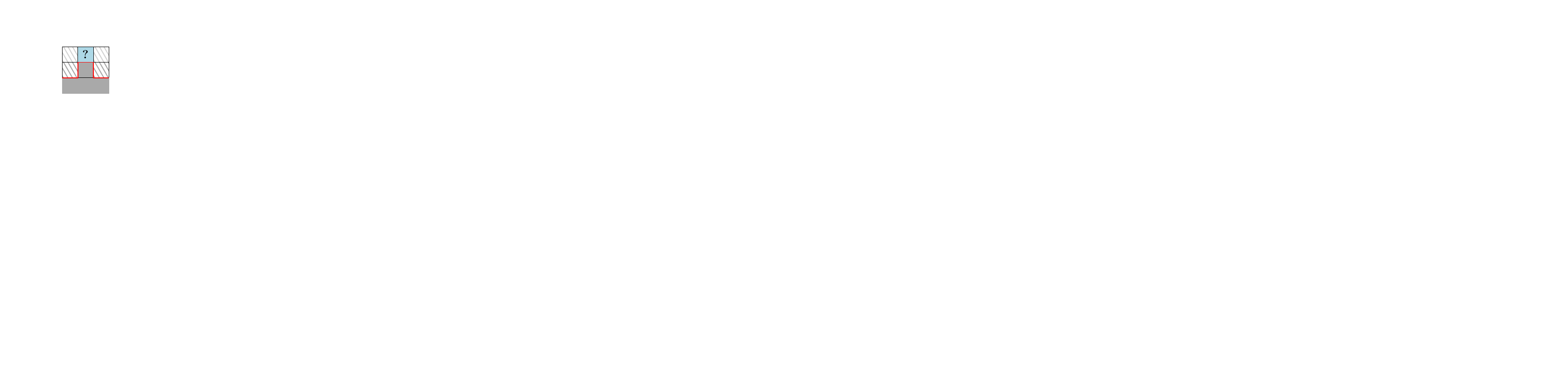} \\ \cline{2-6}
& $\eps$ & $(0,0,0)$ & $(1,0,0)$ or $(0,0,1)$ & $(1,0,1)$ & $(\eps',1,\eps'')$\\ \cline{2-6}
& $\P(B_p^{(3)}=\eps)$ & $(1-p)^3$ & $2p(1-p)^2$ & $p^2(1-p)$ & $p$ \\ \cline{2-6}
& $F_{\footnotesize\texttt{blue}}(\eps)$ & $(0,0,0)$ & $(0,0,1)$ & $(0,0,2)$ & $(1,0,0)$ \\ 
\cline{1-6}
\multirow{4}{*}{\raisebox{2\height}{\includegraphics[scale=0.8]{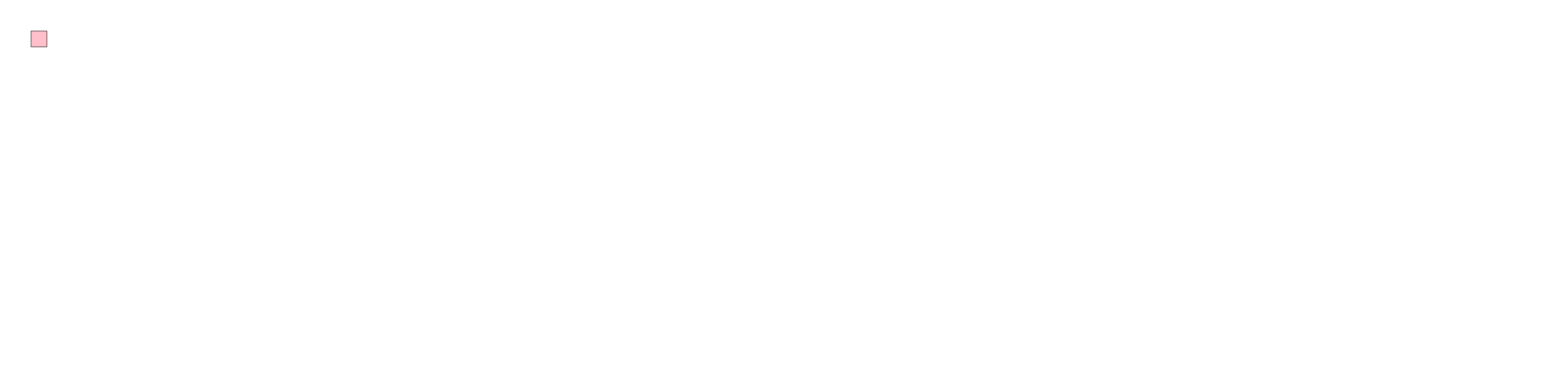}}} & \raisebox{0.75\height}{\begin{minipage}{3cm}$$\begin{array}{cl}
\text{T1:} & 2  \\ \text{T2:} & 2 \end{array}$$\end{minipage}} & \rule{0cm}{1.5cm}\includegraphics[scale=0.75]{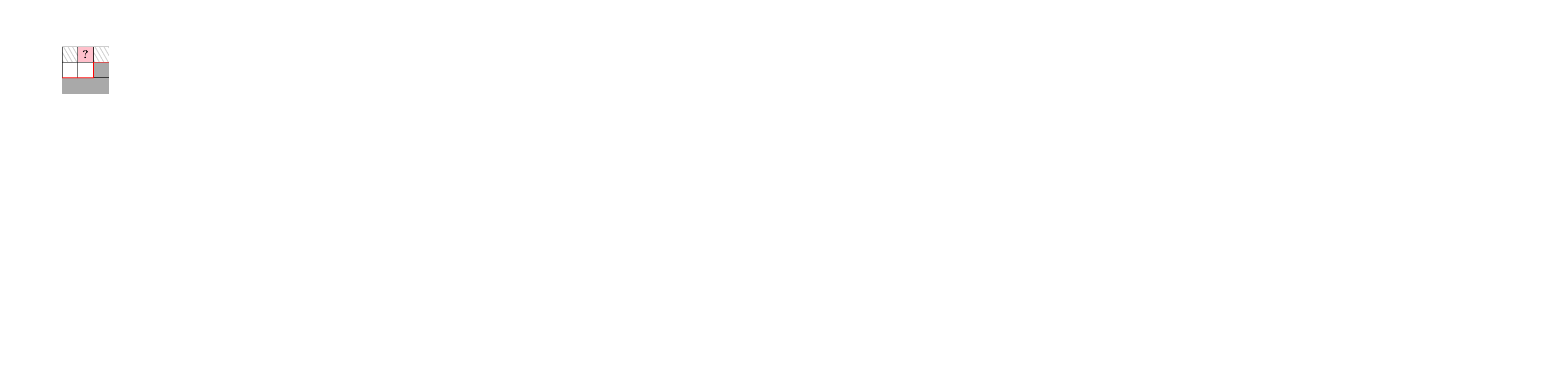} & \rule{0cm}{1.5cm}\includegraphics[scale=0.75]{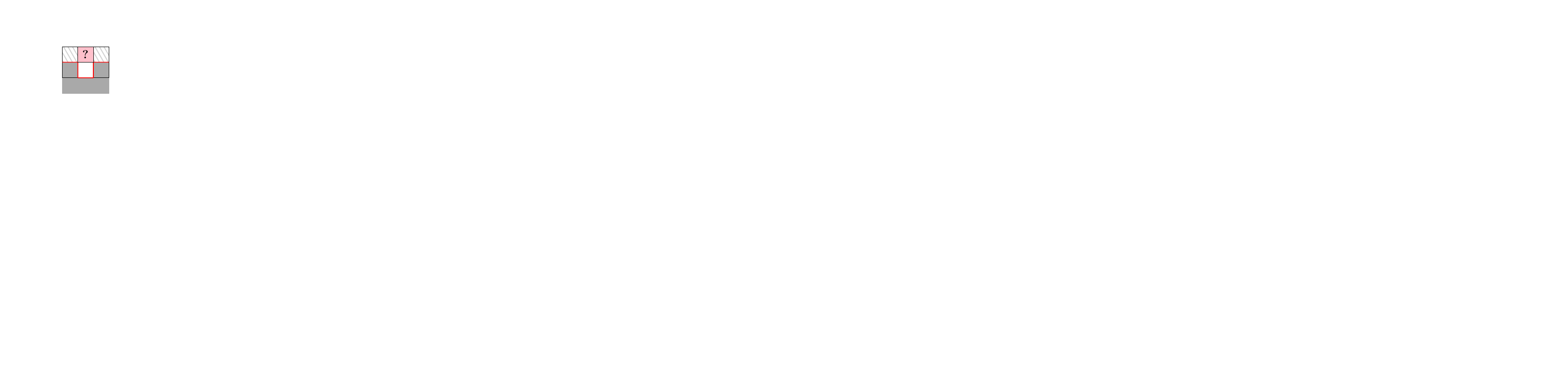} & \rule{0cm}{1.5cm}\includegraphics[scale=0.75]{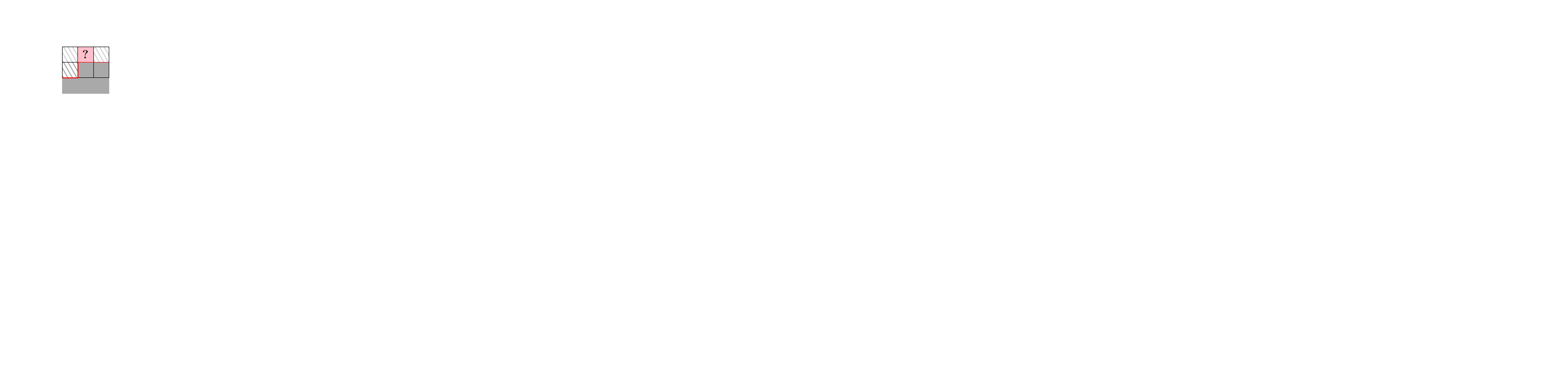} \\ \cline{2-5}
& $\eps$ & $(0,0)$ & $(1,0)$ & $(\eps',1)$ \\ \cline{2-5}
& $\P(B_p^{(2)}=\eps)$ & $(1-p)^2$ & $p(1-p)$ & $p$ \\ \cline{2-5}
& $F_{\footnotesize\texttt{pink}}(\eps)$ & $(0,0,1)$ & $(0,0,2)$ & $(1,0,0)$ \\ 
\cline{1-5}
\multirow{4}{*}{\raisebox{2\height}{\includegraphics[scale=0.8]{SquareYellow}}} & \raisebox{0.75\height}{\begin{minipage}{3cm}$$\begin{array}{cl}
\text{T1:} & 2  \\ \text{T2:} & 2 \end{array}$$\end{minipage}} & \rule{0cm}{1.5cm}\includegraphics[scale=0.75]{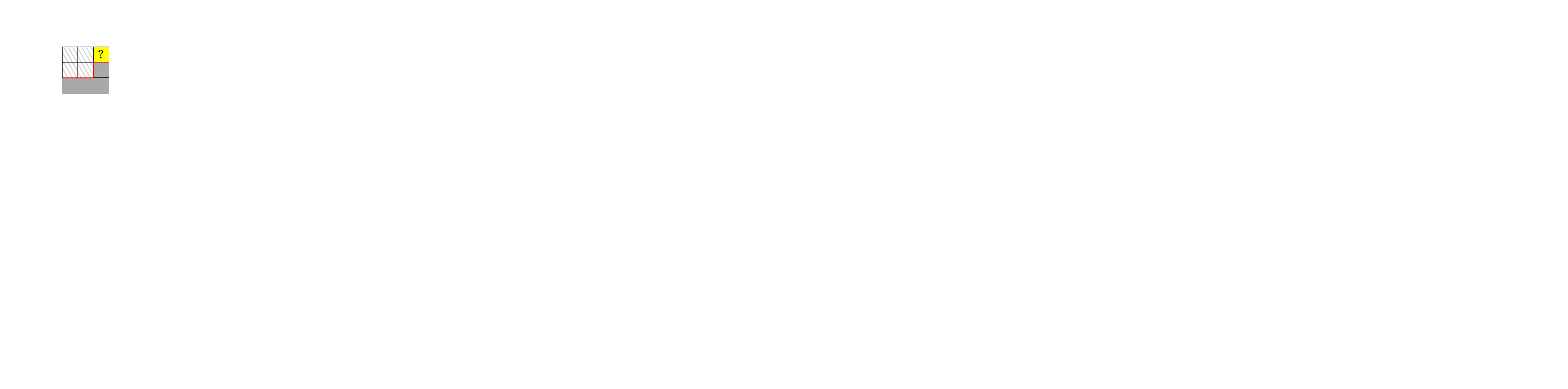} \\ \cline{2-3}
& $\eps$ & $1$ \\ \cline{2-3}
& $\P(B_p^{(1)}=\eps)$ & $1$ \\ \cline{2-3}
& $F_{\footnotesize\texttt{yellow}}(\eps)$ & $(1,0,0)$ \\ 
\cline{1-3}
\end{tabular}}
\pass\caption{Description of the potential children of a square parent of type T1 for the model with $p_*=1$ for the case $\la\ge4$.}
\label{tab:SquareChildModel1T1}
\end{table}

\newpage

\renewcommand{\arraystretch}{1.5}
\setlength{\arrayrulewidth}{0.1pt}
\begin{table}[h!]
\centering
\resizebox{18cm}{!}{%
\hspace{-2cm}\begin{tabular}{|c|c|c|c|c|c|c|c|c|}
\cline{1-6}
\multirow{4}{*}{\raisebox{3\height}{\includegraphics[scale=0.8]{SquareBrown}}} & \raisebox{1\height}{\begin{minipage}{3cm}$$\begin{array}{cl}
\text{T1:} & 0 \\ \text{T2:} & 1  \end{array}$$\end{minipage}} & \rule{0cm}{1.9cm}\includegraphics[scale=0.75]{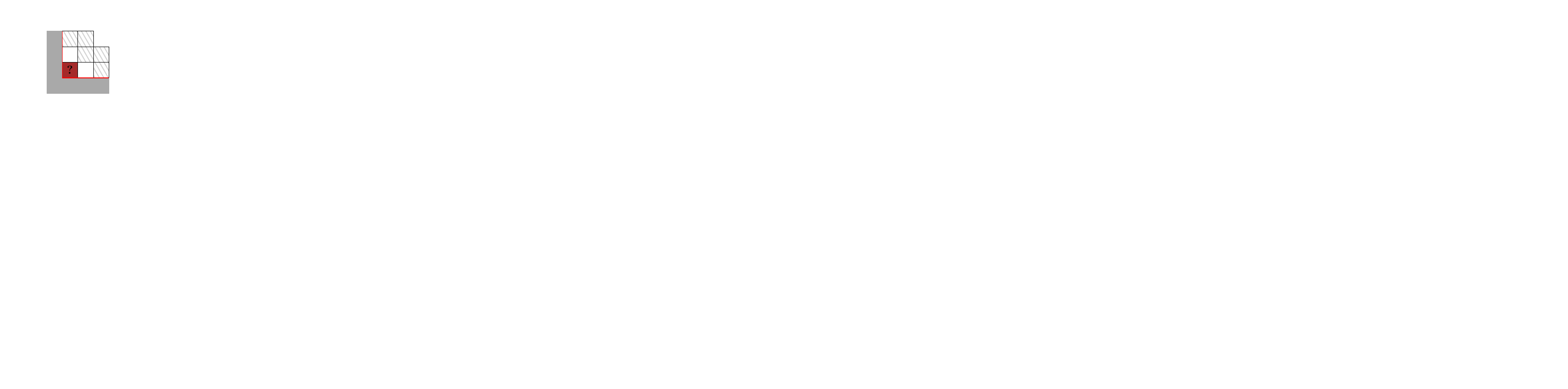} & \rule{0cm}{1.9cm}\includegraphics[scale=0.75]{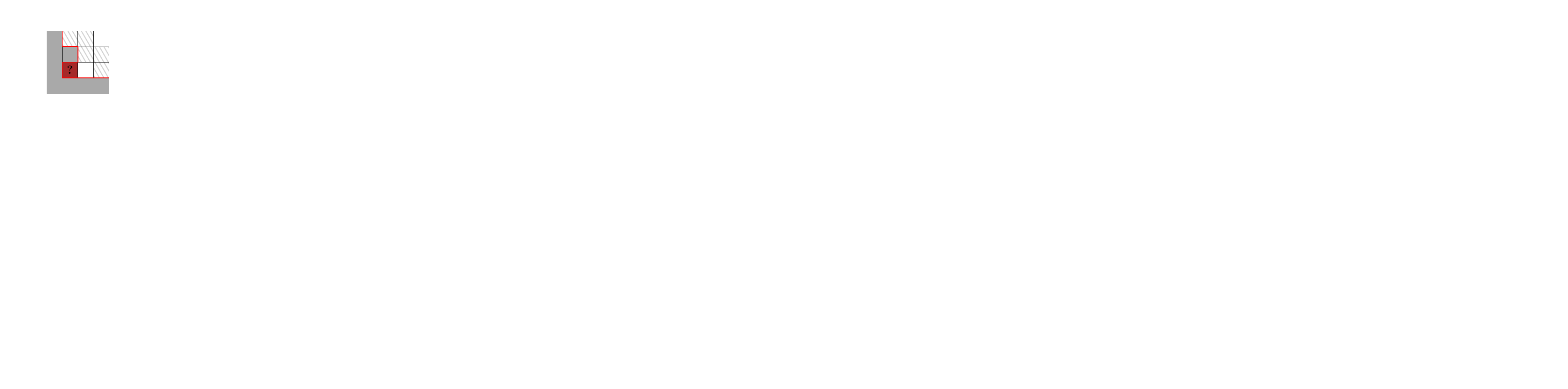} & \rule{0cm}{1.9cm}\includegraphics[scale=0.75]{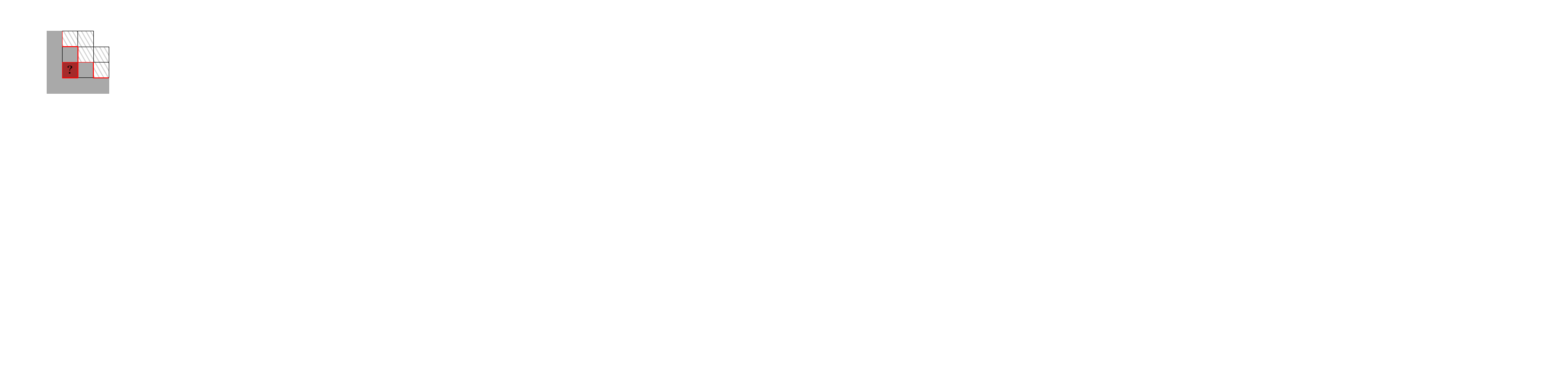} & \rule{0cm}{1.9cm}\includegraphics[scale=0.75]{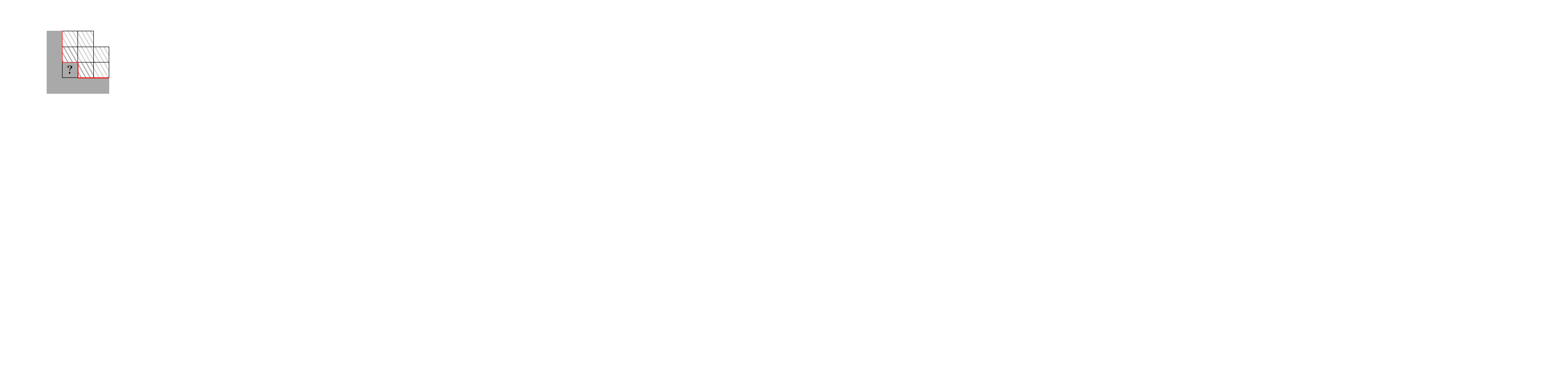} \\ \cline{2-6}
& $\eps$ & $(0,0,0)$ & $(1,0,0)$ or $(0,0,1)$ & $(1,0,1)$ & $(\eps',1,\eps'')$\\ \cline{2-6}
& $\P(B_p^{(3)}=\eps)$ & $(1-p)^3$ & $2p(1-p)^2$ & $p^2(1-p)$ & $p$ \\ \cline{2-6}
& $F_{\footnotesize\texttt{brown}}(\eps)$ & $(0,1,0)$ & $(-1,2,0)$ & $(-4,4,0)$ & $(0,0,0)$ \\ 
\cline{1-9}
\multirow{4}{*}{\raisebox{3\height}{\includegraphics[scale=0.8]{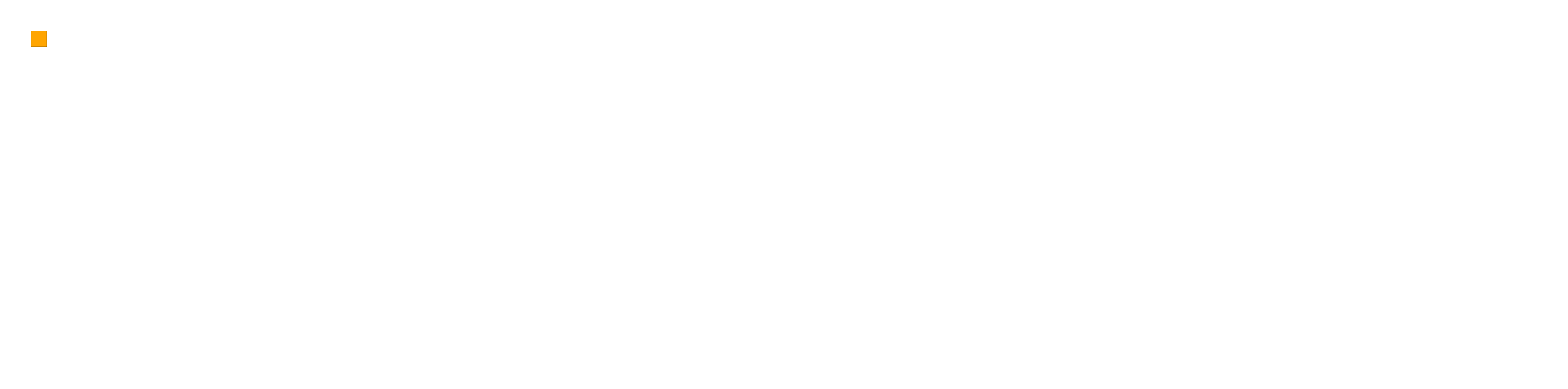}}} & \raisebox{1\height}{\begin{minipage}{3cm}$$\begin{array}{cl}
\text{T1:} & 0 \\ \text{T2:} & 2  \end{array}$$\end{minipage}} & \rule{0cm}{1.9cm}\includegraphics[scale=0.75]{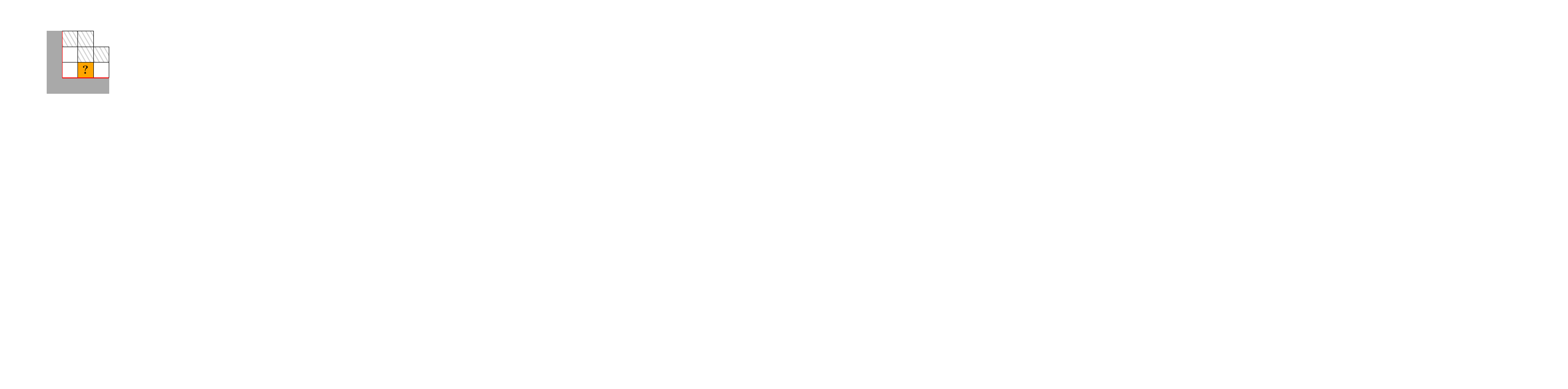} & \rule{0cm}{1.9cm}\includegraphics[scale=0.75]{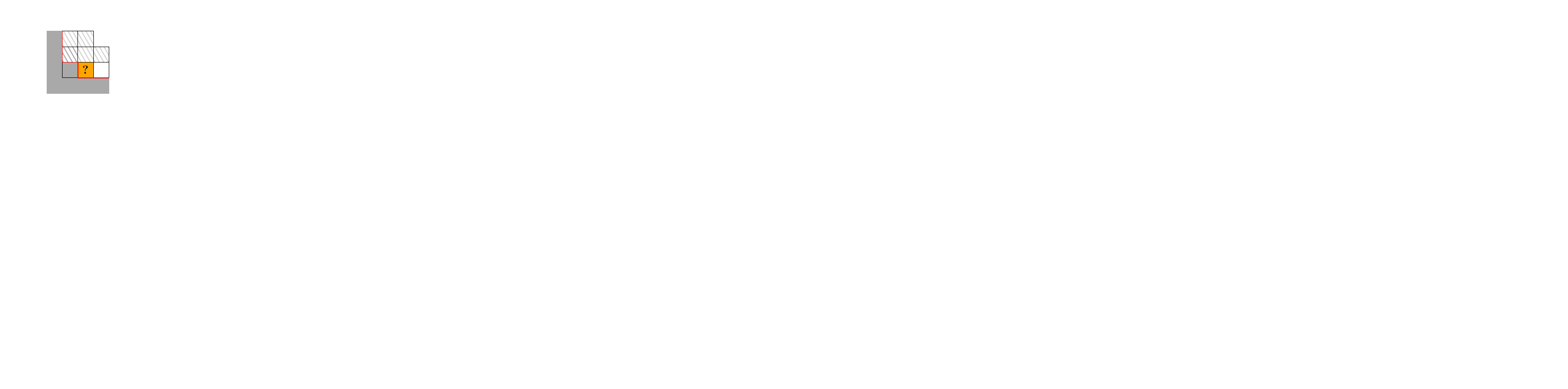} & \rule{0cm}{1.9cm}\includegraphics[scale=0.75]{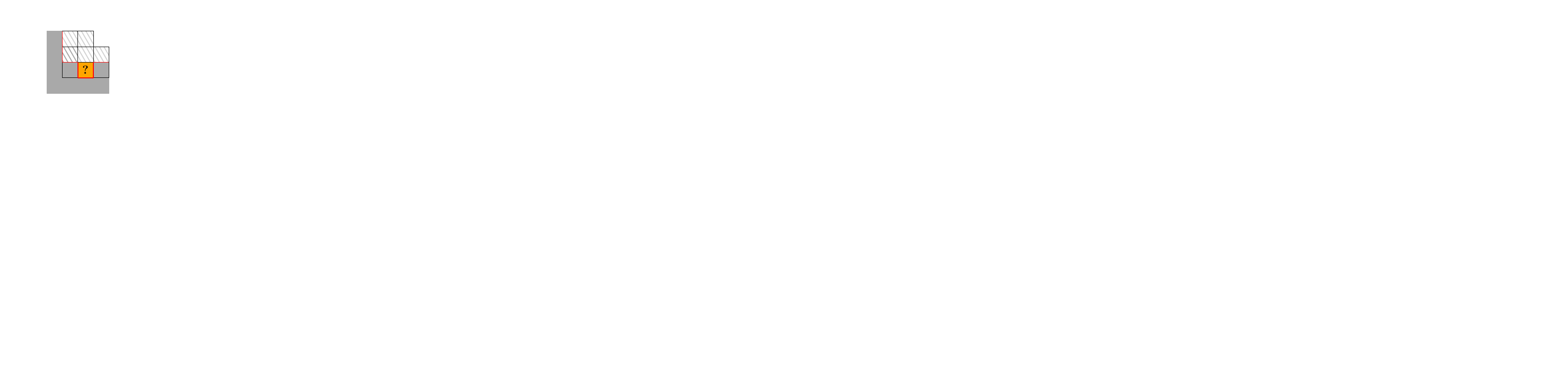} & \rule{0cm}{1.9cm}\includegraphics[scale=0.75]{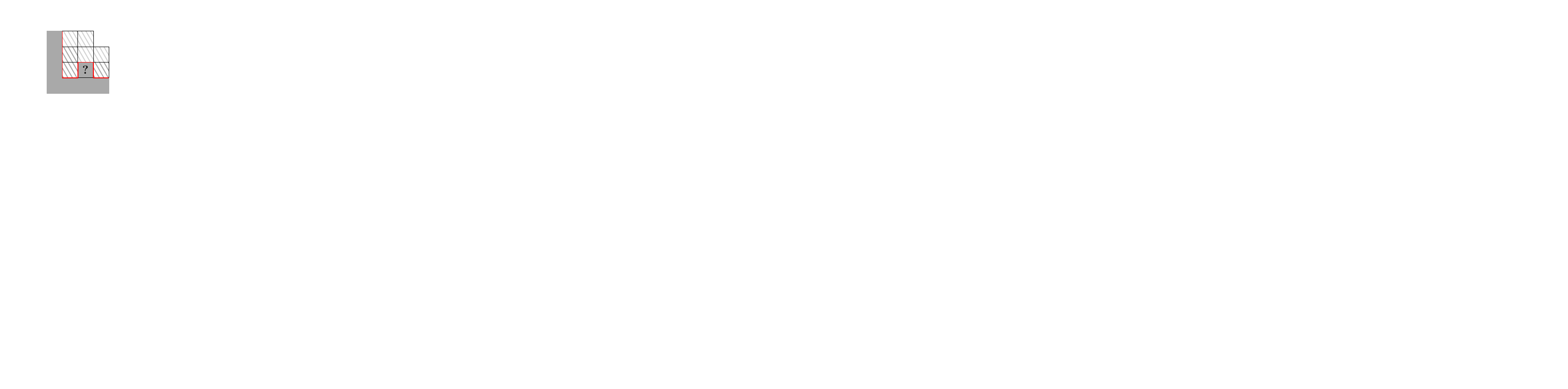} & \rule{0cm}{1.9cm}\includegraphics[scale=0.75]{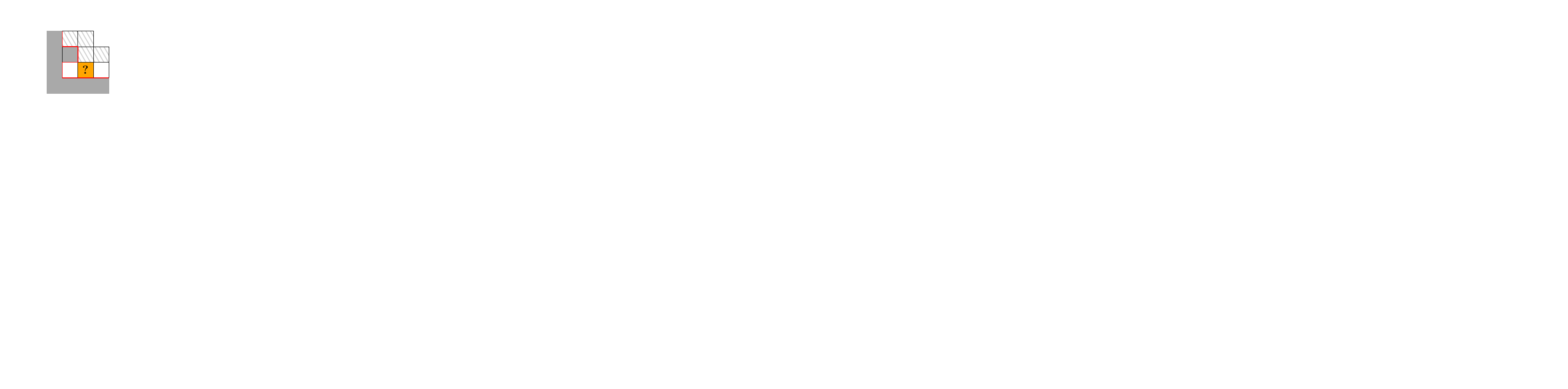} &
\rule{0cm}{1.9cm}\includegraphics[scale=0.75]{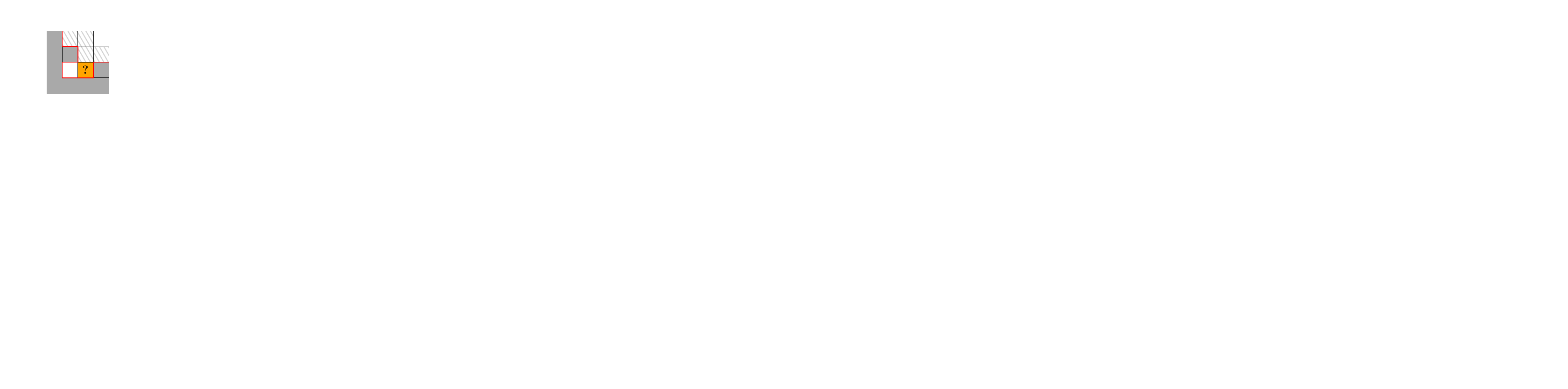} &
\rule{0cm}{1.9cm}\includegraphics[scale=0.75]{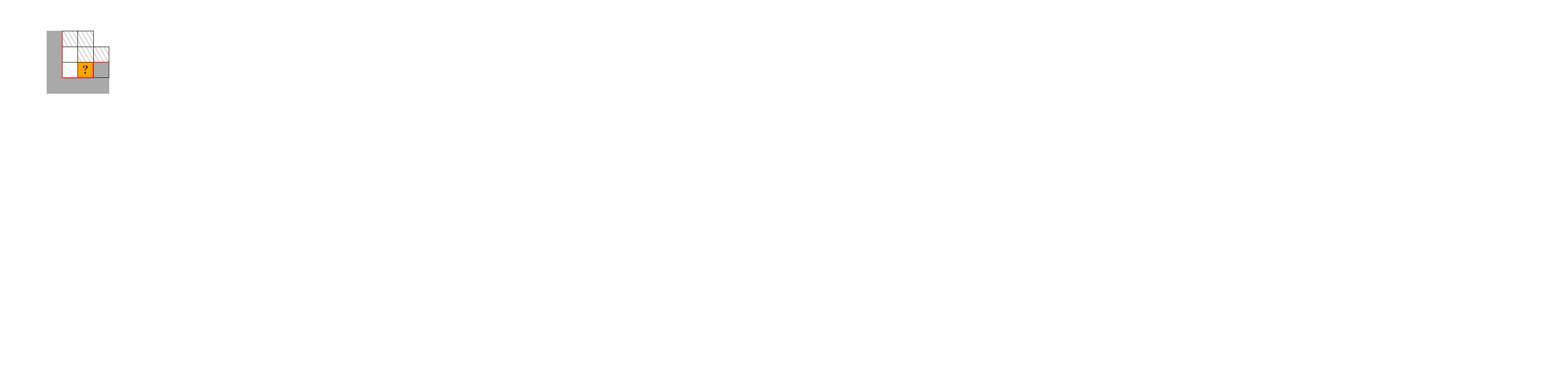}  \\ \cline{2-9}
& $\eps$ & $(0,0,0,0)$ & $(\eps',1,0,0)$ & $(\eps',1,0,1)$ & $(\eps',\eps'',1,\eps''')$ & $(1,0,0,0)$ & $(1,0,0,1)$ & $(0,0,0,1)$ \\ \cline{2-9}
& $\P(B_p^{(4)}=\eps)$ & $(1-p)^4$ & $p(1-p)^2$ & $p^2(1-p)$ & $p$ & $p(1-p)^3$ & $p^2(1-p)^2$ & $p(1-p)^3$ \\ \cline{2-9}
& $F_{\footnotesize\texttt{orange}}(\eps)$ & $(1,0,0)$ & $(0,1,0)$ & $(-1,2,0)$ & $(0,0,0)$ & $(1,0,1)$ & $(0,1,1)$ & $(0,1,0)$ \\ 
\cline{1-9}
\multirow{4}{*}{\raisebox{-.25\height}{\includegraphics[scale=0.8]{SquareRed}}} & \raisebox{1\height}{\begin{minipage}{3cm}$$\begin{array}{cl}
\text{T1:} & 0  \\ \text{T2:} & 1 \end{array}$$\end{minipage}} & \rule{0cm}{1.9cm}\includegraphics[scale=0.75]{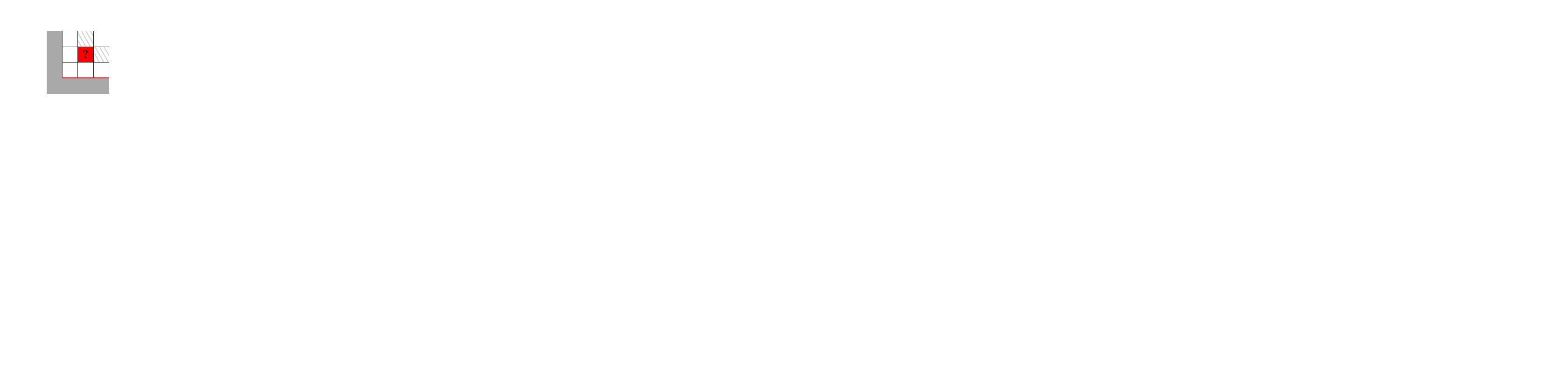} & \rule{0cm}{1.9cm}\includegraphics[scale=0.75]{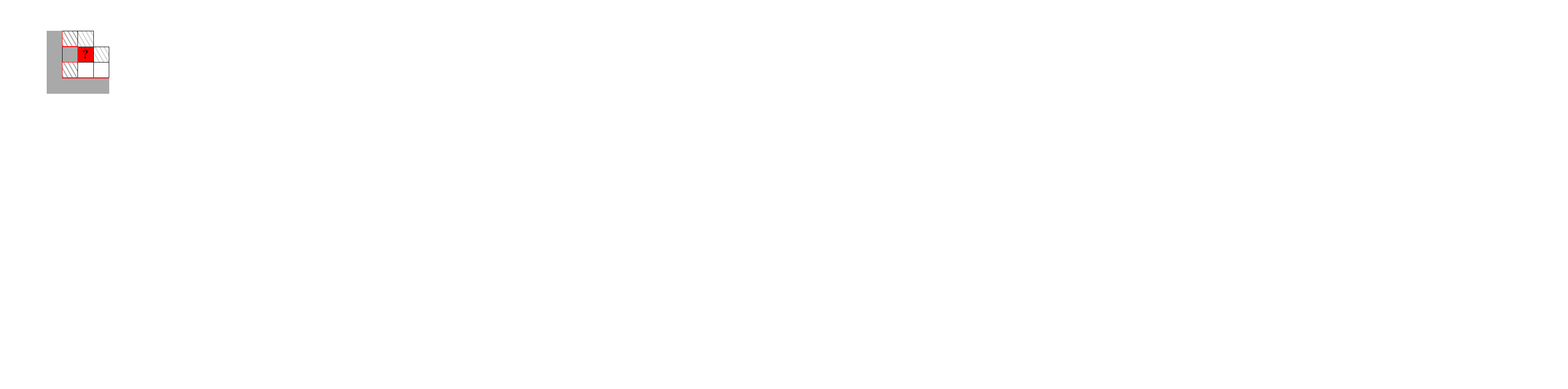} & \rule{0cm}{1.9cm}\includegraphics[scale=0.75]{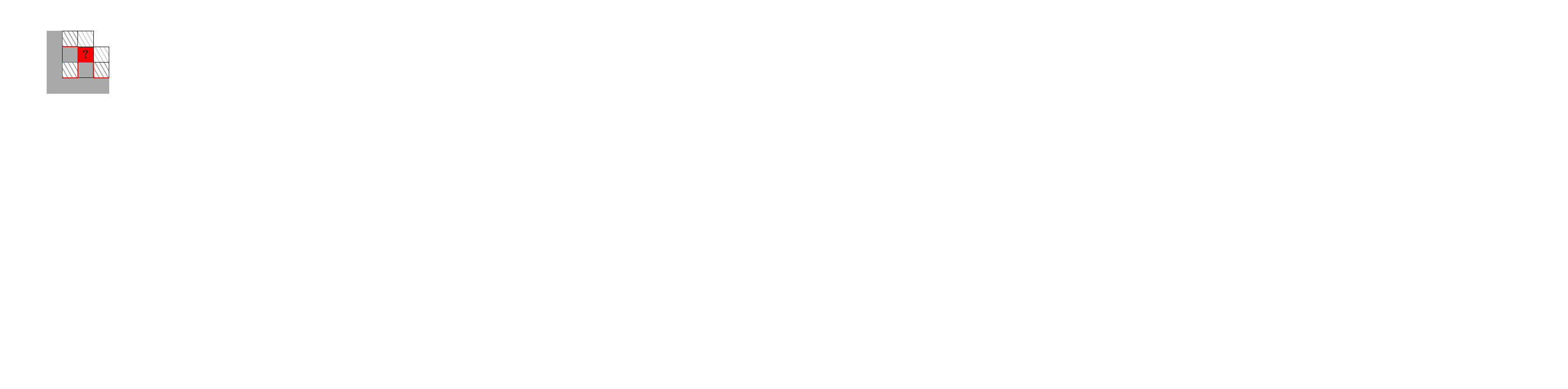} & \rule{0cm}{1.9cm}\includegraphics[scale=0.75]{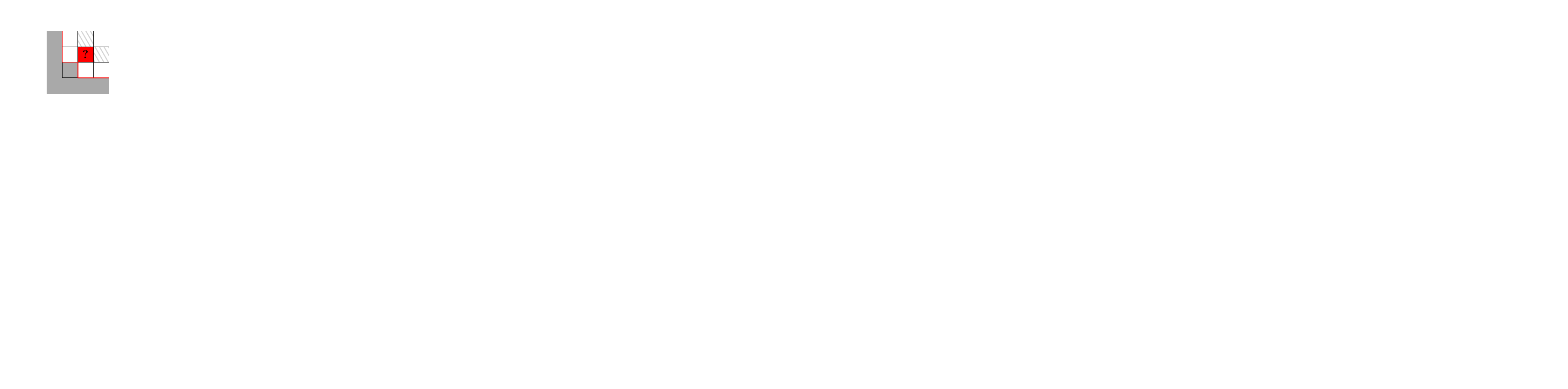} & \rule{0cm}{1.9cm}\includegraphics[scale=0.75]{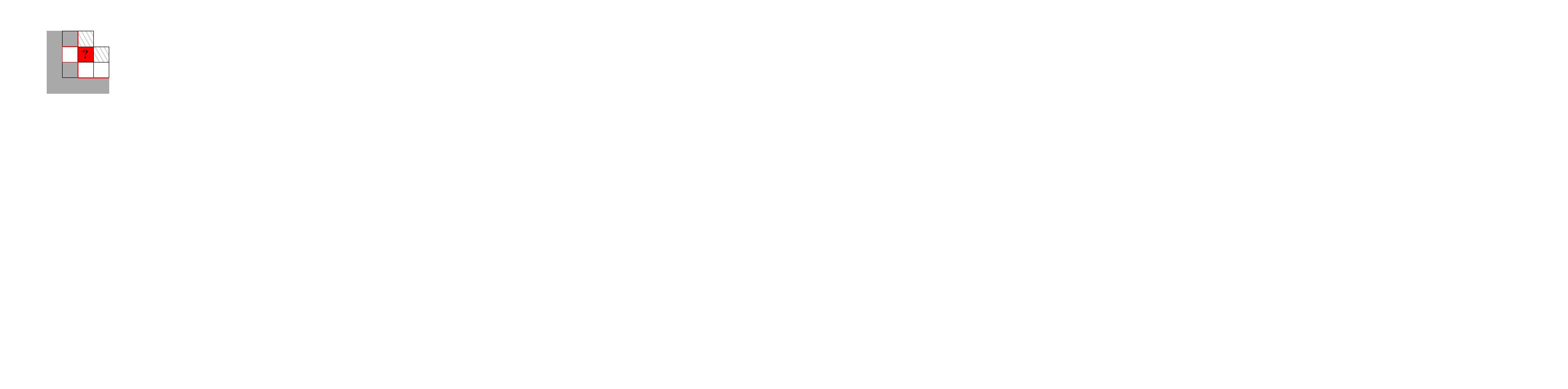} &
\rule{0cm}{1.9cm}\includegraphics[scale=0.75]{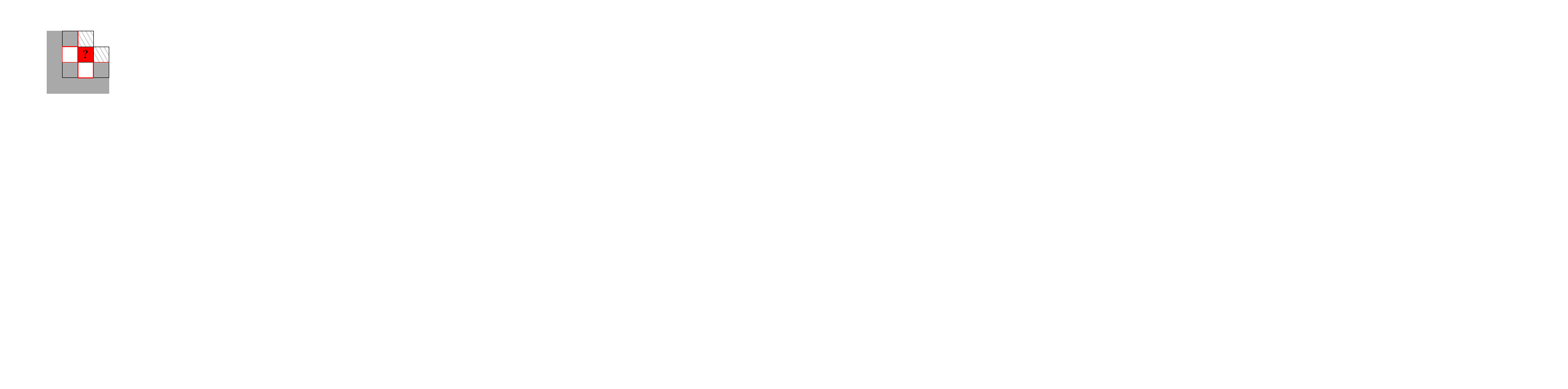} & \rule{0cm}{1.9cm}\includegraphics[scale=0.75]{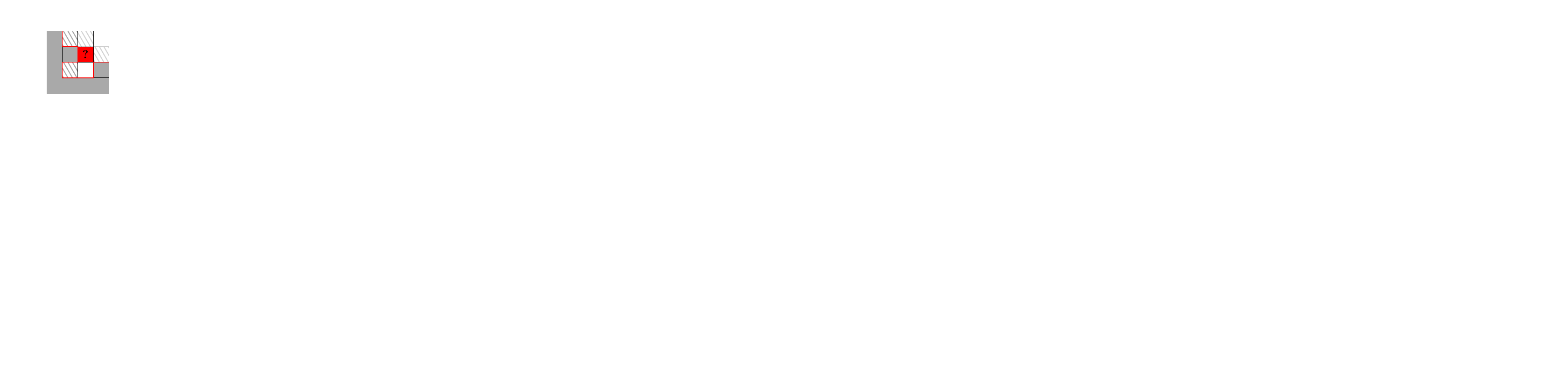} 
\\ \cline{2-9}
& $\eps$ & $(0,0,0,0,0)$ & \begin{minipage}{3cm}$$\begin{array}{l}
\text{\phantom{or} } (\eps',1,\eps'',0,0) \\ \text{or } (0,0,\eps',1,\eps'') \end{array}$$\end{minipage} & $(\eps',1,\eps'',1,\eps''')$ & \begin{minipage}{3cm}$$\begin{array}{l}
\text{\phantom{or} } (1,0,0,0,0) \\ \text{or } (0,0,1,0,0) \\ \text{or } (0,0,0,0,1) \end{array}$$\end{minipage} & \begin{minipage}{3cm}$$\begin{array}{l}
\text{\phantom{or} } (1,0,1,0,0) \\ \text{or } (0,0,1,0,1) \\ \text{or } (1,0,0,0,1) \end{array}$$\end{minipage} & $(1,0,1,0,1)$ & \begin{minipage}{3cm}$$\begin{array}{l}
\text{\phantom{or} } (\eps',1,\eps'',0,1) \\ \text{or } (1,0,\eps',1,\eps'') \end{array}$$\end{minipage} \\ \cline{2-9}
& $\P(B_p^{(5)}=\eps)$ & $(1-p)^5$ & $2p(1-p)^2$ & $p^2$ & $3p(1-p)^4$ & $3p^2(1-p)^3$ & $ p^3(1-p)^2$ & $2p^2(1-p)$ \\ \cline{2-9}
& $F_{\footnotesize\texttt{red}}(\eps)$ & $(0,0,0)$ & $(1,0,0)$ & $(0,1,0)$ & $(0,0,1)$ & $(0,0,2)$ & $(0,0,3)$ & $(1,0,1)$ \\ 
\cline{1-9}
\end{tabular}}
\pass\caption{Description of the specific potential children of a square parent of type T2 for the model with $p_*=1$ for the case $\la\geq4$.}
\label{tab:SquareChildModel1T2}
\end{table}

\subsection{The triangular tessellation}\label{sec:annexetriangle}

\noi

In the case of the triangular tessellation, only the types T1, T2 and T3 are relevant. 
Type T3 is a phantom type which may be reduced using the rule T3$=$3T2$-$3T1.

\renewcommand{\arraystretch}{0.8}
\begin{figure}[h!]
\begin{center}
\begin{tabular}{ccc}
\includegraphics[scale=1.25]{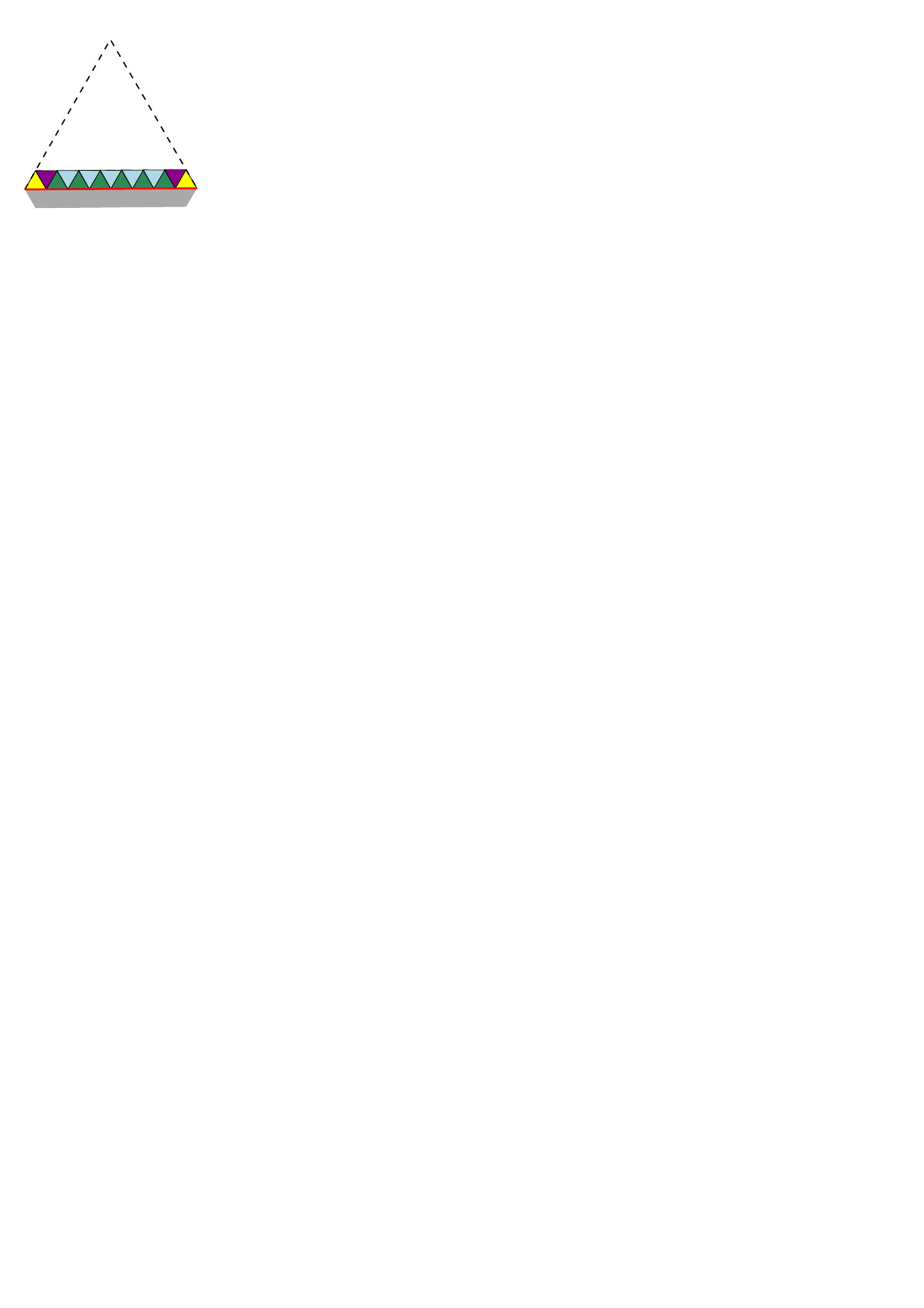} & $\quad$ &
\includegraphics[scale=1.25]{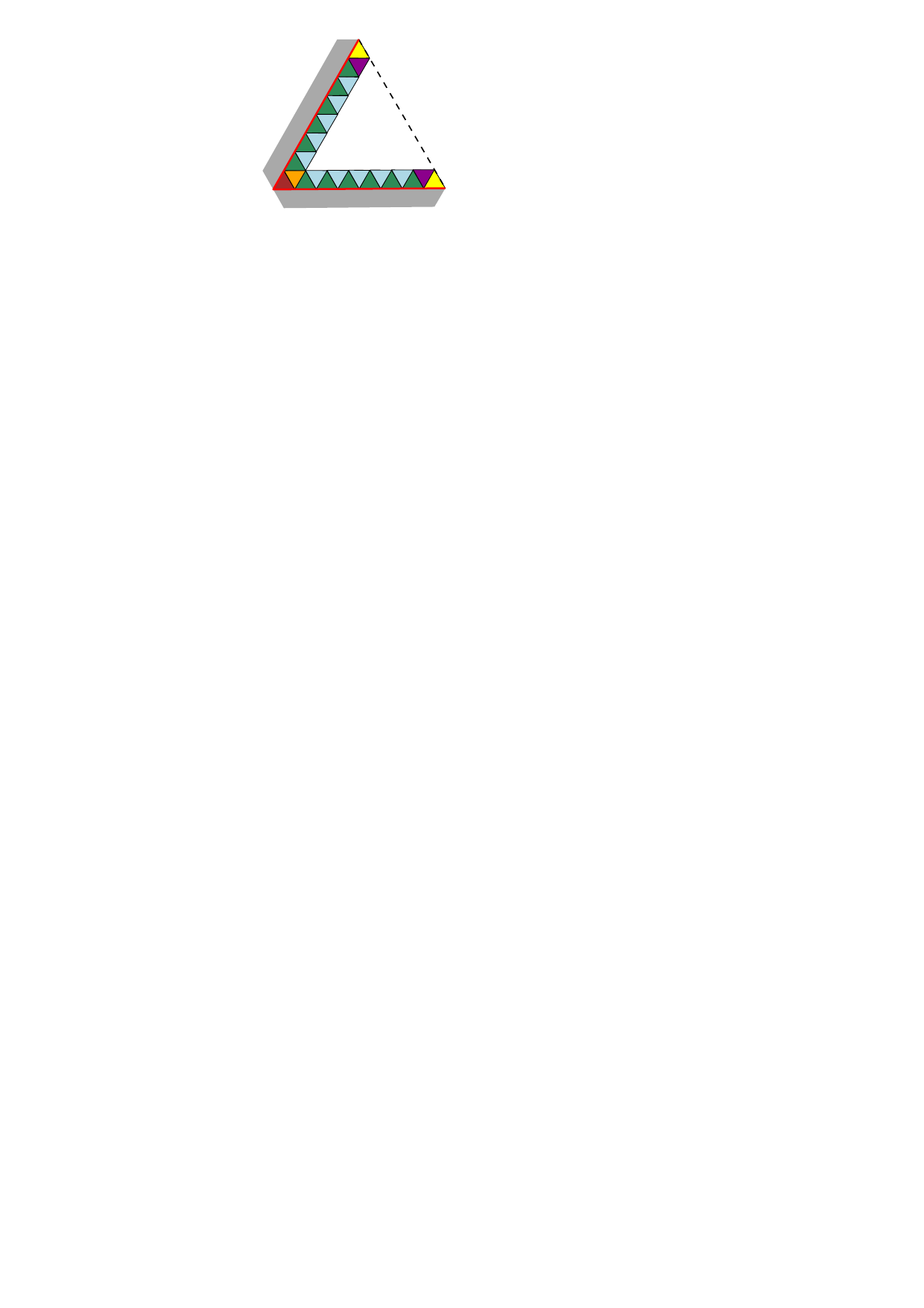} \\
\footnotesize{(a)} & & \footnotesize{(b)}
\end{tabular}
\end{center}
\vspace{-.35cm}
\caption{The potential children of a parent of type T1 (a) and a parent of type T2 (b) for the case of the triangular tessellation (here $\la=8$). A potential children only should be a colored triangle belonging to the first lines neighboring $\partial\cK_n$.}
\label{fig:Model0TriangleAllTheChild}
\end{figure} 
\renewcommand{\arraystretch}{1.5}

\renewcommand{\arraystretch}{1.5}
\setlength{\arrayrulewidth}{0.1pt}
\begin{table}[h!]
\centering
\resizebox{12cm}{!}{%
\begin{tabular}{|c|c|c|c|c|c|}
\cline{1-4}
\multirow{4}{*}{\raisebox{1.5\height}{\includegraphics[scale=0.8]{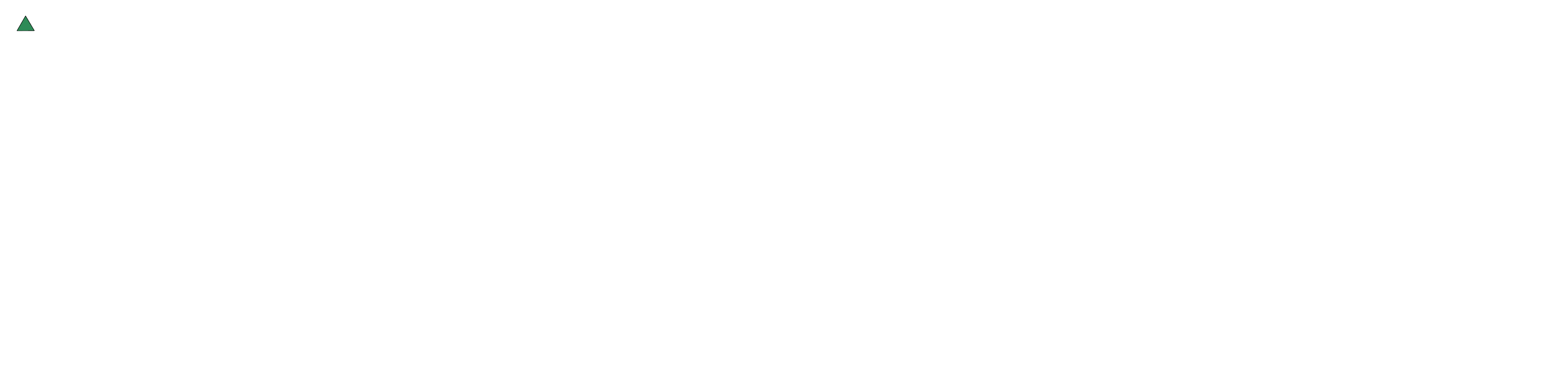}}} & \raisebox{0.5\height}{\begin{minipage}{3cm}$$\begin{array}{cl}
\text{T1:} & \la-2 \\ \text{T2:} & 2(\la-2) \end{array}$$\end{minipage}} & \rule{0cm}{1.25cm}\includegraphics[scale=1.25]{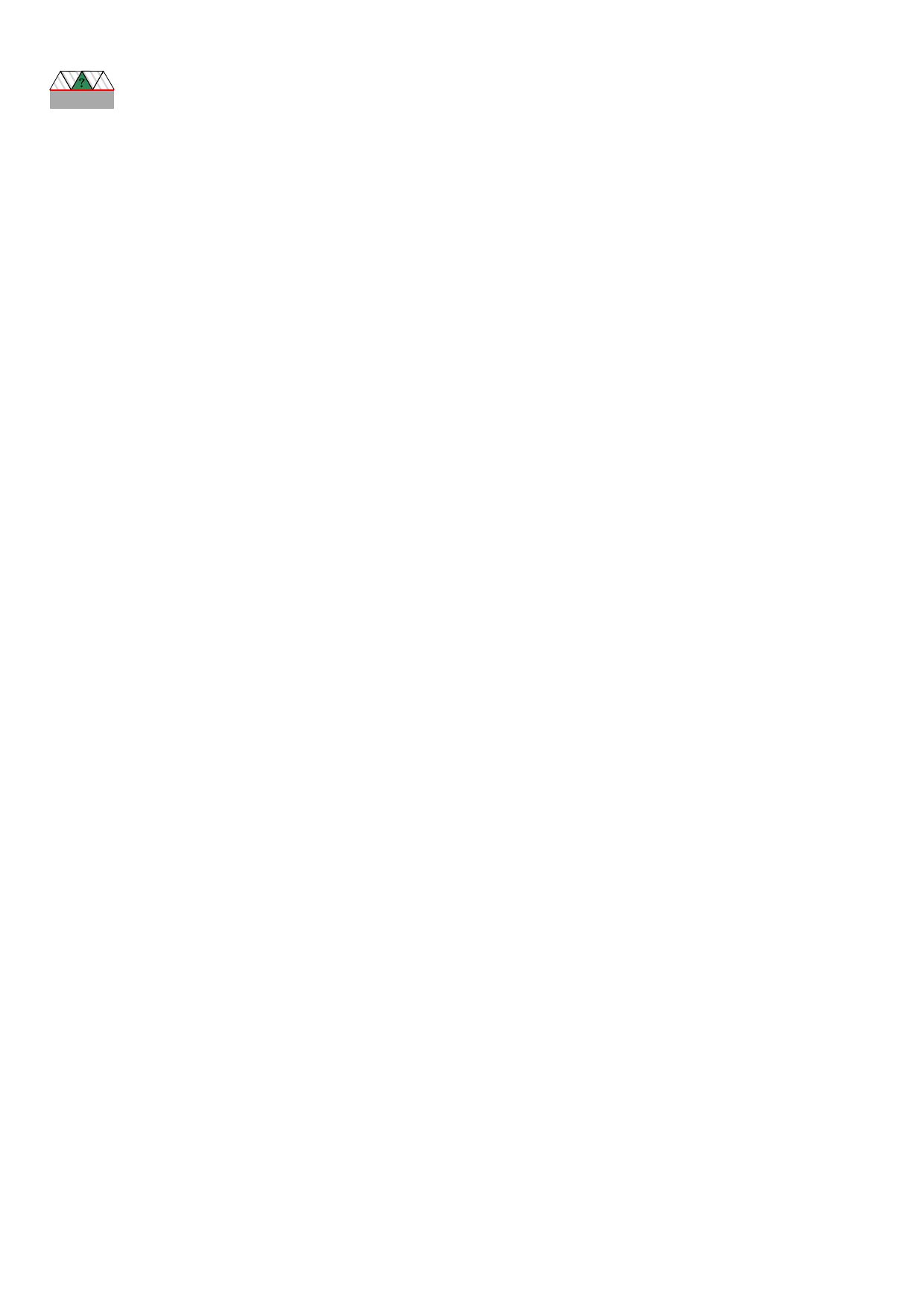} & \rule{0cm}{1.25cm}\includegraphics[scale=1.25]{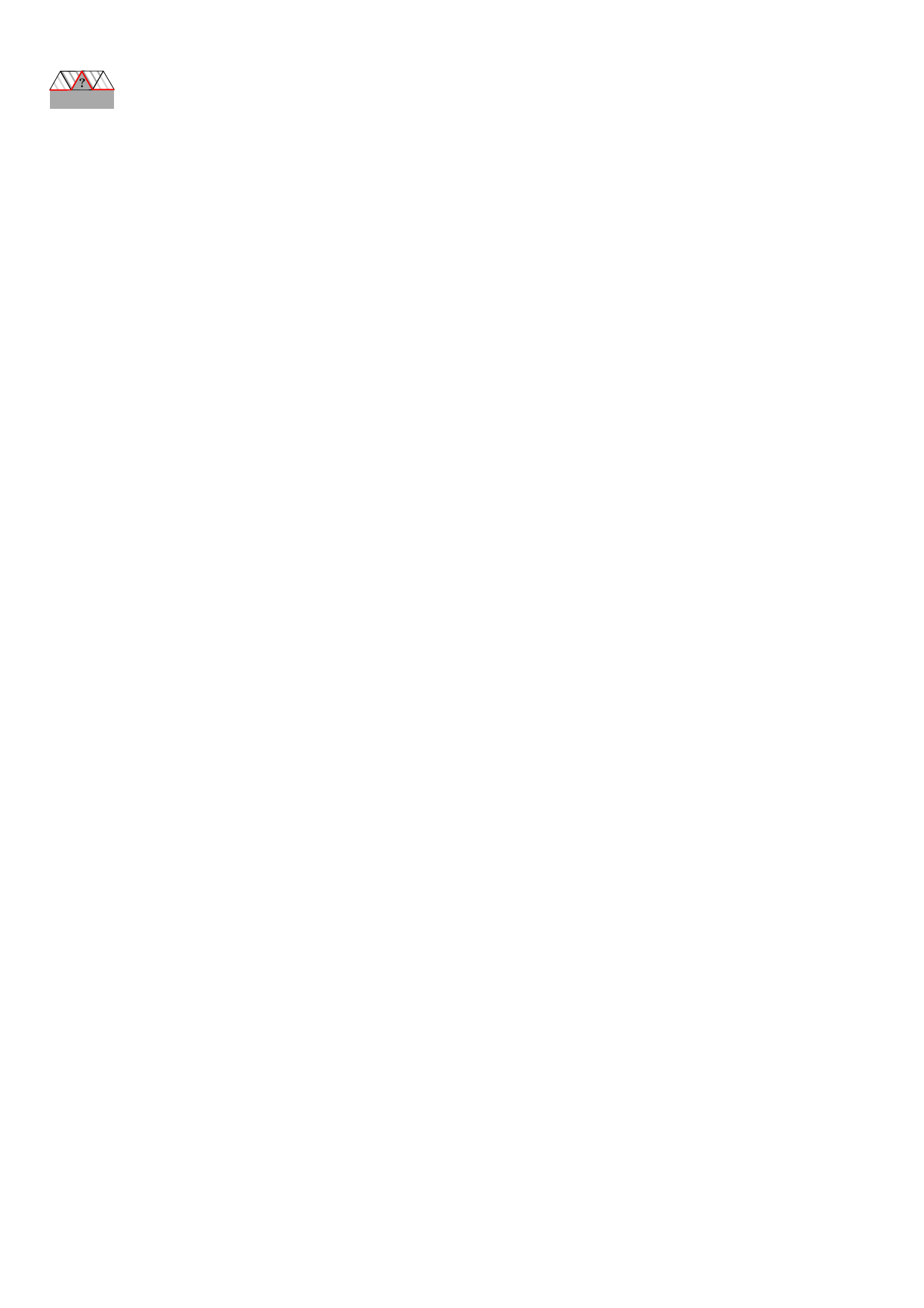} \\ 
\cline{2-4}
& $\eps$ & $0$ & $1$ \\ \cline{2-4}
& $\P(B^{(1)}=\eps)$ & $1-p$ & $p$ \\ 
\cline{2-4}
& $F_{\footnotesize\texttt{green}}(\eps)$ & $(1,0)$ & $(0,0)$ \\ 
\cline{1-5}
\multirow{4}{*}{\raisebox{1.5\height}{\includegraphics[scale=0.8]{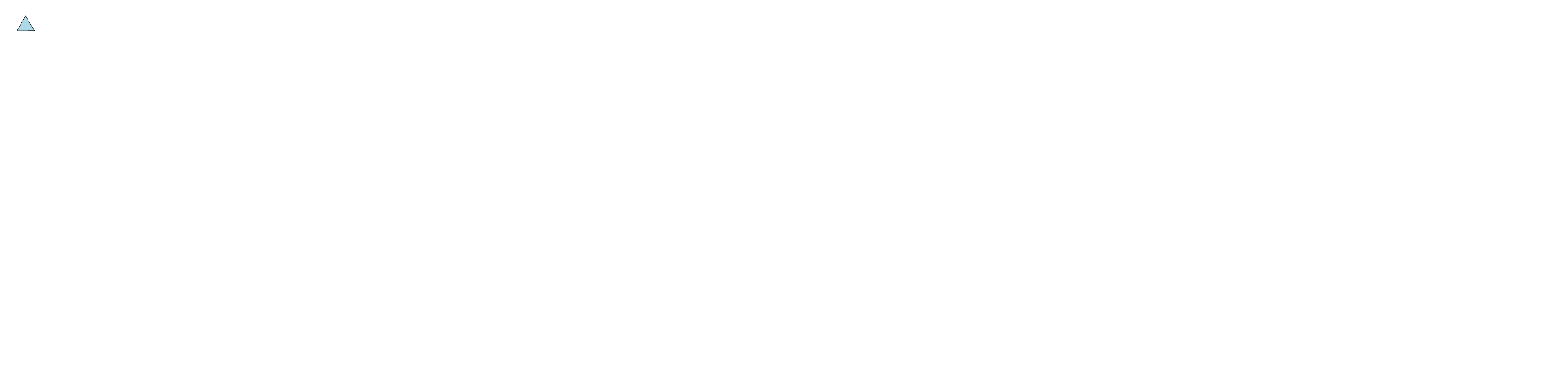}}} & \raisebox{0.5\height}{\begin{minipage}{3cm}$$\begin{array}{cl}
\text{T1:} & \la-3 \\ \text{T2:} & 2(\la-3)  \end{array}$$\end{minipage}} & \rule{0cm}{1.25cm}\includegraphics[scale=1.25]{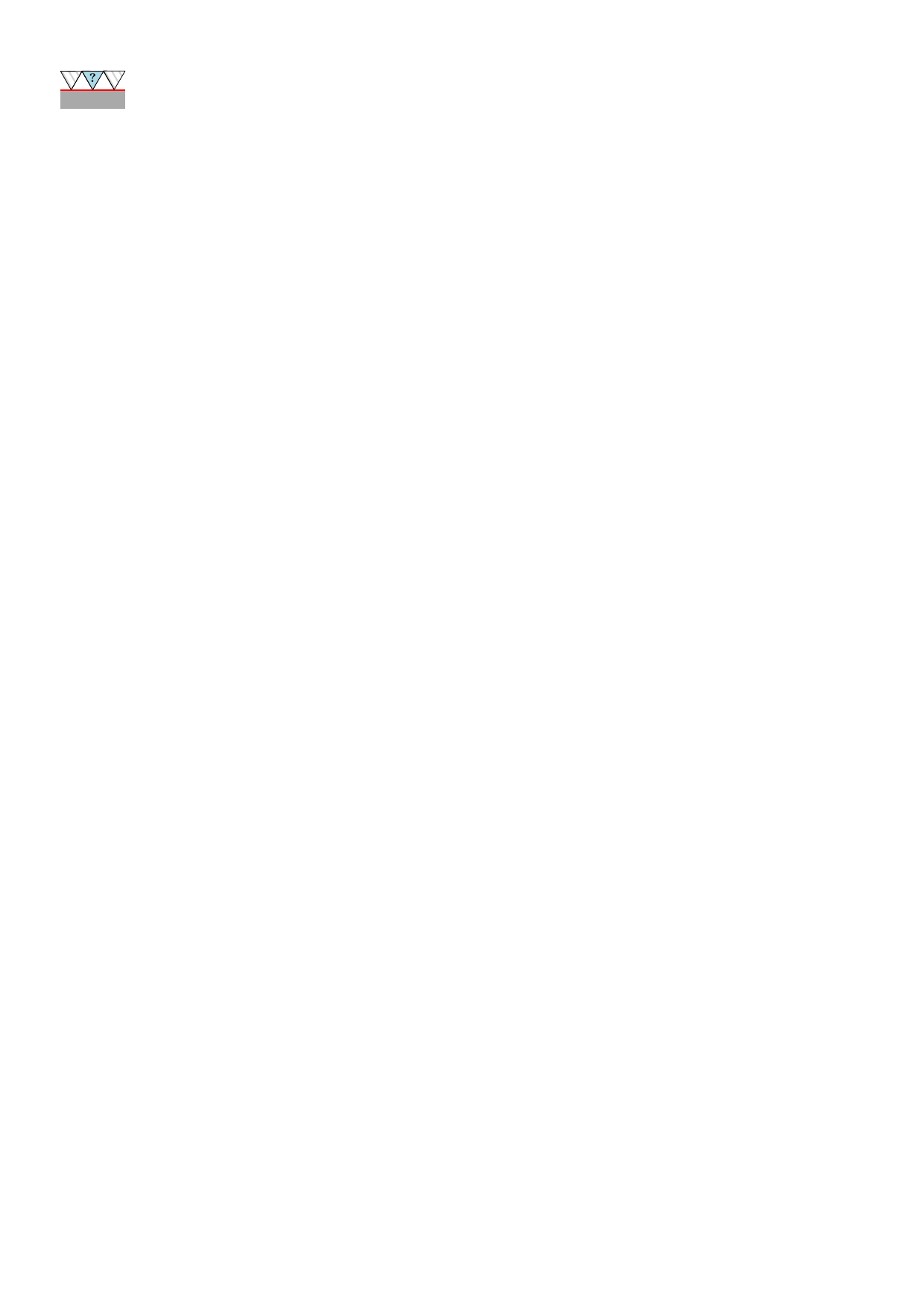} & \rule{0cm}{1.25cm}\includegraphics[scale=1.25]{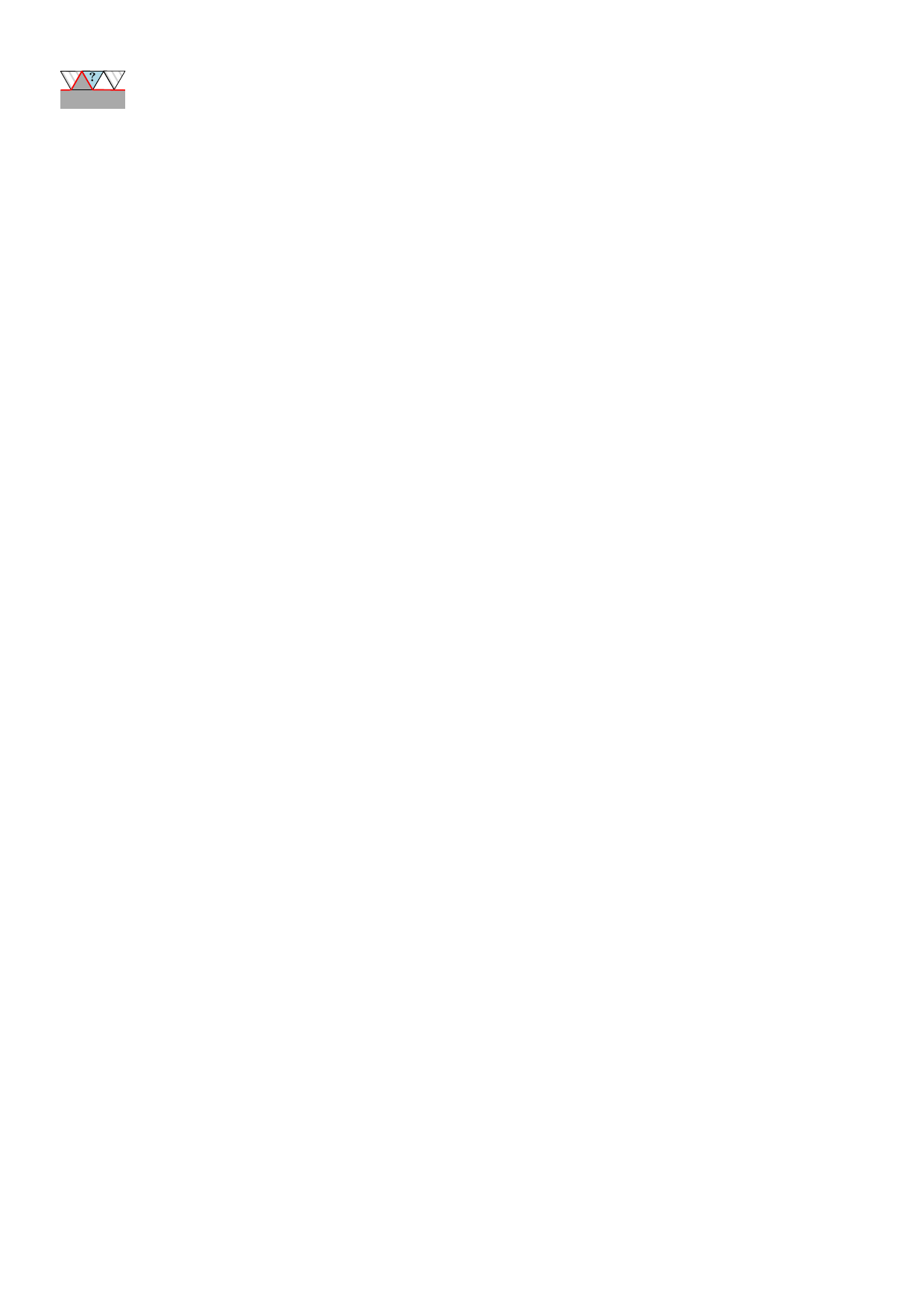} & \rule{0cm}{1.25cm}\includegraphics[scale=1.25]{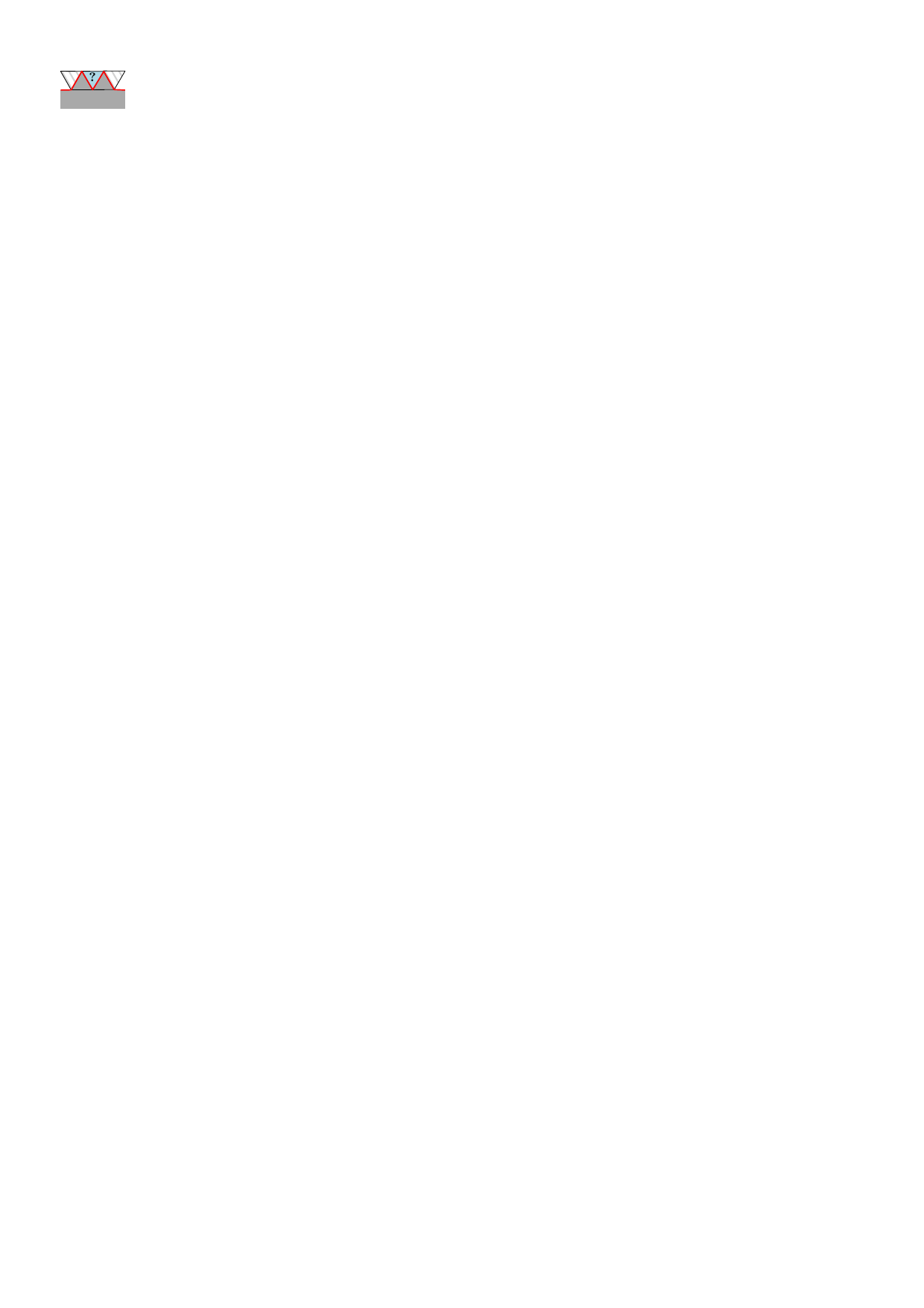} \\ \cline{2-5}
& $\eps$ & $(0,0)$ & $(1,0)$ or $(0,1)$ & $(1,1)$ \\ \cline{2-5}
& $\P(B^{(2)}=\eps)$ & $(1-p)^2$ & $2p(1-p)$ & $p^2$ \\ \cline{2-5}
& $F_{\footnotesize\texttt{blue}}(\eps)$ & $(0,0)$ & $(1,0)$ & $(0,1)$ \\ 
\cline{1-5}
\multirow{4}{*}{\raisebox{1.5\height}{\includegraphics[scale=0.8]{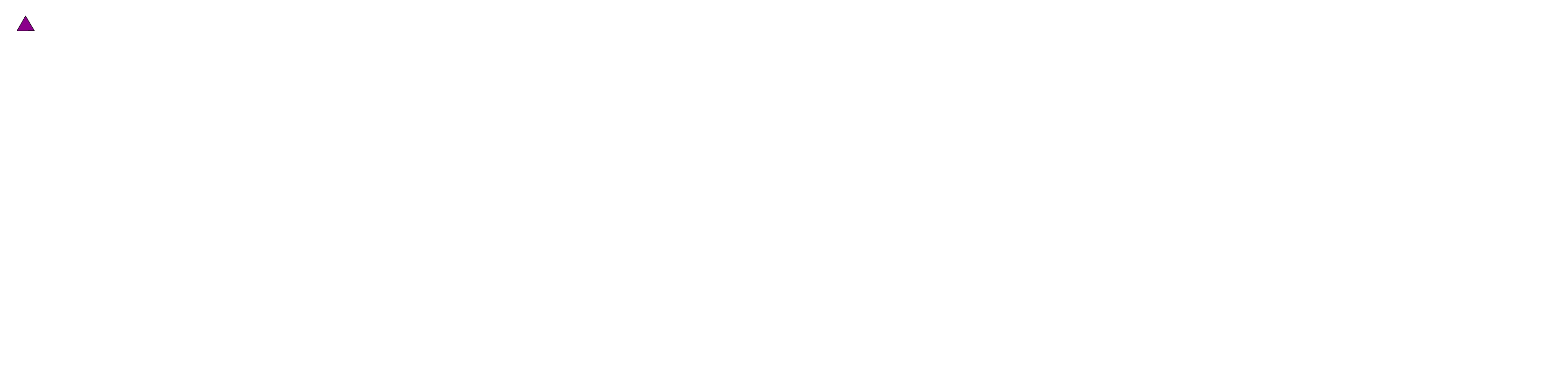}}} & \raisebox{0.5\height}{\begin{minipage}{3cm}$$\begin{array}{cl}
\text{T1:} & 2  \\ \text{T2:} & 2 \end{array}$$\end{minipage}} & \rule{0cm}{1.25cm}\includegraphics[scale=1.25]{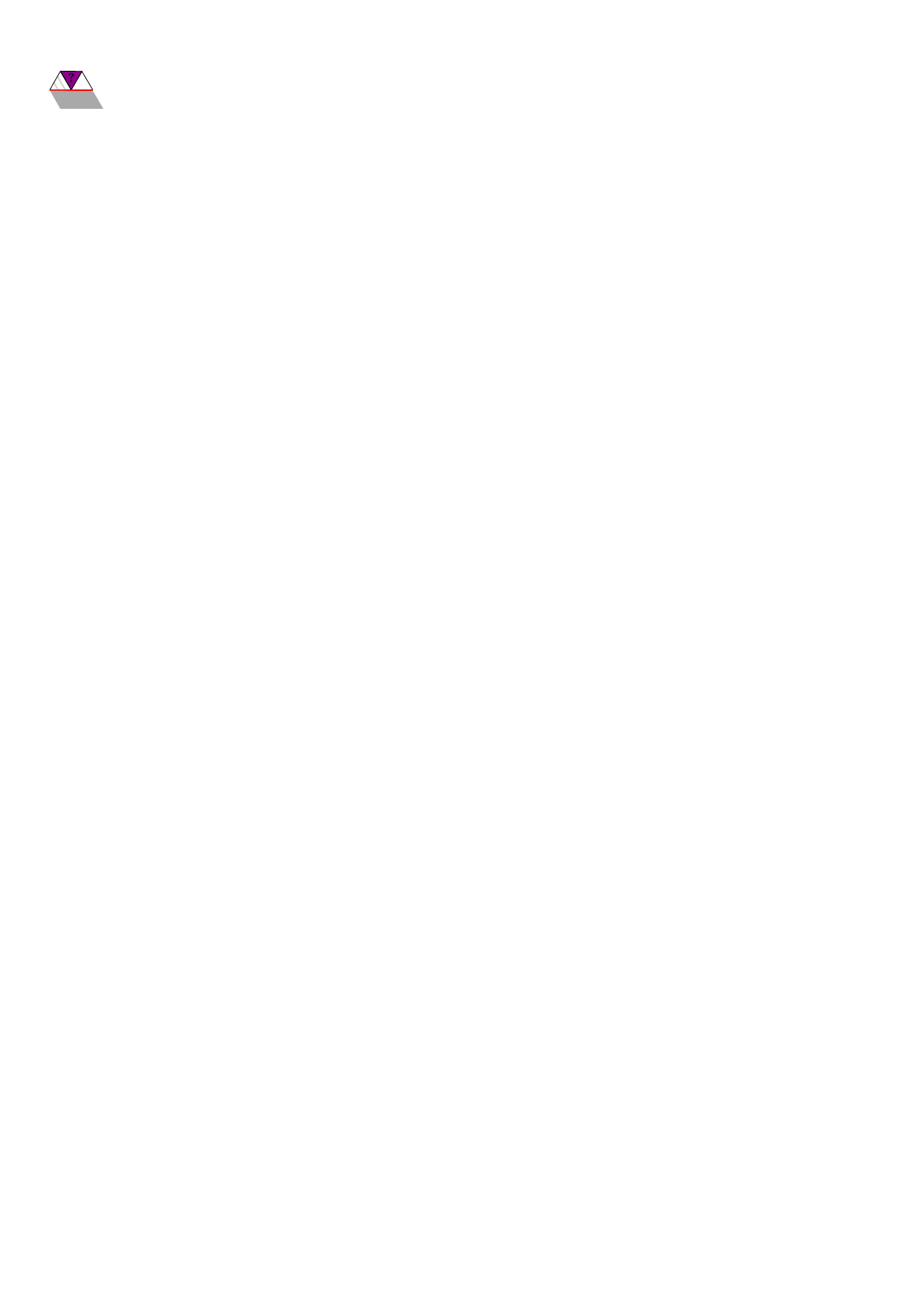} & \rule{0cm}{1.25cm}\includegraphics[scale=1.25]{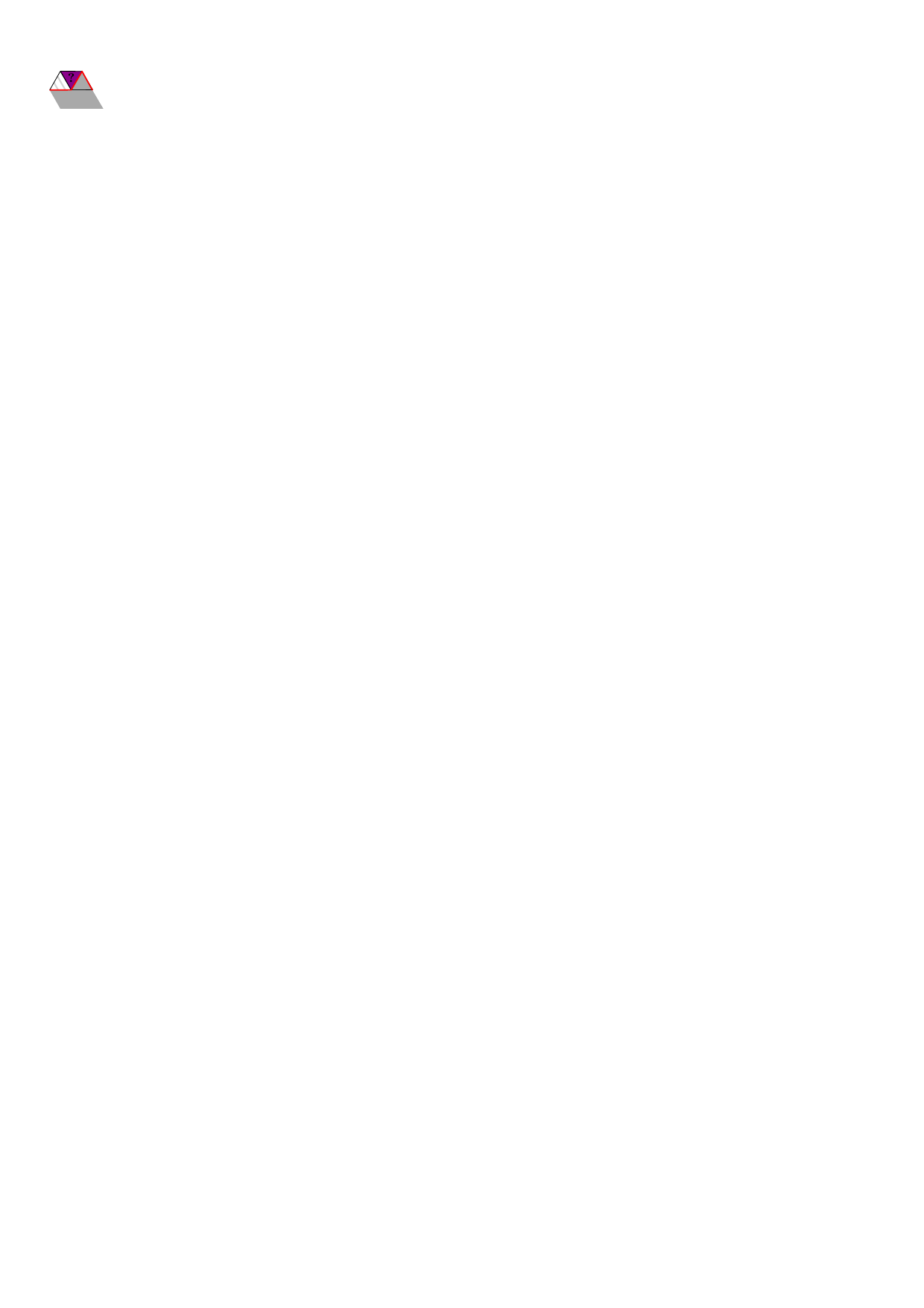} \\ \cline{2-4}
& $\eps$ & $0$ & $1$ \\ \cline{2-4}
& $\P(B^{(1)}=\eps)$ & $1-p$ & $p$ \\ \cline{2-4}
& $F_{\footnotesize\texttt{purple}}(\eps)$ & $(0,0)$ & $(1,0)$ \\ 
\cline{1-4}
\multirow{4}{*}{\raisebox{1.5\height}{\includegraphics[scale=0.8]{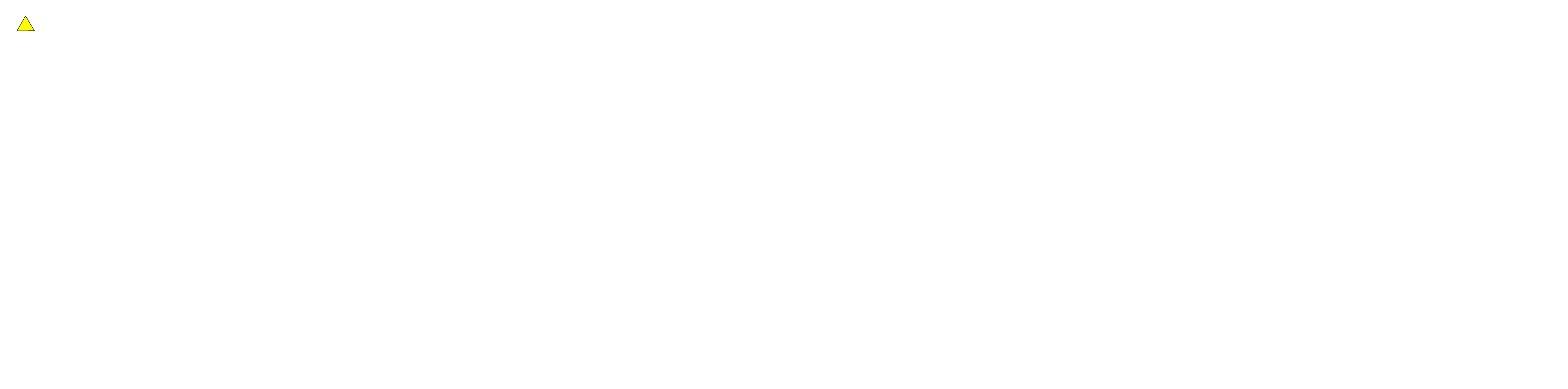}}} & \raisebox{0.5\height}{\begin{minipage}{3cm}$$\begin{array}{cl}
\text{T1:} & 2  \\ \text{T2:} & 2 \end{array}$$\end{minipage}} & \rule{0cm}{1.25cm}\includegraphics[scale=1.25]{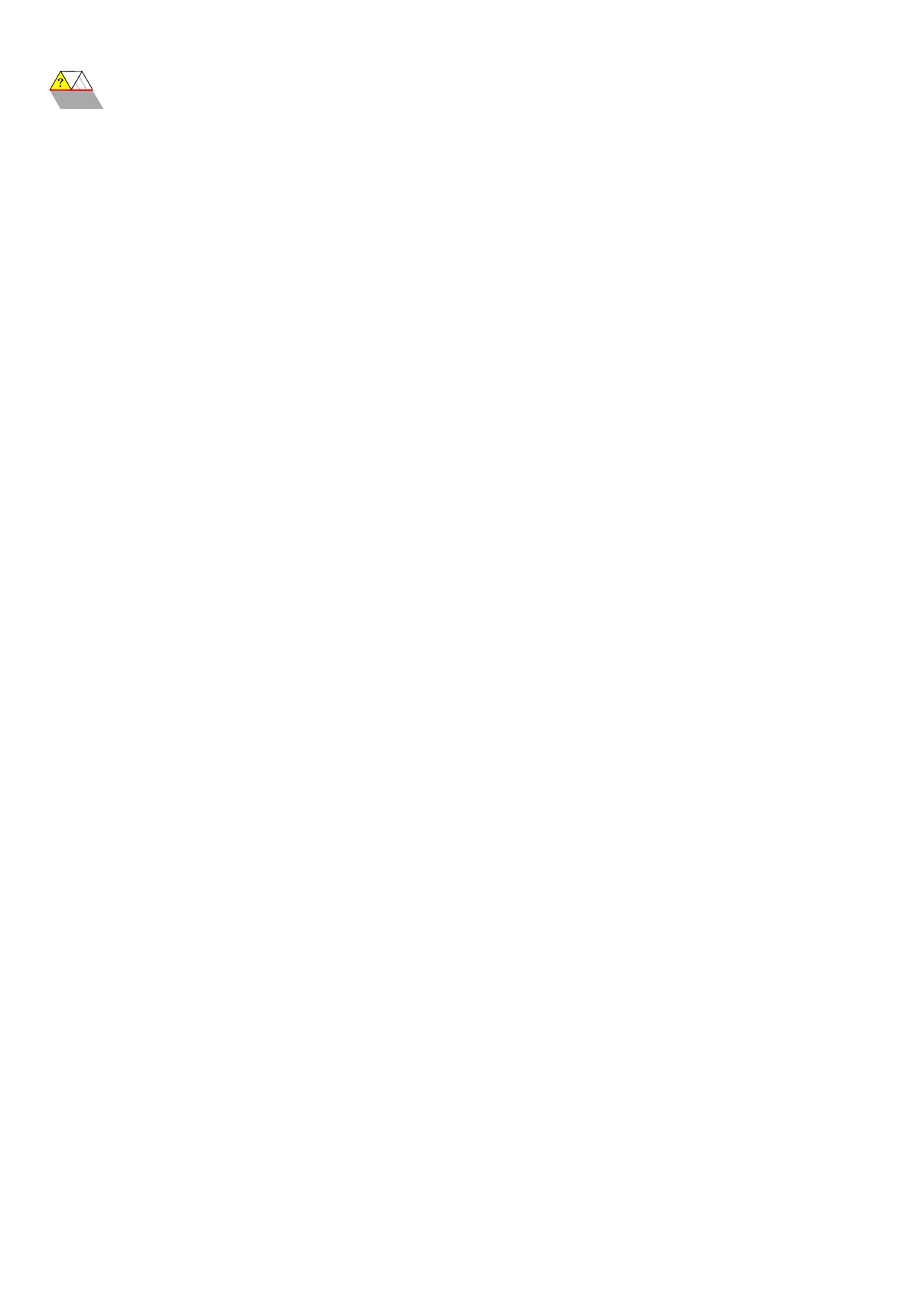} \\ \cline{2-3}
& $\eps$ & $0$ \\ \cline{2-3}
& $\P(B^{(1)}=\eps)$ & $1$ \\ \cline{2-3}
& $F_{\footnotesize\texttt{yellow}}(\eps)$ & $(0,0)$ \\ 
\cline{1-4}
\multirow{4}{*}{\raisebox{3.25\height}{\includegraphics[scale=0.8]{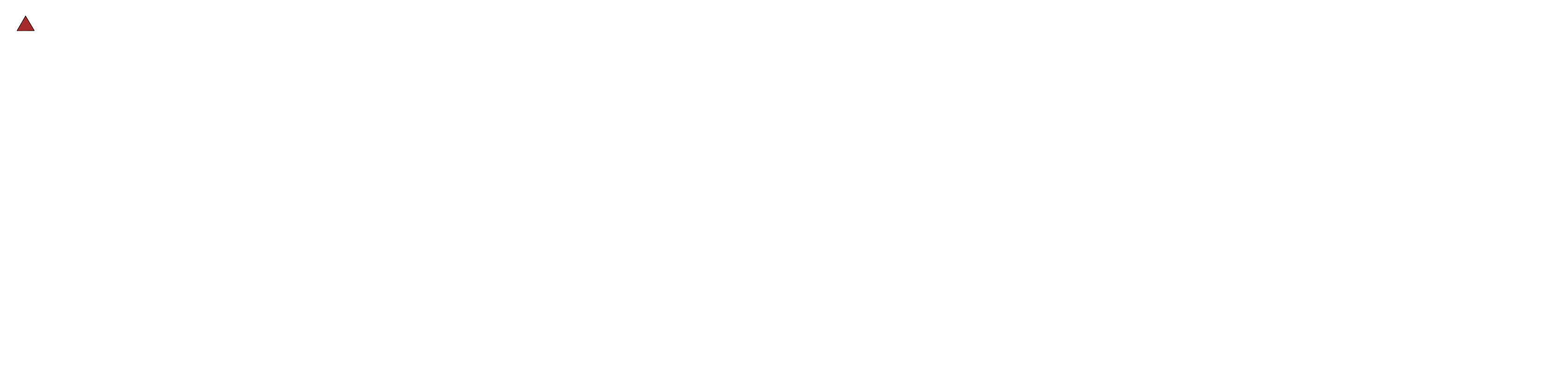}}} & \raisebox{\height}{\begin{minipage}{3cm}$$\begin{array}{cl}
\text{T1:} & 0 \\ \text{T2:} & 1 \end{array}$$\end{minipage}} & \rule{0cm}{1.9cm}\includegraphics[scale=1]{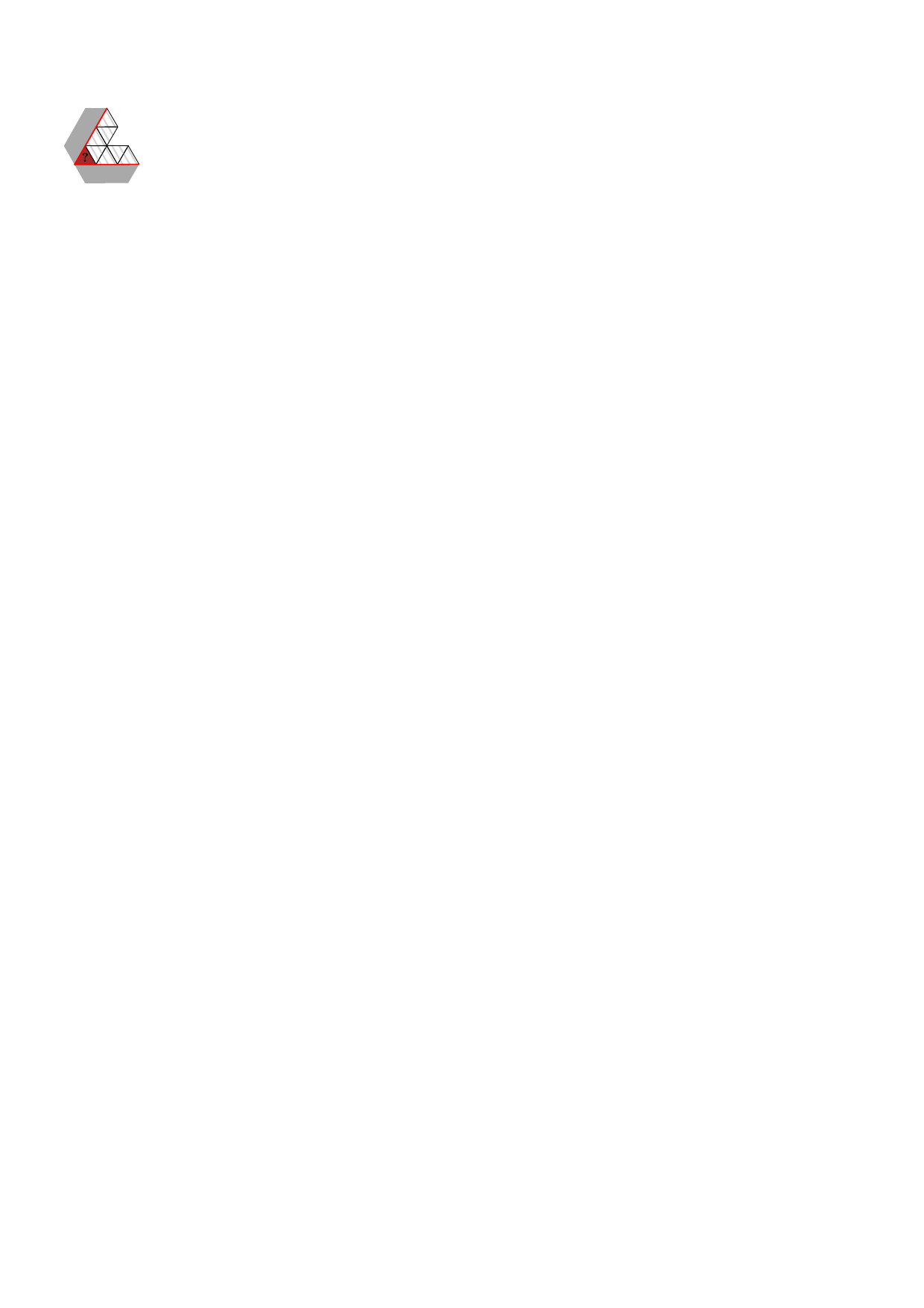} & \rule{0cm}{1.9cm}\includegraphics[scale=1]{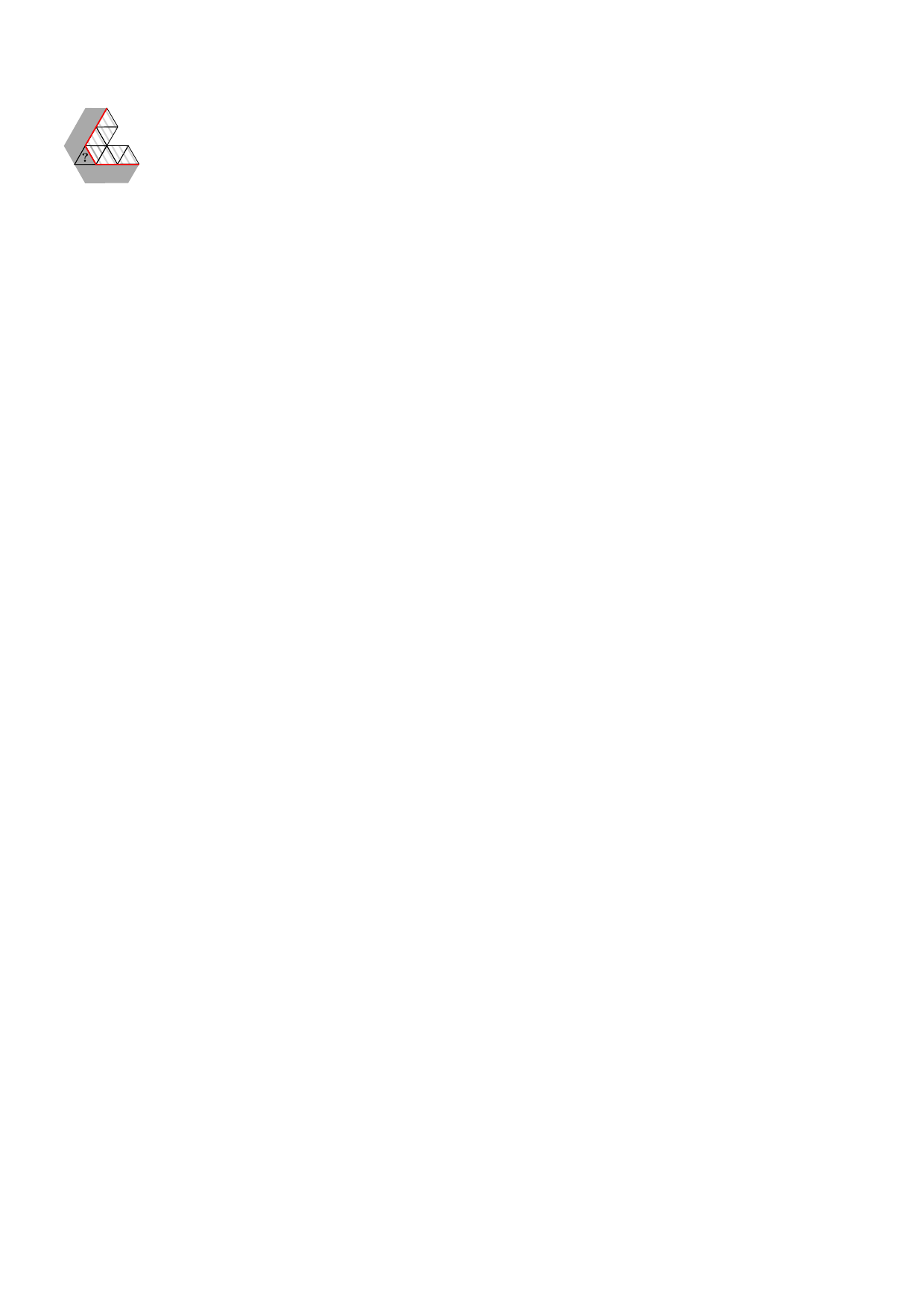} \\ \cline{2-4}
& $\eps$ & $0$ & $1$  \\ \cline{2-4}
& $\P(B^{(1)}=\eps)$ & $1-p$ & $p$ \\ \cline{2-4}
& $F_{\footnotesize\texttt{brown}}(\eps)$ & $(0,1)$ & $(0,0)$  \\ 
\cline{1-6}
\multirow{4}{*}{\raisebox{-0.25\height}{\includegraphics[scale=0.8]{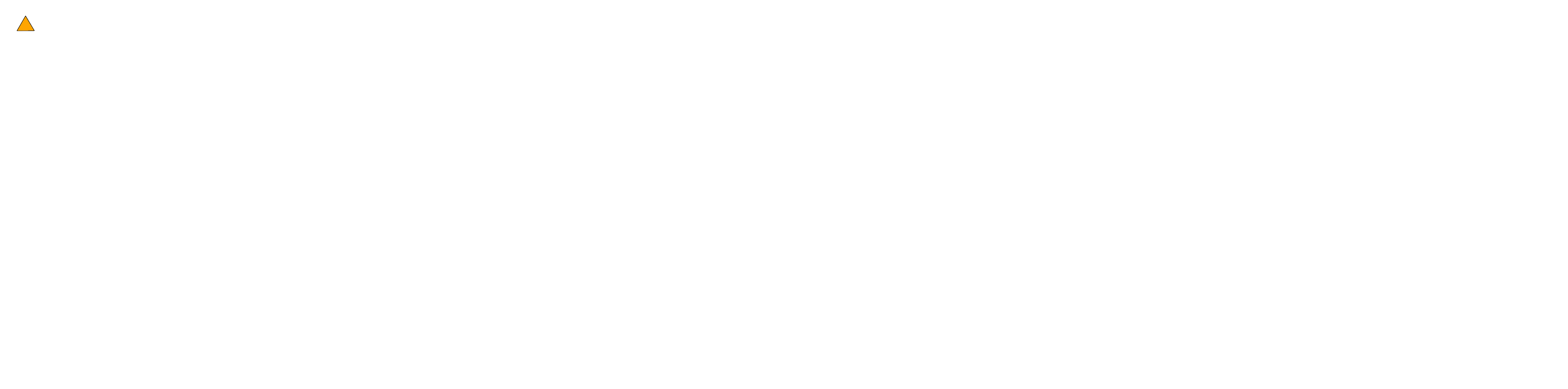}}} & \raisebox{\height}{\begin{minipage}{3cm}$$\begin{array}{cl}
\text{T1:} & 0 \\ \text{T2:} & 1 \end{array}$$\end{minipage}} & \rule{0cm}{1.9cm}\includegraphics[scale=1]{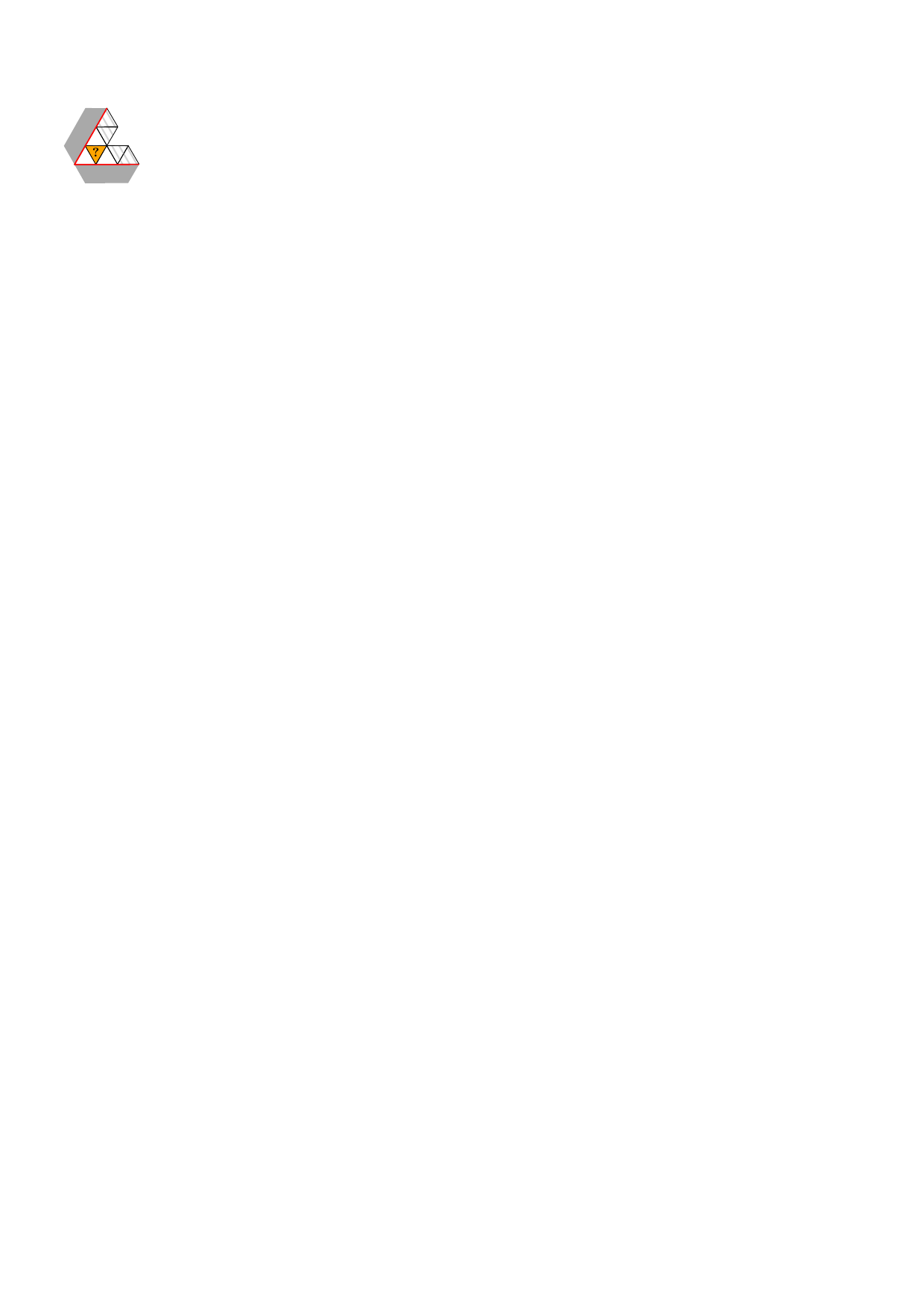} & \rule{0cm}{1.9cm}\includegraphics[scale=1]{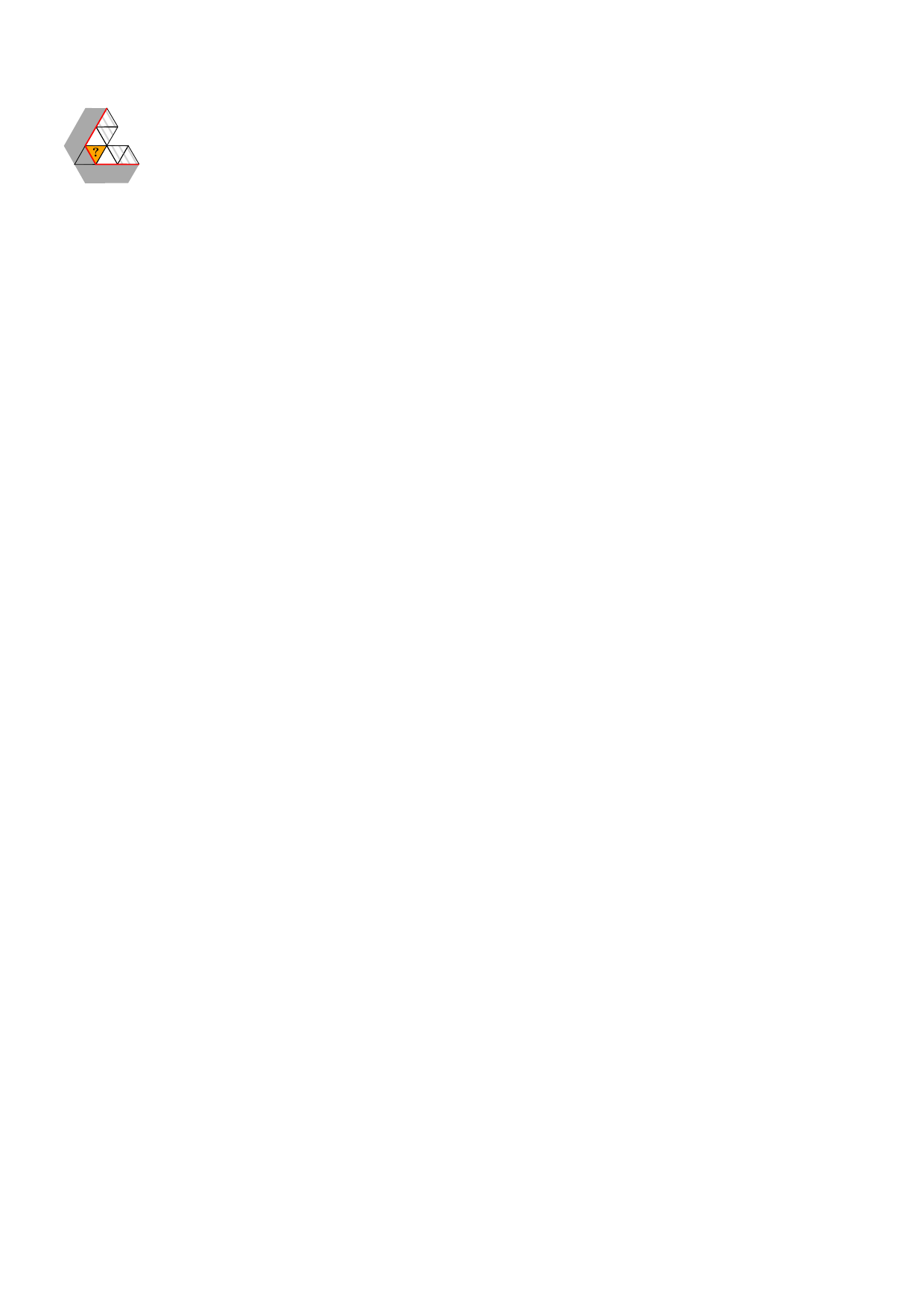} & \rule{0cm}{1.9cm}\includegraphics[scale=1]{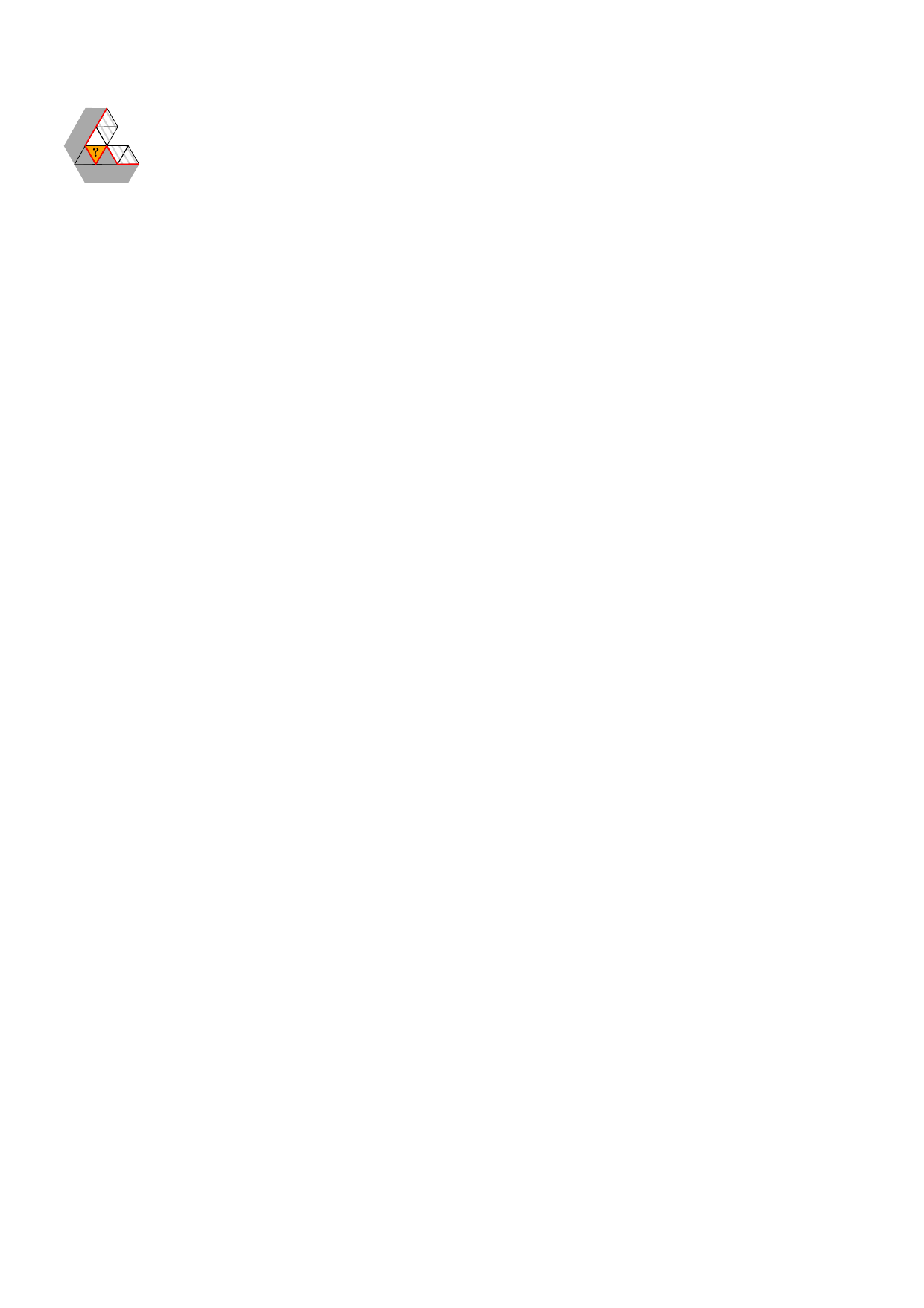} & \rule{0cm}{1.9cm}\includegraphics[scale=1]{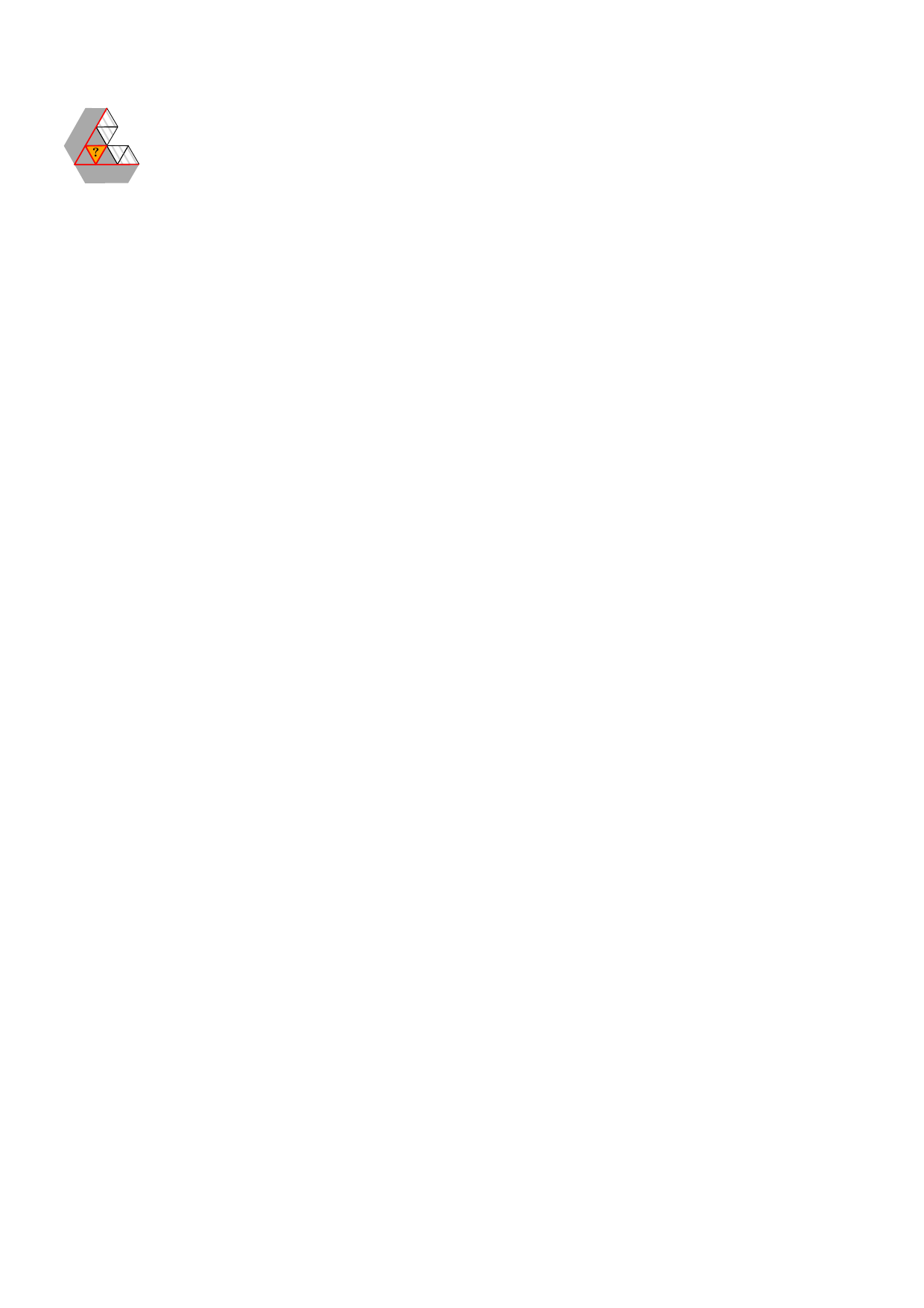} \\ \cline{2-6}
& $\eps$ & $(0,0,0)$ & 
\begin{minipage}{3cm}$$\begin{array}{l}
\text{\phantom{or }} (1,0,0) \\ 
\text{or } (0,1,0) \\ 
\text{or } (0,0,1) \\
\end{array}$$\end{minipage}
& \begin{minipage}{3cm}$$\begin{array}{l}
\text{\phantom{or }} (1,1,0) \\ 
\text{or } (1,0,1) \\ 
\text{or } (0,1,1) \\
\end{array}$$\end{minipage}
& $(1,1,1)$ \\ \cline{2-6}
& $\P(B^{(3)}=\eps)$ & $(1-p)^3$ & $3p(1-p)^2$ & $3(1-p)p^2$ & $p^3$ \\ \cline{2-6}
& $F_{\footnotesize\texttt{orange}}(\eps)$ & $(0,0)$ & $(1,0)$ & $(0,1)$ & $(-3,3)$ \\ 
\cline{1-6}
\end{tabular}}
\pass\caption{Description of the potential children of a triangle parent of type T1 or type T2 for the model with $p_*=0$ for the case $\la\ge4$.}
\label{tab:TriangleChildModel0}
\end{table}

\newpage

\addtocontents{toc}{\vspace{0.2cm}}%
\addcontentsline{toc}{section}{\protect\numberline{}\hspace{-18pt}\bf References}

\let\oldaddcontentsline\addcontentsline
\renewcommand{\addcontentsline}[3]{}
\bibliographystyle{plain}

\end{document}